\documentclass[english,11pt]{smfart}

\usepackage{etex}
\usepackage{tcolorbox}

\usepackage{amsbsy}
\usepackage{amsmath,amsfonts,amssymb,amsthm,mathrsfs,mathtools}

\usepackage{bm}

\usepackage[a4paper,vmargin={3cm,3cm},hmargin={3.5cm,3.5cm}]{geometry}
\usepackage[font=sf, labelfont={sf,bf}, margin=1cm]{caption}
\usepackage{graphicx}
\usepackage{epsfig}
\usepackage{latexsym}
\usepackage{xcolor}
\usepackage{ae,aecompl}
\usepackage{soul,framed}
\usepackage{comment}

\usepackage{xcolor}
\usepackage[pdfpagemode=UseNone,bookmarksopen=false,colorlinks=true,urlcolor=blue,citecolor=blue,citebordercolor=blue,linkcolor=blue]{hyperref}
\usepackage{smfhyperref}
\usepackage[capitalize]{cleveref}

\usepackage{pstricks}
\usepackage{enumerate}
\usepackage{tikz,animate,media9}						
\usepackage{todonotes}
\usepackage{pifont}
\usepackage{bm,marvosym}
\usepackage{algorithm}
\usepackage{algorithmic}

\usepackage{bbm}

\usepackage{tcolorbox}

\definecolor{aleacolor}{rgb}{0.16,0.59,0.78}

\hypersetup{
	breaklinks,
	colorlinks=true,
	linkcolor=aleacolor,
	urlcolor=aleacolor,
	citecolor=aleacolor}

\newcount\colveccount
\newcommand*\colvec[1]{
	\global\colveccount#1
	\begin{pmatrix}
		\colvecnext
	}
	\def\colvecnext#1{
		#1
		\global\advance\colveccount-1
		\ifnum\colveccount>0
		\\
		\expandafter\colvecnext
		\else
	\end{pmatrix}
	\fi
}

\newcommand{\ndN}{\mathbb{N}}
\newcommand{\ndZ}{\mathbb{Z}}

\newcommand{\ndR}{\mathbb{R}}

\renewcommand{\Pr}[1]{\mathbb{P}(#1)}

\newcommand{\Prb}[1]{\mathbb{P}\left(#1\right)}

\newcommand{\Ex}[1]{\mathbb{E}[#1]}

\newcommand{\Exb}[1]{\mathbb{E}\left[#1\right]}

\newcommand{\Va}[1]{\mathbb{V}[#1]}

\newcommand{\Vab}[1]{\mathbb{V}\left[#1\right]}

\newcommand{\convd}{\,{\buildrel \mathrm{d} \over \longrightarrow}\,}

\newcommand{\convv}{\,{\buildrel \mathrm{v} \over \longrightarrow}\,}

\newcommand{\convp}{\,{\buildrel \mathrm{p} \over \longrightarrow}\,}

\newcommand{\eqdist}{\,{\buildrel \mathrm{d} \over =}\,}

\newcommand{\cC}{\mathcal{C}}
\newcommand{\cD}{\mathcal{D}}

\newcommand{\cG}{\mathcal{G}}
\newcommand{\cH}{\mathcal{H}}
\newcommand{\cI}{\mathcal{I}}
\newcommand{\cJ}{\mathcal{J}}

\newcommand{\cL}{\mathcal{L}}

\newcommand{\cN}{\mathcal{N}}

\newcommand{\cP}{\mathcal{P}}
\newcommand{\cQ}{\mathcal{Q}}
\newcommand{\cR}{\mathcal{R}}
\newcommand{\cS}{\mathcal{S}}

\newcommand{\cW}{\mathcal{W}}
\newcommand{\cX}{\mathcal{X}}
\newcommand{\cY}{\mathcal{Y}}
\newcommand{\cZ}{\mathcal{Z}}

\newcommand{\Sym}{\mathrm{Sym}}

\newcommand{\mA}{\mathsf{A}}

\newcommand{\mD}{\mathsf{D}}

\newcommand{\mS}{\mathsf{S}}
\newcommand{\mT}{\mathsf{T}}
\newcommand{\mU}{\mathsf{U}}

\newcommand{\aut}{\mathrm{aut}}

\newcommand{\Aut}{\mathrm{Aut}}
\newcommand{\Autp}{\mathrm{Aut}^+}
\newcommand{\Autm}{\mathrm{Aut}^-}

\newcommand{\mob}{\text{m{\"o}b}}
\newcommand{\cyl}{\text{cyl}}

\newcommand{\id}{\mathrm{id}}

\newtheorem{theorem}{Theorem}[section]

\newtheorem{corollary}[theorem]{Corollary}
\newtheorem{proposition}[theorem]{Proposition}
\newtheorem{lemma}[theorem]{Lemma}

\newtheorem{definition}[theorem]{Definition}
\newtheorem{remark}[theorem]{Remark}

\numberwithin{equation}{section}

\keywords{asymptotic enumeration; unlabelled planar graphs; Walsh index series}

\title{\textbf{Asymptotic enumeration of unlabelled cubic planar graphs}}
\date{}

\author{Benedikt Stufler}

\address[Benedikt Stufler]{Vienna University of Technology}
\email{benedikt.stufler at tuwien.ac.at}

\begin{document}

\vspace {-0.5cm}

\begin{abstract}
	We determine the precise asymptotic number of unlabelled cubic planar graphs with $n$ vertices. Our approach blends generating series methods with computational bounds and probabilistic local large deviation theorems.
\end{abstract}


\maketitle

\section{Introduction}

In their survey on ten steps to counting planar graphs~\cite{zbMATH04055646}, Liskovets and Walsh wrote:  ``One well-known long-standing unsolved graph-enumeration problem is to count (nonisomorphic) planar graphs. The aim of this brief survey is to draw the reader's attention to the considerable progress which has been achieved to that end, and which suggests that this problem may soon be completely solved.''

Forty years later, the problem of counting nonisomorphic planar graphs remains open. A major breakthrough was achieved by Gim\'enez and Noy~\cite{MR2476775} who solved the labelled case, after Bender, Gao and Wormald~\cite{MR1946145} enumerated $2$-connected labelled planar graphs. Determining the asymptotic number laid the foundation for a detailed analysis of shape parameters of labelled planar graphs~\cite{noysurvey}. The related class of labelled cubic planar graphs was enumerated by Bodirsky, Kang, L\"offler and McDiarmid~\cite{zbMATH05122852} and Noy, Requil\'e and Ru\'e~\cite{zbMATH07213288}.  Structural recurrence formulas for the exact number of unlabelled cubic planar graphs were determined by Bodirsky, Gr\"opl and Kang~\cite{zbMATH05255661}, and symmetries of simple triangulations were described by Brown~\cite{MR168485} and Kang and Spr\"ussel~\cite{zbMATH06841878}. However, despite these successes, no asymptotic enumerative formula could be established for unlabelled cubic planar graphs either. 

There are several major obstacles in the asymptotic enumeration of planar graphs up to graph isomorphism. For example, the bijective methods for enumerating planar maps initiated by Tutte's pioneering census papers~\cite{zbMATH03187384, zbMATH03169204} break down in the case of graphs. No adequate bijections for encoding planar graphs in the same way as planar maps are known. While it is known that specific classes of planar maps are  asymmetric, see the paper by  Richmond and Wormald~\cite{zbMATH00767676} for a general criterion, this is not the case for  planar graphs unless they are $3$-connected. The reason for this is that the abundance of pendant copies of symmetric subgraphs in a single large planar graph with less connectivity constraints yields  exponentially many  non-trivial automorphisms and hence shifts the growth constant.

Moreover, even  the non-trivial automorphisms at the $3$-connected level  cannot be ignored: the symmetries of a large graph are decomposed into many types of symmetries of subgraphs and substructures, the vast majority of which have a small number of vertices. Thus, although   a large $3$-connected component is unlikely to exhibit symmetries due to the relation with planar maps via Whitney's theorem, there is typically a linear number of small $3$-connected components (such as the complete graph on four vertices) with non-trivial symmetries. In this way non-trivial symmetries even persist at the $3$-connected component level and influence the asymptotic number of unlabelled planar graphs. This creates a major challenge. Fully characterising the  structure of the non-trivial symmetries at any level of connectivity is a highly complex  problem. 


Furthermore, the decomposition of connectivity involves a large number of rooted versions of graphs and networks. De-rooting in the presence of symmetries, that is, relating the number of rooted objects to the number of unrooted objects, adds further levels of complexity. Classical methods such as the dissymmetry theorem multiply the number of types of rooted versions that are necessary. In order to work, they also require precise second-order asymptotics of growth.

The present work overcomes these challenges and is the first to solve the asymptotic counting problem for a critical class of unlabelled planar graphs:

\begin{theorem}
	\label{te:main}
	The number $g_n$ of unlabelled cubic planar graphs with an even number $n$ of vertices satisfies
	\[
		g_n \sim \kappa n^{-7/2} \rho^{-n}
	\]
	for positive constants $\kappa, \rho>0$. 
\end{theorem}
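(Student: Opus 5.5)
The plan is to reduce the unlabelled count to a weighted labelled count via cycle index sums, and then to control the symmetric part by a local large deviation argument. By P\'olya theory, $g_n$ equals the coefficient extraction of the cycle index sum $Z_{\mathcal G}(x_1,x_2,\dots)$ of the class $\mathcal G$ of cubic planar graphs at $x_i = z^i$. I will decompose graphs along connectivity, going from connected to $2$-connected to $3$-connected components, and I expect to need multigraph intermediates and networks, as in the labelled analysis of Bodirsky, Kang, L\"offler, McDiarmid and of Noy, Requil\'e, Ru\'e. Along this decomposition I will keep track of symmetries using Walsh-type index series (cycle index sums in which automorphisms are weighted by their cycle types). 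The orientation-preserving and orientation-reversing parts $\Autp,\Autm$ must be handled separately at the $3$-connected level, where Whitney's theorem identifies graphs with maps. The bottom level then reduces to cubic $3$-connected maps, equivalently simple triangulations by duality. Their symmetric parts are described by Brown and by Kang and Spr\"ussel; these have radius of convergence strictly larger than that of the asymmetric part.

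The next step is to isolate the dominant contribution. Write $Z_{\mathcal G}(z,z^2,z^3,\dots)=\exp\bigl(\sum_{k\ge1} C(z^k,z^{2k},\dots)/k\bigr)$ for connected graphs, with $C(z)=Z_{\mathcal C}(z,z^2,\dots)$. For $k\ge2$ the terms are analytic in a disc of radius $\rho^{1/2}>\rho$, provided $\rho<1$, so the connected asymptotics transfer to $g_n$. For $C$, I will freeze the variables $x_2,x_3,\dots$ at $z^2,z^3,\dots$; these only enter analytically, by the same radius argument. This turns $C$ into a labelled-type species evaluated at an analytic substitution, that is, a composition scheme whose inner function is a Boltzmann-weighted labelled cubic planar structure. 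The singular analysis of the labelled case then carries over: there is a square-root type singularity at the $2$-connected level, which produces a singular exponent $5/2$ for connected graphs, and so $[z^n]C(z)\sim c\, n^{-7/2}\rho^{-n}$. I will verify this via the implicit function/critical composition scheme, giving $\Delta$-analyticity at $\rho$ and at $-\rho$ (the latter being responsible for the parity restriction to even $n$).

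The main obstacle is that the frozen-variable substitution is not a finite system. Infinitely many index variables appear, and the symmetric contributions from pendant small substructures such as $K_4$-like pieces with involutions shift the constants. So one must prove that the system of equations, with these infinite series plugged in, still has the required criticality; in particular, one has to show it is not subcritical, which would give a different exponent. I expect this cannot be done in closed form. My first attempt will be to establish rigorous computational bounds: truncate the index series after finitely many terms, bound the tail by a geometric series using crude counts of cubic planar graphs ($\le$ exponential with an explicit base), and check numerically the sign and size conditions that locate the singularity and confirm the square-root behaviour. Where the analytic framework cannot pin down the singular exponent directly, I will fall back on a probabilistic route. Interpreting the coefficients as a Gibbs partition or random composite structure, I will apply a local large deviation theorem for sums of i.i.d. heavy-tailed variables with tail exponent $5/2$, which yields the $n^{-7/2}$ local law and hence the theorem with explicit positive $\kappa$ and $\rho$.
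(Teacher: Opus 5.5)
Your outline shares the paper's scaffolding: Walsh index series, separate treatment of $\Autp$ and $\Autm$, exponentially rare symmetric $3$-connected cores, $\rho<1$ making the $x_k$, $k\ge2$, terms harmless, and the final multiset step. But the step that produces the exponent $7/2$ is missing. Your decomposition, like the paper's, yields edge-rooted objects (networks), and the link to unrooted graphs is $W_{\cN_{\mathrm{s}}}^+ = 2\,\partial W_\cC/\partial a_1$. A symmetry $(C,\sigma)$ therefore has exactly $2\cyl_1(\sigma)$ rooted versions, and none at all if $\cyl_1(\sigma)=0$.

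In the labelled case this multiplicity is the deterministic $3n$, which is why de-rooting is trivial there, and why the labelled analysis does not simply ``carry over''. Freezing $x_2,x_3,\dots$ does not help, because $\cyl_1(\sigma)$ is not determined by the number of fixed vertices: a fixed vertex can have $3$, $1$ or $0$ fixed incident edges. For the same reason your probabilistic fallback cannot directly give an $n^{-7/2}$ law. The single-big-jump local large deviation, combined with the cycle-lemma factor $1/\ell$, gives the \emph{rooted} count $d_n\sim c\,n^{-5/2}\rho^{-n}$ (Corollary~\ref{co:piz}). The extra factor $1/n$ has to come from de-rooting.

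The paper obtains that factor from two further facts.
First, $\cyl_1(\sigma)$ for a uniform symmetry of a simple network concentrates at $\beta n/2$ up to probability $n^{-\Theta(\log n)}$ (Lemma~\ref{le:concentrate}). Here $\beta$ is not the naive $\alpha\Ex{Y}$; it contains the extra term $3(1+\mu)(1-\frac{\alpha}{2}\Ex{X})$ coming from the giant $3$-connected core.
Second, symmetries with $\cyl_1(\sigma)=0$ are negligible (Corollary~\ref{co:deno}, Lemma~\ref{le:nosym2}). With overwhelming probability the giant core is asymmetric and has no isomorphic twin, so every automorphism fixes it pointwise and therefore has a cylindrical $1$-cycle.
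An analytic substitute would need $a_1$ marked by a variable $u$, uniform control of the moving singularity $\rho(u)$ near $u=1$, and an integration in $u$. None of this is in your plan.

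You correctly anticipate that excluding the subcritical case needs rigorous numerical bounds, but your description of the singularity is inconsistent. You speak of a critical composition scheme, yet also of a ``square-root type singularity at the $2$-connected level'' that you plan to ``confirm''. A square-root singularity of the network series is precisely the subcritical branch-point scenario you say must be excluded; it would give $n^{-3/2}$ for networks and $n^{-5/2}$ for graphs. What is true and needed is criticality, $\rho^2(1+\cD(\rho))^3=27/256$ (Lemma~\ref{le:keynumeric}). This makes networks inherit the $3/2$-singularity of $3$-connected cubic maps ($q_n\sim n^{-5/2}$).

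The paper proves criticality by contradiction. If the inequality were strict, the a priori bounds of Lemma~\ref{le:prenum} show that the Jacobian determinant of the system in $(\cD,\cD^-)$ is at least $0.166$. The implicit function theorem would then make $\cD$ analytic at $\rho$, contradicting Pringsheim's theorem.

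This system stays genuinely two-dimensional even after freezing $x_k$, $k\ge2$. The pole-symmetric series $\cD^-$ has radius exactly $\rho$, because loop networks satisfy $\cL^-=\cL$. Together with the unknown series $R_1,R_2$, this means neither the explicit labelled singular analysis nor $\Delta$-analyticity at $\pm\rho$ can be transferred. The paper avoids analytic continuation altogether. It encodes symmetries in a multitype branching process whose pruned version is subcritical with exponential moments, so the only heavy-tailed steps are the asymmetric cores (law $\phi_\cZ$, with $\Pr{\bar X=2n}\sim\frac{8\sqrt6}{5\sqrt\pi}n^{-5/2}$). It then applies the Denisov--Dieker--Shneer local large deviation theorem.
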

The constants admit the numerical approximations $\rho \approx 0.31560$ and $\kappa \approx 0.07$. For the number $c_n$ of unlabelled connected cubic planar graphs with an even number $n$ of vertices we obtain
\[
c_n \sim \gamma n^{-7/2} \rho^{-n}
\]
with $\rho$ as before and $\gamma \approx 0.070$ a positive constant satisfying $\gamma < \kappa$.

In broad outline, we proceed in the same direction as the ten-step program. We start with the $3$-connected case. Then we enumerate unlabelled cubic planar networks with an oriented root edge. Next, we enumerate simple cubic planar networks. Finally, we de-root to enumerate unlabelled connected  cubic planar graphs and make the final step to the unconstrained case.

However, our approach differs  in several important points. 
We evade the need for a full structural decomposition and asymptotic enumeration for all kinds of symmetries of $3$-connected cubic planar graphs. A structural analysis of automorphisms of triangulations by Brown~\cite{MR168485} is only used to improve the accuracy of numerical approximations carried out after proving Theorem~\ref{te:main}. The numeric approximations are achieved using a mix of bounds stemming from the system of equations of generating series we establish, as well as truncations of power series obtained using the software \texttt{plantri} by Brinkmann, McKay and Van den Camp~\cite{zbMATH05382101,zbMATH02244630, zbMATH02245027} and the \texttt{nauty} software suite by McKay and Piperno~\cite{zbMATH06264238}.

The underlying reason why we manage to evade the need for a full structural decomposition in the $3$-connected case is as follows. Non-trivial symmetries at the $3$-connected level influence the structure and number of graphs with a lower level of connectivity through a large number of small symmetric components. In the proof we apply local large deviation bounds in a heavy-tailed regime established by Denisov, Dieker and Shneer~\cite{MR2440928} that describe a single big step that decouples the small steps and their dependencies. At the random walk level, this behaviour of the small steps was described in detail by Loulakis and Armendar\'iz~\cite{MR2775110}. Through these deviation bounds we decouple the non-maximal $3$-connected components with complicated symmetries from the maximal one, which is likely to be asymmetric.

Furthermore, we evade combinatorial de-rooting methods and use concentration inequalities instead. This reduces complexity by allowing us  to de-root in a single step, without the need to introduce various types of rooted objects.

The framework we develop works directly with asymptotics of coefficients of generating series, computational methods and probabilistic local large deviation theorems. Our enumerative results constitute a solid foundation for the analysis of limiting laws and shapes for random unlabelled cubic planar graphs. We will pursue this in follow-up work. Our approach through local large deviation methods is reusable for other critical classes of planar graphs as it evades the need for a full structural decomposition of the $3$-connected case. We will resolve the long-standing open problem of enumerating unconstrained unlabelled planar graphs in subsequent work.

\subsection*{Organisation}

The paper is organised as follows. Section~\ref{sec:isopmg} discusses isomorphisms of planar maps and graphs with a focus on the $3$-connected case. In Section~\ref{sec:nedewa} we establish a system of equations for the Walsh index series of cubic planar networks. Section~\ref{sec:ucpg} derives rough numeric bounds and verifies that unlabelled cubic planar graphs form a critical graph class. In Section~\ref{sec:countingtl} we conduct a probabilistic analysis of generating series to determine the asymptotic number of unlabelled cubic planar networks. Section~\ref{sec:sicup} enumerates non-isomorphic simple cubic networks, connected cubic planar graphs and unconstrained cubic planar graphs. In Section~\ref{sec:numerics} we improve numeric approximations of the constants appearing in the asymptotic enumerative formulas.

\subsection*{Notation}
We  let $\ndN = \{1, 2, \ldots\}$ and $\ndN_0 = \{0, 1, 2, \ldots\}$ denote the sets of positive and nonnegative integers. For each $n \in \ndN_0$ we set $[n] = \{1, \ldots, n\}$, so that~$[0]=\emptyset$. All considered random variables are defined on a common probability space whose measure is denoted by $\mathbb{P}$. Unspecified limits are taken as $n$ tends to infinity.
We use $\convd$ and $\convp$ to denote convergence in distribution and probability. We let $\convv$ denote convergence in the vague topology. Equality in distribution is denoted by $\eqdist$. 
We let $O_{\mathrm{p}}(1)$ denote an unspecified random variable $X_n$ of a stochastically bounded sequence $(X_n)_{n \ge 1}$. 
 The total variation distance is denoted by $d_{\mathrm{TV}}$. 
For a power series $f(x) \in \ndR[[x]]$ we let $[x^n]f(x)$ denote the $n$th coefficient of $f$. We use similar notation for multivariate series. We say a power series $g(x)$ dominates $f(x)$ coefficient-wise if $[x^n]f(x) \le [x^n]g(x)$ for all~$n \in \ndN_0$. We denote this by $f(x) \le_{\mathrm{coeff}} g(x)$. We use the notation $O(1)$ to denote an unspecified sequence $a_n$ with $\limsup_{n \to \infty} |a_n| < \infty$. Depending on the context, $a_n$ may depend on further parameters $t_1, t_2, \ldots$. If we state that the $O$ term is uniform for some range of these parameters, it means that $\limsup_{n \to \infty} \sup_{t_1, t_2, \ldots} |a_n| < \infty$, with the indices $t_1, t_2, \ldots$ constrained to that range. Given a sequence $b_n$ we use the short notation $O(b_n)$ for $b_n O(1)$. We use similar conventions for the $o$-notation and $\Theta$-notation defined as follows. The term $o(1)$ denotes an unspecified sequence that tends to zero as $n$ tends to infinity. The term $\Theta(1)$ denotes an unspecified sequence $\bar{a}_n$ such that there exist $c,C>0$ and $N>0$ such that for all $n>N$ we have $c < \bar{a}_n < C$.

\section{Isomorphisms of planar maps and graphs}
\label{sec:isopmg}

\subsection{Graphs and planar maps}

A multigraph is called \emph{simple} (or just a graph) if it has no multi-edges or loops. That is, between any unordered pair of distinct vertices there is at most one edge, and no edge starts and ends at the same vertex. A simple graph is $k$-connected for $k \ge 2$ if it is connected, has at least $k+1$ vertices and removing any subset of at most $k-1$ vertices does not disconnect the graph. A multigraph is called $2$-connected if it has no loops and the underlying simple graph is $2$-connected. For $k \ge 3$ a multigraph is called $k$-connected if it is simple and the underlying simple graph is $k$-connected. A multigraph is called cubic if each vertex has degree $3$.

An \emph{isomorphism} between multigraphs $G$ and $G'$ is a bijection from the vertex set of $G$ to the vertex set of $G'$ such that the number of edges between any two (not necessarily distinct) vertices of $G$ is equal to the number of edges between their images.  An \emph{automorphism} of a multigraph $G$ is an isomorphism from $G$ to itself.

If $G$ is equipped with a root vertex (or several root vertices), a root edge, or an oriented root edge, we require that graph isomorphisms between $G$ and other rooted multigraphs preserve the root structures. That is, root vertices get mapped to root vertices (if an ordering of the root vertices is specified then it must be respected too), root edges get mapped to root edges, and if the root edge is oriented then its start must be mapped to the start of the target root edge and its end needs to be mapped to the end of the target root edge.

A \emph{planar map} is a drawing of a connected multigraph on the $2$-dimensional sphere such that edges are represented by arcs that may only intersect at their endpoints~\cite{zbMATH03187384}.  Planar maps are viewed up to orientation-preserving homeomorphism of the $2$-sphere. 

The \emph{faces} of a planar map correspond to the connected components created when removing it from the $2$-sphere. We require faces to be homeomorphic to open disks.  Each face has a \emph{boundary}, given by a counter-clockwise cyclically ordered list of sides of edges. Thus each edge corresponds to a pair of \emph{half-edges} which  are oriented in opposing directions.   The \emph{degree} of a face is the number of half-edges on its boundary. A planar map in which all faces have degree $3$ is called a  \emph{triangulation}.


A \emph{rooted planar map} is a planar map with a distinguished \emph{root edge} that is oriented.  Rooting maps at an oriented root edge is equivalent to rooting maps at a corner. We use the convention that the face to the right of the root edge is called the \emph{outer face}.

We call a planar map simple or $k$-connected (for any $k \ge 2$), if its underlying multigraph is simple or $k$-connected. Thus, a triangulation with at least $4$ vertices is simple if and only if it is $3$-connected.  There are differing conventions in the literature. Tutte~\cite{zbMATH03169204} additionally requires a simple triangulation to have no separating $3$-cycles. We do not make this requirement here.

A \emph{combinatorial isomorphism} between planar maps $M$ and $M'$ consists of bijections between their sets of vertices, edges, and faces that preserve adjacency of vertices with edges, and edges with faces. That is, a vertex is incident to some edge if and only if its image is incident to the image of the edge, and we additionally require that an edge lies on the boundary of some face if and only if the image of that edge lies on the boundary of the image of that face. If $M$ is $3$-connected, then Whitney's theorem ensures that such an isomorphism is induced by a homeomorphism of the sphere. See Section~\ref{sec:whitney} below. A \emph{combinatorial automorphism} of $M$ is a combinatorial isomorphism between $M$ and itself. 

Any planar map $M$ has a \emph{dual map}. It may be constructed by placing a ``blue'' vertex inside each face of $M$, adding for each edge $e$ of $M$ a blue edge across $e$ between the blue vertices corresponding to the two faces adjacent to $e$, and then finally deleting all the original edges and vertices of $M$.   The dual map construction is an involution, meaning that the dual map of the dual map of $M$ is equal to $M$ (since we view maps up to orientation-preserving homeomorphism).

If $M$ is a rooted map and $e$ is the root edge of $M$, we orient the corresponding blue edge in the dual map from the face to the right of $e$ to the face to the left of $e$. This way, the dual map of a rooted planar map has an oriented root edge as well. Taking the dual twice of a rooted map yields the same map, but with the orientation of the root edge reversed.

\subsection{Whitney's theorem}
\label{sec:whitney}

If $M$ is $3$-connected, then any edge lies on the boundary of exactly two faces and the  face boundaries are precisely the non-separating induced cycles of the underlying graph. No two distinct faces share the same set of vertices or edges in this case.

Consequently, for two $3$-connected planar maps any graph isomorphism of the underlying $3$-connected planar graphs extends uniquely to a combinatorial isomorphism of the maps.

Whitney's theorem~\cite{zbMATH03008955} states that given an embedding of a $3$-connected planar graph into the sphere, any other embedding may be obtained by applying a homeomorphism of the sphere.  In particular, a combinatorial isomorphism between $3$-connected planar maps is induced by a homeomorphism of the sphere.

This homeomorphism may be orientation-preserving or orientation-reversing. Hence any unlabelled $3$-connected planar graph is either \emph{chiral} and corresponds to two planar maps that are mirror images of each other, or it is \emph{achiral} and corresponds to only one. See Figure~\ref{fi:chiral} for an illustration.

\begin{figure}[H]
	\centering
	\begin{minipage}{\textwidth}
		\centering
		\includegraphics[scale=1.0]{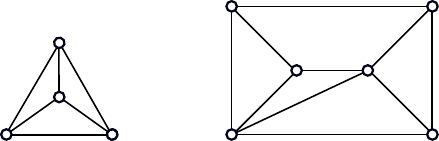}
		\caption{An achiral graph on the left and a chiral graph on the right.}
		\label{fi:chiral}
	\end{minipage}
\end{figure}

Likewise, an unlabelled $3$-connected planar graph with an oriented root edge may correspond to one or two rooted planar maps. We call it accordingly \emph{rooted achiral} or \emph{rooted chiral}.
 The number of embeddings need not be identical to the number of embeddings of the underlying unrooted graph. See Figure~\ref{fi:rchiral} for examples.

\begin{figure}[H]
	\centering
	\begin{minipage}{\textwidth}
		\centering
			\includegraphics[scale=1.0]{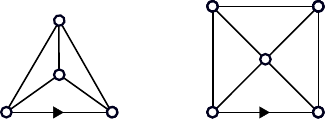}
		\caption{On the left: a rooted graph that is  rooted achiral and also achiral as an unrooted graph. On the right: a rooted graph that is rooted chiral but achiral as an  unrooted graph.}
		\label{fi:rchiral}
	\end{minipage}
\end{figure}

For vertex-labelled planar maps the neighbours of any vertex are cyclically ordered. Orientation-preserving homeomorphisms of the sphere preserve this ordering, whereas orientation-reversing homeomorphisms reverse it. Hence any vertex-labelled $3$-connected planar graph corresponds to precisely two vertex-labelled planar maps, regardless of whether the underlying unlabelled graph corresponds to one or two unlabelled planar maps.  See Figure~\ref{fi:lchiral}.

\begin{figure}[H]
	\centering
	\begin{minipage}{\textwidth}
		\centering
		\includegraphics[scale=0.6]{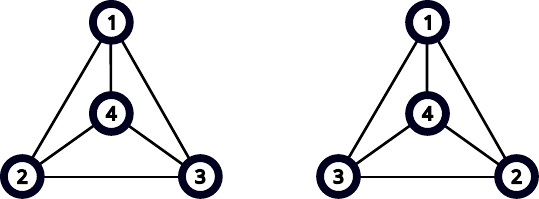}
		\caption{Two vertex-labelled planar maps that are not related through an orientation-preserving homeomorphism, although the corresponding unlabelled planar map is achiral.}
		\label{fi:lchiral}
	\end{minipage}
\end{figure}

\subsection{Automorphisms of $3$-connected planar graphs}

It was shown by~\cite{zbMATH03234261} that any $3$-connected planar graph with $m$ edges has at most $4m$ automorphisms. That is, the number of automorphisms admits a linear bound in terms of the number of edges or equivalently vertices, by Euler's polyhedron formula.

The linear order of growth is asymptotically sharp. For example, the automorphism group of the wheel graph on $n+1$ vertices for $n \ge 4$ is the dihedral group with $2n$ elements. The graph consists of a cycle of length $n$ and an additional vertex that is adjacent to each vertex of the cycle.

However, automorphisms preserving a specified root edge have a more rigid structure:

\begin{proposition}
	\label{pro:aut1}
	Consider a $3$-connected  planar graph $G$ with an oriented root edge $e$. Then $G$ has at most two graph automorphisms $\sigma$ that preserve $e$ and its orientation (i.e. such that both ends of $e$ are fixed points). If $\sigma$ is non-trivial then it is orientation-reversing (with respect to a fixed embedding of $G$) and an involution.
\end{proposition}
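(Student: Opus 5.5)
The plan is to use Whitney's theorem from Section~\ref{sec:whitney} to turn graph automorphisms into homeomorphisms of the sphere, and then to exploit the rigidity of a planar map around a fixed oriented edge. Fix an embedding $M$ of $G$. Since $G$ is $3$-connected, any graph automorphism $\sigma$ extends uniquely to a combinatorial automorphism of $M$, and it is induced by a homeomorphism of the sphere. That homeomorphism is either orientation-preserving or orientation-reversing. The key step is to show that a combinatorial automorphism of $M$ is determined by two pieces of data: the image of the oriented root edge (an oriented edge, i.e.\ a pair of adjacent vertices in order) and whether it preserves or reverses orientation.

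For this step, let $e=(u,v)$ and suppose $\sigma(u)=u$ and $\sigma(v)=v$. The neighbours of $u$ are cyclically ordered in $M$. If $\sigma$ is orientation-preserving, it maps this cyclic order to itself. Because $v$ is fixed, every neighbour of $u$ is then fixed. If $\sigma$ is orientation-reversing, the cyclic order is reversed about the fixed neighbour $v$, so the map on the neighbours of $u$ is again uniquely determined. Next propagate: for any fixed vertex $w$ with a fixed neighbour $w'$, the same argument determines $\sigma$ on all neighbours of $w$. The orientation type is a global property of the homeomorphism, so it is the same at every vertex. Since $G$ is connected, induction along paths shows that $\sigma$ is determined on all vertices. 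In the preserving case this forces $\sigma=\id$.

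Consequently at most two automorphisms fix both ends of $e$: the identity, and at most one orientation-reversing one. If $\sigma$ is non-trivial, it must be orientation-reversing. Then $\sigma^2$ fixes $u$ and $v$ and is orientation-preserving, so $\sigma^2=\id$ and $\sigma$ is an involution.

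The main obstacle is justifying cleanly that orientation type is consistent across vertices, i.e.\ that "orientation-reversing" makes sense locally at every vertex for the combinatorial automorphism. I would handle this by working with the rotation system. An orientation-preserving homeomorphism maps the rotation at $w$ to the rotation at $\sigma(w)$; an orientation-reversing one maps it to the inverse rotation. This holds uniformly for all vertices, because the homeomorphism from Whitney's theorem is a single global map. Loops and multi-edges do not occur, since a $3$-connected multigraph is simple by the paper's convention, so neighbours and edges at a vertex correspond bijectively.
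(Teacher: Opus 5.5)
Your argument is correct and is essentially the paper's. Both pass via Whitney's theorem to a homeomorphism and show that an orientation-preserving automorphism fixing the oriented edge is trivial by propagating outward (you through rotations at vertices, the paper through face boundaries and adjacent faces), and both get the involution property from $\sigma^2$ being orientation-preserving. The only variation is that you obtain uniqueness of the reversing automorphism by direct determination, while the paper composes two reversing automorphisms.

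One wording fix: in the reversing case, state the propagation step for adjacent vertices whose images are already determined, not for fixed vertices. Such a $\sigma$ typically swaps the other neighbours of $u$, so your induction as written does not get past the neighbours of $u$ and $v$.
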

\begin{proof}
	Let $\sigma$ denote such an automorphism. Let $M$ denote a planar map corresponding to $G$. $\sigma$ extends to a combinatorial automorphism of $M$ that is induced by some homeomorphism of the sphere.
	
	If the homeomorphism is orientation-preserving, then $\sigma$ maps the face $f$ to the left of $e$ to itself. Since $\sigma$ fixes the endpoints of $e$, this means any vertex on the boundary of $f$ is also a fixed point of $\sigma$. Likewise, any face $f'$ that is adjacent to $f$  gets fixed by $\sigma$, and all the vertices on the boundary of $f'$ are fixed points too. Consequently, $\sigma=\id$.
	
	Suppose that the homeomorphism is orientation-reversing.  For any automorphism $\nu$ of $G$ that preserves the oriented root edge and whose action on $M$ is induced by an orientation-reversing homeomorphism of the sphere, it follows that the action of $\sigma \nu$ on $M$ is induced by an orientation-preserving homeomorphism. As $\sigma \nu$ fixes the oriented root edge, it follows by the argument in the preceding paragraph that $\sigma \nu = \id$. In particular, $\sigma^2 = \id$ and $\nu= \sigma^{-1} = \sigma$. This completes the proof.
\end{proof}

\begin{proposition}
	\label{pro:aut2}
	Let $G$ denote 	 a $3$-connected  planar graph  with an oriented root edge $e$. Then $G$ (without marking the root edge) has at most two automorphisms that swap the ends of $e$. Any such automorphism is an involution.
\end{proposition}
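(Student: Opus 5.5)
The plan is to reduce everything to Proposition~\ref{pro:aut1} by a coset argument inside $\Aut(G)$. Let $H$ be the subgroup of automorphisms of $G$ that fix both ends of $e$. By Proposition~\ref{pro:aut1}, $H$ has at most two elements. Moreover, any non-identity element of $H$ is orientation-reversing, i.e.\ induced by an orientation-reversing homeomorphism of the sphere.

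\emph{At most two.} Suppose $\tau$ and $\tau'$ both swap the ends of $e$. Then $\tau^{-1}\tau'$ fixes each end of $e$, so it lies in $H$. Hence $\tau' \in \tau H$, and the set of end-swapping automorphisms is either empty or a single left coset $\tau H$. Either way it has at most $|H| \le 2$ elements.

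\emph{Involution.} Let $\tau$ swap the ends of $e$. Then $\tau^2$ fixes both ends, so $\tau^2 \in H$. To conclude $\tau^2 = \id$, I would use orientation, as in the proof of Proposition~\ref{pro:aut1}.
\begin{enumerate}
\item Fix a planar map $M$ of $G$. By Section~\ref{sec:whitney}, $\tau$ extends uniquely to a combinatorial automorphism of $M$, and this is induced by a homeomorphism $h$ of the sphere.
\item By uniqueness of the extension of graph automorphisms to combinatorial automorphisms, the extension of $\tau^2$ is the one induced by $h \circ h$.
\item Whether $h$ preserves or reverses orientation, $h\circ h$ preserves orientation, since orientation character is multiplicative.
\item So $\tau^2$ is an automorphism fixing both ends of $e$ whose action on $M$ is orientation-preserving. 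The first paragraph of the proof of Proposition~\ref{pro:aut1} then forces $\tau^2 = \id$.
\end{enumerate}
This argument is used as given: fixing the left face $f$ of $e$ and its boundary vertices, then propagating through adjacent faces. Alternatively, one can just note that a non-trivial element of $H$ must be orientation-reversing.

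The only step needing care is the claim that the orientation type is well defined and multiplicative on automorphisms. The homeomorphism inducing a given combinatorial automorphism is not unique, but its orientation type is determined by whether it preserves or reverses the cyclic order of neighbours at any vertex. That criterion is clearly multiplicative under composition, so the map to $\{\pm 1\}$ is a homomorphism. Once this is in place, the statement follows directly from Proposition~\ref{pro:aut1}.
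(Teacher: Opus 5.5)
Your proposal is correct and is essentially the paper's argument: the end-swapping automorphisms form a single coset of the group of pole-fixing automorphisms, which has at most two elements by Proposition~\ref{pro:aut1}, and $\tau^2$ is a pole-fixing automorphism induced by an orientation-preserving homeomorphism, hence the identity. Your added check that the orientation type is well defined and multiplicative (via the rotation at a vertex of degree at least $3$) makes explicit a point the paper uses without comment.
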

\begin{proof}
	Suppose that there exists such an automorphism $\sigma$. We fix a planar map  corresponding to $G$.  Let $\nu$ denote an arbitrary automorphism of $G$ that also swaps the ends of $e$. Then $\sigma \nu$ preserves $e$ and its orientation. 
	
	By Proposition~\ref{pro:aut1} there are at most two automorphisms that preserve $e$ and its orientation. Let us denote them by $\id$ and $\tau$ (with possibly $\tau=\id$). Consequently, $\nu \in \{\sigma^{-1}, \sigma^{-1} \tau\}$. That is, there are at most two  possible choices for $\nu$. 
	
	Furthermore, $\nu^2$ is induced by an orientation-preserving automorphism of $G$ (regardless of whether $\nu$ is), so by Proposition~\ref{pro:aut1} it follows that $\nu^2 = \id$. That is, $\nu$ is an involution.
\end{proof}

The bounds in Propositions~\ref{pro:aut1} and~\ref{pro:aut2} are sharp. The complete graph on four vertices with a distinguished oriented root edge $e$ admits two automorphisms that preserve $e$ and its orientation, and two automorphisms that swap the endpoints of~$e$.

Any unlabelled $3$-connected planar graph $U$ with $n$ vertices that is equipped with an oriented root edge corresponds to precisely $n!$ pairs $(G, \sigma)$ of a $3$-connected planar graph $G$ (isomorphic to $U$) with vertex set $[n]$ that has a specified oriented root edge, and an automorphism $\sigma$ of $G$ that fixes the root edge and its orientation (i.e. both ends of the root edge are fixed points). This is due to the symmetric group operating via vertex relabelling on the collection of all such rooted graphs $G$.

\begin{proposition}
	\label{pro:wlabelled}
	An unlabelled $3$-connected planar graph with $n$ vertices and a marked oriented root edge corresponds to either one or two rooted planar maps.
	\begin{enumerate}
		\item If it is rooted achiral, then it corresponds to $n!/2$ labelled graphs with an oriented root edge and admits exactly one non-trivial automorphism that fixes the oriented root edge and its orientation.
		\item If it is rooted chiral, then it corresponds to $n!$ labelled graphs with an oriented root edge and the only automorphism that fixes the oriented root edge and its orientation is the identity.
	\end{enumerate}
\end{proposition}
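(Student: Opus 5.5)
The plan is to combine Whitney's theorem with Proposition~\ref{pro:aut1} and the orbit-counting remark made just before the statement. Fix an unlabelled $3$-connected planar graph $U$ with $n$ vertices and an oriented root edge, and fix a labelled representative $G$ on $[n]$ together with a planar map $M$ embedding it. By Whitney's theorem, every embedding of $G$ arises from $M$ by a homeomorphism of the sphere. Up to orientation-preserving homeomorphism there are therefore at most two rooted maps with underlying rooted graph $U$: the map $M$ itself and its mirror image $\overline{M}$, each rooted at $e$ with its orientation. This gives the first claim, that $U$ corresponds to one or two rooted maps.

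Next I distinguish the two cases according to whether $M$ and $\overline{M}$ coincide as rooted maps. They coincide exactly when there is an orientation-reversing homeomorphism of the sphere mapping $M$ to itself and fixing the oriented root edge. Its restriction to the vertices is a graph automorphism $\sigma$ of $G$ fixing both ends of $e$, and $\sigma$ is non-trivial: by the argument in Proposition~\ref{pro:aut1}, the identity vertex map extends uniquely to the identity combinatorial automorphism, which is induced only by orientation-preserving maps. Conversely, a non-trivial root-fixing automorphism is orientation-reversing by Proposition~\ref{pro:aut1}, so it identifies $M$ with $\overline{M}$ as rooted maps. Hence \emph{rooted achiral} (one rooted map) is equivalent to the existence of a non-trivial automorphism fixing the oriented root edge. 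Proposition~\ref{pro:aut1} also says there are at most two such automorphisms, so in this case there is exactly one non-trivial one and the stabiliser has order $2$. In the rooted chiral case the stabiliser is trivial.

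Finally I count labelled versions using the remark preceding the statement. The symmetric group $S_n$ acts by relabelling on rooted labelled graphs isomorphic to $U$, and this action is transitive. By orbit–stabiliser, the number of labelled rooted graphs is $n!/|\mathrm{Stab}|$, where the stabiliser is the group of automorphisms of $G$ fixing the oriented root edge. This gives $n!/2$ in the achiral case and $n!$ in the chiral case.

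The only delicate step is the equivalence in the second paragraph. The point to be careful about is that a combinatorial automorphism of a $3$-connected map is determined by its vertex permutation, because faces are determined by their vertex sets. Consequently the trivial vertex permutation cannot be realised by an orientation-reversing homeomorphism.
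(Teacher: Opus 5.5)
Your argument is correct, but it runs in the opposite direction from the paper's.

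The paper does not use Proposition~\ref{pro:aut1} here at all. It double counts vertex-labelled rooted maps:
\begin{itemize}
\item each of the one or two unlabelled rooted maps of $U$ has exactly $n!$ labellings;
\item each labelled rooted $3$-connected graph on $[n]$ has exactly two labelled embeddings, a mirror pair.
\end{itemize}
Hence $U$ has $n!$ or $n!/2$ labelled versions according to whether it corresponds to two rooted maps or one. Orbit--stabiliser then gives a stabiliser of order $1$ or $2$, so the automorphism claims come out as a corollary of the count.

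You go the other way. You first determine the stabiliser: via Proposition~\ref{pro:aut1} you show that rooted achirality is equivalent to the existence of a non-trivial automorphism fixing the oriented root edge, and that there is then exactly one. You then deduce the count from orbit--stabiliser.

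The trade-off is as follows. The paper's route is shorter and independent of Proposition~\ref{pro:aut1}, but it leans on two rigidity facts it states without elaboration: rooted maps have no non-trivial automorphisms, so the $n!$ labellings are distinct, and the two labelled mirror embeddings are distinct. Your route additionally identifies the non-trivial automorphism as the one induced by an orientation-reversing self-homeomorphism of $M$. That is exactly the fact exploited later in Proposition~\ref{pro:unl3con}.

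One step needs a line more than you give it. The ``consequently'' in your last paragraph does not follow from uniqueness of the extension alone. You need to add that an orientation-reversing homeomorphism fixing both ends of $e$ swaps the two faces incident to $e$. In a $3$-connected map these faces are distinct, so such a homeomorphism cannot induce the identity combinatorial automorphism. Hence the automorphism it induces is non-trivial.
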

\begin{proof}
	Let $U$ denote an $n$-vertex unlabelled $3$-connected planar graph with an oriented root edge.
	
	It follows from Whitney's theorem that $U$ corresponds  to either one or two (unlabelled) rooted planar maps that are embeddings of $U$ on the sphere. 
	
	Each of these one or two maps admits $n!$ vertex labellings with numbers from $1$ to $n$.
	
	At the same time, any $3$-connected rooted planar graph with vertex set $[n]$ corresponds by Whitney's theorem to precisely two rooted $3$-connected vertex-labelled planar maps.
	
	Therefore, if $U$ corresponds to two (unlabelled) rooted planar maps, then it corresponds to precisely $n!$ labelled versions with vertex set $[n]$. If $U$ corresponds to only one unlabelled rooted planar map, then it corresponds to precisely $n!/2$ labelled versions with vertex set $[n]$.
	
	The symmetric group operates via vertex relabelling on the collection of all labelled versions $G$ of $U$. Accordingly, depending on whether $U$ corresponds to $n!$ or $n!/2$ labelled versions, its orbit has $n!$ or $n!/2$ elements and hence its stabiliser has $1$ or $2$ elements. This completes the proof.	
\end{proof}


\subsection{Automorphisms of 3-connected cubic planar maps and graphs}

The  number $q_n$ of rooted simple triangulations with $n + 2$ vertices (and hence $2n$ faces and $3n$ edges) was determined by Tutte~\cite{TUTTE1973437, zbMATH03169204}. Asymptotically as $n \to \infty$, 
\begin{align}
	\label{eq:enumsimple}
	q_n = \frac{\sqrt{6}}{32 \sqrt{\pi}} n^{-5/2} \left( \frac{27}{256} \right)^{-n} \left(1 + O\left(\frac{1}{n}\right) \right). 
\end{align}
Moreover, $\cQ(z) = \sum_{n \ge 1} q_n z^n$ satisfies
\begin{align}
	\label{eq:qeval}
	\cQ(27/256) = 1/8 \qquad \text{and} \qquad \cQ'(27/256) = 16/9.
\end{align}

The dual map of a $3$-connected map is $3$-connected.  Hence the dual map construction yields a bijection between rooted $3$-connected triangulations and rooted $3$-connected cubic planar maps. Care has to be taken with the orientation of the root edge in the dual. Taking the dual map twice flips the orientation of the root edge, hence the inverse of the dual map construction is taking the dual-map and reversing the orientation of the root edge.

The number of  rooted cubic $3$-connected maps with $2n$ vertices and $3n$ edges is hence also equal to $q_n$ for $n \ge 2$.

It was shown by \cite[Cor. 1]{zbMATH00767676} and~\cite{zbMATH03553320}  that only an exponentially small proportion of simple unrooted triangulations with a given size admit a non-trivial combinatorial automorphism.  See also related results in~\cite{zbMATH03635119,zbMATH03732086,zbMATH03821780, zbMATH05116340,zbMATH03781197,zbMATH03897726,zbMATH04178018,zbMATH03908476,zbMATH04083684,zbMATH03979089,zbMATH04073006, zbMATH00179625} that approach this problem for various types of classes using varying techniques.

\begin{proposition}
	\label{pro:expmap}
	\begin{enumerate}
		\item  Only an exponentially small proportion  of the unrooted $3$-connected cubic planar maps with $2n$ vertices admits a non-trivial combinatorial automorphism.
		\item Only an exponentially small proportion of the $q_n$  rooted $3$-connected cubic planar maps with $2n$ vertices admits a non-trivial combinatorial automorphism of the underlying unrooted planar map.
	\end{enumerate}
\end{proposition}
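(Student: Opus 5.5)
The plan is to transfer the cited asymmetry result for simple triangulations (\cite[Cor. 1]{zbMATH00767676}, \cite{zbMATH03553320}) to cubic maps via duality, and then pass from unrooted to rooted counts by an orbit-counting argument. For a $3$-connected map, combinatorial automorphisms correspond bijectively to combinatorial automorphisms of its dual, and the dual of a $3$-connected map is again $3$-connected. The dual construction is compatible with orientation-preserving homeomorphisms of the sphere. It therefore gives a size-preserving bijection between unrooted $3$-connected triangulations with $n+2$ vertices and unrooted $3$-connected cubic maps with $2n$ vertices, and this bijection preserves the property of having a non-trivial combinatorial automorphism. Part~(1) is then exactly the cited statement about simple triangulations, so all the work goes into part~(2).

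For part~(2) I would rely on the standard relation between rooted and unrooted maps. An unrooted $3$-connected cubic map $M$ with $3n$ edges has $6n$ possible oriented root edges. The orientation-preserving automorphism group $\mathrm{Aut}^+(M)$ acts freely on these oriented edges, since an orientation-preserving automorphism fixing an oriented edge is the identity; this is the first paragraph of the proof of Proposition~\ref{pro:aut1}. Hence $M$ yields exactly $6n/|\mathrm{Aut}^+(M)|$ distinct rooted maps. Let $u_n$ be the number of unrooted maps and $s_n$ the number of those with a non-trivial combinatorial automorphism, where non-trivial means either orientation-preserving or orientation-reversing. We have
\[
	q_n = \sum_M \frac{6n}{|\mathrm{Aut}^+(M)|} \ge 6n\,(u_n - s_n),
\]
and the number of rooted maps whose underlying map has a non-trivial automorphism is at most $6n\, s_n$. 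Part~(1) gives $s_n \le c^n u_n$ for some $c<1$, and also $q_n \ge 6n(1-c^n)u_n$. Combining these, the proportion is at most $s_n/(u_n - s_n) = O(c^n)$, which proves~(2).

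The main obstacle is making sure the cited corollary really covers our setting. The triangulations in \cite{zbMATH00767676} might be required to have no separating triangles, and their notion of automorphism might be restricted to orientation-preserving maps; both differ from our conventions. Part~(1) must count orientation-reversing automorphisms as well, because it asks for any non-trivial combinatorial automorphism. If the reference handles only orientation-preserving symmetries, I would apply the Richmond--Wormald criterion directly. The idea is to exhibit a small asymmetric, non-reflexible $3$-connected cubic submap that can be inserted as a ``pendant'' pattern, and then use a concentration argument to show that all but an exponentially small fraction of rooted maps contain linearly many copies of it. Any automorphism would have to permute these copies while respecting their chirality, and this forces it to be trivial. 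The estimate \eqref{eq:enumsimple} supplies the growth rate that these comparisons need.
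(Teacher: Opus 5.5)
Your proposal is correct and follows the paper's proof: part (1) passes to dual triangulations and invokes the cited Richmond--Wormald asymmetry result, and part (2) compares rooted with unrooted counts, where your exact orbit count $q_n=\sum_M 6n/|\mathrm{Aut}^+(M)|$ is a slightly sharper form of the paper's sandwich $\tilde q_n \le q_n \le 6n\,\tilde q_n$ (the polynomial factor is harmless either way). The paper simply cites the asymmetry result without discussing the conventions you worry about (separating triangles, orientation-reversing automorphisms), so your fallback via the Richmond--Wormald criterion goes beyond what its argument uses.
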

\begin{proof}
	If a $3$-connected unrooted cubic planar map admits a non-trivial combinatorial automorphism, then so does its dual. Consequently, by  \cite[Cor. 1]{zbMATH00767676},  only an exponentially small proportion of unlabelled unrooted $3$-connected cubic planar maps with $2n$ vertices admit a non-trivial combinatorial automorphism. 
	
	Any unrooted cubic planar map with $2n$ vertices has $3n$ edges and hence corresponds to at most $6n$ rooted cubic planar maps. So the number $\tilde{q}_n$ of $2n$-vertex unrooted cubic planar maps satisfies
	\[
		\tilde{q}_n \le q_n \le 6n \tilde{q}_n.
	\]
	Due to this polynomial bound, it follows that only an exponentially small fraction of the $q_n$ rooted $2n$-vertex $3$-connected cubic planar maps admits a combinatorial automorphism of the underlying unrooted planar map.
\end{proof}

\begin{proposition}
	\label{pro:unl3con}
	\begin{enumerate}
		\item The number $u_n$ of $2n$-vertex unlabelled $3$-connected cubic planar graphs with an oriented root edge  satisfies
		\[
		u_n = \frac{q_n}{2}(1 + O(c^n))
		\]
		for some constant $0<c<1$.
		\item Only an exponentially small fraction of these graphs admit a non-trivial automorphism of the underlying unrooted graph.
	\end{enumerate}
\end{proposition}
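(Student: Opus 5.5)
The plan is to compare unlabelled rooted graphs with rooted maps through Whitney's theorem and Proposition~\ref{pro:wlabelled}, and to treat all symmetric objects as an exponentially small error using Proposition~\ref{pro:expmap}.

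First I set up the correspondence. For $n \ge 2$, the $q_n$ rooted $3$-connected cubic maps with $2n$ vertices are the same as rooted $3$-connected cubic maps with $2n$ vertices. By Whitney's theorem, each unlabelled $3$-connected cubic planar graph $U$ with an oriented root edge corresponds to one or two of these rooted maps. It corresponds to two exactly when $U$ is rooted chiral. The corresponding rooted maps of different $U$ are distinct, since isomorphic rooted maps have isomorphic rooted underlying graphs. Forgetting the embedding is therefore a surjection from rooted maps onto rooted unlabelled graphs. Writing $a_n$ for the number of rooted achiral $U$ and $u_n - a_n$ for the number of rooted chiral ones, this gives
\[
q_n = 2(u_n - a_n) + a_n = 2u_n - a_n ,\qquad\text{so}\qquad u_n = \frac{q_n + a_n}{2}.
\]
It therefore suffices to show $a_n = O(c^n q_n)$.

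Next I bound $a_n$. If $U$ is rooted achiral, Proposition~\ref{pro:wlabelled} gives a non-trivial automorphism $\sigma$ fixing the oriented root edge. By Proposition~\ref{pro:aut1}, $\sigma$ is orientation-reversing. Two consequences follow. First, the corresponding rooted map $M$ coincides with its mirror image. Second, $\sigma$ is a non-trivial graph automorphism of the underlying unrooted graph, and it extends uniquely to a non-trivial combinatorial automorphism of $M$ (Section~\ref{sec:whitney}). Hence the map $M$ associated with $U$ lies in the set of rooted maps whose underlying unrooted map admits a non-trivial combinatorial automorphism. Since distinct $U$ give distinct $M$, $a_n$ is at most the size of this set. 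By Proposition~\ref{pro:expmap}(2), that size is $O(c^n q_n)$. This proves part (1).

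For part (2), let $U$ be a rooted unlabelled graph whose underlying unrooted graph has a non-trivial automorphism. By Whitney's theorem this automorphism extends to a non-trivial combinatorial automorphism of either embedding. Every rooted map corresponding to $U$ therefore lies in the exceptional set of Proposition~\ref{pro:expmap}(2). The number of such $U$ is at most the number of those maps, which is $O(c^n q_n) = O(c^n u_n)$ by part (1).

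The main point needing care is the remark in Proposition~\ref{pro:expmap} that \emph{combinatorial} automorphisms include orientation-reversing ones. If the cited result of~\cite{zbMATH00767676} only covers orientation-preserving automorphisms of triangulations, I would pass to the dual triangulation and use that paper's stated treatment of reflections. Failing that, I would bound the number of reflection-symmetric rooted maps directly: a map equal to its mirror image is determined by its quotient by the reflection, which is roughly half-sized, so there are at most about $(256/27)^{n/2}\mathrm{poly}(n)$ of them.
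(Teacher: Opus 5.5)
Your proof is correct and follows essentially the paper's route: you use the identity $u_n = (q_n + a_n)/2$ (the paper writes $u_n = q_n' + (q_n - q_n')/2$), and you bound the rooted-achiral count via Propositions~\ref{pro:wlabelled} and~\ref{pro:aut1} together with Proposition~\ref{pro:expmap}. For part (2) you invoke Proposition~\ref{pro:expmap}(2) directly instead of the paper's bound of at most $6n$ rootings per unrooted graph; this is equivalent, since that polynomial factor is already absorbed in Proposition~\ref{pro:expmap}(2). Your concern about orientation-reversing automorphisms does not arise within the paper, because its combinatorial automorphisms include them by definition.
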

\begin{proof}
Any $3$-connected $2n$-vertex unlabelled cubic planar graph $U$ with an oriented root edge corresponds by Whitney's theorem to one or two unlabelled rooted cubic planar maps. 

By Proposition~\ref{pro:wlabelled}, if $U$ only corresponds to one such map $M$ then it admits a non-trivial graph automorphism that fixes the root edge and its orientation.  This automorphism extends to a combinatorial isomorphism between rooted planar maps corresponding to $U$. Since there is only one, it hence extends to a combinatorial automorphism of $M$.  It follows by Proposition~\ref{pro:expmap} that the number $q_n'$ of choices for $U$ is hence bounded by \[
q_n' = O(c^n)q_n
\] for a fixed constant $0<c<1$, and $q_n'$ is equal to the number of rooted achiral $3$-connected cubic planar maps with $2n$ vertices.

In the other case, $U$ corresponds to precisely two rooted planar maps. Hence there are  precisely $(q_n - q_n')/2$ choices for $U$. Thus
\begin{align}
	\label{eq:uq}
	u_n = q_n' + (q_n - q_n')/2 = q_n/2(1+O(c^n)).
\end{align}

We know by Proposition~\ref{pro:wlabelled} that if $U$ corresponds to two rooted planar maps then the only automorphism of $U$ that fixes the root edge and its orientation is the identity. But there could exist non-trivial  automorphisms of the underlying unrooted graph.

If $U$ admits a non-trivial automorphism of the underlying unrooted unlabelled graph $U'$, then each of the one or two unrooted planar maps corresponding to $U'$ admits a non-trivial combinatorial automorphism. $U$ has $3n$ edges, so there are at most $6n$ unlabelled graphs with an oriented root edge that have $U'$ as underlying unrooted  graph.

By Proposition~\ref{pro:expmap} it follows that in this case the number of possible choices for $U$ and the unrooted maps corresponding to it is bounded by $O(c^n) q_n$.
\end{proof}

\section{Network decomposition and Walsh index series}

\label{sec:nedewa}

\subsection{Walsh index series of graphs and networks}
\label{sec:wiso}

Let $G$ denote a simple graph. We let $\Aut(G)$ denote the collection of automorphisms of $G$. For any $\sigma \in \Aut(G)$  and $i \ge 1$ we let $\sigma_i$ denote the number of cycles of $\sigma$ of length $i$. The automorphism $\sigma$ induces a permutation $\sigma^{(2)}$ of the set of edges of $G$. There are two types of cycles of $\sigma^{(2)}$. A \emph{cylindrical} cycle $c$ of length $i$ has the property that for each edge $e=ab$ of $c$ we have $\sigma^i(a) = a$ and $\sigma^i(b) = b$. A \emph{m\"obius} cycle $c$ of length $i$ is characterised by $\sigma^i(a) = b$ and $\sigma^i(b) = a$ for each edge $e=ab$ of $c$. We let $\cyl_i(\sigma)$ and $\mob_i(\sigma)$ denote the numbers of cylindrical and m\"obius cycles of $\sigma^{(2)}$. The \emph{weight monomial} of $(G, \sigma)$ is defined by
\[
w(G, \sigma) = s_1^{\sigma_1} s_2^{\sigma_2} \cdots a_1^{\cyl_1(\sigma)} a_2^{\cyl_2(\sigma)} \cdots b_1^{\mob_1(\sigma)} b_2^{\mob_2(\sigma)} \cdots.
\]
A \emph{species} of graphs $\cG$ may be defined as a class of graphs that is closed under isomorphism. This is essentially a special case of the elegant and more general notion of combinatorial species that is formulated using category theoretic concepts~\cite{MR633783,MR1629341}. The \emph{Walsh index series} of $\cG$ introduced in~\cite{zbMATH03702699} is defined by the formal power series
\[
W_{\cG}(\bm{s}, \bm{a}, \bm{b}) = \sum_{G}  \frac{1}{|\Aut(G)|} \sum_{\sigma \in \Aut(G)} w(G, \sigma)
\]
with the sum index $G$ assuming one representative of each isomorphism class of multigraphs in $\cG$. This way, the ordinary generating series $\cG(x,y)$ with $[x^ny^m]\cG(x,y)$ the number of unlabelled graphs in $\cG$ with $n$ vertices and $m$ edges is given by
\[
\cG(x,y) = W_{\cG}(x, x^2, \ldots; y, y^2, \ldots; y, y^2, \ldots).
\]
Often, the notation $\tilde{\cG}(x,y)$ is used instead in order to distinguish it from the exponential generating function. We drop the tilde since there won't be any risk of confusion. We also use the short notation $\cG(x) = \cG(x,1)$.

For any $n \ge 1$ we let
$
	\Sym_{\cG}[n]
$
denote the finite set of pairs $(G, \sigma)$ with $G$ a graph from $\cG$ with vertex set $[n]$ and $\sigma$ a graph automorphism of $G$. Such a pair is called a \emph{symmetry}. It follows from basic properties of group operations that any unlabelled graph $G$ in $\cG$ with $n$ vertices corresponds to precisely $n!$ symmetries. The underlying graphs of these symmetries are labelled versions of $G$.

We define a \emph{network} as a connected cubic planar multigraph $N$ with an oriented root edge $e$ such that the graph $\hat{N}$ obtained by removing the root edge (but not its endpoints) is simple. The endpoints of the root edge are called the poles of the network. Networks are fundamental in decompositions of planar graphs~\cite{MR2524178,zbMATH05564308,zbMATH05216539,zbMATH05192214,zbMATH05158242, zbMATH05138815}.

 We let $\Autp(N)$ denote the set of graph automorphisms of $\hat{N}$ that fix each pole vertex. Likewise,  $\Autm(N)$ denotes the set of graph automorphisms of $\hat{N}$ that swap the pole vertices. If the root edge is a loop, then $\Autp(N) = \Autm(N)$.

An automorphism of $N$ is a pole-preserving automorphism, so we set $\Aut(N) = \Autp(N)$. Clearly $\Autp(N)$ always contains the identity, but it's possible that $\Autm(N) = \emptyset$. If $\Autm(N) \ne \emptyset$, then we can select any $\tau \in \Autm(N)$ and obtain a bijection
\[
\Autp(N) \to \Autm(N), \qquad \sigma \mapsto \tau \sigma.
\]
The inverse of that bijection maps $\nu \in \Autm(N)$ to $\tau^{-1}\nu \in \Autp(N)$.

For a species $\cN$ of networks, we define 
\[
	W_{\cN}^+(\bm{s}, \bm{a}, \bm{b}) = \sum_{n \ge 1} \frac{1}{n!} \sum_{(N, \sigma)}  w(\hat{N}, \sigma),
\]
with the sum index  $(N, \sigma)$ ranging over all pairs of a network $N$ in the species $\cN$ with vertex set $[n]$ and $\sigma \in \Autp(N)$. Likewise,
\[
	W_{\cN}^-(\bm{s}, \bm{a}, \bm{b}) = \sum_{n \ge 1} \frac{1}{n!} \sum_{(N, \sigma)}   w(\hat{N}, \sigma),
\]
with the sum index $(N, \sigma)$ ranging over networks $N$ of $\cN$ with vertex set $[n]$ and $\sigma \in \Autm(N)$.

\begin{remark}
	\label{re:diff}
	The notion of networks with which we work here differs slightly from the literature. We allow the two pole vertices to be identical, and elements of $\Autp(N)$ and $\Autm(N)$ are permutations of the entire vertex set of the network, including the poles. The edge index series $W^+$ and $W^-$ also differ slightly from the usual Walsh index series: We count the poles of the network, but not the marked oriented root edge. This notion will be more convenient for the network decomposition of cubic planar graphs.
\end{remark}

For each $i \ge 1$, define
\[
	W_{\cN, i}^{\pm} (\bm{s}, \bm{a}, \bm{b})  = W_{\cN}^{\pm}(s_i, s_{2i}, s_{3i}, \ldots; a_i, a_{2i}, a_{3i}, \ldots; b_i, b_{2i}, b_{3i}, \ldots),
\]
with $W_{\cN, i}^{\pm}$ referring to $W_{\cN, i}^{+}$ or $W_{\cN, i}^{-}$.
We let
\[
\cN(x,y) = y W^+_{\cN}(x, x^2, \ldots; y, y^2, \ldots; y, y^2, \ldots)\]
denote the ordinary generating series of networks (with $x$ counting  vertices and $y$ counting  edges). Furthermore,
\[
	\cN^{-}(x,y) = y W^-_{\cN}(x, x^2, \ldots; y, y^2, \ldots; y, y^2, \ldots)
\]
is the ordinary generating series of pole-symmetric networks in $\cN$.

\subsection{Decomposition}
\label{sec:dec}

We let $\cN$ denote the species of all networks. This species may be decomposed into five subspecies. We recall this decomposition and refer the reader to~\cite{zbMATH05122852,  zbMATH05255661,zbMATH07213288} for details.

A network $N$ with oriented root edge $e$ belongs to some of these subspecies according to the following criteria.
\begin{enumerate}
	\item  $\cL$ (Loop networks). The root edge  is a loop.
	\item $\cI$ (Isthmus networks). The root edge is an isthmus, meaning the graph $N - e$ obtained by deleting $e$ is disconnected.
	\item $\cS$ (Series networks). $N - e$ is connected, but contains a bridge that separates the endpoints of $e$.
	\item $\cP$ (Parallel networks). $N - e$ is connected, contains no bridge that would separate the endpoints of $e$, and either $e$ is part of a double edge in $N$ or deleting the endpoints of $e$ disconnects $N$.
	\item $\cH$ (Polyhedral networks). $N$ is obtained from a $3$-connected network (that is, a network with no multi-edge whose underlying graph is $3$-connected) by possibly replacing each non-root edge with a non-isthmus network.
\end{enumerate}

We use sum and minus operations on these species, such that, for example, 
\begin{align}
	\label{eq:D}
	\cD := \cL + \cS + \cP+ \cH = \cN - \cI
\end{align}
is the species of non-isthmus networks. We will also drop the arguments of the associated generating series when there is no risk of confusion, writing for example $W_\cN^+$ instead of $W_\cN^+(\bm{s}, \bm{a}, \bm{b})$. Recall from Remark~\ref{re:diff} that we work with slightly adjusted definitions of networks and edge index series.

\subsubsection{Loop networks}

A  network $N$ belongs to the species $\cL$ of loop-networks if its root edge  is a loop. As illustrated in Figure~\ref{fi:loop}, the vertex $s$ of the loop is adjacent to a single vertex $s'$, which is adjacent to two distinct vertices $u \ne v$ that form the poles of a non-loop network. There are two ways to orient this associated network.  Any automorphism of $N$ fixes $s$ and $s'$, and maps $\{u,v\}$ to $\{u,v\}$. Hence
\begin{align}
	\label{eq:ldec}
	W_{\cL}^+ &= \frac{s_1^2 }{2}(a_1^3 W_{\cN - \cL}^+ + a_1 a_2 W_{\cN - \cL}^-), \\
	W_{\cL}^- &= W_{\cL}^+.
\end{align}

\begin{figure}[H]
	\centering
	\begin{minipage}{\textwidth}
		\centering
		\includegraphics[scale=0.7]{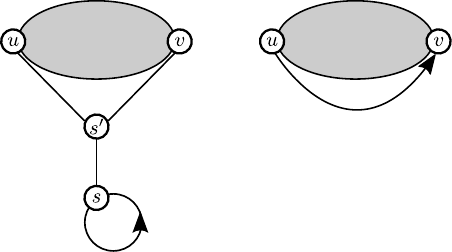}
		\caption{Decomposition of loop networks.}
		\label{fi:loop}
	\end{minipage}
\end{figure}

\subsubsection{Isthmus networks}

An isthmus network $N$ corresponds to an ordered pair of loop networks, each having an additional vertex. See Figure~\ref{fi:isthmus}. An automorphism of $\hat{N}$ that fixes the pole vertices is composed of automorphisms of the two loop network components. $N$ together with an automorphism of $\hat{N}$ that reverses the poles is composed from a single loop with an automorphism via the usual cycle composition. Hence
\begin{align}
	\label{eq:argc}
	W_\cI^+ = \frac{1}{s_1^2a_1^2} (W_\cL^+)^2, \\
	W_\cI^- = \frac{1}{s_2a_2} W_{\cL,2}^+.
\end{align}

\begin{figure}[H]
	\centering
	\begin{minipage}{\textwidth}
		\centering
		\includegraphics[scale=0.7]{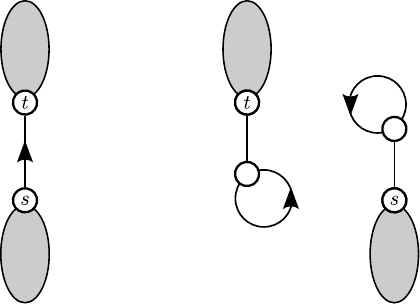}
		\caption{Decomposition of isthmus networks.}
		\label{fi:isthmus}
	\end{minipage}
\end{figure}

\subsubsection{Series networks}

Let $N$ denote a series network with poles $s$ and $t$. Then $N - e$ contains one or more bridges that separate the poles. Let $uv$ denote the bridge that is closest to $s$, directed from $u$ to $v$ such that $u$ is closer to $s$ than $v$. Then, as illustrated in Figure~\ref{fi:series}, $N$ corresponds to two networks $N_1$ and $N_2$, with root edges $su$ and $vt$. If $s=u$, then $N_1$ is a loop network, and likewise if $v=t$ then $N_2$ is a loop network. Both networks $N_1$ and $N_2$ cannot be isthmus networks, since there are multiple paths between their poles. Since we selected the bridge $uv$ which is closest to $s$, the network $N_1$ additionally cannot be a series network.  An automorphism of $\hat{N}$ that fixes the pole vertices must also fix the pole vertices of $N_1$ and $N_2$. An automorphism of $\hat{N}$ that maps $s$ to $t$ maps $N_1$ to an isomorphic copy $N_1'$ of itself which is attached to $t$. The vertex $u$ is mapped to the other pole  of $N_1'$, denoted by $u'$. Between $u$ and $u'$ there is either a single edge, or $u$ and $u'$ each are sources of a bridge and in the middle there is a non-isthmus network together with an automorphism that flips its pole vertices. Hence
\begin{align}
	\label{eq:S}
	W_\cS^+ &= W^+_{\cD- \cS} a_1 W_\cD^+, \\
	W_\cS^- &=  (b_1 +   a_2 W_\cD^-)  W^+_{\cD - \cS, 2}.
\end{align}
We remark that the subspecies $\cS_{\mathrm{d}}$ of series networks whose root edge is a double edge corresponds to the series composition of two loop networks and hence 
\begin{align}
	\label{eq:sdplus}
	W_{\cS_{\mathrm{d}}}^+  &= a_1 (W^+_{\cL})^2, \\
	W_{\cS_{\mathrm{d}}}^-  &=  b_1 W^+_{\cL,2}.
\end{align}
Consequently, the species  $\cS_{\mathrm{s}}$ of series networks whose root edge is a single edge  satisfies
\begin{align}
	W_{\cS_{\mathrm{s}}}^+  &= W_\cS^+ - 	W_{\cS_{\mathrm{d}}}^+, \\
	W_{\cS_{\mathrm{s}}}^-  &=  W_\cS^- - 	W_{\cS_{\mathrm{d}}}^- .
\end{align}

\begin{figure}[H]
	\centering
	\begin{minipage}{\textwidth}
		\centering
		\includegraphics[scale=0.7]{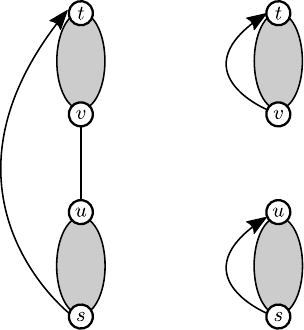}
		\caption{Decomposition of series networks.}
		\label{fi:series}
	\end{minipage}
\end{figure}

\subsubsection{Parallel networks}

We distinguish  two types of parallel networks, corresponding to species $\cP_{\mathrm{s}}$ and $\cP_{\mathrm{d}}$,  depending on whether the root edge is a simple edge or a double edge.

 If the root edge of a parallel network $N$ is a double edge, then its poles $s$ and $t$ are adjacent to (possibly identical) vertices $u$ and $v$, which form the poles of a smaller  non-isthmus network. See the first row of Figure~\ref{fi:parallel}. An automorphism of $\hat{N}$ that fixes or flips the poles of $N$ corresponds to an automorphism that fixes or flips the poles of the smaller network.
Hence
\begin{align}
	\label{eq:pdplus}
	W_{\cP_{\mathrm{d}}}^+ &=  s_1^2a_1^3 W_\cD^+, \\
	W_{\cP_{\mathrm{d}}}^- &=  s_2 a_2 b_1 W_\cD^-.
\end{align}
If the root edge is not a double edge, then as illustrated in the second row of Figure~\ref{fi:parallel} the parallel network corresponds to an unordered pair $\{N_1, N_2\}$ of two non-isthmus networks with poles $uv$ and $u'v'$. An automorphism of $\hat{N}$ that fixes the poles of $N$ may either fix the poles of $N_1$ and $N_2$ (and then map $N_i$ to $N_i$ for $i=1,2$), or it may transpose $u$ with $u'$, and $v$ with $v'$ (and then map $N_1$ to $N_2$). 
Likewise, an automorphism of $\hat{N}$ that reverses the poles of $N$ may either reverse the poles of $N_1$ and $N_2$ (and then map $N_i$ to $N_i$, $i=1,2$) or have cycles $(u v')$ and $(u'v)$ (and then map $N_1$ to $N_2$).
 Hence
\begin{align}
	\label{eq:rhopsplus}
	W_{\cP_{\mathrm{s}}}^+ &= \frac{s_1^2}{2} \left( a_1^4(W_{\cD}^+)^2 + a_2^2 W_{\cD,2}^+ \right), \\
	W_{\cP_{\mathrm{s}}}^- &= \frac{s_2}{2} \left( a_2^2(W_{\cD}^-)^2 + a_2^2W_{\cD,2}^+\right).
\end{align}

\begin{figure}[H]
	\centering
	\begin{minipage}{\textwidth}
		\centering
		\includegraphics[scale=0.7]{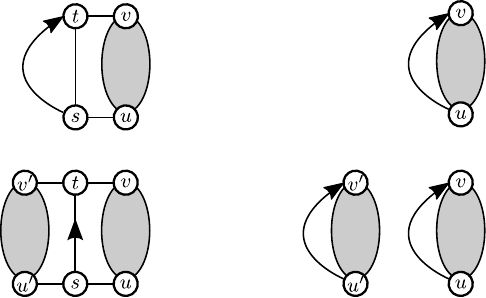}
		\caption{Decomposition of the two types of parallel networks.}
		\label{fi:parallel}
	\end{minipage}
\end{figure}

\subsubsection{Polyhedral networks}

A polyhedral network $N$ is obtained from a $3$-connected cubic planar graph $M$ with a directed root edge by inserting components $D(1), D(2), \ldots$ at its non-root edges as illustrated in Figure~\ref{fi:poly}.

Each component is either an edge or a non-isthmus network. For inserting components we fix a canonical enumeration and orientation of each non-root edge of $M$. If the component is an edge then we do not insert anything. If we insert a non-isthmus network $D$ with oriented root edge $st$ at a non-root edge $uv$ of $M$ (with its canonically chosen orientation) we delete the edge $st$ from $D$ and the edge $uv$ from $M$, and add edges $us$ and $vt$. We say $M$ is the $3$-connected core of $N$. It is uniquely determined by $N$.

\begin{figure}[H]
	\centering
	\begin{minipage}{\textwidth}
		\centering
		\includegraphics[scale=0.7]{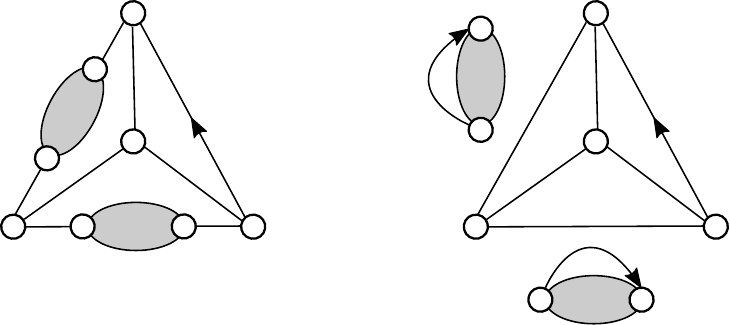}
		\caption{Decomposition of polyhedral networks.}
		\label{fi:poly}
	\end{minipage}
\end{figure}

Recall that $q_n$ denotes the number of $3$-connected rooted cubic planar maps with $2n$ vertices and hence $3n$ edges.

\begin{proposition}
	\label{pro:H}
	There exist multivariate power series $R_1(s_1, s_2,a_1,a_2, b_1)$ and $R_2(s_1, s_2,a_1,a_2,b_1)$  with nonnegative coefficients and a constant $0<c<1$ such that for all $n \ge 2$ and $i=1,2$
	\[
	[x^{2n}y^{3n-1}]R_i(x,x^2,y,y^2,y) = O(c^n)  q_n,
	\]
	with all other coefficients equal to zero, and
	\begin{align*}
		W_\cH^+ &= \sum_{n \ge 2} \frac{q_n}{2} s_1^{2n} (a_1 + a_1^2W_\cD^+)^{3n-1} + R_1(s_1,s_2, a_1 + a_1^2 W_\cD^+, a_2 + a_2^2 W_{\cD,2}^+, b_1 + a_2 W_\cD^-) \\
		W_\cH^- &= R_2(s_1,s_2, a_1 + a_1^2 W_\cD^+, a_2 + a_2^2 W_{\cD,2}^+, b_1 + a_2 W_\cD^-).
	\end{align*}
\end{proposition}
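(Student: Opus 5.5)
We would prove the statement by a composition argument in the spirit of the substitution rule for Walsh index series, isolating the contribution of cores on which the automorphism is trivial.

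First, write $\mathcal{M}$ for the species of $3$-connected cubic planar graphs with an oriented root edge, viewed as networks. Define $W_{\mathcal{M}}^\pm$ as for networks: poles are counted, the root edge is not, and $\sigma$ ranges over automorphisms fixing, respectively swapping, the poles. A polyhedral network $N$ has a uniquely determined core $M$, and every automorphism of $\hat N$ in $\Autp(N)$ or $\Autm(N)$ restricts to an automorphism of $M$ of the same type. Conversely, a symmetry $(N,\sigma)$ amounts to a symmetry $(M,\tau)$ of the core together with a $\tau$-compatible decoration of the non-root edges of $M$ by components.

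For an edge cycle of $\tau^{(2)}$ we read off the decoration choices as follows.
\begin{enumerate}
\item A cylindrical cycle of length $i$ carries an edge or a network $D$ with an automorphism fixing its poles, composed $i$ times. This substitutes $a_i \mapsto a_i + a_i^2 W^+_{\cD,i}$; the factor $a_i^2$ accounts for the two new edges per inserted network.
\item A möbius cycle of length $i$ requires the inserted network to carry a $\tau^i$-induced pole-swapping automorphism. This substitutes $b_i \mapsto b_i + a_{2i} W^-_{\cD,i}$.
\end{enumerate}
The canonical orientation of edges must be handled carefully, but the usual cycle-composition bookkeeping gives exactly this. Hence $W_\cH^\pm = W^\pm_{\mathcal{M}}$ with these substitutions in the $a$ and $b$ variables, and with $s$ unchanged.

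Next we split $W^+_{\mathcal{M}}$ by the automorphism. By Proposition~\ref{pro:aut1}, $\tau$ is either the identity or an orientation-reversing involution. Likewise, every element of $\Autm$ is an involution by Proposition~\ref{pro:aut2}. Thus only the variables $s_1,s_2,a_1,a_2,b_1$ occur in the weight monomials, which explains the five arguments of $R_i$.

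The identity terms contribute $u_n\, s_1^{2n} a_1^{3n-1}$ over all unlabelled rooted cores, since each unlabelled object corresponds to $n!$ symmetries and only the identity term is separated out here. By Proposition~\ref{pro:unl3con}, $u_n = q_n/2 + q_n' /2$ with $q_n' = O(c^n)q_n$. We put $\frac{q_n}{2}s_1^{2n}a_1^{3n-1}$ into the main term. We put into $R_1$ the leftover $\frac{q_n'}{2} s_1^{2n}a_1^{3n-1}$ together with all non-identity symmetries, each weighted by $1/|\Aut|$. The series $R_2$ is simply $W^-_{\mathcal{M}}$.

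It remains to bound coefficients. Specialising $R_i(x,x^2,y,y^2,y)$ gives $x^{2n}y^{3n-1}$ for every term coming from a core with $2n$ vertices. The coefficient is at most the number of unlabelled rooted cores with $2n$ vertices possessing a non-trivial automorphism, either rooted or underlying unrooted; this uses that each has at most two automorphisms of the relevant type. That count is $O(c^n)q_n$ by Proposition~\ref{pro:unl3con}(2).

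The coefficient of $R_1$ also contains the $q_n'/2 = O(c^n)q_n$ piece. For $R_2$, any core with a pole-swapping automorphism has a non-trivial unrooted automorphism, so the same bound applies. Nonnegativity is clear from the construction, since every piece of each $R_i$ is a nonnegative combination of weight monomials (identity terms enter only through $q_n'/2 \ge 0$).

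The main obstacle is the first step: making the substitution rule precise for our modified networks, in which poles are counted, loops and multi-edges are absent in cores, and the canonical orientations of the inserted edges interact with möbius cycles. We would verify it by directly constructing, for each core symmetry, the bijection between compatible decorations and products of network symmetries, as in the series and parallel cases above.
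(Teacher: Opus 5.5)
Your overall route is the paper's: a substitution formula for the Walsh index series of polyhedral networks (which the paper cites from the literature rather than re-deriving), then separating the identity automorphism and bounding the rest via Propositions~\ref{pro:aut1}, \ref{pro:aut2} and~\ref{pro:unl3con}. However, your identity bookkeeping is wrong, and with the $R_1$ you define the equation for $W_\cH^+$ fails.

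The identity terms do not contribute $u_n s_1^{2n}a_1^{3n-1}$. The fact that each unlabelled rooted core corresponds to $(2n)!$ symmetries concerns the sum over \emph{all} its automorphisms. The identity part alone is $\frac{1}{(2n)!}$ times the number of labelled rooted cores on $[2n]$. That number is $q_n(2n)!/2$: each unlabelled rooted map has exactly $(2n)!$ labellings, and by Whitney each labelled rooted core corresponds to exactly two labelled rooted maps. Equivalently, use the weights $1/|\Aut|$ that you yourself apply to the non-identity terms. A rooted chiral core gives its identity weight $1$ and a rooted achiral core gives it weight $1/2$, for a total of $(q_n-q_n')/2+q_n'/2=q_n/2$. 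Your $u_n$-accounting instead gives every identity weight $1$ while the non-trivial involution of an achiral core gets weight $1/2$, which is inconsistent. So the identity contributes exactly $\frac{q_n}{2}s_1^{2n}a_1^{3n-1}$, and the ``leftover'' $\frac{q_n'}{2}s_1^{2n}a_1^{3n-1}$ you place in $R_1$ is spurious. With it, your right-hand side exceeds $W_\cH^+$ by $\sum_n \frac{q_n'}{2}s_1^{2n}(a_1+a_1^2W_\cD^+)^{3n-1}$.

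The fix is to take $R_1$ to be exactly the non-identity part; its specialised coefficient is then $q_n'/2=O(c^n)q_n$. Your coefficient bounds, nonnegativity and treatment of $R_2$ then go through as in the paper. One small further omission: to exclude $b_2$ you need that every $2$-cycle of edges under an involution is cylindrical. A m\"obius one would force $\sigma^2$ to swap the ends of an edge, contradicting $\sigma^2=\id$. You only note that $\sigma$ is an involution.
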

\begin{proof}

It follows from~\cite[Thm. 15]{MR2524178} (originating from \cite{zbMATH03702699}) that
	\begin{align}
		\label{eq:WWWp}
	W_{\cH}^+ =& \frac{1}{a_1 + a_1^2W_\cD^+}\sum_{n \ge 2} \frac{1}{(2n)!} \sum_{(M, \sigma)}  s_1^{\sigma_1} s_2^{\sigma_2}  \cdots  \\
	& (a_1 + a_1^2 W_\cD^+)^{\cyl_1(\sigma)} (a_2 + a_2^2 W_{\cD,2}^+)^{\cyl_2(\sigma)}  (a_3 + a_3^2 W_{\cD,3}^+)^{\cyl_3(\sigma)}  \cdots \nonumber \\ 
	& (b_1 + a_2 W_\cD^-)^{\mob_1(\sigma)} (b_2 + a_4 W_{\cD,2}^-)^{\mob_2(\sigma)} (b_3 + a_6 W_{\cD,3}^-)^{\mob_3(\sigma)} \cdots, \nonumber 
	\end{align}
	with $(M, \sigma)$ ranging over all pairs of a $3$-connected $2n$-vertex cubic planar graph $M$ with an oriented root edge, together with an automorphism $\sigma$ of $M$ that fixes each end of the root edge.
	 Likewise,
	\begin{align}
		\label{eq:WWWm}
	W_{\cH}^- =& \frac{1}{b_1 + a_2 W_\cD^-}\sum_{n \ge 2} \frac{1}{(2n)!} \sum_{(M, \sigma)}  s_1^{\sigma_1} s_2^{\sigma_2}  \cdots  \\
& (a_1 + a_1^2 W_\cD^+)^{\cyl_1(\sigma)} (a_2 + a_2^2 W_{\cD,2}^+)^{\cyl_2(\sigma)}  (a_3 + a_3^2 W_{\cD,3}^+)^{\cyl_3(\sigma)}  \cdots \nonumber  \\ 
& (b_1 + a_2 W_\cD^-)^{\mob_1(\sigma)} (b_2 + a_4 W_{\cD,2}^-)^{\mob_2(\sigma)} (b_3 + a_6 W_{\cD,3}^-)^{\mob_3(\sigma)} \cdots, \nonumber 
	\end{align}
	with $(M, \sigma)$ ranging over all pairs of a $3$-connected $2n$-vertex cubic planar graph $M$ with an oriented root edge, together with an automorphism $\sigma$ of $M$ that swaps the ends of the root edge.

	By Whitney's theorem, any rooted cubic planar map with $2n$ vertices admits $(2n)!$ vertex labellings. Any labelled $3$-connected cubic planar graph with an oriented root edge corresponds by Whitney's theorem to two labelled rooted planar maps. Hence there are
	\[
	q_n (2n)! / 2
	\]
	labelled $3$-connected cubic planar graphs with an oriented root edge. Therefore, $\sum_{n \ge 2} \frac{q_n}{2} s_1^{2n} (a_1 + a_1^2 W_\cD^+)^{3n-1}$ is precisely the contribution in $	W_\cH^+ $ where $\sigma=\id$.
	
	By Propositions~\ref{pro:aut1} and~\ref{pro:aut2}, any non-trivial automorphism of the core that swaps or fixes the ends of the oriented root edge is an involution, and there is at most one non-trivial automorphism per core-graph that fixes each end of the root edge, and there are at most two automorphisms per core-graph that swap the ends of the root edge.	
	
	 An involution can only have fixed points and $2$-cycles on the vertex set and on the edge-set, and any $2$-cycle on the edge set needs to be cylindrical. Hence we may write
		\begin{align*}
		W_\cH^+ &= \sum_{n \ge 2} \frac{q_n}{2} s_1^{2n} (a_1 + a_1^2W_\cD^+)^{3n-1} + R_1(s_1,s_2, a_1 + a_1^2 W_\cD^+, a_2 + a_2^2 W_{\cD,2}^+, b_1 + a_2 W_\cD^-) \\
		W_\cH^- &= R_2(s_1,s_2, a_1 + a_1^2 W_\cD^+, a_2 + a_2^2 W_{\cD,2}^+, b_1 + a_2 W_\cD^-)
	\end{align*}
	for uniquely determined series $R_i(s_1, s_2, a_1, a_2, b_1)$. Their coefficients satisfy the bounds
	\begin{align}
		\label{eq:rbound1}
		[x^{2n}y^{3n-1}] R_1(x,x^2, y, y^2, y) \le \frac{q_n}{2}, \\
		\label{eq:rbound2}
		[x^{2n}y^{3n-1}] R_2(x,x^2, y, y^2, y) \le q_n
	\end{align}
	for $n \ge 2$.
	All other coefficients are equal to zero.
	
	By Proposition~\ref{pro:unl3con} the  number of summands $(M, \sigma)$ with $\sigma \ne \id$ in Equations~\eqref{eq:WWWp} and~\eqref{eq:WWWm} is bounded by $(2n)! q_n O(c^n)$ for some constant $0<c<1$ (using the fact that any unlabelled planar map with $2n$ vertices corresponds to at most $(2n)!$ labelled planar maps.) Hence
	\[
	[x^{2n}y^{3n-1}]R_i(x,x^2,y,y^2,y) = O(c^n)  q_n 
	\]
	for $i=1,2$.  This completes the proof.
\end{proof}

\subsubsection{Simple networks}
	\label{sec:simplenetworks}
	A network is simple if it is not a loop network,  not the kind of parallel network illustrated in the top half of Figure~\ref{fi:parallel}, and not a series network whose two components are loop networks. Hence the species $\cN_{\mathrm{s}}$ of simple networks satisfies
\begin{align}
	\label{eq:simpnetwork}
	W_{\cN_{\mathrm{s}}}^+	&= W_\cH^+  + W_{\cP_{\mathrm{s}}}^+  + W_\cI^+ + W_{\cS_{\mathrm{s}}}^+.
\end{align}
Simple networks are precisely connected cubic planar graphs with an oriented root edge.

\section{Unlabelled cubic planar graphs are a critical graph class}

\label{sec:ucpg}

\subsection{Generating functions are analytic and finite at their radius of convergence}

Let $\cR$ denote any of the species of networks $\cN$, $\cN_{\mathrm{s}}$, $\cD$, $\cL$, $\cP$, $\cP_{\mathrm{d}}$, $\cP_{\mathrm{s}}$, $\cI$, $\cH$, $\cS$, $\cS_{\mathrm{d}}$,$\cS_{\mathrm{s}}$. It was shown by~\cite{zbMATH05122852,zbMATH07213288} that the number $r_n^{\mathrm{lab}}$ of networks in $\cR$ with vertex set $[n]$ satisfies
\begin{align}
	\label{eq:labelled}
	r_n^{\mathrm{lab}} \sim c^\mathrm{lab}_\cR n^{-5/2} \rho_{\mathrm{lab}}^{-n} n!
\end{align}
as $n \equiv 0 \mod 2$ becomes large, for a constant $ c^\mathrm{lab}_\cR>0$ that depends on $\cR$ and a constant  $\rho_{\mathrm{lab}}>0$ satisfying
\begin{align}
	\label{eq:rholab}
\rho_{\mathrm{lab}} = 0.319224606195452700761429068280\ldots.
\end{align}
Let  $\rho_\cR$ denote the radius of convergence of the ordinary generating series
\[
	\cR(x) := \cR(x,1) = W_{\cR}^+(x, x^2, \ldots; 1, 1, \ldots; 1,1, \ldots).
\]
We also set $\cR^-(x) := \cR^-(x,1) $ and
\[
	\cR^{\mathrm{lab}}(x) = \sum_{n \ge 1} \frac{r_n^{\mathrm{lab}}}{n!} x^n.
\]
For $k \ge 2$ we also write
\[
	\cR_k(x) = \cR(x^k) \qquad \text{and} \qquad \cR_k^-(x) = \cR^-(x^k).
\]

Our first observation is a rough bound on $\rho_\cR$ that is based on a bound for the number of automorphisms in terms of the maximal degree~\cite{zbMATH01838946}.
\begin{proposition}
	\label{pro:rho}
	We have
	\[
	\rho_{\mathrm{lab}} / 2 \le \rho_\cR \le \rho_{\mathrm{lab}}.
	\]
\end{proposition}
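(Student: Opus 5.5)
The plan is to compare the number $r_n$ of unlabelled networks in $\cR$ with $n$ vertices to the number $r_n^{\mathrm{lab}}$ of labelled ones, via the orbit--stabiliser relation. The symmetric group $\mathfrak{S}_n$ acts by relabelling on the $r_n^{\mathrm{lab}}$ networks of $\cR$ with vertex set $[n]$. Here $r_n^{\mathrm{lab}}$ counts networks with vertex set $[n]$ and with an oriented root edge (whose endpoints are the poles). The orbits correspond bijectively to unlabelled networks, and the stabiliser of a labelled network $N$ is $\Aut(N)=\Autp(N)$. Hence
\[
r_n = \sum_{N} \frac{|\Aut(N)|}{n!},
\]
with the sum over labelled networks $N$ in $\cR$ on $[n]$. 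This is exactly the coefficient of $x^n$ in $W^+_\cR(x,x^2,\ldots;1,1,\ldots;1,1,\ldots)$. Since $|\Aut(N)|\ge 1$, we get $r_n \ge r_n^{\mathrm{lab}}/n!$. By \eqref{eq:labelled}, $\limsup (r_n)^{1/n} \ge \rho_{\mathrm{lab}}^{-1}$, which gives $\rho_\cR \le \rho_{\mathrm{lab}}$. This holds for every listed species, because \eqref{eq:labelled} applies to each of them along even $n$.

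For the lower bound $\rho_\cR\ge\rho_{\mathrm{lab}}/2$, we need $|\Aut(N)| \le C\, 2^{n}$, or at least $2^{n+o(n)}$. The cited bound from~\cite{zbMATH01838946} for connected graphs of maximal degree $\Delta$ should give this: an automorphism of a connected graph is determined by the image of one vertex together with the images of its neighbours, proceeding along a spanning tree. For $\hat N$ the maximal degree is $3$, and $\hat N$ is connected, or has two components if the root edge is an isthmus. We fix the root vertex $s$, which is a pole and is therefore fixed by every $\sigma\in\Autp(N)$. We then explore $\hat N$ by breadth-first search. When a vertex $v$ is processed, its parent in the tree is already mapped, so its at most two remaining children can be permuted in at most $2$ ways. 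Over the whole tree this gives at most $2^{n}$ choices. If $\hat N$ is disconnected, we also start at the other pole, which is fixed as well. The factor at the root is at most $3!$, but the root is a pole, so it has degree at most $2$ in $\hat N$. Thus $|\Aut(N)|\le 2^{n}$ up to a constant.

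Consequently $r_n \le 2^{n} r_n^{\mathrm{lab}}/n!$, and \eqref{eq:labelled} yields $\limsup r_n^{1/n} \le 2\rho_{\mathrm{lab}}^{-1}$, that is, $\rho_\cR\ge \rho_{\mathrm{lab}}/2$.

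The main obstacle is bookkeeping rather than a deep step. First, we must check that the labelled counts in \eqref{eq:labelled} use the same notion of network as here, since Remark~\ref{re:diff} allows poles to coincide and counts the poles as vertices. If the conventions differ only by finitely many small cases or by a shift in the vertex count, the exponential growth rate is unaffected. Second, for odd $n$ there are no cubic networks, so only even $n$ matters. Finally, the automorphism bound must be valid for multigraphs $\hat N$ containing double edges. For these the tree argument still works: an automorphism is a vertex bijection, and each vertex has at most $3$ distinct neighbours.
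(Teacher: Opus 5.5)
Your proposal is correct and follows essentially the same proof as the paper. Both compare $r_n$ with $r_n^{\mathrm{lab}}/n!$ via the orbit--stabiliser (Burnside) relation and use an automorphism bound of order $2^n$ for graphs of maximum degree $3$. The only difference is that you derive this bound yourself by a breadth-first search from the fixed pole(s), while the paper cites \cite[Thm. 2]{zbMATH01838946} to get $\aut(C) \le \frac{3}{8} n 2^n$.
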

\begin{proof}
	For any connected simple graph $C$ with $n$ vertices and maximum degree $\Delta(C)$, the number of automorphisms $\aut(C)$ satisfies
	\[
	\aut(C) \le n (\Delta(C))!(\Delta(C)-1)^{n - \Delta(C) - 1}
	\]
	by~\cite[Thm. 2]{zbMATH01838946}.  Hence, if $C$ has maximum degree $3$, then
	\begin{align}
		\label{eq:aut}
		1 \le \aut(C) \le \frac{3}{8} n 2^{n}.
	\end{align}
	It is clear that this bound still holds if $C$ is a network from the class $\cR$ and we let $\aut(C)$ denote the number of automorphisms that fix the ends $u$ and $v$ of the oriented root edge, as this may only decrease the number of automorphisms. (Also, if the network is not simple because the root edge is a loop or because there is a non-root edge between $u$ and $v$, then any permutation of vertices that fixes $u$ and $v$ is an automorphism of the network if and only if it is an automorphism of the simple graph obtained by deleting the root edge.)

	By Burnside's relation we have
	\[
	r_n^{\mathrm{lab}} = \sum_C \frac{n!}{\aut(C)}
	\]
	with the sum index $C$ ranging over all  unlabelled $n$-vertex networks in $\cR$. By~\eqref{eq:aut}, it follows that the number $r_n$ of unlabelled $n$-vertex networks in $\cR$ satisfies
	\[
	\frac{8}{3} n^{-1}2^{-n} r_n n! \le r_n^{\mathrm{lab}} \le  r_n n!.
	\]
	By~\eqref{eq:labelled}, this yields
	\[
	c^\mathrm{lab}_\cR n^{-5/2} \rho_{\mathrm{lab}}^{-n}(1+o(1)) \le r_n \le \frac{3}{8} c^\mathrm{lab}_\cR n^{-3/2} (\rho_{\mathrm{lab}}/2)^{-n}(1+o(1)).
	\]
	This completes the proof.
\end{proof}

An important consequence of Proposition~\ref{pro:rho} is that $\rho_\cR <1$. In particular, for any $i \ge 2$ we have that $\cR(x^i)$ has radius of convergence $\rho_\cR^{1/i} > \rho_\cR$. This allows us to determine that all species of networks considered here have the same growth constant:

\begin{corollary}
	\label{cor:rho}
	There exists a constant $0<\rho\le\rho_{\mathrm{lab}}$ with 
	\[
		\rho_\cR= \rho
	\]
	for each species $\cR \in \{ 
	\cN,\cN_{\mathrm{s}}, \cD, \cL, \cP, \cP_{\mathrm{d}}, \cP_{\mathrm{s}}, \cI, \cH, \cS, \cS_{\mathrm{d}},\cS_{\mathrm{s}} \}$.
\end{corollary}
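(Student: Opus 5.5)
The plan is to establish comparison inequalities between the ordinary generating series of the different species. Each inequality should be coefficient-wise up to a polynomial or substitution factor. Substituting $x^i$ with $i\ge 2$ only produces series of radius $\rho_\cR^{1/i}>\rho_\cR$, by Proposition~\ref{pro:rho}, so such terms are analytic in a neighbourhood of the disc of radius $\rho_{\mathrm{lab}}$. Define $\rho:=\rho_\cN$. Every species $\cR$ in the list is a subspecies of $\cN$, so $\rho_\cR\ge\rho$ immediately. The bound $\rho\le\rho_{\mathrm{lab}}$ comes from Proposition~\ref{pro:rho}. It therefore remains to show $\rho_\cR\le\rho$ for each $\cR$, meaning that no subspecies grows exponentially slower than $\cN$.

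\textbf{Step 1: $\cD$, $\cL$ and $\cI$ versus $\cN$.} Specialise the identities of Section~\ref{sec:dec} to $s_i=x^i$, $a_i=b_i=1$. Equation~\eqref{eq:argc} gives $\cI(x)=x^{-2}\cL(x)^2$, which has radius at least $\rho_\cL$. Equation~\eqref{eq:ldec} gives
\[
\cL(x)\ge_{\mathrm{coeff}} \tfrac{x^2}{2}(\cN(x)-\cL(x)),
\]
and since $\cN-\cL\supseteq\cD-\cL$ one gets $\cL(x)\ge_{\mathrm{coeff}}\tfrac{x^2}{2}\cD(x)-\tfrac{x^2}{2}\cL(x)$. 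This can be rearranged to $(1+x^2/2)\cL\ge_{\mathrm{coeff}}\ldots$; more simply, I would use the injection $N\mapsto$ loop network built on $N$. Together with $\cN=\cD+\cI$, this gives $\rho_\cN=\min(\rho_\cD,\rho_\cI)\ge\min(\rho_\cD,\rho_\cL)$. Since $\cL\subseteq\cD$, one has $\rho_\cN=\rho_\cD$. The loop construction gives $\rho_\cL\le\rho_{\cN-\cL}$, and together with $\cL\subseteq\cN$ this yields $\rho_\cL=\rho$. Then $\rho_\cI=\rho$ as well, because $\cI\ge x^{-2}\cdot(\text{first coefficient of }\cL)\cdot x^{\cdot}\cL$ coefficient-wise, which is an injective pairing with a fixed small loop network.

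\textbf{Step 2: the remaining species via injective constructions.} For each species $\cR$ I would give an injection (or a map that is at most polynomially-to-one) $\cN\to\cR$ that increases the vertex count by a bounded amount. This yields $r_{n+k}\ge r'_n/\mathrm{poly}(n)$ and hence $\rho_\cR\le\rho$. The constructions are as follows.
\begin{itemize}
\item $\cP_{\mathrm d}$: by \eqref{eq:pdplus}, $\cP_{\mathrm d}(x)\ge_{\mathrm{coeff}} x^2\cD(x)$.
\item $\cP_{\mathrm s}$: by \eqref{eq:rhopsplus}, $\cP_{\mathrm s}(x)\ge_{\mathrm{coeff}}\tfrac{x^2}{2}\cD(x)^2\ge_{\mathrm{coeff}} c\,x^{k}\cD(x)$, obtained by fixing $N_2$ to be a small network.
\item $\cS$ and $\cS_{\mathrm d}$: by \eqref{eq:S} and \eqref{eq:sdplus}, $\cS\ge\cL\cdot\cD$ and $\cS_{\mathrm d}\ge\cL^2$.
\item $\cS_{\mathrm s}$: take $N_1$ to be a fixed small non-loop, non-series network and let $N_2$ range over $\cD$.
\item $\cN_{\mathrm s}\supseteq\cP_{\mathrm s}$.
\item $\cP=\cP_{\mathrm d}+\cP_{\mathrm s}$.
\item $\cH$: by Proposition~\ref{pro:H}, $W_\cH^+\ge \tfrac{q_2}{2}s_1^{4}(a_1+a_1^2W_\cD^+)^{5}$. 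Setting $s_i=x^i$ and $a_i=1$, this dominates $c\,x^4\cD(x)$. Equivalently, insert an arbitrary non-isthmus network into one non-root edge of $K_4$.
\end{itemize}
Every coefficient inequality involved is between nonnegative series. Each right-hand side has radius $\rho_\cD=\rho$ or $\rho_\cL=\rho$, so each left-hand side has radius at most $\rho$.

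\textbf{Main obstacle.} The main difficulty is that the Walsh-series relations mix $W^+$ with $W^-$ and with substituted terms $W_{\cdot,2}$. I would handle this by using only lower bounds that discard nonnegative terms. The one point needing care is $\rho\ge\rho_{\mathrm{lab}}/2>0$, which guarantees that the relevant constants are finite; this is supplied by Proposition~\ref{pro:rho}.
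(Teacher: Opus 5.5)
Your proposal is correct and follows essentially the paper's route. It specialises the same decomposition identities, and its key step is the same loop-network relation \eqref{eq:ldec} that ties $\rho_\cL$ to the other species. The difference is organisational: setting $\rho:=\rho_\cN$ gives $\rho_\cR\ge\rho$ for free from containment in $\cN$, so you only need coefficient-wise lower bounds and never use that $\cD(x^2)$ has a larger radius or that $(\cN-\cL)^-\le_{\mathrm{coeff}}\cN-\cL$, whereas the paper proves pairwise equalities anchored at $\cD$.
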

\begin{proof}
	Equation~\eqref{eq:argc} yields \[
	\rho_\cI= \rho_\cL.
	\]
	 Equation~\eqref{eq:S} implies 
	 \[
	 \cS(x) = (\cD(x) - \cS(x)) \cD(x).
	 \]
	 Clearly $\cD(x) - \cS(x)$ has nonnegative coefficients and is dominated coefficient-wise by $\cD(x)$, hence it follows that
	 \[
	 	\rho_\cS = \rho_\cD.
	 \]
	 Equation~\eqref{eq:sdplus} yields
	 \[
	 	\rho_{\cS_\mathrm{d}} = \rho_\cL.
	 \]	 
	 By Equation~\eqref{eq:pdplus}
	 \[
	 	\rho_{\cP_{\mathrm{d}}} = \rho_\cD.
	 \]
	 Equation~\eqref{eq:rhopsplus} implies
	 \[
	 	\cP_{\mathrm{s}}(x) = \frac{x^2}{2}(\cD(x)^2 + \cD(x^2)).
	 \]
	 Since $\rho_\cD<1$ we know that $\cD(x^2)$ has radius of convergence $\sqrt{\rho_\cD} > \rho_\cD$. Hence
	 \[
	 	\rho_{\cP_\mathrm{s}} = \rho_\cD.
	 \]
	 Since $\cP(x) = \cP_{\mathrm{s}}(x) + \cP_{\mathrm{d}}(x)$ it follows that
	 \[
	 	\rho_\cP= \rho_\cD.
	 \]
	 The series composition of two parallel networks is always a simple network, so we have
	 \[
	 	\cS(x) \ge \cS_{\mathrm{s}}(x) \ge \cP(x)^2.
	 \]
	 Using $\rho_\cS= \rho_\cP= \rho_\cD$, it follows that
	 \[
	 	\rho_{\cS_\mathrm{s}} =  \rho_\cD.
	 \]
	 By Equation~\eqref{eq:D} and Proposition~\ref{pro:H} we have
	 \[
	 	\cD(x) \ge \cH(x) \ge \frac{q_2}{2} x^4(1 + \cD(x))^5.
	 \]
	 Hence
	 \[
	 	\rho_\cH = \rho_\cD.
	 \]
	 Summing up, we know that 
	 \[
	 	\rho_\cL= \rho_\cI = \rho_{\cS_{\mathrm{d}}}
	 \]
	 and
	 \[
	 	\rho_\cD= \rho_\cS = \rho_{\cS_{\mathrm{s}}} = \rho_{\cP} = \rho_{\cP_\mathrm{s}} = \rho_{\cP_\mathrm{d}} = \rho_\cH.
	 \]
	 By~\eqref{eq:D} we have $\cD(x) = \cL(x) + \cP(x) + \cS(x) + \cH(x)$, and hence
	 \[
	 	\rho_\cL \ge \rho_{\cD}.
	 \]
	 On the other hand, by~\eqref{eq:ldec} and~\eqref{eq:D} 
	 we have
	 \[
	 	\cL(x) = \frac{x^2}{2}( (\cN -\cL)(x) + (\cN - \cL)^-(x)).
	 \]
	 The ordinary generating series $(\cN - \cL)^-(x)$ of pole-symmetric  networks in $\cN - \cL$ is dominated coefficient-wise by $(\cN - \cL)(x)$. Hence $\cL(x)$ has the same radius of convergence as
	 \[
	 	(\cN - \cL)(x) = \cI(x) + \cS(x) + \cP(x) + \cH(x).
	 \]
	 As $\rho_\cI=\rho_\cL$, this radius of convergence is given by $\min(\rho_\cL, \rho_\cD)$, in other words
	 \[
	 	\rho_\cL \le \rho_\cD.
	 \]
Using $\rho_\cL \ge \rho_\cD$ it follows that
\[
	\rho_\cL= \rho_\cD.
\]
As $(\cN - \cL)(x)$ and $\cL(x)$ both have radius of convergence $\rho_\cL$, it also follows that
\[
	\rho_\cN= \rho_\cL.
\]
By~\eqref{eq:simpnetwork} we also get
\[
	\rho_{\cN_{\mathrm{s}}} = \rho_\cD.
\]
This completes the proof.
\end{proof}

\begin{corollary}
		For each species $\cR \in \{ 
		\cN,\cN_{\mathrm{s}}, \cD, \cL, \cP, \cP_{\mathrm{d}}, \cP_{\mathrm{s}}, \cI, \cH, \cS, \cS_{\mathrm{d}},\cS_{\mathrm{s}} \}$ we have $\cR(\rho) < \infty$. 
\end{corollary}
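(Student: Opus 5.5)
The plan is to prove the statement first for the species $\cD$ of non-isthmus networks, using only a superlinear functional inequality, and then to transfer finiteness to all other species via the decomposition equations of Section~\ref{sec:dec}. The inequality needed for $\cD$ was already noted in the proof of Corollary~\ref{cor:rho}. By Equation~\eqref{eq:D} and Proposition~\ref{pro:H}, for all $0 \le x < \rho$,
\[
	\cD(x) \;\ge\; \cH(x) \;\ge\; \frac{q_2}{2} x^4 (1 + \cD(x))^5 \;\ge\; \frac{q_2}{2} x^4 \cD(x)^5 .
\]
Here $q_2 \ge 1$, since $K_4$ gives a rooted $3$-connected cubic map with $4$ vertices. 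By Corollary~\ref{cor:rho} all series have radius of convergence $\rho$, so $\cD(x)$ is finite for $x<\rho$.

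Step one is to extract a uniform bound from this inequality. Fix $x_0 \in (0,\rho)$. For $x \in [x_0,\rho)$, either $\cD(x)=0$, or we may divide by $\cD(x)>0$. In the latter case
\[
	\cD(x)^4 \le \frac{2}{q_2 x^4} \le \frac{2}{q_2 x_0^4}.
\]
Thus $\cD(x)$ is bounded uniformly on $[x_0,\rho)$. Since $\cD$ has nonnegative coefficients, monotone convergence (Abel's theorem for nonnegative series) gives
\[
	\cD(\rho) = \lim_{x \uparrow \rho} \cD(x) \le \left(\frac{2}{q_2 x_0^4}\right)^{1/4} < \infty .
\]
This is the key step. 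The only subtlety is that the polynomial lower bound has degree $5>1$ in $\cD$, which forces boundedness. Everything else is bookkeeping.

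Step two transfers finiteness to the subspecies of $\cD$. The species $\cL$, $\cS$, $\cP$, $\cH$, $\cS_{\mathrm{d}}$, $\cS_{\mathrm{s}}$, $\cP_{\mathrm{d}}$, $\cP_{\mathrm{s}}$ are all subspecies of $\cD$. Their ordinary generating series are therefore dominated coefficient-wise by $\cD(x)$, and so they are finite at $\rho$.

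Step three handles the remaining species, which lie outside $\cD$. From Equation~\eqref{eq:argc} we get $\cI(x) = x^{-2}\cL(x)^2$, which is finite at $\rho$. (The specialization $y=1$ here is the edge-counting convention of Remark~\ref{re:diff}.) Then $\cN = \cD + \cI$ is finite at $\rho$. Finally $\cN_{\mathrm{s}} \le_{\mathrm{coeff}} \cN$ by Equation~\eqref{eq:simpnetwork}, so $\cN_{\mathrm{s}}(\rho)<\infty$ as well.

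I do not expect a real obstacle. The only points to check carefully are:
\begin{itemize}
\item the exact form of the lower bound $\cH(x) \ge \frac{q_2}{2}x^4(1+\cD(x))^5$ after the specialization $s_i = x^i$, $a_i=b_i=1$. It follows by dropping the nonnegative remainder $R_1$ in Proposition~\ref{pro:H} and keeping only the $n=2$ term;
\item the fact that $\rho>0$, which holds by Proposition~\ref{pro:rho}, so that some $x_0\in(0,\rho)$ exists.
\end{itemize}
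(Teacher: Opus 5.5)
Your proof is correct, but it obtains finiteness of $\cD(\rho)$ differently from the paper.

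The paper keeps the whole series $\sum_{n\ge2}\frac{q_n}{2}x^{2n}(1+\cD(x))^{3n-1}$, which is dominated by $\cH(x)$, and compares it with the radius of convergence $27/256$ of $\cQ$. If $\rho^2(1+\cD(\rho))^3>27/256$ (this includes the case $\cD(\rho)=\infty$), then left-continuity gives a point $\rho-\epsilon$ where this series, and hence $(1+\cD)\cH$, diverges. That contradicts Corollary~\ref{cor:rho}.

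You instead keep only the $n=2$ ($K_4$) term and use that it has degree $5>1$ in $\cD$. This gives the uniform bound $\cD(x)^4\le 2/(q_2x_0^4)$ on $[x_0,\rho)$. Your argument needs nothing about $\cQ$ beyond $q_2\ge 1$, and it avoids the contradiction argument.

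What the paper's route buys is the quantitative inequality \eqref{eq:almostrho}, $\rho^2(1+\cD(\rho))^3\le 27/256$. This is the input for Lemma~\ref{le:analyticsum} (the bound $\cD(\rho)\le 0.61$ and the analyticity of the $R_i$-terms), for Lemma~\ref{le:prenum}, and for Lemma~\ref{le:keynumeric}. Your bound, $\cD(\rho)\le (2/q_2)^{1/4}\rho^{-1}$ after letting $x_0\uparrow\rho$, is far too crude to replace it.

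However, \eqref{eq:almostrho} follows from your result in one line. Once $\cD(\rho)<\infty$, you have $\sum_{n\ge2}\frac{q_n}{2}\rho^{2n}(1+\cD(\rho))^{3n-1}\le \cH(\rho)\le\cD(\rho)<\infty$. A series with nonnegative coefficients that converges at a positive real point $t$ has radius of convergence at least $t$, so $\rho^2(1+\cD(\rho))^3\le 27/256$. Your Steps two and three coincide with the paper's argument.
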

\begin{proof}
	By Proposition~\ref{pro:H} we have that $\cH(x)$ dominates coefficient-wise the series
	\[
		\sum_{n \ge 2} \frac{q_n}{2} x^{2n} (1 +  \cD(x))^{3n-1}.
	\]
	Recall that $\sum_{n \ge 2} q_n x^n$ has radius of convergence $27/256$. Suppose that we have $\rho^2(1 + \cD(\rho))^3 > 27/256$. Since $\cD$ has nonnegative coefficients, it follows that there exists $\epsilon>0$ small enough with  $(\rho-\epsilon)^2(1 + \cD(\rho  - \epsilon) )^3 > 27/256$. Hence the series $(1 + \cD(x))\cH(x)$ which dominates $\sum_{n \ge 2} \frac{q_n}{2} x^{2n} (1 +  \cD(x))^{3n}$ coefficient-wise has radius of convergence at most $\rho -  \epsilon$,  a contradiction to Corollary~\ref{cor:rho}. Consequently,
	\begin{align}
		\label{eq:almostrho}
		\rho^2(1 + \cD(\rho))^3 \le 27/256.
	\end{align}
	
	This entails $\cD(\rho)<\infty$. The series $\cD(x)$ dominates $\cR(x)$ coefficient-wise (implying $\cR(\rho) \le \cD(\rho)< \infty$) in all cases except (possibly) $\cR\in\{\cI,\cN,\cN_{\mathrm{s}}\}$. For $\cR = \cI$, it follows from~\eqref{eq:argc} that $\cI(\rho) = \rho^{-2} \cL(\rho)^2 <\infty$. This implies $\cN_{\mathrm{s}}(\rho) \le \cN(\rho) < \infty$ and hence completes the proof.
\end{proof}

\subsection{Criticality and an equation for the growth constant}

From the equations of Walsh index series in Section~\ref{sec:dec} we immediately obtain
\begin{align*}
	\cD &= \cL + \cS + \cP + \cH,
\end{align*}
and
\begin{align*}
	\cL &= \frac{x^2}{2}( \cD  + \cI + \cD^- + \cI^- - 2\cL), & \cL^- &= \cL \\
	\cI &= \frac{1}{x^2} \cL^2, & \cI^- &= \frac{1}{x^2} \cL_2 \\
	\cS &= (\cD - \cS) \cD, & \cS^- &= (1 + \cD^-) (\cD_2 - \cS_2) \\
	\cP &= x^2\cD + \frac{x^2}{2}(\cD^2 + \cD_2), & \cP^- &= x^2 \cD^- + \frac{x^2}{2}( (\cD^-)^2 + \cD_2) 
\end{align*}
as well as
\begin{align*}
	\cH &= M(x, 1 + \cD) + R_1(x,x^2, 1+ \cD, 1 + \cD_2, 1 + \cD^-),  \\
	\cH^- &= R_2(x,x^2, 1+ \cD, 1 + \cD_2, 1 + \cD^-)
\end{align*}
for
\[
	M(x,y) := \sum_{n \ge 2} \frac{q_n}{2} x^{2n} y^{3n-1}.
\]
The equation for $\cS$ may be reformulated by
\begin{align}
	\label{eq:sthat}
	\cS= \frac{\cD^2}{1 +
		 \cD}.
\end{align}
Inserting the equation for $\cI$ and $\cI^-$ into the equation for $\cL$ yields
\begin{align}
	\label{eq:biermoe}
	0 = \cL^2 - 2(1 + x^2) \cL + x^2(\cD + \cD^-) + \cL_2.
\end{align}
In the ring of formal power series, this quadratic equation in $\cL$ has two solutions (if we treat $\cL_2$ as a constant). Since $\cL(0) = 0$ we may rule out one of the two, yielding
\begin{align}
	\label{eq:lroot}
	\cL = 1 + x^2 - \sqrt{1 + 2 x^2 + x^4 - \cL_2 - x^2(\cD + \cD^- )}.
\end{align}

\begin{lemma}
	\label{le:analyticsum}
	The functions
	\[
		(x, z, z^-) \mapsto 1 + x^2 - \sqrt{1 + 2 x^2 +  x^4 - \cL_2(x) - x^2(z + z^-)}
	\]
	and
	\[
		(x, z, z^-) \mapsto R_i(x,x^2,1 + z, 1 + \cD_2(x), 1 + z^-)
	\]
	for $i=1,2$
	are analytic at the point $(\rho, \cD(\rho), \cD^-(\rho))$. 
\end{lemma}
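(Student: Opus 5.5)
We split the statement into its two parts. For the square-root function, analyticity at $(\rho,\cD(\rho),\cD^-(\rho))$ needs two things: the inner expression
\[
\Delta(x,z,z^-) := 1+2x^2+x^4-\cL_2(x)-x^2(z+z^-)
\]
must be analytic near the point, and $\Delta(\rho,\cD(\rho),\cD^-(\rho))\neq 0$. Indeed $\sqrt{\cdot}$ is analytic away from $0$ on a branch containing the positive reals. We will in fact show that the value of $\Delta$ at this point is strictly positive.

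\textbf{Analyticity of $\Delta$.} The only non-polynomial term is $\cL_2(x)=\cL(x^2)$. By Corollary~\ref{cor:rho} and Proposition~\ref{pro:rho}, $\cL$ has radius of convergence $\rho<1$. Hence $\cL(x^2)$ has radius $\sqrt\rho>\rho$ and is analytic in a neighbourhood of $x=\rho$.

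\textbf{Positivity of $\Delta$.} This is the main point. From \eqref{eq:lroot}, for $0\le x<\rho$,
\[
\sqrt{\Delta(x,\cD(x),\cD^-(x))}=1+x^2-\cL(x).
\]
Letting $x\uparrow\rho$ and using finiteness at $\rho$ gives
\[
\sqrt{\Delta(\rho,\cD(\rho),\cD^-(\rho))}=1+\rho^2-\cL(\rho).
\]
So it suffices to show $\cL(\rho)<1+\rho^2$. We plan to use the isthmus relation $\cI=\cL^2/x^2$ together with $\cI(\rho)\le\cN(\rho)<\infty$. This gives $\cL(\rho)^2=\rho^2\cI(\rho)$, which is finite but not yet bounded by $1+\rho^2$, so a quantitative argument is needed. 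The approach is to view \eqref{eq:biermoe} as a quadratic $\cL^2-2(1+x^2)\cL+C(x)=0$ with $C(x)=x^2(\cD+\cD^-)+\cL_2\ge0$. Its discriminant is $(1+x^2)^2-C(x)=\Delta$. If $\Delta$ vanished at $\rho$, then $\cL(\rho)=1+\rho^2$, and so $\cI(\rho)=(1+\rho^2)^2/\rho^2>1/\rho^2\approx 10$.

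We would rule this out using the crude numerical information available. From \eqref{eq:almostrho}, $\cD(\rho)\le (27/256)^{1/3}\rho^{-2/3}-1$. Combined with $\rho\ge\rho_{\mathrm{lab}}/2\approx0.16$, this yields $\cD(\rho)$ of order at most about $0.6$. Since $\cD^-\le_{\mathrm{coeff}}\cD$, and $\cL\le_{\mathrm{coeff}}\cD$ gives $\cL_2(\rho)\le\cD(\rho^2)\le\cD(\rho)$, we get $C(\rho)\le 2\rho^2\cD(\rho)+\cD(\rho)$. This should be well below $(1+\rho^2)^2\ge1$, so $\Delta(\rho,\cdot)>0$ strictly.

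Alternatively, a cleaner route is $\cL(\rho)\le\cD(\rho)<1\le1+\rho^2$, using $\cL\le_{\mathrm{coeff}}\cD$ and the bound on $\cD(\rho)$ from \eqref{eq:almostrho}. This requires checking $(27/256)^{1/3}\rho^{-2/3}<2$, i.e.\ $\rho^2>27/2048$, i.e.\ $\rho>0.1148$, which holds since $\rho\ge\rho_{\mathrm{lab}}/2>0.159$. This is the step I expect to need care: we must make sure the numerical constants genuinely close.

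\textbf{Analyticity of $R_i(x,x^2,1+z,1+\cD_2(x),1+z^-)$.} Here we use Proposition~\ref{pro:H}. The coefficient of $x^{2n}y^{3n-1}$ in $R_i(x,x^2,y,y^2,y)$ is $O(c^nq_n)$ with $c<1$, so these coefficients have exponential growth at most $c\cdot 256/27$. This is strictly smaller than that of $M$. Since $R_i$ has nonnegative coefficients, it converges absolutely wherever the dominating substitution $R_i(|x|,|x|^2,Y,Y^2,Y)$ converges with $Y=\max$ of the moduli of the arguments. It therefore suffices to check
\[
\rho^2\bigl(1+\max(\cD(\rho),\cD_2(\rho),\cD^-(\rho))\bigr)^3<\frac{27}{256\,c}.
\]
The maximum equals $\cD(\rho)$, because $\cD_2(\rho)=\cD(\rho^2)\le\cD(\rho)$ and $\cD^-\le_{\mathrm{coeff}}\cD$. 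By \eqref{eq:almostrho} the left side is at most $27/256$, which is $<27/(256c)$. Strictness lets us enlarge to a small polydisc around $(\rho,\cD(\rho),\cD^-(\rho))$, with $x$ ranging in a disc where $\cD_2$ is analytic. On that polydisc the series converges absolutely and uniformly, giving analyticity as a locally uniform limit of polynomials.

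One subtlety to handle is that $R_i$ is indexed by the total monomial, with variables $s_2,a_2$ entering separately. We bound it monomialwise, using $|a_2|\le Y^2$ with $Y\ge1$ so that $1+\cD_2\le(1+\cD)^2$, and similarly $s_2=x^2$. This keeps the grading $x^{2n}y^{3n-1}$ dominant.
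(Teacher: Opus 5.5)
Your proof is correct and follows essentially the same route as the paper. In both, the crude bound $\cD(\rho)\le 0.61$ from \eqref{eq:almostrho} and $\rho\ge\rho_{\mathrm{lab}}/2$ keeps the radicand away from the branch point. You show the radicand is strictly positive via $\cL(\rho)\le\cD(\rho)<1$; the paper instead shows it lies within distance $0.34$ of $1$. Your $R_i$ argument matches the paper's: the domination $1+\cD_2\le(1+\cD)^2$ and $\cD^-\le_{\mathrm{coeff}}\cD$, together with the $O(c^n)$ slack from Proposition~\ref{pro:H}, give absolute convergence on a small polydisc around the point.
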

\begin{proof}
	By Equation~\eqref{eq:almostrho} we know \[
	\rho^2(1 + \cD(\rho))^3 \le 27/256.
	\] This entails by Proposition~\ref{pro:rho} and Equation~\eqref{eq:rholab} that
	\begin{align}
		\label{eq:drhobound}
		\cD(\rho) \le 3 (\rho^{2} 256)^{-1/3} -1 \le 3 (\rho_{\mathrm{lab}}^{2} 64)^{-1/3} -1 \le 0.61.
	\end{align}
	The smallest loop network has $6$ vertices. Since $\rho<1$, it follows by Proposition~\ref{pro:rho} and Equations~\eqref{eq:rholab} and~\eqref{eq:drhobound} that 
	\begin{align}
		\label{eq:l2rhobound}
		\cL_2(\rho) = \cL(\rho^2) \le \rho^6 \cL(\rho)  \le \rho_{\mathrm{lab}}^6 \cD(\rho) \le 0.00065.
	\end{align}
	Since $\cD^-$ is dominated coefficient-wise by $\cD$, it follows that
	\begin{align}
		\label{eq:lllbound}
	\left|2 \rho^2 + \rho^4 - \cL_2(\rho) - \rho^2(\cD(\rho) + \cD^-(\rho)) \right| \le 0.34.
	\end{align}	
	Since $\sqrt{1 +x}$ is analytic for $|x|<1$ (and  $\cL_2$ is analytic at $\rho$ because of $\rho<1$), it follows that the expression of $\cL$ in~\eqref{eq:lroot} in terms of the variables $x$, $\cD$ and $\cD^-$ is analytic at the point $(\rho, \cD(\rho), \cD^-(\rho))$.

	Furthermore, since $\rho<1$ we have for $\epsilon>0$ sufficiently small that $(\rho + \epsilon)^2 \le \rho$ and hence
	\[
		1 + \cD_2(\rho +\epsilon) \le 1 + \cD(\rho) \le (1 + \cD(\rho))^2.
	\]
	The series $\cD^-(x)$ is dominated coefficient-wise by $\cD(x)$. Thus, for integers $a,b,c \ge 0$ with $a+2b+c = 3n-1$ we have
	\[
		(1 + \cD(\rho) + \epsilon)^a (1 + \cD_2(\rho + \epsilon))^b (1 + \cD^-(\rho) + \epsilon)^c \le (1 + \cD(\rho) + \epsilon)^{3n-1}.
	\]
	 By Proposition~\ref{pro:H} and Equation~\eqref{eq:almostrho} it follows that for $\epsilon>0$ sufficiently small
	\begin{align*}
		&R_i(\rho+\epsilon,(\rho+\epsilon)^2,1 + \cD(\rho) + \epsilon, 1 + \cD_2(\rho + \epsilon), 1 + \cD^-(\rho) + \epsilon)  \\
		&\qquad\le \sum_{n \ge 2} q_n O(c^n) (\rho + \epsilon)^{2n}(1 + \cD(\rho) + \epsilon)^{3n-1} \\
		&\qquad< \infty
	\end{align*}
	for $i=1,2$. Hence  $R_i(x,x^2,1 + \cD, 1 + \cD_2(x), 1 + \cD^-)$ interpreted as a series in $x$, $\cD$ and $\cD^-$ is analytic at the point $(\rho, \cD(\rho), \cD^-(\rho))$.
\end{proof}

We may summarise that
\begin{align}
	\label{eq:done}
	\cD &= 1 + x^2 - \sqrt{1 + 2 x^2 + x^4 - \cL_2 - x^2(\cD + \cD^- )} + \frac{\cD^2}{1 + \cD} + x^2\cD \\ 
		&\quad\, + \frac{x^2}{2}(\cD^2 + \cD_2) + M(x, 1 + \cD) + R_1(x,x^2, 1+ \cD, 1 + \cD_2, 1 + \cD^-) \nonumber
\end{align}		
and
\begin{align}
	\label{eq:dtwo}
	\cD^- &= 1 + x^2 - \sqrt{1 + 2 x^2 + x^4 - \cL_2 - x^2(\cD + \cD^- )} + (1 + \cD^-) (\cD_2 - \cS_2) \\
	&\quad\, + x^2 \cD^- + \frac{x^2}{2}( (\cD^-)^2 + \cD_2)  + R_2(x,x^2, 1+ \cD, 1 + \cD_2, 1 + \cD^-). \nonumber
\end{align}
If we regard~\eqref{eq:done} and~\eqref{eq:dtwo} as equations in the indeterminates  $x$, $\cD$ and $\cD^-$, then by Lemma~\ref{le:analyticsum} the only summand on the right-hand side that might not be analytic at $(\rho, \cD(\rho), \cD^{-}(\rho))$ is $M(x, 1 + \cD)$. We will see below that, in fact, it isn't. As a preparation for this, we need to improve our crude bounds:

\begin{lemma}
	\label{le:prenum}
	We have
	\begin{align*}
		\cD(\rho) &\le 0.15724 \\
		\cL(\rho) &\le 0.092223 \\
		\cS(\rho) &\le 0.021364 \\
		\cP(\rho) &\le 0.024118 \\
		\cH(\rho) &\le \frac{5}{256} \\
		\cD_2(\rho) &\le 0.0016329 \\
		\cL_2(\rho) &\le 0.000097593.
	\end{align*}
\end{lemma}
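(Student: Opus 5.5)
The plan is to turn the system of equations from Section~\ref{sec:ucpg} into a set of coefficient-wise inequalities. Evaluated at $x=\rho$, these become numerical inequalities that can be iterated to a fixed point starting from the crude bounds. The starting data are three facts already established: $\rho\le\rho_{\mathrm{lab}}$ (Corollary~\ref{cor:rho}), the bound \eqref{eq:almostrho}, namely $\rho^2(1+\cD(\rho))^3\le 27/256$, and $\cD(\rho)\le 0.61$ from \eqref{eq:drhobound}.

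\textbf{Bounding $\cH(\rho)$.} This is the cleanest step. By Proposition~\ref{pro:H}, $\cH(x)$ is dominated coefficient-wise by
\[
\sum_{n\ge2} q_n x^{2n}(1+\cD(x))^{3n-1}\bigl(\tfrac12+O(c^n)\bigr).
\]
More precisely, \eqref{eq:rbound1} gives $\cH(x)\le_{\mathrm{coeff}} \sum_n q_n x^{2n}(1+\cD)^{3n-1}$, once $1+\cD_2$ and $1+\cD^-$ are replaced by the larger quantity $1+\cD$. This replacement is legitimate because the $b$-type slots come squared or in pairs, and the exponent count $a+2b+c=3n-1$ is the one used in the proof of Lemma~\ref{le:analyticsum}. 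Setting $t=\rho^2(1+\cD(\rho))^3\le 27/256$, we get
\[
\cH(\rho)\le (1+\cD(\rho))^{-1}\cQ(t)\le \cQ(27/256)=1/8
\]
by \eqref{eq:qeval}. The sharper value $5/256$ presumably comes from the finer identity: the main term gives $\tfrac12\cQ(t)$, and the $R_1$ part is controlled by the few small maps with nontrivial automorphisms. For instance, $q_2=1$ ($K_4$) contributes $\tfrac12 t^2\cdot\frac{256}{27}\cdot$(slack), and one also uses $t$ much smaller than $27/256$ once $\cD(\rho)$ is known to be small. So this step runs inside a bootstrap.

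\textbf{Bootstrap.} I will carry out the following iteration.
\begin{enumerate}
\item Start from $\cD(\rho)\le D_0=0.61$ and $\rho\le\rho_{\mathrm{lab}}$. This gives $\cD_2(\rho)\le \rho^4\cD(\rho)$, since the smallest non-isthmus network has at least $4$ vertices, and $\cL_2(\rho)\le\rho^6\cL(\rho)$ as in \eqref{eq:l2rhobound}.
\item Bound $\cL(\rho)$ via \eqref{eq:lroot}, using $\cD^-\le\cD$ and the monotonicity of $u\mapsto 1+x^2-\sqrt{(1+x^2)^2-u}$.
\item Bound $\cS(\rho)=\cD^2/(1+\cD)$ by \eqref{eq:sthat}.
\item Bound $\cP(\rho)\le\rho^2\cD+\frac{\rho^2}{2}(\cD^2+\cD_2)$.
\item Bound $\cH(\rho)$ through $\tfrac12\cQ(t)$ plus the $R_1$ remainder.
\end{enumerate}
Summing the bounds from steps 2--5 gives a new upper bound $D_1$ on $\cD(\rho)=\cL+\cS+\cP+\cH$. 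Each right-hand side is increasing in $D$, so iterating produces a decreasing sequence of valid upper bounds. The stated constants should be its limit, rounded up. Equivalently, they come from the smallest fixed point of the monotone map, or from verifying directly that $D^*=0.15724$ satisfies $F(D^*)\le D^*$ together with a monotonicity or continuity argument starting from $\cD(\rho)$.

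\textbf{Main obstacle.} The difficulty lies in making the self-map contracting, and in justifying that $\cD(\rho)$ lies below the fixed point rather than at a larger one. Monotone iteration started from a valid bound only ever yields valid bounds, so the real risk is the first step. From $D_0=0.61$, the term $\cQ(t)$ is evaluated at $t\le 27/256$, where it is finite ($=1/8$), so the iteration is well defined. The remaining concern is whether the iteration actually descends to about $0.157$. I expect this to require using the remainder estimate for $R_1$ with explicit small-$n$ counts rather than the crude $O(c^n)$, and possibly an explicit numerical value for $\rho$ sharper than $\rho_{\mathrm{lab}}$.
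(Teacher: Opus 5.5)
\textbf{The gap is in the bound on $\cH(\rho)$, the input that drives the whole bootstrap.}

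Your rigorous estimate only gives $\cH(\rho)\le 1/8$. You correctly majorised $\cH$ by $\sum_{n\ge2}q_nx^{2n}(1+\cD)^{3n-1}$, but then bounded $\sum_{n\ge2}q_nt^n$ by $\cQ(t)$. That puts back the $n=1$ term $q_1t=t$, which is the dominant one: $27/256$ out of $\cQ(27/256)=1/8$.

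With $1/8$ in place of $5/256$ the bootstrap cannot reach the stated numbers. Take the most favourable version of your steps 2--4, with every occurrence of $\rho$ replaced by $\rho_{\mathrm{lab}}$. Even then the increasing map $D\mapsto L(D)+S(D)+P(D)+1/8$ has a fixed point near $0.2$, so the iterates started at $0.61$ converge to about $0.2$, not $0.157$.

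The routes you suggest for recovering $5/256$ also fail. The quantity $t=\rho^2(1+\cD(\rho))^3$ cannot be pushed below $27/256$ by shrinking the bound on $\cD(\rho)$. With $\rho\le\rho_{\mathrm{lab}}$, the estimate $t\le\rho_{\mathrm{lab}}^2(1+\cD(\rho))^3$ only improves on $27/256$ once you have $\cD(\rho)<0.012$. In fact Lemma~\ref{le:keynumeric} shows $t=27/256$ exactly. Explicit small-$n$ counts for $R_1$ and a sharper value of $\rho$ are not needed.

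\textbf{The missing observation is to keep the index restriction $n\ge2$.} By \eqref{eq:rbound1}, together with $1+\cD_2\le_{\mathrm{coeff}}(1+\cD)^2$ and $\cD^-\le_{\mathrm{coeff}}\cD$, the $R_1$-contribution is at most $M(\rho,1+\cD(\rho))$. Hence
\[
\cH(\rho)\le 2M(\rho,1+\cD(\rho))=\frac{\cQ(t)-t}{1+\cD(\rho)}\le \cQ\Bigl(\frac{27}{256}\Bigr)-\frac{27}{256}=\frac18-\frac{27}{256}=\frac{5}{256}.
\]
This uses $q_1=1$, the monotonicity of $\cQ(x)-x=\sum_{n\ge2}q_nx^n$, the bound $t\le27/256$ from \eqref{eq:almostrho}, and \eqref{eq:qeval}. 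This bound is not part of the iteration. It is held fixed while the bounds for $\cL,\cL_2,\cS,\cD_2,\cP,\cD$ are iterated, which is exactly the paper's argument.

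\textbf{A smaller point on your $\cL$-step.} In \eqref{eq:lroot}, $\rho$ appears both outside and inside the square root, so monotonicity in $u$ alone does not give a numerical bound. You can either use $\rho\ge\rho_{\mathrm{lab}}/2$ (Proposition~\ref{pro:rho}) for the terms $2\rho^2+\rho^4$ under the root, as the paper does. Or you can check that the expression is increasing in $\rho$ for fixed upper bounds on $\cL_2$ and $\cD+\cD^-$.
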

\begin{proof}
	Recall from  Proposition~\ref{pro:rho} and Equations~\eqref{eq:drhobound} and~\eqref{eq:l2rhobound} we have
	\begin{align*}
		\cD^{-}(\rho) &\le \cD(\rho) \le 0.61, \\
		\cL_2(\rho) & \le 0.00065, \\
		\rho_{\mathrm{lab}} / 2 &\le \rho \le \rho_{\mathrm{lab}}.
	\end{align*}
	It will be convenient to use the notation $\rho_\mathrm{l} = \rho_{\mathrm{lab}} / 2$ and $\rho_\mathrm{u}= \rho_{\mathrm{lab}}$ for the lower and upper bound. 
	
	It follows that
	\begin{align}
		\label{eq:dfirst}
		\cL(\rho) \le 1 + \rho_\mathrm{u}^2 - \sqrt{1 + 2 \rho_{\mathrm{l}}^2 + \rho_{\mathrm{l}}^4 - \cL_2(\rho) - \rho_{\mathrm{u}}^2 2 \cD(\rho) } \le  0.14.
	\end{align}
	and, like in~\eqref{eq:l2rhobound}, 
	\begin{align}
	\cL_2(\rho) = \cL(\rho^2) \le \rho_{\mathrm{u}}^6 \cL(\rho)  \le 0.000149.
	\end{align}
	
	The function $x^2 / (1+x)$ is increasing for $x>0$, hence
	\begin{align}
		\cS(\rho) = \cD(\rho)^2/(1+\cD(\rho)) \le 0.232.
	\end{align}
		The smallest $\cD$-network has four vertices, hence
	\begin{align}
	\cD_2(\rho) \le \rho^4 \cD(\rho) \le \rho_{\mathrm{u}}^4 \cD(\rho) \le 0.0064.
	\end{align}
	Hence
	\begin{align}
		\label{eq:dlast}
		\cP(\rho) \le \rho_\mathrm{u}^2 \cD(\rho) + \frac{\rho_\mathrm{u}^2}{2}(\cD^2(\rho) + \cD_2(\rho)) \le 0.0936.
	\end{align}
	Using Equation~\eqref{eq:rbound1} and the fact that $\cD(x)$ dominates $\cD^-(x)$ coefficient-wise, as well as the fact that $(1 + \cD(x))^2$ dominates $1 + \cD_2(x)$ coefficient-wise, it follows that 
	\[
		\cH(\rho) \le 2 M(\rho,1+\cD(\rho)).
	\]
	Because of $q_1 = 1$ the series $\cQ(x) - x$ is increasing in $x$. By Equations~\eqref{eq:qeval} and~\eqref{eq:almostrho}, it follows that
	\begin{align*}
	M(\rho,1+\cD(\rho)) &\le \frac{1}{2} (\cQ(x) -x )\Big\vert_{x = \rho^2(1 + \cD(\rho))^3} \\
	&\le  \frac{1}{2} (\cQ(x) -x )\Big\vert_{x = 27/256} \\
	&= \frac{1}{2}(\frac{1}{8} - \frac{27}{256}) \\
	&= \frac{5}{512}.
	\end{align*}
	Hence
	\[
		\cH(\rho) \le \frac{5}{256}.
	\]
	With this, we arrive at
	\begin{align}
		\label{eq:final1}
		\cD(\rho) = \cL(\rho) + \cS(\rho) + \cP(\rho) + \cH(\rho) \le 0.14 + 0.232 + 0.0936 + \frac{5}{256} \le 0.49.
	\end{align}
	These improved bounds on $\cD(\rho)$ also improve the bounds in~\eqref{eq:dfirst}--\eqref{eq:dlast} and hence also the bound~\eqref{eq:final1}. In other words we may iterate.

	That is, we keep  $\rho_\mathrm{l}$,   $\rho_\mathrm{u}$ and the upper bound for $\cH(\rho)$ fixed, since the stronger upper bound on $\cD(\rho)$ does not improve them. We take the upper bounds for $\cL(\rho)$, $\cL_2(\rho)$, $\cS(\rho)$, $\cD_2(\rho)$, $\cP(\rho)$ and $\cD(\rho)$ from Equations~\eqref{eq:dfirst}--\eqref{eq:dlast} and ~\eqref{eq:final1} as a $6$-dimensional starting state. In each iteration, we apply the same numerical evaluations and arguments as in~\eqref{eq:dfirst}--\eqref{eq:dlast} and~\eqref{eq:final1} to obtain improved upper bounds which become the new state. 
	
	After thirty iterations using the computer algebra system Mathematica, we arrive at the bounds in the statement of the lemma.
\end{proof}

\begin{lemma}
	\label{le:keynumeric}
	The radius of convergence $\rho$ satisfies
	\[
	\rho^2 (1 + \cD(\rho))^3 = 27/256.
	\]
\end{lemma}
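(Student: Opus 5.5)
We already know from \eqref{eq:almostrho} that $\rho^2(1+\cD(\rho))^3 \le 27/256$. So it suffices to rule out strict inequality, and I would argue by contradiction. Assume $\tau := \rho^2(1+\cD(\rho))^3 < 27/256$. Then $M(x,y)=\sum_{n\ge2}\frac{q_n}{2}x^{2n}y^{3n-1}$ is analytic in a neighbourhood of $(\rho, 1+\cD(\rho))$, because the power series $\sum q_n t^n$ converges for $|t|<27/256$. Combined with Lemma~\ref{le:analyticsum}, every term on the right-hand sides of \eqref{eq:done} and \eqref{eq:dtwo} is then analytic at $(\rho,\cD(\rho),\cD^-(\rho))$.

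Write the system as $\cD = F(x,\cD,\cD^-)$ and $\cD^- = G(x,\cD,\cD^-)$. The coefficient-dependent inputs $\cD_2$, $\cL_2$ and $\cS_2$ are analytic at $\rho$ since $\rho<1$, so I treat them as analytic functions of $x$. I would then apply the analytic implicit function theorem to the map $(x,z,z^-)\mapsto (z-F,\, z^- - G)$ at the point $(\rho,\cD(\rho),\cD^-(\rho))$. If the Jacobian $\det(I - J)$ is nonzero there, with $J$ the matrix of partial derivatives of $(F,G)$ in $(z,z^-)$, we obtain analytic functions $z(x)$ and $z^-(x)$ in a neighbourhood of $\rho$. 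These agree with $\cD$ and $\cD^-$ on $(\rho-\epsilon,\rho]$: uniqueness of the implicit solution applies because $\cD$ and $\cD^-$ are continuous on $[0,\rho]$ and hit the point. Consequently $\cD$ extends analytically beyond $\rho$. By Pringsheim's theorem, since $\cD$ has nonnegative coefficients, $\rho$ is a singularity of $\cD$, which is a contradiction.

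The main obstacle is verifying $\det(I-J)\neq 0$, and this is where Lemma~\ref{le:prenum} enters. I would bound all partial derivatives at the point using the numerical bounds; all these derivatives are nonnegative since $F$ and $G$ have nonnegative coefficients in $z$ and $z^-$.

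\begin{itemize}
\item From the square-root term, the derivative in either variable is $\frac{\rho^2}{2\sqrt{\cdots}}$. Bounding the radicand below using $\cL(\rho)\le 0.0922$ (the radicand equals $(1+\rho^2-\cL(\rho))^2$), this contribution is about $\rho^2/(2\cdot 0.98)\le 0.052$.
\item The derivative of $z^2/(1+z)$ is at most $2\cD(\rho)\le 0.32$.
\item From $x^2z+\frac{x^2}{2}z^2$ the contribution is at most $\rho^2(1+\cD(\rho))\le 0.12$.
\item $\partial_y M(\rho,1+\cD(\rho)) \le \frac{3}{1+\cD(\rho)}M\cdot(\text{const})$. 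More carefully, $\partial_y M \le \frac{3}{2}\rho^2(1+\cD)^2\,\cQ'(\tau)$ with $\cQ'(\tau)\le \cQ'(27/256)=16/9$, which gives roughly $0.3$.
\item The derivatives of $R_1$ and $R_2$ are controlled crudely by the same $\cQ'$ bound together with \eqref{eq:rbound1} and \eqref{eq:rbound2}, using the fact that the $s_2$ and $a_2$ variables carry tiny weights $\rho^2$ and $\cD_2(\rho)$.
\end{itemize}

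Showing the row sums of $J$ are below $1$ gives spectral radius below $1$, hence $\det(I-J)>0$. The $R_i$ derivatives are the delicate part, since their coefficients are only bounded by $q_n/2$ and $q_n$. I would bound them first by dominating $R_i$ by the $\sigma=\mathrm{id}$ term evaluated with the substitutions used in the proof of Lemma~\ref{le:prenum}. Failing this, I would use that only involutions contribute, so a nontrivial $\sigma$ forces many $a_2$ and $s_2$ factors and gives an extra small factor.

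Finally, if this route is too tight numerically, there is an alternative. Failure of invertibility would itself signal a square-root singularity of $\cD$ at $\rho$, giving coefficients of order $n^{-3/2}\rho^{-n}$. Comparing this with the count of polyhedral networks, whose coefficients dominate $q_n$-type terms, could then yield a contradiction. My main plan, however, is the row-sum bound.
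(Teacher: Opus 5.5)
Your overall strategy is the paper's: assume strict inequality, apply the analytic implicit function theorem to the system \eqref{eq:done}--\eqref{eq:dtwo}, and contradict Pringsheim. Replacing the determinant by a row-sum criterion would also be fine. The gap is in the step you yourself flag as the crux: with the estimates you propose, the Jacobian condition cannot be verified.

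Your bound $\partial_y M\le\frac32\rho^2(1+\cD(\rho))^2\cQ'(\tau)$ with $\cQ'(\tau)\le 16/9$ gives about $0.28$--$0.36$. The reason is that $\cQ'$ contains the term $q_1=1$, which is absent from $M$ (its sum starts at $n=2$). That term alone contributes about $\frac32\cdot\frac{27}{256}\approx0.158$, already more than the true bound for the whole derivative. At this stage the only information on $R_1,R_2$ is \eqref{eq:rbound1}--\eqref{eq:rbound2}. So $\partial_3R_1,\partial_5R_1$ can only be bounded by $\partial_2M$, and $\partial_3R_2,\partial_5R_2$ by $2\partial_2M$. Your loose constant therefore enters row one three times and row two four times. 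Row one then sums to roughly $0.05+0.25+0.12+3(0.28)+0.05>1$, row two also exceeds $1$, and the determinant is in fact negative.

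Your fallbacks do not rescue this:
\begin{itemize}
\item $a_2$ is substituted by $1+\cD_2(\rho)$, not by the tiny $\cD_2(\rho)$.
\item $s_2\mapsto\rho^2$ is no smaller than $s_1^2\mapsto\rho^2$.
\item Reflections can have linearly many m\"obius $1$-cycles, whose weight $1+\cD^-(\rho)$ is not small.
\item The square-root-singularity alternative does not yield a contradiction as stated.
\end{itemize}

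The missing ingredient is the exact evaluation using both identities in \eqref{eq:qeval}. Since $\rho^2(1+\cD(\rho))^3\le 27/256$ and $(1+\cD(\rho))^{-2}\le 1$,
\[
\partial_2 M(\rho,1+\cD(\rho))\le \frac12\sum_{n\ge2}(3n-1)q_n\Big(\frac{27}{256}\Big)^n=\frac32\cdot\frac{27}{256}\Big(\cQ'\big(\tfrac{27}{256}\big)-1\Big)-\frac12\Big(\cQ\big(\tfrac{27}{256}\big)-\frac{27}{256}\Big)=\frac{29}{256}.
\]
Combining \eqref{eq:rbound1}, \eqref{eq:rbound2}, $1+\cD_2(\rho)\le(1+\cD(\rho))^2$ and $\cD^-(\rho)\le\cD(\rho)$ then gives $\partial_3R_1,\partial_5R_1\le 29/256$ and $\partial_3R_2,\partial_5R_2\le 29/128$. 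With Lemma~\ref{le:prenum}, the paper bounds the Jacobian entries by $0.64825$, $0.16376$, $0.27704$ and $0.39661$, hence $\det\ge 0.166$. With these numbers your row sums are about $0.812$ and $0.674$, so your row-sum criterion would also close the argument once the sharp bound on $\partial_2M$ is in place.
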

\begin{proof}
	By Equation~\eqref{eq:almostrho} we know \[
	\rho^2(1 + \cD(\rho))^3 \le 27/256.
	\]
	Suppose that $\rho^2(1 + \cD(\rho))^3 < 27/256$. Then $M(x, 1 + z)$ is analytic at $(\rho, \cD(\rho))$. Additionally using Lemma~\ref{le:analyticsum}, it follows that~\eqref{eq:done} and~\eqref{eq:dtwo} yield an equation
	\[
		(\cD, \cD^-) = (\phi_1(x, \cD, \cD^-), \phi_2(x, \cD, \cD^-))
	\]
	with $\phi_1$ and $\phi_2$ analytic at $(\rho, \cD(\rho), \cD^{-}(\rho))$. We have
	\begin{align*}
		\frac{\partial \phi_1}{\partial \cD} &= \frac{x^2}{2\sqrt{1 + 2 x^2 + x^4 - \cL_2 - x^2(\cD + \cD^- )}} + \frac{\cD^2 + 2\cD}{(1 + \cD)^2} + x^2 + x^2\cD  \\
		&\quad\,+ \partial_2 M(x, 1 + \cD) + \partial_3 R_1(x, x^2, 1+ \cD, 1 + \cD_2, 1 + \cD^-) \\
		\frac{\partial \phi_1}{\partial \cD^-} &= \frac{x^2}{2\sqrt{1 + 2 x^2 + x^4 - \cL_2 - x^2(\cD + \cD^- )}} + \partial_5 R_1(x, x^2, 1+ \cD, 1 + \cD_2, 1 + \cD^-) \\
		\frac{\partial \phi_2}{\partial \cD} &= \frac{x^2}{2\sqrt{1 + 2 x^2 + x^4 - \cL_2 - x^2(\cD + \cD^- )}} + \partial_3 R_2(x, x^2, 1+ \cD, 1 + \cD_2, 1 + \cD^-) \\
		\frac{\partial \phi_2}{\partial \cD^-} &= \frac{x^2}{2\sqrt{1 + 2 x^2 + x^4 - \cL_2 - x^2(\cD + \cD^- )}} + \cD_2 - \cS_2 + x^2 + x^2 \cD^- \\
		&\quad\,+ \partial_5 R_2(x, x^2, 1 + \cD, 1 + \cD_2, 1 + \cD^-).
	\end{align*}		
	Using~\eqref{eq:qeval} it follows that
	\begin{align*}
	\partial_2 M(\rho, 1 + \cD(\rho)) 	&= \sum_{n \ge 2} \frac{q_n}{2} \rho^{2n}(1 + \cD(\rho))^{3n-2} (3n-1) \\
										&\le  \frac{1}{2}  \sum_{n \ge 2} q_n (\rho^2(1 + \cD(\rho))^3)^{n} (3n-1) \\
										&\le \frac{3}{2} \sum_{n \ge 2} q_n (27/256)^{n} n - \frac{1}{2}\sum_{n \ge 2} q_n (27/256)^{n}  \\
										&= \frac{3}{2} (27/256)(\cQ'(27/256) - 1) - \frac{1}{2}(\cQ(27/256) - 27/256)\\
										&= \frac{3}{2}  (27/256)(16/9 - 1) - \frac{1}{2}(1/8 - 27/256) \\
										&= \frac{29}{256}.
	\end{align*}
	By~\eqref{eq:rbound1} and $1 + \cD_2(\rho) \le (1+ \cD(\rho))^2$ and $\cD^-(\rho) \le \cD(\rho)$ we get
	\begin{align*}
		\partial_3 R_1(\rho, \rho^2, 1 + \cD(\rho), 1 + \cD_2(\rho), 1 + \cD^-(\rho)) &\le \partial_2 M(\rho, 1 + \cD(\rho)), \\
		\partial_5 R_1(\rho, \rho^2, 1 + \cD(\rho), 1 + \cD_2(\rho), 1 + \cD^-(\rho)) &\le \partial_2 M(\rho, 1 + \cD(\rho)).
	\end{align*}
	Likewise,~\eqref{eq:rbound2} yields
	\begin{align*}
		\partial_3 R_2(\rho, \rho^2, 1 + \cD(\rho), 1 + \cD_2(\rho), 1 + \cD^-(\rho))  &\le 2 \partial_2 M(\rho, 1 + \cD(\rho)) \\
		\partial_5 R_2(\rho, \rho^2, 1 + \cD(\rho), 1 + \cD_2(\rho), 1 + \cD^-(\rho)) &\le 2 \partial_2 M(\rho, 1 + \cD(\rho)).
	\end{align*}
	Lemma~\ref{le:prenum} and Proposition~\ref{pro:rho} yield
	\begin{align*}
		&\frac{\rho^2}{2\sqrt{1 + 2\rho^2 + \rho^4 - \cL_2(\rho) - \rho^2(\cD(\rho) + \cD^-(\rho) )}}\\ &\le \frac{\rho_{\mathrm{lab}}^2}{2\sqrt{1 + 2 (\rho_{\mathrm{lab}}/2)^2 + (\rho_{\mathrm{lab}}/2)^4 - \cL_2(\rho) - \rho_{\mathrm{lab}}^22 \cD(\rho  )}} \\
		&\le 0.05047.
	\end{align*}
	The function $(x^2 + 2x)/(1+x)^2$ is increasing in $x$. Hence
	\[
		\frac{\cD(\rho)^2 + 2\cD(\rho)}{(1 + \cD(\rho))^2} + \rho^2 + \rho^2\cD(\rho) \le  0.37121.
	\]
	Moreover,
	\[
		\cD_2(\rho) - \cS_2(\rho) + \rho^2 + \rho^2 \cD^-(\rho) \le \cD_2(\rho)  + \rho^2 + \rho^2 \cD(\rho) \le 0.11957
	\]
	So we arrive at
	\begin{align*}
		\frac{\partial \phi_1}{\partial \cD}(\rho, \cD(\rho), \cD^-(\rho)) &\le 0.05047 + 0.37121 + \frac{29}{128} &\le 0.64825 \\
		\frac{\partial \phi_1}{\partial \cD^-}(\rho, \cD(\rho), \cD^-(\rho)) &\le 0.05047 + \frac{29}{256} &\le 0.16376 \\
		\frac{\partial \phi_2}{\partial \cD}(\rho, \cD(\rho), \cD^-(\rho)) &\le 0.05047 + \frac{29}{128} &\le 0.27704 \\
		\frac{\partial \phi_2}{\partial \cD^-}(\rho, \cD(\rho), \cD^-(\rho)) &\le 0.05047 + 0.11957 + \frac{29}{128} &\le 0.39661.
	\end{align*}
	It follows that
	\begin{align*}
	&\det \begin{pmatrix}
		1- \frac{\partial \phi_1}{\partial \cD} & -\frac{\partial \phi_1}{\partial \cD^-}  \\
		-\frac{\partial \phi_2}{\partial \cD}  & 1 - \frac{\partial \phi_2}{\partial \cD^-} 
	\end{pmatrix}(\rho, \cD(\rho), \cD^-(\rho)) \\
	&\ge (1 - 0.64825)(1 - 0.39661) - 0.16376 \cdot 0.27704 \\
	&\ge 0.166.
	\end{align*}
	Since this value is non-zero the implicit function theorem applies, yielding that $\cD(x)$ is analytic in a neighbourhood of $x=\rho$. But this is a violation of Pringsheim's theorem~\cite[Thm. IV.6]{MR2483235}. Consequently, it must hold that\[
	\rho^2(1 + \cD(\rho))^3 = 27/256.
	\]
\end{proof}
Lemma~\ref{le:keynumeric} justifies calling unlabelled cubic planar graphs a critical graph class.
\begin{corollary}
	\label{co:mcomp}
	We have
	\begin{align*}
		M(\rho, 1 + \cD(\rho)) &= \frac{1}{1 +\cD(\rho)} \frac{5}{512},\\
		\partial_1 M(\rho, 1 + \cD(\rho)) &= \frac{1}{\rho(1+ \cD(\rho))} \frac{21}{256},\\
		\partial_2 M(\rho, 1 + \cD(\rho)) &= \frac{1}{(1 +\cD(\rho))^2} \frac{29}{256}.
	\end{align*}
\end{corollary}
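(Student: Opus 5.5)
The plan is to express $M$ and its partial derivatives at the point $(\rho, 1+\cD(\rho))$ through $\cQ$ and $\cQ'$ evaluated at $t := \rho^2(1+\cD(\rho))^3$. By Lemma~\ref{le:keynumeric} we have $t = 27/256$, so the values in \eqref{eq:qeval} apply. Writing $y = 1+\cD(\rho)$, the identity
\[
M(x,y) = \frac{1}{2y}\sum_{n\ge 2} q_n (x^2y^3)^n = \frac{1}{2y}\bigl(\cQ(x^2y^3) - x^2y^3\bigr)
\]
holds wherever the series converge, using $q_1 = 1$. Because all coefficients are nonnegative and $t$ equals the radius of convergence $27/256$ of $\cQ$, the finiteness of $\cQ(27/256)$ and $\cQ'(27/256)$ justifies evaluating $M$ and its termwise derivatives at the boundary point: by Abel/monotone convergence, the sums converge to these finite values. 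Then
\[
M(\rho,y) = \frac{1}{2y}\Bigl(\frac18 - \frac{27}{256}\Bigr) = \frac{1}{y}\,\frac{5}{512}.
\]

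For $\partial_1 M$, differentiate termwise in $x$. This gives
\[
\partial_1 M(\rho,y) = \frac{1}{2y}\sum_{n\ge2} q_n\, 2n\, \rho^{2n-1} y^{3n} = \frac{1}{\rho y}\sum_{n\ge 2} n q_n t^n = \frac{1}{\rho y}\, t\bigl(\cQ'(t)-1\bigr).
\]
With $t = 27/256$ and $\cQ'(t) = 16/9$, this equals $\frac{1}{\rho y}\cdot\frac{27}{256}\cdot\frac{7}{9} = \frac{1}{\rho y}\frac{21}{256}$.

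For $\partial_2 M$, differentiate in $y$. This gives
\[
\partial_2 M(\rho,y) = \frac{1}{2y^2}\sum_{n\ge2}(3n-1) q_n t^n = \frac{1}{y^2}\Bigl(\tfrac32 t(\cQ'(t)-1) - \tfrac12(\cQ(t)-t)\Bigr).
\]
This is exactly the computation already carried out in the proof of Lemma~\ref{le:keynumeric}, which yields $\frac{29}{256}$; now equality holds because $t = 27/256$ exactly. Dividing by $y^2$ gives the claim.

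The only delicate point is justifying termwise differentiation and evaluation exactly at the boundary of convergence. I would handle it by noting that for nonnegative-coefficient series, the one-sided derivative at the radius equals the (finite) termwise sum by monotone convergence. Alternatively, one can simply interpret $\partial_i M$ at this point as the termwise series, which is how it was used in Lemma~\ref{le:keynumeric}. Everything else is arithmetic.
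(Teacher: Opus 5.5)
Your proposal is correct and follows the paper's proof essentially verbatim. Both rewrite $M$ and its termwise partial derivatives as combinations of $\cQ(t)$ and $t\cQ'(t)$ at $t=\rho^2(1+\cD(\rho))^3=27/256$ (Lemma~\ref{le:keynumeric}) and substitute the values from \eqref{eq:qeval}; your monotone-convergence justification of termwise differentiation at the boundary point is a small addition that the paper leaves implicit.
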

\begin{proof}
	We have by Lemma~\ref{le:keynumeric} and~\eqref{eq:qeval}
	\begin{align*}
		M(\rho, 1+ \cD(\rho)) 	&= \frac{1}{1 +\cD(\rho)} (\cQ(27/256) - 27/256)/2 \\
		&=  \frac{1}{1 +\cD(\rho)} \frac{5}{512}
	\end{align*}
	and
	\begin{align*}
		\partial_1 M(\rho, 1 + \cD(\rho)) 	&= \sum_{n \ge 2} q_n n \rho^{2n-1}(1 + \cD(\rho))^{3n-1} \\
		&= \frac{1}{\rho(1+ \cD(\rho))} \sum_{n \ge 2} q_n n (27/256)^n \\
		&= \frac{(27/256)}{\rho(1+ \cD(\rho))} (\cQ'(27/256) - 1) \\
		&= \frac{(21/256)}{\rho(1+ \cD(\rho))}.
	\end{align*}
	Moreover, 
	\begin{align*}
		&(1 + \cD(\rho))^2 \partial_2 M(\rho, 1 + \cD(\rho)) \\
		&=  \sum_{n \ge 2} \frac{q_n}{2} \rho^{2n}(1 + \cD(\rho))^{3n} (3n-1) \\
		&=  \sum_{n \ge 2} \frac{q_n}{2} (27/256)^{n} (3n-1) \\
		&= \frac{3}{2} (27/256)(\cQ'(27/256) - 1) - \frac{1}{2}(\cQ(27/256) - 27/256)\\
		&= \frac{3}{2}  (27/256)(16/9 - 1) - \frac{1}{2}(1/8 - 27/256) \\
		&= \frac{29}{256}.
	\end{align*}
\end{proof}
We will require the following sharpened bounds:
\begin{corollary}
	\label{co:sharpy}
	We have
	\begin{align*}
		\cD(\rho) &\le 0.038 \\
		\cL(\rho) &\le 0.012 \\
		\cS(\rho) &\le 0.0014 \\
		\cP(\rho) &\le 0.0058 \\
		\cH(\rho) &\le \frac{5}{256} \\
		\cD_2(\rho) &\le 0.0004 \\
		\cL_2(\rho) &\le 0.000012.
	\end{align*}
	Furthermore,
	\[
					\rho \ge 0.307 
	\]
	and
	\begin{align*}
		\partial_2 M \left( \rho, 1 + \cD(\rho)  \right) &\le \frac{29}{256} \\
				\partial_3 R_1(\rho, \rho^2, 1 + \cD(\rho), 1 + \cD_2(\rho), 1 + \cD^-(\rho)) &\le \frac{29}{256} \\
				\partial_5 R_1(\rho, \rho^2, 1 + \cD(\rho), 1 + \cD_2(\rho), 1 + \cD^-(\rho)) &\le \frac{29}{256} \\
				\partial_3 R_2(\rho, \rho^2, 1 + \cD(\rho), 1 + \cD_2(\rho), 1 + \cD^-(\rho))  &\le \frac{29}{128} \\
				\partial_5 R_2(\rho, \rho^2, 1 + \cD(\rho), 1 + \cD_2(\rho), 1 + \cD^-(\rho)) &\le \frac{29}{128}.
	\end{align*}
	Finally,
	\[
		\Delta := \frac{\rho^2}{2\sqrt{1 + 2\rho^2 + \rho^4 - \cL_2(\rho) - \rho^2(\cD(\rho) + \cD^-(\rho) )}} \le 0.047.
	\]
\end{corollary}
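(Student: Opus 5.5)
The plan is to feed the identity $\rho^2(1+\cD(\rho))^3 = 27/256$ from Lemma~\ref{le:keynumeric} back into the iterative bounding scheme used in the proof of Lemma~\ref{le:prenum}. The key new input is a \emph{lower} bound on $\rho$. Lemma~\ref{le:keynumeric} gives
\[
\rho = \sqrt{27/256}\,(1+\cD(\rho))^{-3/2}.
\]
By Lemma~\ref{le:prenum} we have $\cD(\rho)\le 0.15724$, which yields $\rho \ge \sqrt{27/256}\cdot 1.15724^{-3/2}$. This is already a substantial improvement over $\rho_{\mathrm{lab}}/2$. I will then set $\rho_{\mathrm{l}}$ to this value and keep $\rho_{\mathrm{u}}=\rho_{\mathrm{lab}}$.

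Next I will rerun the chain of estimates \eqref{eq:dfirst}--\eqref{eq:final1}, with one change. Instead of bounding $\cH(\rho)$ by $2M(\rho,1+\cD(\rho))$, I will use the sharper information from Corollary~\ref{co:mcomp}. The term $M$ equals $\frac{5}{512}(1+\cD(\rho))^{-1}$, and the $R_1$ contribution is still at most $M$, so $\cH(\rho)\le 5/256$ persists. The remaining steps of each iteration are:
\begin{itemize}
\item bound $\cL(\rho)$ through \eqref{eq:lroot}, using the updated $\rho_{\mathrm{l}}$ inside the square root;
\item bound $\cS(\rho)=\cD^2/(1+\cD)$;
\item bound $\cD_2(\rho)\le\rho_{\mathrm{u}}^4\cD(\rho)$ and $\cL_2(\rho)\le \rho_{\mathrm{u}}^6\cL(\rho)$;
\item bound $\cP(\rho)$ via its equation;
\item sum the pieces to get a new bound on $\cD(\rho)$, which in turn feeds a new lower bound on $\rho$ through Lemma~\ref{le:keynumeric}.
\end{itemize}
Iterating this map numerically should converge to the stated values $\cD(\rho)\le 0.038$ and $\rho\ge 0.307$, together with the other listed bounds. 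The map is monotone, since all quantities are increasing in the upper bounds and decreasing in $\rho_{\mathrm{l}}$, so each iterate stays a valid bound.

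The derivative bounds then follow as in the proof of Lemma~\ref{le:keynumeric}. The bound $\partial_2 M\le 29/256$ is immediate from Corollary~\ref{co:mcomp}, because $(1+\cD(\rho))^{-2}\le 1$. The bounds on $\partial_3 R_i$ and $\partial_5 R_i$ are obtained from \eqref{eq:rbound1} and \eqref{eq:rbound2} by exactly the comparison used there, again with $1+\cD_2\le(1+\cD)^2$ and $\cD^-\le\cD$.

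Finally, I will bound $\Delta$ by substituting $\rho\le\rho_{\mathrm{lab}}$ in the numerator and $\rho\ge 0.307$ in the positive terms under the root. For the subtracted terms I will use $\cL_2(\rho)\le 0.000012$ and $\cD(\rho)+\cD^-(\rho)\le 2\cdot 0.038$. This gives roughly $0.1019/(2\sqrt{1.19-0.0078})\approx 0.047$.

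The main obstacle is purely numerical. I must ensure that the iteration really contracts to the claimed thresholds and that every step uses a monotone bound in the correct direction. Particular care is needed with $\cL$, whose expression mixes the lower bound $\rho_{\mathrm{l}}$ and the upper bound $\rho_{\mathrm{u}}$, and with the bound on $\Delta$, where the margin is small. I would carry this out with Mathematica, as in Lemma~\ref{le:prenum}, rounding outward at each step.
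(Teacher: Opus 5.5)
Your proposal is correct and matches the paper's proof. The paper likewise reruns the iteration of Lemma~\ref{le:prenum}, replacing $\rho_{\mathrm{l}}$ after each round by $\sqrt{(27/256)/(1+x)^3}$ with $x$ the current upper bound on $\cD(\rho)$ (via Lemma~\ref{le:keynumeric}), takes the derivative bounds from the proof of Lemma~\ref{le:keynumeric}, and bounds $\Delta$ exactly as you do; using Corollary~\ref{co:mcomp} for $\partial_2 M$ and for $\cH(\rho)\le 5/256$ is only a cosmetic variation.
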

\begin{proof}
	For any real number $x$ with $\cD(\rho) \le x$ it follows by Lemma~\ref{le:keynumeric}  that
	\[
		\rho = \sqrt{(27/256) / (1 + \cD(\rho))^3} \ge \sqrt{(27/256) / (1 + x)^3}.
	\]	
	Thus, we may argue identically as in the proof of Lemma~\ref{le:prenum}, but after each iteration we may replace the lower bound $\rho_{\mathrm{l}}$ (which got initialized with $\rho_{\mathrm{lab}}/2$ in the first round and stayed constant in the proof of Lemma~\ref{le:prenum}) by $\sqrt{(27/256) / (1 + x)^3}$ for $x$ the current upper bound for $\cD(\rho)$ in that iteration.
	
	This yields the upper bounds for $\cD(\rho)$, $\cL(\rho)$, $\cS(\rho)$, $\cP(\rho)$, $\cH(\rho)$, $\cD_2(\rho)$ and $\cL_2(\rho)$. The value of $\rho_{\mathrm{l}}$ in the last iteration becomes the lower bound for $\rho$.
	
	The bounds for the partial derivatives of $M$, $R_1$ and $R_2$ were already observed in the proof of Lemma~\ref{le:keynumeric}.

	Finally, using $\cD^-(\rho) \le \cD(\rho)$, we obtain
	\begin{align*}
	\Delta  &\le \frac{\rho_{\mathrm{lab}}^2}{2\sqrt{1 + 2 \rho_{\mathrm{l}}^2 + \rho_{\mathrm{l}}^4 - \cL_2(\rho) - 2 \rho_{\mathrm{lab}}^2 \cD(\rho  )}} \\
		&\le  0.047.
	\end{align*}
\end{proof}

\section{Counting through local large deviation methods}

\label{sec:countingtl}

\subsection{Symmetry-enriched trees}

Let us call a \emph{symmetry} a pair of a labelled network together with an automorphism that fixes the root edge and either swaps or fixes the endpoints of the root edge. Call the symmetry \emph{positive}, if it fixes the endpoints of the root edge, and \emph{negative} if it swaps the ends. Call it non-trivial if the automorphism of the symmetry is not the identity. The vertex set of a symmetry is the vertex set of the underlying network.

For a species $\cR$ of networks, let $\Sym_{\cR}^{+}[n]$ denote the set of pairs $(N,\sigma)$ such that $N$ is a network in $\cR$ with vertex set $[n]$ and $\sigma\in \Autp(N)$. Likewise, $\Sym_{\cR}^{-}[n]$ denotes the set of pairs $(N,\sigma)$ with $\sigma\in \Autm(N)$.

The equations of Walsh index series in Section~\ref{sec:dec} may be summarised as follows:
\begin{align*}
	W^+_{\cD} &= W^+_{\cL}  + W^+_{\cS}  + W^+_{\cP} + W^+_{\cH} \\
	W^-_{\cD} &= W^+_{\cL}  + W^-_{\cS}  + W^-_{\cP} + W^-_{\cH} \\ 
	W_{\cL}^+ &= \frac{s_1^2 a_1^3}{2}(W^+_{\cS}  + W^+_{\cP} + W^+_{\cH}) + \frac{a_1}{2} \left(W_{\cL}^+\right)^2 \\
	&\quad\,+ \frac{s_1^2a_1 a_2}{2} (W^-_{\cS} + W^-_{\cP} +  W^-_{\cH}) + \frac{s_1^2 a_1 }{2 s_2} W^+_{\cL,2} \\
	W_{\cS}^+ &= (W^+_{\cL}  + W^+_{\cP} + W^+_{\cH}) a_1 W_{\cD}^+ \\
	W_\cS^- &=  (b_1 +   a_2 W_\cD^-)  (W^+_{\cL, 2} + W^+_{\cP, 2} + W^+_{\cH, 2} ) \\
	W_{\cP}^+ &=  s_1^2a_1^3 W_\cD^+ + \frac{s_1^2}{2} \left( a_1^4(W_{\cD}^+)^2 + a_2^2 W_{\cD,2}^+ \right) \\
	W_{\cP}^- &=  s_2 a_2 b_1 W_\cD^- + \frac{s_2}{2} \left( a_2^2(W_{\cD}^-)^2 + a_2^2W_{\cD,2}^+\right) \\
	W_\cH^+ &= \sum_{n \ge 2} \frac{q_n}{2} s_1^{2n} (a_1 + a_1^2W_\cD^+)^{3n-1} + R_1(s_1,s_2, a_1 + a_1^2 W_\cD^+, a_2 + a_2^2 W_{\cD,2}^+, b_1 + a_2 W_\cD^-) \\
	W_\cH^- &= R_2(s_1,s_2, a_1 + a_1^2 W_\cD^+, a_2 + a_2^2 W_{\cD,2}^+, b_1 + a_2 W_\cD^-).
\end{align*}

As detailed in Section~\ref{sec:dec} these equations represent how any symmetry admits a canonical recursive decomposition in terms of other symmetries.  At each recursive step we record the class of the current component  and  obtain a multitype rooted tree  with vertex types in
\[
\Pi
:=
\{\cD,\cL,\cS,\cP,\cH,\cD^-,\cS^-,\cP^-,\cH^-, \cL_{2}, \cP_2, \cH_2,  \cD_2, R_1, \cX,\cY,\cZ\}.
\]
That is, each vertex of the tree is assigned a type from that set. 

The leaves have type $\cX$ or $\cY$, with $\cX$ representing vertices of the underlying network, and leaves of type $\cY$ represent cylindrical $1$-cycles of edges. The auxiliary type $\cZ$ isolates the first summand in the expression for $W_\cH^+$, that is, the case where the $3$-connected core carries the identity automorphism. The reason why we introduce these special vertices is because we will be particularly interested in keeping track of their numbers.

This way, a vertex of type $\cD$ always has a single child, with type in $\{\cL, \cS,\cP, \cH\}$. A vertex of type $\cD^-$ also always has a single child, with type in $\{\cL, \cS^-, \cP^-, \cH^-\}$. For a vertex of type $\cL$ there are four disjoint cases, corresponding to the four summands in the equation for $W_{\cL}^+$. In the first case, corresponding to $\frac{s_1^2 a_1^3}{2}(W^+_{\cS}  + W^+_{\cP} + W^+_{\cH})$, we have two children of type $\cX$, three children of type $\cY$, and one child with type in $\{\cS, \cP, \cH\}$. In the second case, corresponding to $\frac{a_1}{2} \left(W_{\cL}^+\right)^2$,  we have one child of type $\cY$ and two children of type $\cL$. In the third case, corresponding to $\frac{s_1^2a_1 a_2}{2} (W^-_{\cS} + W^-_{\cP} +  W^-_{\cH}) $,  we have two children of type $\cX$, one child of type $\cY$, and one child with type in $\{\cS^-, \cP^-, \cH^-\}$.  In the fourth case, corresponding to $\frac{s_1^2 a_1 }{2 s_2} W^+_{\cL,2}$, we have  one child of type $\cY$, and a single child of type $\cL_2$. The rules for the children of vertices of type $\cS$, $\cS^-$, $\cP$, and $\cP^-$ are determined analogously.

For any vertex $v$ with type  $\cL_2$, $\cP_2$, $\cH_2$, or $\cD_2$, we do not decompose the corresponding symmetry any further, but rather store it as a \emph{local decoration} that gets assigned to the vertex $v$. We add as children of $v$ as many type-$\cX$ leaves as there are vertices in the symmetry. 

A vertex with type $\cH$ has a single child, which has either type $\cZ$ or type $R_1$, depending on whether the symmetry with an underlying $\cH$-network we are decomposing at this step is equipped with the identity automorphism or a non-trivial automorphism. For a vertex  $v$ of type $\cZ$, we add children of type $\cX$ according to the number of vertices of the $3$-connected core (of the symmetry we are decomposing at this step). We store the $3$-connected core as local decoration at $v$. For each non-root edge of the $3$-connected core we either add a child of type $\cY$ to $v$ if we don't insert any component at that edge, or we add two children of type $\cY$ and one child of type $\cD$ to $v$ if we insert a component at that edge. In the latter case we also add the information to the local decoration that this child of type $\cD$ corresponds to this edge of the $3$-connected core.

For vertices with type $R_1$ and $\cH^-$ we proceed analogously: The $\cH$-symmetry we decompose at this step has a $3$-connected core on which it induces an automorphism. We store this $3$-connected core and its automorphism as local decoration. We add children of type $\cX$ according to the number of vertices in the $3$-connected core. For each cylindrical $1$-edge-cycle we either add a child of type $\cY$ if the edge has no component, or we add two children of type $\cY$ plus one child of type $\cD$ if a component gets inserted at that edge. In the latter case, we also add the information that this child of type $\cD$ corresponds to that edge to the local decoration. For each m\"obius $1$-edge-cycle we either add nothing if no component gets inserted at that edge, or we add a child of type $\cD^-$ if a component gets inserted at that edge. In the latter case we also add the information that this child corresponds to that edge in the $3$-connected core to the local decoration. For each cylindrical $2$-edge-cycle we either add nothing if no component gets inserted at that edge, or we add a single child of type $\cD_2$ if a component gets inserted at that cycle. In the latter case, we again store the information that this child corresponds to that cycle as local decoration.

We call this multitype symmetry-enriched tree along with the local decoration a \emph{symmetry-enriched tree}. 

Note that the type of a vertex, together with its children and local decoration, leaves no ambiguity in the type of network decomposition. For example, each of the four cases of adding children to a vertex of type $\cL$ corresponds to a unique set of children that identifies which of the four cases was applied in the decomposition.  Hence a symmetry-enriched tree allows us to reconstruct the symmetry up to labels of vertices. By construction:

\begin{itemize}
	\item The number of type $\cX$ leaves  in the tree equals the number of vertices of the symmetry. 
	\item The number of type $\cY$-vertices in the tree equals the number of cylindrical $1$-edge-cycles in the symmetry.
	\item The local decorations store (among other things) the $3$-connected components.
\end{itemize}

Note that the possible local decorations of a vertex in the tree are only constrained by the set of children of that vertex. For example, if a vertex $v$ of type $\cZ$ has exactly $k$ children of type $\cX$, then we know that the associated $3$-connected core has $k$ vertices. Furthermore, among the networks sharing the same undecorated tree, any $3$-connected core with $k$ vertices is equally likely to occur as decoration of $v$.

\subsection{Branching mechanism}

We use the equations of Walsh index series to define a multi-type branching process that will help us enumerate unlabelled cubic planar networks.  Each type of vertex produces offspring of different types according to its own distribution. The result is a multi-type plane tree.

We denote the offspring distribution of a vertex with type $\cR \in \Pi$ by $\xi_\cR$. We specify the distribution of $\xi_\cR$ through its probability generating series $\phi_\cR$ expressed as a power series in indeterminates $(x_{\cR})_{\cR \in \Pi}$, such that

\begin{align*}
	\phi_{\cD} &= \frac{\cL(\rho)}{\cD(\rho)} x_{\cL}  + \frac{\cS(\rho)}{\cD(\rho)} x_{\cS}  + \frac{\cP(\rho)}{\cD(\rho)} x_{\cP} + \frac{\cH(\rho)}{\cD(\rho)} x_{\cH} \\
	\phi_{\cD^-} &= \frac{\cL(\rho)}{\cD^-(\rho)} x_{\cL}  + \frac{\cS^-(\rho)}{\cD^-(\rho)} x_{\cS^-}  + \frac{\cP^-(\rho)}{\cD^-(\rho)} x_{\cP^-} + \frac{\cH^-(\rho)}{\cD^-(\rho)} x_{\cH^-}
\end{align*}
and
\begin{align*}
	\phi_{\cL} &= \frac{\rho^2 x_\cX^2 x_{\cY}^3}{2}\left(\frac{\cS(\rho)}{\cL(\rho)} x_{\cS}  + \frac{\cP(\rho)}{\cL(\rho)}x_{\cP} + \frac{\cH(\rho)}{\cL(\rho)}x_{\cH}\right) + \frac{x_{\cY}}{2} \cL(\rho) x_{\cL}^2 \\
	&\quad\,+ \frac{\rho^2 x_{\cX}^2x_{\cY} }{2} \left(\frac{\cS^-(\rho)}{\cL(\rho)}x_{\cS^-} + \frac{\cP^-(\rho)}{\cL(\rho)} x_{\cP^-} +  \frac{\cH^-(\rho)}{\cL(\rho)} x_{\cH^-}\right) + \frac{ x_{\cY} }{2 } \frac{x_{\cL_2} \cL_2(\rho)}{\cL(\rho)} \\
	\phi_\cS &= \left(\cL(\rho) x_{\cL}  + \cP(\rho)  x_{\cP} + \cH(\rho) x_{\cH}\right) x_{\cY} \frac{\cD(\rho)}{\cS(\rho)} x_{\cD} \\
	\phi_{\cS^-} &=  \frac{1}{\cS^-(\rho)}(1 +    \cD^{-}(\rho) x_{\cD^-})  (\cL_2(\rho) x_{\cL_2} + \cP_2(\rho)x_{\cP_2} + \cH_2(\rho )x_{\cH_2} ) \\
	\phi_{\cP} &= \frac{1}{\cP(\rho)} \left( \rho^2 x_{\cX}^2x_{\cY}^3 \cD(\rho) x_\cD + \frac{\rho^2 x_{\cX}^2}{2} \left( x_{\cY}^4\cD(\rho)^2 x_{\cD}^2  +  \cD_2(\rho)x_{\cD_2} \right) \right) \\
	\phi_{\cP^-} &=  \frac{1}{\cP^-(\rho)} \left( \rho^2x_{\cX}^2 \cD^-(\rho) x_{\cD^-} + \frac{\rho^2x_{\cX}^2}{2} \left( \cD^-(\rho)^2 x_{\cD^-}^2  + \cD_2(\rho)x_{\cD_2}\right) \right) 
\end{align*}
and
\begin{align*}
	\phi_\cH &= \frac{1}{\cH(\rho)}  \frac{5}{512 (1 + \cD(\rho))} x_{\cZ} \\
	&+ \frac{1}{\cH(\rho)} R_1(\rho, \rho^2, 1 + \cD(\rho) , 1 +  \cD_2(\rho), 1 +  \cD^-(\rho) ) x_{R_1} \\
	\phi_{\cH^-} &= \frac{1}{\cH^-(\rho)} R_2(\rho x_{\cX},(\rho x_{\cX})^2, x_{\cY} + x_{\cY}^2 \cD(\rho) x_\cD , 1 +  \cD_2(\rho)x_{\cD_2}, 1 +  \cD^-(\rho) x_{\cD^-}) \\
	\phi_{\cZ} &= \frac{512 (1 + \cD(\rho))}{5} \sum_{n \ge 2} \frac{q_n}{2} (\rho x_{\cX})^{2n} (x_{\cY} + x_{\cY}^2 \cD(\rho) x_\cD)^{3n-1} \\
		\phi_{R_1} &= \frac{R_1(\rho x_{\cX}, (\rho x_{\cX})^2, x_{\cY} + x_{\cY}^2 \cD(\rho) x_\cD , 1 +  \cD_2(\rho)x_{\cD_2}, 1 +  \cD^-(\rho) x_{\cD^-})}{R_1(\rho, \rho^2, 1 + \cD(\rho) , 1 +  \cD_2(\rho), 1 +  \cD^-(\rho) )}
	\end{align*}
and
	\begin{align*}
	\phi_{\cD_2} &= \frac{\cD_2(\rho x_\cX)}{\cD_2(\rho)} \\
	\phi_{\cP_2} &= \frac{\cP_2(\rho x_\cX)}{\cP_2(\rho)} \\
	\phi_{\cH_2} &= \frac{\cH_2(\rho x_\cX)}{\cH_2(\rho)} \\
	\phi_{\cL_2} &= \frac{\cL_2(\rho x_\cX)}{\cL_2(\rho)} \\
	\phi_{\cX} &= 1 \\
	\phi_{\cY} &= 1.
\end{align*}

Let $\mT$ denote the tree generated by starting this multi-type branching process with a single vertex of type $\cD$. For each type $\cR$ of vertices we let $\#_\cR \mT$ denote the number of vertices with that type in $\mT$. We defined the offspring distribution precisely so that
\[
	\Exb{ x_1^{\#_\cX \mT} x_2^{\#_\cY \mT} } = 	\frac{1}{\cD(\rho)} W_\cD^+(\rho x_1, (\rho x_1)^2, \ldots; x_2, 1,1, \ldots; 1,1, \ldots).
\]

We form a symmetry-enriched tree from $\mT$ by adding uniform random local decorations at each vertex of $\mT$ with type in $\{\cZ, R_1, \cH^-, \cD_2, \cP_2, \cH_2, \cL_2\}$: 

If a vertex $v$ has type $\cZ$ with $k$ children of type $\cX$ and $\ell$ children of type $\cD$, we assign to it as local decoration a uniformly at random selected $3$-connected cubic planar graph with an oriented root edge and vertex set $[k]$. We match the $\ell$ children of type $\cD$  in a uniformly at random selected way to $\ell$ non-root edges of this $3$-connected cubic planar graph. 

If $v$ has type $R_1$ with $k$ children of type $\cX$, $\ell$ children of type $\cD$, $\ell^-$ children of type $\cD^-$, $m$ children of type $\cD_2$, then we endow $v$ with a local decoration chosen uniformly from all compatible positive symmetries of a $3$-connected cubic planar graph with an oriented root edge on the vertex set $[k]$, together with bijections from the children of $v$ of type $\cD$, $\cD^-$, and $\cD_2$  to the corresponding cylindrical $1$-cycles, m\"obius $1$-cycles, and cylindrical $2$-cycles. If $v$ has type $\cH^-$ we proceed analogously, just with a negative symmetry instead.

If $v$ has type $\cR_2$ for $\cR \in \{\cD, \cP, \cH, \cL \}$ with $m$ children of type $\cX$, we store a local decoration at $v$ given by a uniformly at random selected  positive $\cR$-symmetry generated by first selecting a symmetry  with vertex set $[m/2]$ and forming the cycle composition with an identical copy of itself, resulting after a uniform relabelling in a symmetry of size $m$.

The symmetry-enriched tree resulting from adding these local decorations to the tree $\mT$ corresponds to a symmetry, whose vertices we label uniformly at random with labels in the set $[\#_{\cX} \mT]$, the integers from $1$ to the number of type $\cX$ vertices in $\mT$. By construction, conditional on a given number $n$ of type $\cX$ vertices the symmetry is uniformly distributed on $\Sym^+_{\cD}[n]$.

\subsection{Branching structure}

Let $\mS$ denote the result of pruning away all descendants of non-root type $\cD$ vertices in $\mT$. We may view $\mS$ as a branching process like~$\mT$, except that non-root type $\cD$ vertices are infertile. Let $\mU$ denote the result of pruning away all descendants of vertices of type $\cZ$ in $\mS$.

\begin{lemma}
	\label{le:le1}
	The total number of vertices in $\mU$ has finite exponential moments.
\end{lemma}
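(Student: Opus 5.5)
The plan is to treat $\mU$ as a multitype Galton--Watson tree and to show that its offspring generating functions admit a strict \emph{supersolution} slightly above the all-ones vector. In $\mU$ the vertices of type $\cZ$, type $\cX$, type $\cY$ and the non-root vertices of type $\cD$ are leaves. Every other vertex reproduces according to the corresponding $\phi_\cR$, with the variables $x_\cD$ (for non-root vertices) and $x_\cZ$ frozen to constants.

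\textbf{Step 1: the relevant offspring generating functions are analytic beyond $1$.} Concretely, I will show there is $\epsilon>0$ such that all $\phi_\cR$ with $\cR$ fertile in $\mU$ are analytic at $(1+\epsilon,\dots,1+\epsilon)$.
\begin{itemize}
\item For $\phi_{\cD_2},\phi_{\cP_2},\phi_{\cH_2},\phi_{\cL_2}$ this holds because $\rho<1$, so that for instance $\cD_2(\rho x)=\cD(\rho^2x^2)$ converges for $x$ near $1$. This uses Proposition~\ref{pro:rho} and Corollary~\ref{cor:rho}.
\item For $\phi_{R_1}$ and $\phi_{\cH^-}$ this is exactly the analyticity of $R_1,R_2$ at the relevant point. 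It was established in the proof of Lemma~\ref{le:analyticsum}, using the factor $O(c^n)$ from Proposition~\ref{pro:H}.
\item All remaining $\phi_\cR$ are polynomials in the variables.
\item The only non-analytic generating function, $\phi_\cZ$ (which involves $M$ at criticality by Lemma~\ref{le:keynumeric}), never enters, because $\cZ$-vertices are infertile in $\mU$.
\end{itemize}

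\textbf{Step 2: reduce to a subcriticality statement.} I will use the standard criterion: if there are $t>1$ and a vector $z=(z_\cR)$ with $z_\cR>1$ such that $t\,\phi_\cR(z)\le z_\cR$ for every fertile type $\cR$, then $\Ex{t^{|\mU|}}\le z_\cD<\infty$. Here leaf types contribute the factor $t$ only, i.e.\ we set $z=t$ on them. The proof is an induction on the truncation depth combined with monotone convergence.

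By a first-order Taylor expansion around $\bm 1$, such $z=\bm 1+\eta v$ and $t=1+\eta\delta$ exist for small $\eta$ as soon as two conditions hold:
\begin{itemize}
\item the mean matrix $A$ of the process restricted to the fertile types has spectral radius $<1$;
\item we choose $v>0$ with $Av<v$, for example $v=(I-A)^{-1}\bm 1$, which is positive when $A\ge 0$ has spectral radius below $1$.
\end{itemize}
Leaf types only add bounded linear terms, which are absorbed by the slack $v-Av$.

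\textbf{Step 3 (main obstacle): show the spectral radius of $A$ is below $1$.} This is the main difficulty, and it is verified numerically from Corollary~\ref{co:sharpy}. Since non-root $\cD$ is a leaf and $\cH\to\cZ$ is cut, cycles in the type graph of $\mU$ can only pass through two routes.

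\emph{The $\cL$-loops.} These are $\cL\to\cL\cL$ and $\cL\to\cS,\cP,\cH\to\cL$. They carry weights of order $\cL(\rho)\le0.012$, together with $\cP(\rho)$, $\cS(\rho)$ and the $\cH\to R_1$ probability, all of which are small.

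\emph{The $\cD^-$-loop.} This runs $\cD^-\to\cS^-,\cP^-,\cH^-\to\cD^-$. Its expected $\cD^-$-offspring count is controlled by $\partial\phi_2/\partial\cD^-$ from the proof of Lemma~\ref{le:keynumeric}, with $\partial_5R_2\le 29/128$, $\Delta\le0.047$ and $\cD_2(\rho)+\rho^2+\rho^2\cD(\rho)\le0.12$. Note that the $\partial_2 M$ contribution is absent here because $\cZ$ is infertile. This gives a total of about $0.4<1$.

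\emph{Coupling terms.} The $R_1$- and $\cH^-$-rows are bounded via $\partial_3R_i,\partial_5R_i$ as in Corollary~\ref{co:sharpy}.

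I would bound the spectral radius by exhibiting an explicit positive vector $v$ with $Av\le(1-\delta)v$, with all entries estimated using those numerical bounds. Equivalently, one can check a row-sum bound after a diagonal rescaling by the masses $\cR(\rho)$. This turns the determinant computation of Lemma~\ref{le:keynumeric}, with the $M$-term removed, into a Perron--Frobenius statement.
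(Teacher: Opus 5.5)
Your Steps~1 and~2 match the paper. The paper also invokes Proposition~\ref{pro:H} (with $\cZ$ infertile, so $M$ never enters) to get exponentially integrable offspring laws, and then uses that a subcritical finite-type process with such laws has exponentially integrable total progeny; your supersolution induction is an explicit version of that step.

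The real difference is Step~3, which the paper settles without any numerics. $\mT$ is a.s.\ finite by construction, so by the extinction criterion its mean matrix has spectral radius at most $1$. The block on the types $\cD,\cZ,\cL,\cS,\cP,\cH,\cD^-,\cS^-,\cP^-,\cH^-,R_1$ is irreducible. Making non-root $\cD$ and $\cZ$ infertile zeroes two of its rows, so strict monotonicity of the Perron root gives spectral radius $<1$.

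Your numerical route also works with Corollary~\ref{co:sharpy}, but the sketch needs repair in three places.
\begin{itemize}
\item In $\mU$ the type $\cP$ never returns to $\cL$: its offspring types are $\cX,\cY,\cD_2$ and $\cD$, and none of these leads back once non-root $\cD$ is a leaf.
\item $\cH$ returns to $\cL$ only via $R_1\to\cD^-\to\cL$, not directly.
\item The row-sum shortcut fails. A $\cD^-$-vertex always has exactly one child, of fertile type, so that row of the mean matrix sums to exactly $1$, and to $4$ in the Jacobian. You genuinely need a nontrivial weight vector.
\end{itemize}

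One clean way to get the weight vector is to eliminate the pass-through types. This leaves a Jacobian $J$ in $(\cL,\cD^-)$ with the following entries:
\begin{itemize}
\item $\cL(\rho)+\tfrac{\rho^2}{2}\cD(\rho)\approx 0.014$;
\item $\tfrac{\rho^2}{2}\bigl((1+\cD(\rho))\partial_5R_1+\cD_2(\rho)-\cS_2(\rho)+\rho^2(1+\cD^-(\rho))+\partial_5R_2\bigr)\approx 0.023$;
\item $1$;
\item $\cD_2(\rho)-\cS_2(\rho)+\rho^2(1+\cD^-(\rho))+\partial_5R_2\approx 0.33$.
\end{itemize}
With these values $I-J$ is a nonsingular M-matrix, which is what you need.

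The paper's argument is shorter and independent of numerical input, so it transfers to other critical classes. Yours buys an explicit subcriticality margin and hence an explicit admissible exponent $t$.
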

\begin{proof}
	By construction, the branching process $\mT$ terminates almost surely. All types are reachable from the root, hence the spectral radius of the mean matrix $M$ of the offspring distribution is less than or equal to $1$~\cite[Thm. 10.1]{zbMATH03190745}.
	
	If we order the types in  $\Pi$ so that $\cD$, $\cZ$ come first and $\cD_2, \cP_2, \cH_2, \cL_2, \cY, \cX$ come last, then the mean matrix $M$ (with the coefficient in the $i$th row and $j$th column denoting the average number of children with the $j$th type of a vertex with the $i$th type) of the offspring distribution has the shape
	\[
		M= \begin{pmatrix}
			A & B \\ 0 & C 
		\end{pmatrix}
	\]
	with $A$ an irreducible matrix and $C$ a $6 \times 6$ matrix with the first $4$ columns equal to zero and the last two rows equal to zero. In particular, $0$ is the only eigenvalue of $C$.
	
	The tree $\mU$ is the result of modifying the offspring distribution so that vertices of type $\cD$ (other than the root vertex) and of type $\cZ$ become infertile. The corresponding mean matrix $\bar{M}$ is obtained from $M$ by  setting the first two rows to zero. Since $A$ is irreducible, it follows by the monotonicity property~\cite[Satz III]{zbMATH03054135} that the result of setting its first two rows to zero has spectral radius strictly less than the spectral radius of $A$, i.e. it is strictly smaller than $1$. Consequently, the spectral radius of $\bar{M}$ is strictly smaller than $1$. Hence this modified branching process is subcritical. 
	
	Since we modified the type $\cZ$ to become infertile, by Proposition~\ref{pro:H}, this modified branching process (which starts with a  single random  root vertex with random type chosen according to the probability generating series $\phi_\cD$) has an offspring distribution with finite exponential moments.
	
	For a subcritical branching process with finitely many types whose offspring distribution has finite exponential moments the total population has finite exponential moments as well. (This is a straightforward application of the implicit function theorem.) Hence the number of vertices in $\mU$ has finite exponential moments and the proof is complete.
\end{proof}

We set 
\begin{align*}
	(X,Y,L) &:= (\#_{\cX} \mS, \#_{\cY} \mS, \#_{\cD} \mS-1), \\
	(X_0, Y_0, L_0, Z) &:= (\#_{\cX} \mU, \#_{\cY} \mU, \#_{\cD} \mU-1, \#_{\cZ} \mU).
\end{align*}
By Lemma~\ref{le:le1}, $(X_0,  Y_0, L_0, Z)$ has finite exponential moments. We let $(\bar{X}, \bar{Y}, \bar{L})$ denote a triple of  nonnegative random variables with distribution given by the trivariate probability generating series $\phi_{\cZ}(x_{\cX}, x_{\cY}, x_{\cD})$. 	Note that
\begin{align}
	\label{eq:bary}
\bar{Y}= \frac{3}{2}\bar{X} -1 + \bar{L}.
\end{align}
We also set
\begin{align*}
	\mu &:= \frac{\cD(\rho)}{1 + \cD(\rho)}.
\end{align*}

\begin{proposition}
	\label{pro:expect}
	We have
	\begin{align*}
		\Exb{\bar{X}} &= \frac{42}{5},  \\
		\Exb{\bar{Y}} &= \frac{58}{5}(1 +  \mu), \\
		\Exb{\bar{L}} &= \frac{58}{5} \mu, \\
		\Exb{X} &= \Exb{X_0} + \Ex{Z} \Exb{\bar{X}}  < \infty, \\
		\Exb{Y} &= \Exb{Y_0} + \Ex{Z} \Exb{\bar{Y}}  < \infty, \\
		\Exb{L} &= \Exb{L_0} + \Ex{Z} \Exb{\bar{L}}  < \infty.
	\end{align*}
\end{proposition}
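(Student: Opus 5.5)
The plan has two parts. First, compute the moments of $(\bar X,\bar Y,\bar L)$ directly from the probability generating series $\phi_\cZ$ using Corollary~\ref{co:mcomp}. Second, obtain the decomposition of $(X,Y,L)$ from the branching structure of $\mS$ relative to $\mU$.

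For the first part, write $\phi_\cZ(x_\cX,x_\cY,x_\cD) = \frac{512(1+\cD(\rho))}{5} M\big(\rho x_\cX,\, x_\cY + x_\cY^2\cD(\rho)x_\cD\big)$. Note that $\phi_\cZ(1,1,1)=1$ is exactly the first identity of Corollary~\ref{co:mcomp}, since $x_\cY+x_\cY^2\cD(\rho)x_\cD$ evaluates to $1+\cD(\rho)$.

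Differentiating in $x_\cX$ at $(1,1,1)$ gives
\[
\Exb{\bar X}=\tfrac{512(1+\cD(\rho))}{5}\,\rho\,\partial_1 M(\rho,1+\cD(\rho)),
\]
and by Corollary~\ref{co:mcomp} this equals $\tfrac{512}{5}\cdot\tfrac{21}{256}=\tfrac{42}{5}$.

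Differentiating in $x_\cD$ gives
\[
\Exb{\bar L}=\tfrac{512(1+\cD(\rho))}{5}\,\cD(\rho)\,\partial_2 M = \tfrac{512}{5}\cdot\tfrac{29}{256}\cdot\tfrac{\cD(\rho)}{1+\cD(\rho)} = \tfrac{58}{5}\mu .
\]

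Differentiating in $x_\cY$ gives the factor $1+2\cD(\rho)$ in place of $\cD(\rho)$, so $\Exb{\bar Y}=\tfrac{58}{5}\cdot\tfrac{1+2\cD(\rho)}{1+\cD(\rho)}=\tfrac{58}{5}(1+\mu)$.

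As a consistency check, \eqref{eq:bary} gives $\tfrac32\cdot\tfrac{42}{5}-1+\tfrac{58}{5}\mu = \tfrac{58}{5}+\tfrac{58}{5}\mu$, which matches. All of these evaluations are legitimate because $\rho^2(1+\cD(\rho))^3=27/256$ by Lemma~\ref{le:keynumeric}, and $\cQ$, $\cQ'$ are finite there by \eqref{eq:qeval}. Since all terms are nonnegative, term-by-term differentiation at the boundary of convergence is justified by monotone convergence.

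For the second part, $\mS$ is obtained from $\mU$ by attaching, at each of the $Z$ type-$\cZ$ vertices of $\mU$, its offspring in $\mS$. Within $\mS$ the offspring of a $\cZ$-vertex consists only of $\cX$-leaves, $\cY$-leaves and non-root $\cD$-vertices, all of which are infertile in $\mS$. So each $\cZ$-vertex contributes exactly one independent copy of $(\bar X,\bar Y,\bar L)$ to the counts. By the branching property, conditionally on $\mU$ these copies are i.i.d. and independent of $\mU$. Hence $(X,Y,L)$ is distributed as $(X_0,Y_0,L_0)$ plus a sum of $Z$ i.i.d. copies of $(\bar X,\bar Y,\bar L)$.

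Wald's identity, applied with nonnegative summands so no integrability issue arises, then yields $\Exb{X}=\Exb{X_0}+\Ex{Z}\Exb{\bar X}$, and similarly for $Y$ and $L$. Finiteness follows because $X_0,Y_0,L_0,Z$ have finite exponential moments by Lemma~\ref{le:le1}, and the moments of the barred variables computed above are finite.

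The main obstacle is a bookkeeping check rather than an analytic one: one must verify that the pruning defining $\mU$ indeed removes exactly the $\cZ$-offspring, and that $L$ counts non-root $\cD$-vertices. Specifically, the $\cD$-children of $\cZ$-vertices are not counted in $L_0$ but are counted in $\bar L$, whereas the $\cD$-vertices hanging from $R_1$ and $\cH^-$ vertices or elsewhere remain in $\mU$ and are counted by $L_0$.
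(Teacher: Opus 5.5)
Your proposal is correct and follows essentially the paper's proof. The moments of $(\bar X,\bar Y,\bar L)$ come from differentiating $\phi_\cZ$ at $(1,1,1)$: you route this through Corollary~\ref{co:mcomp}, which is the same computation the paper does directly from Lemma~\ref{le:keynumeric} and \eqref{eq:qeval}, and you compute $\Exb{\bar Y}$ directly with \eqref{eq:bary} as a check rather than deriving it from \eqref{eq:bary}. The remaining identities follow from $(X,Y,L)\eqdist(X_0,Y_0,L_0)+\sum_{i=1}^Z(\bar X_i,\bar Y_i,\bar L_i)$ and Wald's identity, and your bookkeeping on why each $\cZ$-vertex contributes one independent copy of $(\bar X,\bar Y,\bar L)$ simply spells out what the paper calls ``by construction''.
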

\begin{proof}
	From
	\[
		\phi_{\cZ} = \frac{512 (1 + \cD(\rho))}{5} \sum_{n \ge 2} \frac{q_n}{2} (\rho x_{\cX})^{2n} (x_{\cY} + x_{\cY}^2 \cD(\rho) x_\cD)^{3n-1}
	\]
	we obtain by Lemma~\ref{le:keynumeric} and~\eqref{eq:qeval} that
	\begin{align*}
		\Exb{\bar{L}} &= \frac{256}{5} \mu \sum_{n \ge 2} q_n \left( \frac{27}{256}\right)^{n}(3n-1) \\
		&= \frac{256}{5} \mu \left(\frac{81}{256} \cQ'(27/256) - \cQ(27/256) - \frac{54}{256} \right) \\
		&= \frac{58}{5} \mu
	\end{align*}
	and
	\begin{align*}
		\Exb{\bar{X}} &= \frac{512}{5}  \sum_{n \ge 2}  q_n \left( \frac{27}{256} \right)^{n}n \\
		&= \frac{512}{5}  \left( \frac{27}{256} \cQ'(27/256)  - \frac{27}{256}\right) \\
		&= \frac{42}{5} .
	\end{align*}
	The equation for $\Exb{\bar{Y}}$ now follows from~\eqref{eq:bary}.
	
	With $(\bar{X}_i, \bar{Y}_i, \bar{L}_i)_{i \ge 1}$ denoting independent copies of $(\bar{X}, \bar{Y}, \bar{L})$ that are also independent of  $\mU$, we have by construction
	\begin{align}
		(X, Y, L) \eqdist (X_0, Y_0, L_0) + \sum_{i=1}^Z (\bar{X}_i, \bar{Y}_i, \bar{L}_i).
	\end{align}
	As noted above, $(X_0, Y_0, L_0, Z)$ have finite exponential moments by Lemma~\ref{le:le1}. The expressions for and finiteness of the moments of $X$, $Y$ and $L$ follow by Wald's formula.
\end{proof}

\begin{lemma}
	\label{le:le3}
	With \[
		\vartheta= (\rho,\rho^2,1+\cD(\rho),1+\cD_2(\rho),1+\cD^-(\rho))
	\]
	and $\Delta$ as defined in Corollary~\ref{co:sharpy} set
	\begin{align*}
		a_{11} &=  1 - \left(\Delta  + 
		\frac{  \cD(\rho) }{ 1 +  \cD(\rho) } \right), \\
		a_{12} &= - \Delta - \partial_5 R_1(\vartheta), \\
		a_{21} &= -\Delta,  \\
		a_{22} &= 1 - \left(\Delta + \cD_2(\rho) - \cS_2(\rho) + \rho^2 (1 +  \cD^-(\rho)) +  \partial_5 R_2(\vartheta) \right), \\
		b_1 &=  \frac{ \cD(\rho) }{ \left( 1 +  \cD(\rho) \right)^2 } + \rho^2(1 +  \cD(\rho) ) +  \frac{29}{256(1 +\cD(\rho))^2} 
		+  \partial_3 R_1(\vartheta), \\
		b_2 &=   \partial_3 R_2(\vartheta), \\
		\tilde{b}_1 &= \frac{1}{\cD(\rho)(1 +\cD(\rho))} \frac{5}{512}
	\end{align*}
	and
	\begin{align*}
		\hat{b}_1 	&= \frac{1}{\cD(\rho)} \Big(\frac{\rho^2 \left(\cD(\rho) + \cD^-(\rho) - 2\cL(\rho) \right) + \frac{\rho}{2} \cL_2'(\rho)}{1 + \rho^2 - \cL(\rho)} + 2 \rho^2 \cD(\rho) + \rho^2 \left(\cD(\rho)^2 + \cD_2(\rho)\right) \\
		&\quad+ \frac{\rho^3}{2} \cD_2'(\rho) + \frac{21}{256 (1+\cD(\rho))} + \rho \partial_1 R_1(\vartheta) +2 \rho^2 \partial_2 R_1(\vartheta) + \rho \cD_2'(\rho) \partial_4 R_1(\vartheta) \Big),\\
		\hat{b}_2 	&= \frac{1}{\cD(\rho)} \Big(\frac{\rho^2 \left(\cD(\rho) + \cD^-(\rho) - 2\cL(\rho)\right) + \frac{\rho}{2} \cL_2'(\rho)}{1 + \rho^2 - \cL(\rho)} + (1 + \cD^-(\rho))\rho \left(\cD_2'(\rho) - \cS_2'(\rho)\right)\\
		&\quad + 2 \rho^2 \cD^-(\rho) + \rho^2 \left(\cD^-(\rho)^2 + \cD_2(\rho)\right) + \frac{\rho^3}{2} \cD_2'(\rho) + \rho \partial_1 R_2(\vartheta) + 2 \rho^2 \partial_2 R_2(\vartheta) \\
		&\quad+ \rho \cD_2'(\rho) \partial_4 R_2(\vartheta)\Big), 
	\end{align*}
	and
	\begin{align*}
		\check{b}_1 	&= \frac{1}{\cD(\rho)} \Big(\frac{\rho^2 \left(3\cD(\rho) + \cD^-(\rho) - 4\cL(\rho) \right) + \cL(\rho)^2 + \cL_2(\rho)}{2(1 + \rho^2 - \cL(\rho))} + \frac{\cD(\rho)^2}{\left(1+\cD(\rho)\right)^2}\\
		&\quad + \rho^2 \cD(\rho)\left(3 + 2\cD(\rho)\right) + \left(1 + 2\cD(\rho)\right)\left(\frac{29}{256(1+\cD(\rho))^2} + \partial_3 R_1(\vartheta)\right) \Big) \\
		\check{b}_2 &= \frac{1}{\cD(\rho)} \Big(\frac{\rho^2 \left(3\cD(\rho) + \cD^-(\rho) - 4\cL(\rho)\right) + \cL(\rho)^2 + \cL_2(\rho)}{2(1 + \rho^2 - \cL(\rho))} + \left(1 + 2\cD(\rho)\right)\partial_3 R_2(\vartheta)\Big).
	\end{align*}
	We have
	\begin{align*}
		\Exb{L} &= \frac{a_{22}b_1 - a_{12}b_2  } {a_{11}a_{22} - a_{21}a_{12}} < 1, \\
		\Ex{Z} &=  \frac{a_{22}\tilde{b}_1  } {a_{11}a_{22} - a_{21}a_{12}}, \\
		\Ex{X} &= \frac{a_{22}\hat{b}_1 - a_{12}\hat{b}_2  } {a_{11}a_{22} - a_{21}a_{12}}, \\
		\Ex{Y} &=  \frac{a_{22}\check{b}_1 - a_{12}\check{b}_2 } {a_{11}a_{22} - a_{21}a_{12}}.
	\end{align*}
\end{lemma}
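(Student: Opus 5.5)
The plan is to compute all four expectations by a first-moment (generating-function derivative) analysis of the branching process $\mS$, obtaining a $2\times 2$ linear system. Its two unknowns are the expectations for the process started at a root of type $\cD$ and at a root of type $\cD^-$. For $\epsilon\in\{+,-\}$, let $F^\epsilon(u_{\cX},u_{\cY},u_{\cD},u_{\cZ})$ be the joint probability generating series of the numbers of type-$\cX$, type-$\cY$, non-root type-$\cD$ and type-$\cZ$ vertices of $\mS$, started at a root of type $\cD$ (for $+$) or $\cD^-$ (for $-$). In $\mS$ the non-root $\cD$ vertices are infertile, but vertices of type $\cL,\cS,\cP,\cH,\cD^-,\ldots$ keep reproducing. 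Unfolding the offspring rules one generation gives $\cD(\rho)F^+$ and $\cD^-(\rho)F^-$ as the right-hand sides of \eqref{eq:done} and \eqref{eq:dtwo}, with $x=\rho$, evaluated in a marked form.

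In that marked form, every occurrence of $\cD$ or $\cD^-$ that the offspring rules produce as an actual child (the factor $\cD$ in $\cS=(\cD-\cS)\cD$, the $\cD$ children in $\phi_\cP$ and $\phi_\cZ$, and the arguments $1+\cD$, $1+\cD^-$ of $R_i$) is replaced by $u_{\cD}\cD(\rho)$, respectively by $\cD^-(\rho)F^-$. Every occurrence that merely re-encodes recursive non-$\cD$ vertices (the combination $\cD-\cS$, the $\cL$ children of $\cL$, and the $\cS,\cP,\cH$ children of $\cL$ hidden inside $\cD+\cD^-$ in \eqref{eq:lroot}) is replaced by the corresponding marked series built from $F^\pm$. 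The factors $\rho^{\cdot}$ are marked by $u_{\cX}$, the cylindrical $1$-cycles by $u_{\cY}$, and the summand $M(\rho,1+\cD(\rho))$ by $u_{\cZ}$.

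Differentiating at $(1,1,1,1)$ then yields, for each of the four statistics, a linear system
\[
\begin{pmatrix} a_{11} & a_{12}\\ a_{21} & a_{22}\end{pmatrix}\begin{pmatrix} m^+\\ m^-\end{pmatrix}=\begin{pmatrix} \beta_1\\ \beta_2\end{pmatrix}.
\]
The coefficient matrix is the same for all four statistics. The right-hand side is $(b_1,b_2)$ for $L$, $(\tilde b_1,0)$ for $Z$, $(\hat b_1,\hat b_2)$ for $X$ and $(\check b_1,\check b_2)$ for $Y$. Cramer's rule then gives the stated formulas, provided the determinant is nonzero.

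Concretely, I would verify the entries term by term. The square-root term contributes $\Delta$ to every recursive derivative, since $\partial/\partial\cD$ and $\partial/\partial\cD^-$ of \eqref{eq:lroot} both equal $\Delta$ at the point in question. The series term $\cD^2/(1+\cD)$ has total derivative $\frac{\cD}{1+\cD}+\frac{\cD}{(1+\cD)^2}$: the recursive part $\frac{\cD}{1+\cD}$ goes into $a_{11}$, and the child part $\frac{\cD}{(1+\cD)^2}$ goes into $b_1$. The parallel terms give $\rho^2(1+\cD)$ (all $\cD$ children) and $\rho^2(1+\cD^-)$ (recursive $\cD^-$ children). The $\cH$ terms give $\partial_2M=\frac{29}{256(1+\cD)^2}$ via Corollary~\ref{co:mcomp}, together with $\partial_3R_i$ (children) and $\partial_5R_i$ (recursive $\cD^-$).

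For $X$, the factor $\rho$ from $\partial_x$ produces the terms $\rho\,\partial_1$, $2\rho^2\partial_2$, $\rho\cD_2'\partial_4$, and the $\cL_2'$, $\cD_2'$, $\cS_2'$ contributions. Here the size-biased derivatives of the local decorations $\cL_2,\cD_2,\ldots$ count their $\cX$-leaves, and $\partial_1M=\frac{21}{256\rho(1+\cD)}$ again by Corollary~\ref{co:mcomp}. For $Y$, the powers of $x_\cY$ in the offspring rules produce the $\check b$ terms; there one uses \eqref{eq:biermoe} to express the $\cY$-weight coming from the $\cL$ rules through the denominator $2(1+\rho^2-\cL)$. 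Finiteness of the derivatives of $R_i$ at $\vartheta$ is provided by Lemma~\ref{le:analyticsum}, and finiteness of the moments by Proposition~\ref{pro:expect}.

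The remaining analytic input is that the determinant $a_{11}a_{22}-a_{12}a_{21}$ is positive, so that the system is uniquely solvable and its solution is the actual (finite) expectation rather than a spurious root. I would obtain this exactly as in the proof of Lemma~\ref{le:keynumeric}, using the bounds of Corollary~\ref{co:sharpy}: $\Delta\le0.047$, $\partial_5R_1\le 29/256$, $\partial_5R_2\le29/128$, $\cD(\rho)\le0.038$, and so on. These give $a_{11},a_{22}$ bounded well away from $0$ and $|a_{12}|,|a_{21}|$ small.

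The inequality $\Ex{L}<1$ is then a numerical estimate. Bound the numerator $a_{22}b_1-a_{12}b_2$ from above and the determinant from below using Corollary~\ref{co:sharpy}. The dominant contribution is $\frac{29}{256(1+\cD)^2}$ together with $\partial_3R_1\le 29/256$ and $\partial_3R_2\le29/128$, so the resulting quantity should be of order $0.3$, comfortably below $1$. I expect the main obstacle to be the bookkeeping rather than this inequality. One has to separate correctly, within \eqref{eq:done} and \eqref{eq:dtwo}, which occurrences of $\cD$ and $\cD^-$ are genuine infertile children and which encode recursive vertices, especially inside the square-root expression for $\cL$. Getting this split wrong changes both the matrix and the $b$-vectors.
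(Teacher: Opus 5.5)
Your proposal is correct and follows the paper's proof essentially step for step. The paper also works with $r_\cR(u)=\cR(\rho)\Exb{u^{N_\cR}}$, using one marking variable at a time for $L$, $Z$, $X$, $Y$. It eliminates $\cL$ through the quadratic and $\cS$ through $r_\cS(u)=u\cD(\rho)r_\cD(u)/(1+u\cD(\rho))$, differentiates at $u=1$ to get the common matrix $(a_{ij})$, and uses Corollary~\ref{co:sharpy} to show $a_{11}a_{22}-a_{21}a_{12}\ge 0.56$ and $\Ex{L}\le 0.73$.

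Two small bookkeeping corrections:
\begin{enumerate}
\item With matrix $(a_{ij})$ and right-hand side $(b_1,b_2)$, the second unknown is $r_{\cD^-}'(1)/\cD(\rho)$. That is $\cD^-(\rho)/\cD(\rho)$ times the expectation from a $\cD^-$ root, not that expectation itself. This is harmless, because Cramer's rule for the first unknown is unchanged by rescaling the second.
\item The crude bound from Corollary~\ref{co:sharpy} comes out near $0.73$ rather than $0.3$, once you include the term $\rho^2(1+\cD(\rho))$ in $b_1$ and divide by the determinant bound. This still gives $\Ex{L}<1$.
\end{enumerate}
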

\begin{proof}
	For a vertex type $\cR \in \{\cD, \cD^-, \cL, \cS, \cS^-, \cP, \cP^-, \cH, \cH^-\}$ let $\mS^\cR$  be defined analogously to $\mS$ except that we start with a  root vertex of type $\cR$. Let $N_\cR$ denote the number of non-root type $\cD$ vertices in~$\mS^\cR$.
	For $0 \le u \le 1$ set
	\[
	r_\cR(u) := \cR(\rho)\Exb{u^{N_\cR}}.
	\]
	It is elementary that
	\begin{align*}
		\Exb{L} &= \cD(\rho)^{-1} \frac{\partial r_\cD}{\partial u}(1).
	\end{align*}
	By construction of the offspring laws we have
	\begin{align*}
		r_\cD(u) &= r_{\cL}(u) + r_{\cS}(u) + r_{\cP}(u) + r_{\cH}(u),\\
		r_{\cD^-}(u) &= r_{\cL}(u) + r_{\cS^-}(u) + r_{\cP^-}(u) + r_{\cH^-}(u), \\
		r_{\cL}(u) &= \frac{\rho^2}{2} \left(r_{\cS}(u) + r_{\cP}(u) + r_{\cH}(u)\right) + \frac{r_{\cL}(u)^2}{2} \\
		&\quad\,+\frac{\rho^2}{2} \left(r_{\cS^-}(u) + r_{\cP^-}(u) + r_{\cH^-}(u)\right) + \frac{\cL_2(\rho)}{2},\\
		r_{\cS}(u) &= u\cD(\rho)\left(r_{\cL}(u) + r_{\cP}(u) + r_{\cH}(u)\right),\\
		r_{\cS^-}(u) &= (1+r_{\cD^-}(u))(\cD_2(\rho) - \cS_2(\rho)),\\
		r_{\cP}(u) &= \rho^2\cD(\rho)u + \frac{\rho^2}{2} \left(\cD(\rho)^2 u^2 + \cD_2(\rho)\right),\\
		r_{\cP^-}(u) &= \rho^2r_{\cD^-}(u) + \frac{\rho^2}{2} \left(r_{\cD^-}(u)^2 + \cD_2(\rho)\right),\\
		r_{\cH}(u) &= M(\rho,1+\cD(\rho)u)+R_1(\rho,\rho^2,1+\cD(\rho)u,1+\cD_2(\rho),1+r_{\cD^-}(u)),\\
		r_{\cH^-}(u) &= R_2(\rho,\rho^2,1+\cD(\rho)u,1+\cD_2(\rho),1+r_{\cD^-}(u)).
	\end{align*}
	Analogously as for~\eqref{eq:lroot} and~\eqref{eq:sthat} it follows that
	\begin{align*}
		r_{\cL}(u) &= 1 + \rho^2 - \sqrt{1 + 2\rho^2 + \rho^4 - \cL_2(\rho) - \rho^2 (r_\cD(u) + r_{\cD^-}(u))}, \\
		r_{\cS}(u) &= \frac{u\cD(\rho) r_\cD(u)}{1 + u\cD(\rho)}.
	\end{align*}
	Setting
	\[
		\lambda(u) :=(\rho,\rho^2,1+\cD(\rho)u,1+\cD_2(\rho),1+r_{\cD^-}(u))
	\]
	we hence obtain
	\begin{align*}
		r_{\cL}'(u) &= \frac{\rho^2(r_\cD'(u) + r_{\cD^-}'(u))}{2 \sqrt{1 + 2\rho^2 + \rho^4 - \cL_2(\rho) - \rho^2 (r_\cD(u) + r_{\cD^-}(u))}}, \\
		r_\cS'(u) 
		&= \frac{ \cD(\rho) r_\cD(u) }{ \left( 1 + u \cD(\rho) \right)^2 }
		+
		\frac{ u \cD(\rho) }{ 1 + u \cD(\rho) } r_\cD'(u), \\
		r_{\cS^-}'(u) &= r_{\cD^-}'(u)(\cD_2(\rho) - \cS_2(\rho)),\\
		r_{\cP}'(u) &= \rho^2\cD(\rho)(1 +  \cD(\rho) u), \\
		r_{\cP^-}'(u) &= \rho^2r_{\cD^-}'(u)(1 +  r_{\cD^-}(u)), \\
		r_{\cH}'(u) &=	\cD(\rho) \partial_2 M \left( \rho, 1 + \cD(\rho) u \right)
		+ \cD(\rho) \partial_3 R_1(\lambda(u))
		+ \partial_5 R_1(\lambda(u)) r_{\cD^-}'(u), \\
		r_{\cH^-}'(u) &=
		\cD(\rho) \partial_3 R_2(\lambda(u)) + \partial_5 R_2(\lambda(u)) r_{\cD^-}'(u).
	\end{align*}
	Note that $r_\cD(1) = \cD(\rho)$ and $r_{\cD^-}(1) = \cD^-(\rho)$. Using Corollary~\ref{co:mcomp} and $\vartheta = \lambda(1)$ it follows that
	\[
		\begin{pmatrix} a_{11} & a_{12} \\ a_{21} & a_{22} \end{pmatrix}  \begin{pmatrix} r_\cD'(1) \\ r_{\cD^-}'(1) \end{pmatrix} = \cD(\rho) \begin{pmatrix} b_1 \\ b_2  \end{pmatrix}.
	\]
The function $x \mapsto \frac{x}{1+x}$ is increasing for positive $x$, hence by Corollary~\ref{co:sharpy} we have
\begin{align*}
	 \Delta + \frac{\cD(\rho)}{1 + \cD(\rho)} &\le 0.047 + \frac{0.038}{1 + 0.038} \\&\le 0.084.
\end{align*}
and, using $\cD^-(\rho) \le \cD(\rho)$ (and the fact that $\cD_2(\rho) \ge \cS_2(\rho)$), 
\begin{align*}
	0 &\le \Delta + \cD_2(\rho) - \cS_2(\rho) + \rho^2 (1 +  \cD^-(\rho)) +  \partial_5 R_2(\vartheta) \\
	&\le 0.047 + 0.0004 + \rho_{\mathrm{lab}}^2(1 + 0.038) + \frac{29}{128} \\
	&\le 0.38.
\end{align*}
Furthermore,
\begin{align*}
	|a_{12}| &\le 0.047 + \frac{29}{256} \le 0.161, \\
	|a_{21}| &\le 0.047, \\
	b_2 &\le \frac{29}{128}.
\end{align*}
Furthermore, the function $x \mapsto \frac{x}{(1+x)^2}$ is increasing for $0<x<1$, hence
\begin{align*}
	b_1 &\le \frac{0.038}{(1 + 0.038)^2} + (1+0.038)\rho_{\mathrm{lab}}^2 + 29/256 + 29/256 \\
	&\le 0.368.
\end{align*}
Thus, the determinant of $(a_{ij})_{1 \le i,j \le 2}$  satisfies
\begin{align}
	\label{eq:deti}
	a_{11}a_{22} - a_{21}a_{12} \ge (1 - 0.084)(1- 0.38) - 0.161 \cdot 0.047 \ge 0.56.
\end{align}
Hence $(a_{ij})_{1 \le i,j \le 2}$ is invertible, yielding
\begin{align*}
	\Ex{L} &= \cD(\rho)^{-1} r_{\cD}'(1) = \frac{a_{22}b_1 - a_{12}b_2  } {a_{11}a_{22} - a_{21}a_{12}} 
\end{align*}
with an upper bound
\begin{align*}
\Ex{L} \le \frac{1 \cdot 0.368 + 0.161 \cdot 29 / 128  }{0.56} \le 0.73.
\end{align*}
Next, let $\tilde{N}_\cR$ denote the number of type $\cZ$ vertices in~$\mS^\cR$.
For $0 \le u \le 1$ set
\[
\tilde{r}_\cR(u) := \cR(\rho)\Exb{u^{\tilde{N}_\cR}}.
\]
This way
\begin{align*}
	\Exb{Z} &= \cD(\rho)^{-1} \frac{\partial\tilde{r}_\cD}{\partial u}(1).
\end{align*}
By construction of the offspring laws we have
\begin{align*}
	\tilde{r}_{\cS}(u) &= \cD(\rho)\left(\tilde{r}_{\cL}(u) + \tilde{r}_{\cP}(u) + \tilde{r}_{\cH}(u)\right),\\
	\tilde{r}_{\cP}(u) &= \rho^2\cD(\rho) + \frac{\rho^2}{2} \left(\cD(\rho)^2  + \cD_2(\rho)\right),\\
	\tilde{r}_{\cH}(u) &= M(\rho,1+\cD(\rho))u+R_1(\rho,\rho^2,1+\cD(\rho),1+\cD_2(\rho),1+\tilde{r}_{\cD^-}(u)),\\
	\tilde{r}_{\cH^-}(u) &= R_2(\rho,\rho^2,1+\cD(\rho),1+\cD_2(\rho),1+\tilde{r}_{\cD^-}(u)).
\end{align*}
For all other types the expression of $\tilde{r}_\cR$ is identical as before, just with $r$ replaced by $\tilde{r}$ on the right-hand side. We obtain analogously as before
\[
	\tilde{r}_{\cS}(u) = \frac{\cD(\rho) \tilde{r}_\cD(u)}{1 + \cD(\rho)}.
\]
Setting
\[
\tilde{\lambda}(u) :=(\rho,\rho^2,1+\cD(\rho),1+\cD_2(\rho),1+\tilde{r}_{\cD^-}(u))
\]
it follows that
\begin{align*}
	\tilde{r}_\cS'(u) &= \frac{\cD(\rho) \tilde{r}'_\cD(u)}{1 + \cD(\rho)}, \\
	\tilde{r}_{\cP}'(u) &= 0, \\
	\tilde{r}_{\cH}'(u) &=	M \left( \rho, 1 + \cD(\rho)\right)+ \partial_5 R_1(\tilde{\lambda}(u)) \tilde{r}_{\cD^-}'(u), \\
	\tilde{r}_{\cH^-}'(u) &=  \partial_5 R_2(\tilde{\lambda}(u)) \tilde{r}_{\cD^-}'(u).
\end{align*}
Using Corollary~\ref{co:mcomp}, $\tilde{r}_\cD(1) = \cD(\rho)$ and $\tilde{r}_{\cD^-}(1) = \cD^-(\rho)$ and  $\vartheta = \tilde{\lambda}(1)$ it follows that
\[
	\begin{pmatrix} a_{11} & a_{12} \\ a_{21} & a_{22} \end{pmatrix}  \begin{pmatrix} \tilde{r}_\cD'(1) \\ \tilde{r}_{\cD^-}'(1) \end{pmatrix} = \cD(\rho) \begin{pmatrix} \tilde{b}_1 \\ 0  \end{pmatrix}.
\]
Hence
\begin{align*}
	\Ex{Z} &= \cD(\rho)^{-1} \tilde{r}_{\cD}'(1) = \frac{a_{22}\tilde{b}_1  } {a_{11}a_{22} - a_{21}a_{12}} 
\end{align*}
Next, let $\hat{N}_\cR$ denote the number of type $\cX$ vertices in~$\mS^\cR$.
For $0 \le u \le 1$ set
\[
\hat{r}_\cR(u) := \cR(\rho)\Exb{u^{\hat{N}_\cR}}.
\]
This way
\begin{align*}
	\Exb{X} &= \cD(\rho)^{-1} \frac{\partial\hat{r}_\cD}{\partial u}(1).
\end{align*}
We have
\begin{align*}
	\hat{r}_{\cL}(u) &= \frac{u^2\rho^2}{2} \left(\hat{r}_{\cS}(u) + \hat{r}_{\cP}(u) + \hat{r}_{\cH}(u)\right) + \frac{\hat{r}_{\cL}(u)^2}{2} \\
	&\quad\,+\frac{u^2\rho^2}{2} \left(\hat{r}_{\cS^-}(u) + \hat{r}_{\cP^-}(u) + \hat{r}_{\cH^-}(u)\right) + \frac{\cL_2(\rho u)}{2},\\
	\hat{r}_{\cS}(u) &= \cD(\rho)\left(\hat{r}_{\cL}(u) + \hat{r}_{\cP}(u) + \hat{r}_{\cH}(u)\right),\\
	\hat{r}_{\cS^-}(u) &= (1+\hat{r}_{\cD^-}(u))(\cD_2(u \rho) - \cS_2(u \rho)),\\
	\hat{r}_{\cP}(u) &= u^2\rho^2\cD(\rho) + \frac{u^2\rho^2}{2} \left(\cD(\rho)^2 + \cD_2(u \rho)\right),\\
	\hat{r}_{\cP^-}(u) &= u^2\rho^2\hat{r}_{\cD^-}(u) + \frac{u^2\rho^2}{2} \left(\hat{r}_{\cD^-}(u)^2 + \cD_2(u\rho)\right),\\
	\hat{r}_{\cH}(u) &= M(u \rho,1+\cD(\rho))+R_1(u\rho,u^2\rho^2,1+\cD(\rho),1+\cD_2(u\rho),1+\hat{r}_{\cD^-}(u)),\\
	\hat{r}_{\cH^-}(u) &= R_2(u \rho,u^2\rho^2,1+\cD(\rho),1+\cD_2(u \rho),1+\hat{r}_{\cD^-}(u)).
\end{align*}
It follows that
\begin{align*}
	\hat{r}_{\cL}(u) &= 1 + u^2\rho^2 - \sqrt{1 + 2u^2\rho^2 + u^4\rho^4 - \cL_2(u\rho) - u^2\rho^2 (\hat{r}_\cD(u) + \hat{r}_{\cD^-}(u))}, \\
	\hat{r}_{\cS}(u) &= \frac{\cD(\rho) \hat{r}_\cD(u)}{1 + \cD(\rho)}.
\end{align*}
Using  Corollary~\ref{co:mcomp}
\begin{align*}
	\hat{r}_\cS'(1) 	&= \frac{\cD(\rho)}{1 + \cD(\rho)} \hat{r}_\cD'(1),\\
	\hat r_{\cS^-}'(1) 	&= (\cD_2(\rho) - \cS_2(\rho))\hat{r}_{\cD^-}'(1) + (1 + \cD^-(\rho))\rho (\cD_2'(\rho) - \cS_2'(\rho)),\\
	\hat r_\cP'(1) 		&= 2 \rho^2 \cD(\rho) + \rho^2 (\cD(\rho)^2 + \cD_2(\rho)) + \frac{\rho^3}{2} \cD_2'(\rho),\\
	\hat{r}_{\cP^-}'(1) &= \rho^2 (1 + \cD^-(\rho)) \hat{r}_{\cD^-}'(1) + 2 \rho^2 \cD^-(\rho) +\rho^2(\cD^-(\rho)^2 + \cD_2(\rho))\\
	&\quad + \frac{\rho^3}{2} \cD_2'(\rho),\\
	\hat{r}_\cH'(1) 		&= \partial_5 R_1(\vartheta) \hat{r}_{\cD^-}'(1) + \frac{1}{1+ \cD(\rho)} \frac{21}{256} + \rho \partial_1 R_1(\vartheta) \\
	&\quad + 2 \rho^2 \partial_2 R_1(\vartheta) + \rho \cD_2'(\rho) \partial_4 R_1(\vartheta),\\
	\hat{r}_{\cH^-}'(1)	&= \partial_5 R_2(\vartheta) \hat{r}_{\cD^-}'(1) + \rho \partial_1 R_2(\vartheta) + 2\rho^2 \partial_2R_2(\vartheta) + \rho \cD_2'(\rho) \partial_4 R_2(\vartheta).
\end{align*}
Recall from~\eqref{eq:lroot}
\[
	\cL(\rho) = 1 + \rho^2 - \sqrt{1 + 2 \rho^2 + \rho^4 - \cL_2(\rho) - \rho^2(\cD(\rho) + \cD^-(\rho) )}.
\]
Hence
\begin{align*}
	\hat{r}_\cL'(1) &= 2\rho^2 - \frac{4\rho^2 + 4\rho^4 - \rho\cL_2'(\rho) - 2\rho^2 (\cD(\rho) + \cD^-(\rho)) - \rho^2 (\hat r_\cD'(1) + \hat r_{\cD^-}'(1))}{2 (1 + \rho^2 - \cL(\rho))}\\
	&= \Delta (\hat{r}_\cD'(1) + \hat{r}_{\cD^-}'(1)) + \frac{\rho^2 (\cD(\rho) + \cD^-(\rho)-2\cL(\rho)) + \frac{\rho}{2} \cL_2'(\rho)}{1 + \rho^2 - \cL(\rho)}.
\end{align*}
\[
\begin{pmatrix} a_{11} & a_{12} \\ a_{21} & a_{22} \end{pmatrix}  \begin{pmatrix} \hat{r}_\cD'(1) \\ \hat{r}_{\cD^-}'(1) \end{pmatrix} = \cD(\rho) \begin{pmatrix} \hat{b}_1 \\ \hat{b}_2  \end{pmatrix}.
\]
Hence
\begin{align*}
	\Ex{X} &= \cD(\rho)^{-1} \hat{r}_{\cD}'(1) = \frac{a_{22}\hat{b}_1 - a_{12}\hat{b}_2 } {a_{11}a_{22} - a_{21}a_{12}}.
\end{align*}

It remains to treat the number of type $\cY$ vertices. Let $\check{N}_\cR$ denote the number of type $\cY$ vertices in~$\mS^\cR$ and set
\[
\check{r}_\cR(u) := \cR(\rho)\Exb{u^{\check{N}_\cR}}
\]
for $0 \le u \le 1$. This way
\begin{align*}
	\Exb{Y} &= \cD(\rho)^{-1} \frac{\partial\check{r}_\cD}{\partial u}(1).
\end{align*}
We have
\begin{align*}
	\check{r}_\cD(u) &= \check{r}_{\cL}(u) + \check{r}_{\cS}(u) + \check{r}_{\cP}(u) + \check{r}_{\cH}(u),\\
	\check{r}_{\cD^-}(u) &= \check{r}_{\cL}(u) + \check{r}_{\cS^-}(u) + \check{r}_{\cP^-}(u) + \check{r}_{\cH^-}(u), \\
	\check{r}_{\cL}(u) &= \frac{u^3\rho^2}{2} \left(\check{r}_{\cS}(u) + \check{r}_{\cP}(u) + \check{r}_{\cH}(u)\right) + \frac{u\check{r}_{\cL}(u)^2}{2} \\
	&\quad\,+\frac{u\rho^2}{2} \left(\check{r}_{\cS^-}(u) + \check{r}_{\cP^-}(u) + \check{r}_{\cH^-}(u)\right) + \frac{u\cL_2(\rho)}{2},\\
	\check{r}_{\cS}(u) &= u\cD(\rho)\left(\check{r}_{\cL}(u) + \check{r}_{\cP}(u) + \check{r}_{\cH}(u)\right),\\
	\check{r}_{\cS^-}(u) &= (1+\check{r}_{\cD^-}(u))(\cD_2(\rho) - \cS_2(\rho)),\\
	\check{r}_{\cP}(u) &= u^3\rho^2\cD(\rho) + \frac{\rho^2}{2} \left(u^4\cD(\rho)^2 + \cD_2(\rho)\right),\\
	\check{r}_{\cP^-}(u) &= \rho^2\check{r}_{\cD^-}(u) + \frac{\rho^2}{2} \left(\check{r}_{\cD^-}(u)^2 + \cD_2(\rho)\right),\\
	\check{r}_{\cH}(u) &= M(\rho,u+u^2\cD(\rho))+R_1(\rho,\rho^2,u+u^2\cD(\rho),1+\cD_2(\rho),1+\check{r}_{\cD^-}(u)),\\
	\check{r}_{\cH^-}(u) &= R_2(\rho,\rho^2,u+u^2\cD(\rho),1+\cD_2(\rho),1+\check{r}_{\cD^-}(u)).
\end{align*}
It follows that
\[
\check{r}_{\cS}(u) = \frac{u\cD(\rho) \check{r}_\cD(u)}{1 + u\cD(\rho)}.
\]
Using Corollary~\ref{co:mcomp} we obtain
\begin{align*}
	\check{r}_\cS'(1) 	&= \frac{\cD(\rho)^2}{(1+\cD(\rho))^2} + \frac{\cD(\rho)}{1 + \cD(\rho)} \check{r}_\cD'(1),\\
	\check r_{\cS^-}'(1) 	&= (\cD_2(\rho) - \cS_2(\rho))\check{r}_{\cD^-}'(1),\\
	\check r_\cP'(1) 		&= \rho^2 \cD(\rho)\left(3 + 2\cD(\rho)\right),\\
	\check{r}_{\cP^-}'(1) &= \rho^2 (1 + \cD^-(\rho)) \check{r}_{\cD^-}'(1),\\
	\check{r}_\cH'(1) 		&= \partial_5 R_1(\vartheta) \check{r}_{\cD^-}'(1) + \left(1 + 2\cD(\rho)\right)\left(\frac{29}{256(1+\cD(\rho))^2} + \partial_3 R_1(\vartheta)\right),\\
	\check{r}_{\cH^-}'(1)	&= \partial_5 R_2(\vartheta) \check{r}_{\cD^-}'(1) + \left(1 + 2\cD(\rho)\right)\partial_3 R_2(\vartheta).
\end{align*}
Differentiating the displayed equation for $\check{r}_{\cL}(u)$ at $u=1$ yields
\begin{align*}
	\check{r}_\cL'(1) &= \Delta \left(\check{r}_\cD'(1) + \check{r}_{\cD^-}'(1)\right) + \frac{\rho^2 \left(3\cD(\rho) + \cD^-(\rho)-4\cL(\rho)\right) + \cL(\rho)^2 + \cL_2(\rho)}{2(1+\rho^2-\cL(\rho))}.
\end{align*}
Consequently,
\[
\begin{pmatrix} a_{11} & a_{12} \\ a_{21} & a_{22} \end{pmatrix}  \begin{pmatrix} \check{r}_\cD'(1) \\ \check{r}_{\cD^-}'(1) \end{pmatrix} = \cD(\rho) \begin{pmatrix} \check{b}_1 \\ \check{b}_2  \end{pmatrix}.
\]
Hence, using~\eqref{eq:deti},
\begin{align*}
	\Ex{Y} &= \cD(\rho)^{-1} \check{r}_{\cD}'(1) = \frac{a_{22}\check{b}_1 - a_{12}\check{b}_2 } {a_{11}a_{22} - a_{21}a_{12}}.
\end{align*}
This completes the proof.
\end{proof}

\begin{proposition}
	\label{pro:linearbound}		
	Almost surely,
	\[
		\#_{\cD} \mT \le \frac{5}{2} \#_{\cX} \mT.
	\]
\end{proposition}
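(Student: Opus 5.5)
The plan is to prove a stronger, type-dependent statement by induction over the (almost surely finite) tree $\mT$, working from the leaves upward. For a vertex $v$ of $\mT$ let $\mT_v$ be the fringe subtree rooted at $v$, and set
\[
	f(v) := \frac{5}{2}\#_{\cX}\mT_v - \#_{\cD}\mT_v .
\]
Here $v$ itself is counted in $\#_{\cD}\mT_v$ if it has type $\cD$. Since $\mT$ terminates almost surely, it is finite almost surely, so induction on the height of $\mT_v$ is legitimate. The claim then follows from $f(\text{root}) \ge 0$.

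The inductive hypothesis I would prove is the following: $f(v) \ge 1$ whenever $v$ has type in $\{\cL,\cS,\cP,\cH,\cS^-,\cP^-,\cH^-,\cZ,R_1\}$, and $f(v)\ge 0$ whenever $v$ has type in $\{\cD,\cD^-,\cD_2,\cP_2,\cH_2,\cL_2,\cX,\cY\}$.

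The leaf types and the local-decoration types $\cD_2,\cP_2,\cH_2,\cL_2$ are immediate, because their children are only $\cX$-leaves. A $\cD$-vertex has a single child $w$ of type in $\{\cL,\cS,\cP,\cH\}$, so $f(v)=f(w)-1\ge 0$. A $\cD^-$-vertex is not of type $\cD$, so $f(v)=f(w)\ge 1$ for its unique child $w$.

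For the remaining types I would go through the offspring rules of Section~\ref{sec:dec} one by one.
\begin{itemize}
	\item $\cS$: the children are one vertex of type in $\{\cL,\cP,\cH\}$, one $\cY$ and one $\cD$, which gives $f\ge 1+0$.
	\item $\cL$: in the first and third cases there are two $\cX$-children, contributing $5$, plus a child with $f\ge 1$. The second case gives $f\ge 1+1$. In the fourth case the child is an $\cL_2$-vertex with at least $12$ $\cX$-children.
	\item $\cP$ and $\cP^-$: every case has two $\cX$-children, contributing $5$, and the other children all have $f\ge 0$.
	\item $\cS^-$: the children are an $\cL_2,\cP_2$ or $\cH_2$ vertex with at least $8$ $\cX$-leaves, plus possibly a $\cD^-$ child with $f\ge 0$.
	\item $\cH$: its single child has type $\cZ$ or $R_1$.
	\item $\cZ$, $R_1$, $\cH^-$: the vertex has $k\ge 4$ $\cX$-children, contributing at least $10$. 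Its remaining children have types $\cY,\cD,\cD^-,\cD_2$, all with $f\ge 0$.
\end{itemize}

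Note that no $\cD$-vertices are created directly at $\cZ$/$R_1$ vertices beyond children whose own $f\ge 0$ already accounts for them. The one extra $\cD$ produced per series step is paid for by the $\cX$-vertices attached at the $\cL$, $\cP$ or $\cH$ component sitting next to it.

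The main obstacle is purely bookkeeping: I must verify that every production rule creating a child of type $\cD$ is covered. Such rules occur at $\cS$, $\cP$, $\cZ$, $R_1$ and $\cH^-$, and through $\cD^-$ at $\cS^-$, $\cP^-$ and $\cH^-$. I also have to check that no type can form a chain producing $\cD$-vertices without contributing either $\cX$-leaves or a child with $f\ge 1$. The only potentially dangerous chains are $\cD\to\cS\to\cD$ and $\cD^-\to\cS^-\to\cD^-$. Both are handled above: the first through the sibling of type $\cL$, $\cP$ or $\cH$, the second through the $\cdot_2$-decoration with at least $8$ vertices.
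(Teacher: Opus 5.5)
Your induction is correct, but it takes a different route from the paper. The paper uses a direct injection. Every type-$\cD$ vertex has a unique child of type $\cL$, $\cS$, $\cP$ or $\cH$, and from that child one reaches a distinct $\cX$- or $\cY$-leaf, passing through the unique $\cZ$/$R_1$-child in the $\cH$ case. This gives $\#_{\cD}\mT \le \#_{\cX}\mT + \#_{\cY}\mT$. The paper then uses the combinatorial meaning of the tree, namely that $\cY$-leaves correspond to distinct fixed edges of a cubic network, to get $\#_{\cY}\mT \le \frac{3}{2}\#_{\cX}\mT$. Your potential-function argument works only with the offspring rules $\phi_{\cR}$ and never needs that correspondence, since $\cY$-leaves simply carry weight $0$. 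The price is a check of all seventeen types, which you carry out correctly. In return your scheme shows more than the statement. The binding constraint in your case analysis is at $\cP$- and $\cP^-$-vertices, which are guaranteed only two $\cX$-children and otherwise only children with $f\ge 0$. Every other type in your first class already has $f\ge 1$ from an $\cL/\cS/\cP/\cH$-type child, or has at least four $\cX$-children. Hence the same induction with weight $\frac12$ instead of $\frac52$ on $\cX$-leaves goes through and gives $\#_{\cD}\mT \le \frac12\#_{\cX}\mT$. This constant cannot be lowered: a chain $\cD\to\cP\to\cD\to\cP\to\cdots$ of $m$ double-edge parallel steps, ending with a $K_4$ core, has ratio $(m+1)/(2m+4)$. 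For the paper's later use, the range $1\le \ell\le 5n$ when $\#_{\cX}\mT=2n$, any linear bound suffices, so the shorter injection argument is enough there.
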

\begin{proof}
	In $\mT$, any vertex $u$ of type $\cD$ has a unique child $v$. The type of $v$ belongs to $\{\cL, \cS, \cP, \cH\}$. If $v$ has type $\cL$, $\cS$, or $\cP$, then $v$ has always at least one child $w$ with type of $w$ in $\cX$ or $\cY$. If $v$ has type $\cH$ then $v$ has a unique child $v'$, and the type of $v'$ is either $\cZ$ or $R_1$. In both cases, $v'$ has always at least one child $w'$ with type $\cX$.
	
	This way, we obtain an injective mapping from the type $\cD$ vertices of $\mT$ to the vertices of $\mT$ with type $\cX$ or $\cY$. In other words
	\[
		\#_\cD \mT \le \#_\cX\mT + \#_\cY \mT.
	\]
	The vertices of type $\cX$ of $\mT$ correspond precisely to the vertices of the network represented by $\mT$ and its decorations. The vertices of type $\cY$ of $\mT$ correspond injectively to edges of this network. In any cubic graph, the number of edges equals $3/2$ times the number of vertices. Hence,
	\[
		\#_\cY \mT \le \frac{3}{2} \#_\cX\mT.
	\]
	It follows that
	\[
		\#_\cD \mT \le \frac{5}{2} \#_\cX\mT.
	\]
\end{proof}

\subsection{Total progeny}

Our aim in this section is to determine the asymptotic probability for $\mT$ to have a given even number of type $\cX$ vertices, and analyse the proportion of other types of vertices on that event.

\subsubsection{Deviation inequalities}

For the reader's convenience, we recall two standard deviation bounds that we will employ in our subsequent analysis of $\mT$. Let $\mathrm{Bin}(n,p)$ denote the binomial distribution with success probability $p$ out of $n$ tries.

\begin{proposition}
	\label{pro:binomial}
	For each $0<p<1$ there exist $C,c>0$ such that for all $n \ge 1$ and  $0 \le k \le n$ we have
	\[
	\Prb{\mathrm{Bin}(n,p) = k} \le  \frac{C}{\sqrt{n}} \exp\left( - c \frac{(k- n p)^2}{n } \right).
	\]
\end{proposition}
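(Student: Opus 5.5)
The plan is to treat the bulk of the distribution with a local central limit estimate for the point probabilities and the tails with a Chernoff bound, and then to merge the two regimes into one Gaussian-type bound. Write $q = 1-p$ and $\delta = (k-np)/n$. I will split according to whether $|k - np|$ is at most $\epsilon n$ for a small $\epsilon = \epsilon(p) > 0$, chosen so that $np \pm \epsilon n$ stays inside $[np/2, n(1+p)/2]$.

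\emph{Tails.} Suppose first that $|k-np| > \epsilon n$. The Chernoff--Hoeffding inequality gives
\[
\Prb{\mathrm{Bin}(n,p)=k} \le \Prb{|\mathrm{Bin}(n,p)-np| \ge |k-np|} \le 2\exp\left(-2\frac{(k-np)^2}{n}\right).
\]
To produce the factor $n^{-1/2}$, I split the exponent in half. Since $(k-np)^2/n \ge \epsilon^2 n$, we get $\exp(-(k-np)^2/n) \le e^{-\epsilon^2 n} \le C'/\sqrt{n}$. This leaves a bound of the required form with $c = 1$.

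\emph{Bulk.} Now suppose $|k-np| \le \epsilon n$. Then both $k$ and $n-k$ are at least a constant times $n$, so Stirling's formula with uniform error terms applies. It gives
\[
\binom{n}{k}p^kq^{n-k} \le \frac{C''}{\sqrt{2\pi n x(1-x)}}\exp\bigl(-n\, \mathrm{KL}(x\|p)\bigr), \qquad x = k/n,
\]
where $\mathrm{KL}$ denotes the relative entropy of Bernoulli laws, and the uniformity uses $k, n-k \ge 1$. The prefactor is $O(n^{-1/2})$ because $x(1-x)$ is bounded below on this range.

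It remains to control the exponent. Pinsker's inequality gives $\mathrm{KL}(x\|p) \ge 2(x-p)^2$, and multiplying by $n$ yields $n\,\mathrm{KL}(x\|p) \ge 2(k-np)^2/n$. This gives the bound with $c = 2$. In fact Pinsker holds for every $x$, so with a little care the bulk argument alone covers all $1 \le k \le n-1$.

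\emph{Endpoints and small $n$.} The cases $k \in \{0, n\}$ are handled directly. There $\Prb{\mathrm{Bin}(n,p)=0} = q^n$, and $\Prb{\mathrm{Bin}(n,p)=n} = p^n$ is the analogous case. These are exponentially small in $n$, while the right-hand side is at least $Cn^{-1/2}e^{-c\max(p,q)^2 n}$. Choosing $c < -\log\max(p,q)$ makes the exponential on the right decay more slowly than $q^n$ and $p^n$, so the bound holds for large $n$. Finitely many small $n$ are absorbed by enlarging $C$.

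\emph{Combining.} Taking $c$ to be the minimum of the constants from all the cases, and $C$ to be the maximum, gives the claim.

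The only genuinely delicate step is the uniform Stirling estimate with the $1/\sqrt{k(n-k)/n}$ prefactor. Once $k$ is kept away from $0$ and $n$, or once the endpoints have been handled separately as above, it is routine.
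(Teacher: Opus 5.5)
Your proof is correct and follows essentially the paper's route. In the bulk you use the same Stirling-type local bound $\frac{1}{\sqrt{2\pi n x(1-x)}}\,e^{-n\,\mathrm{KL}(x\|p)}$ (which the paper imports from the literature) combined with Pinsker's inequality, and in the tails exponential smallness absorbs the factor $n^{-1/2}$.

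The only real difference is the source of the tail estimate: you get it from Hoeffding's inequality, whereas the paper reuses the same local bound there, whose prefactor is at most $O(\sqrt{n})$ for $1 \le k \le n-1$. This also makes your separate treatment of $k \in \{0,n\}$ unnecessary, since your $\epsilon \le \min(p,1-p)/2$ already places both endpoints in your tail regime.
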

\begin{proof}
	It suffices to prove this for $0<k<n$, since the cases $k=0$ and $k=n$ then follow by adjusting the constants $C$ and $c$.
	
	Hence, suppose that $0<k<n$. By~\cite[Prop. 5.4]{zhu2022nearlytightuniversalbounds},  we have 
	\[
	\Prb{\mathrm{Bin}(n,p) = k} \le  \frac{\sqrt{n}}{\sqrt{2 \pi k(n-k)}} \exp\left( - n D(k/n \,\|\,  p) \right)
	\]
	with, for all $0<x<1$, 
	\[
	D(x \,\|\, p) = x \log\frac{x}{p} + (1-x) \log\frac{1-x}{1-p}.
	\]
	By Pinsker's inequality for Bernoulli distributions,
	\[
	D(x \,\|\, p) \ge  2 (x-p)^2.
	\]
	Hence we arrive at
	\[
	\Prb{\mathrm{Bin}(n,p) = k} \le  n^{-1/2} \frac{1}{\sqrt{ 2 \pi (k/n)(1 - k/n) }} \exp\left( - 2 \frac{(k- n p)^2}{n } \right).
	\]
	For $k/n$ bounded away from $0$ and $1$ we have 
	\[
	\frac{1}{\sqrt{ 2 \pi (k/n)(1 - k/n) }} = O(1).
	\]
	For $k/n$  bounded away from $p$ we have
	\[
	\frac{1}{\sqrt{ 2 \pi (k/n)(1 - k/n) }} \exp\left( - 2 \frac{(k- n p)^2}{n } \right) = \exp(-\Theta(n)) = \exp(- \Theta(n)(k/n -p)^2).
	\]
	This completes the proof.
\end{proof}

The following medium deviation inequality for random walks with light-tailed step distribution is stated in \cite[Example 1.4]{RS15}:

\begin{proposition}
	\label{pro:deviation}
	Let $(W_i)_{i \in \ndN}$ be an i.i.d. family of real-valued random variables with $\Ex{W_1} = 0$ and $\Ex{e^{t W_1}} < \infty$ for all $t$ in some open interval containing zero. Then there are constants $\delta, c>0$ such that for all $n\in \ndN$, $x > 0$ and $0 \le\lambda\le\delta$ it holds that \[\Pr{|W_1 + \ldots + W_n| \ge x} \le 2 \exp(c n \lambda^2 - \lambda x).\]
\end{proposition}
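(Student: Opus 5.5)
The plan is the standard exponential Chernoff argument, applied to each tail separately. Write $S_n = W_1 + \ldots + W_n$. Since
\[
\Pr{|S_n| \ge x} \le \Pr{S_n \ge x} + \Pr{-S_n \ge x},
\]
and the family $(-W_i)_i$ satisfies the same hypotheses as $(W_i)_i$, it suffices to show the following: there are $\delta, c>0$ with $\Pr{S_n \ge x} \le \exp(cn\lambda^2 - \lambda x)$ for all $n$, $x>0$ and $0 \le \lambda \le \delta$. The constants obtained for $W$ and for $-W$ can then be combined by taking the minimum of the two $\delta$'s and the maximum of the two $c$'s. This is where the factor $2$ in the statement comes from.

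For the one-sided bound, fix $\lambda \ge 0$. Markov's inequality applied to $e^{\lambda S_n}$, together with independence, gives
\[
\Pr{S_n \ge x} \le e^{-\lambda x}\, \Exb{e^{\lambda W_1}}^n.
\]
It then remains to prove a bound of the form $\Exb{e^{\lambda W_1}} \le \exp(c\lambda^2)$ for $0 \le \lambda \le \delta$. Raising it to the $n$th power gives exactly $\exp(cn\lambda^2 - \lambda x)$. The case $\lambda = 0$ is trivial, since the bound then reads $\Pr{S_n \ge x} \le 1$.

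The only real content is this bound on the moment generating function. I would choose $\delta_0>0$ such that $[-2\delta_0, 2\delta_0]$ lies inside the interval where $\Ex{e^{tW_1}}<\infty$. The function $\psi(t) = \log \Ex{e^{tW_1}}$ is then analytic, in particular $C^2$, on $(-2\delta_0, 2\delta_0)$, because the moment generating function is finite in a neighbourhood of this interval and may be differentiated under the expectation by dominated convergence. We have $\psi(0)=0$, and $\psi'(0) = \Ex{W_1} = 0$. Set $\delta = \delta_0$ and $c = \frac12\sup_{|t|\le \delta_0} |\psi''(t)|$, which is finite by continuity on a compact set. Taylor's theorem with remainder then yields $\psi(\lambda) \le c\lambda^2$ for $0 \le \lambda \le \delta$, as required. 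If one prefers to avoid $\psi''$, the elementary inequality $e^{y} \le 1 + y + y^2 e^{|y|}/2$ gives
\[
\Ex{e^{\lambda W_1}} \le 1 + \tfrac{\lambda^2}{2}\Ex{W_1^2 e^{\delta|W_1|}}.
\]
The last expectation is finite, because $W_1^2 e^{\delta |W_1|} \le C_\delta (e^{2\delta W_1} + e^{-2\delta W_1})$. Combining this with $1+y \le e^y$ gives the same conclusion.

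The main, if mild, obstacle is justifying uniformity in $\lambda$: the constant $c$ must not depend on $\lambda \in [0,\delta]$. This is why I would shrink the interval to $[-2\delta_0, 2\delta_0]$, strictly inside the domain of finiteness. The second-moment term is then dominated uniformly, and the bound holds simultaneously for all $n$, $x>0$ and $\lambda \in [0,\delta]$.
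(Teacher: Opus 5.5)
Your proof is correct, and it differs from the paper only in that the paper gives no proof at all: it quotes the inequality as a known medium deviation bound from \cite[Example 1.4]{RS15}.

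You give the standard self-contained Chernoff argument. It has three steps:
\begin{enumerate}
\item A union bound over the two tails, using that $-W_1$ satisfies the same hypotheses. This is where the factor $2$ comes from.
\item Markov's inequality applied to $e^{\lambda S_n}$.
\item The bound $\Ex{e^{\lambda W_1}} \le e^{c\lambda^2}$ for $0 \le \lambda \le \delta$.
\end{enumerate}
Both of your routes to the third step are valid. One is Taylor's theorem for the cumulant generating function $\psi$ with $\psi(0)=\psi'(0)=0$. The other is the inequality $e^y \le 1+y+\frac{y^2}{2}e^{|y|}$ combined with $W_1^2 e^{\delta|W_1|} \le C_\delta(e^{2\delta W_1}+e^{-2\delta W_1})$. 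You get uniformity in $\lambda$ correctly by working on a compact interval strictly inside the domain where the moment generating function is finite.

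The citation buys the paper brevity. Your argument buys a self-contained proof with constants explicit in the law of $W_1$, for instance $c=\frac12\sup_{|t|\le\delta}|\psi''(t)|$. The later applications (in~\eqref{eq:conen}, and in the bounds for $W_\ell$ and $S_\ell^V$) only need such constants to exist.

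One cosmetic point: if $W_1=0$ almost surely, your $c$ equals $0$, so replace it by $\max(c,1)$ to meet the requirement $c>0$.
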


\subsubsection{Typed population asymptotics}

Recall that $\mu = \frac{\cD(\rho)}{1 + \cD(\rho)}$. We define several auxiliary constants that we are going to use in the analysis of $\mT$.

\begin{proposition}
	The following constants are positive and finite.
\begin{align*}
	\alpha &= \frac{3 \mu}{ 1 - \Ex{L_0} + \mu \Ex{Z} + \frac{3}{2}\mu \Ex{X_0}} = \frac{3 \mu}{1 - \Ex{L} + \frac{3}{2}\mu \Ex{X}}, \\
	\beta &= \alpha \Exb{Y} + 3 (1+\mu) \left(1 - \frac{\alpha}{2} \Exb{X} \right), \\
	\eta &= 3 -  \alpha ((3/2)\Ex{X_0} + \Ex{Z}), \\
	\nu^2 &= \Vab{\frac{3}{2}\mu X_0 + \mu Z - L_0}, \\
	\sigma^2 &= \mu(1-\mu) = \frac{\cD(\rho)}{(1 + \cD(\rho))^2}. \\
	\chi &= \frac{2(1 -\Ex{L})}{1 - \Ex{L} + \frac{3}{2}\mu \Ex{X}}.
\end{align*}
\end{proposition}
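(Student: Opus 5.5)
The plan is to first verify the two expressions for $\alpha$ agree, then check positivity and finiteness constant by constant. For the identity, use Proposition~\ref{pro:expect}: $\Ex{L} = \Ex{L_0} + \Ex{Z}\frac{58}{5}\mu$ and $\Ex{X} = \Ex{X_0} + \Ex{Z}\frac{42}{5}$. Substituting gives
\[
1 - \Ex{L} + \tfrac{3}{2}\mu\Ex{X} = 1 - \Ex{L_0} + \tfrac{3}{2}\mu \Ex{X_0} + \mu\Ex{Z}\Big(\tfrac{3}{2}\cdot\tfrac{42}{5} - \tfrac{58}{5}\Big).
\]
Since $\frac{63}{5}-\frac{58}{5}=1$, this matches the first denominator.

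Finiteness of every quantity comes from Lemma~\ref{le:le1}: $(X_0,Y_0,L_0,Z)$ has finite exponential moments, so all means, and the variance $\nu^2$, are finite. Also $\Ex{X},\Ex{Y},\Ex{L}<\infty$ by Proposition~\ref{pro:expect}. Since $0<\cD(\rho)<\infty$, we have $0<\mu<1$, so $\sigma^2 = \mu(1-\mu) > 0$. Lemma~\ref{le:le3} gives $\Ex{L}<1$, so the denominator $1-\Ex{L}+\frac32\mu\Ex{X}$ is strictly positive. Hence $\alpha>0$ and $\chi>0$ are finite; $\chi$ is positive because its numerator is $2(1-\Ex{L})>0$.

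For $\nu^2>0$, it suffices that $\frac32\mu X_0 + \mu Z - L_0$ is not almost surely constant. I would exhibit two finite outcomes of $\mU$ with positive probability and different values. One outcome is a root $\cD$ whose child is a $\cP$ vertex with offspring $x_\cX^2 x_\cY^3 x_\cD$, giving $X_0=2$, $L_0=1$, $Z=0$, so the value is $3\mu - 1$. Another has the $\cP$ vertex take the $x_\cX^2 x_\cY^4 x_\cD^2$ option, giving $X_0=2$, $L_0=2$, $Z=0$, so the value is $3\mu-2$. These differ, and both have positive probability since $\cD(\rho)>0$ and $\rho>0$.

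For $\eta$, I would rewrite it using the definition of $\alpha$:
\[
\eta = \frac{3\big(1-\Ex{L_0}+\mu\Ex{Z}+\frac32\mu\Ex{X_0}\big) - 3\mu\big(\frac32\Ex{X_0}+\Ex{Z}\big)}{1-\Ex{L_0}+\mu\Ex{Z}+\frac32\mu\Ex{X_0}} = \frac{3(1-\Ex{L_0})}{1-\Ex{L}+\tfrac32\mu\Ex{X}} .
\]
Because $L_0\le L$, we get $\Ex{L_0}\le\Ex{L}<1$, so $\eta>0$.

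For $\beta$, the main obstacle is that the term $1-\frac{\alpha}{2}\Ex{X}$ could in principle be negative. Rewriting, I expect
\[
1-\frac{\alpha}{2}\Ex{X} = \frac{1-\Ex{L}}{1-\Ex{L}+\frac32\mu\Ex{X}} = \frac{\chi}{2} > 0.
\]
Then $\beta = \alpha\Ex{Y} + \frac32(1+\mu)\chi$ is a sum of a nonnegative term and a positive term, so $\beta>0$. It is finite by the earlier finiteness bounds.
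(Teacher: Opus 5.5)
Your proposal is correct and follows the same route as the paper: the identity for $\alpha$ comes from Proposition~\ref{pro:expect}, the positivity of all denominators from $\Ex{L_0}\le\Ex{L}<1$ (Lemma~\ref{le:le3}), and finiteness (including of $\nu^2$) from Lemma~\ref{le:le1}. Beyond the paper's terse argument, you make explicit three points it leaves implicit: the identity $\eta = 3(1-\Ex{L_0})/(1-\Ex{L}+\frac{3}{2}\mu\Ex{X})$, the identity $1-\frac{\alpha}{2}\Ex{X}=\chi/2$, and a concrete non-degeneracy witness for $\frac{3}{2}\mu X_0+\mu Z-L_0$ (the two $\cP$-offspring options give the values $3\mu-1$ and $3\mu-2$), all of which check out.
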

\begin{proof}
By Lemma~\ref{le:le3} and Proposition~\ref{pro:expect}  we have $\Ex{L}<1$ and $\Ex{L_0}<1$ (since $\Ex{L} = \Ex{L_0} + \Ex{Z} \Ex{\bar{L}}$). Hence $\alpha>0$ is well-defined. The equality for $\alpha$  follows from Proposition~\ref{pro:expect}. From $\Ex{L}<1$ it also follows that $\frac{\alpha}{2} \Ex{X}<1$, hence both summands in $\beta>0$ are positive. Likewise, it follows that $\eta>0$. The variance $\nu^2$ is finite and non-zero because $(X_0, L_0, Z)$ have finite exponential moments by~Lemma~\ref{le:le1}, and $\frac{3}{2}\mu X_0 + \mu Z - L_0$ is not degenerate. It is clear that $\sigma^2$ and $\chi$ are non-zero.
\end{proof}

We are going to determine the probability for the event $\#_\cX\mT= 2n$. Conditionally on that event we verify that the subpopulation $\#_\cD\mT$ concentrates around $\alpha n$, and that the subpopulation $\#_\cY\mT$ concentrates around $\beta n$. It is important to note that $\#_\cY\mT$ does \emph{not} concentrate at $\alpha \Ex{Y}n$: The local large deviation asymptotics by~\cite{MR2440928} which we will apply describe a single big jump scenario. Hence we get the contribution $\alpha \Ex{Y}n$ from non-maximal jumps and an additional contribution $3 (1+\mu) \left(1 - \frac{\alpha}{2} \Exb{X}\right)n $ from the unique maximal jump.

\begin{theorem}
	\label{te:le4}
	As $n \to \infty$
	\[
		\Pr{ \#_{\cX} \mT = 2n } \sim \frac{8 \sqrt{6}}{5 \sqrt{\pi}} \Exb{Z} 
		\frac{(1 - \Exb{L} + \frac{3}{2} \mu \Exb{X})^{3/2}}{(1-\Exb{L})^{5/2}} n^{-5/2}.
	\]
	Uniformly for all integers $\ell$ with $| \ell - \alpha n| = O(\sqrt{n})$
	\begin{multline*}
		\Pr{ (\#_{\cX} \mT, \#_{\cD} \mT ) = (2n,\ell) } \\
		\sim  \Ex{Z} \frac{8 \sqrt{3} (1 - \Ex{L} + \frac{3}{2}\mu \Ex{X})^{5/2}}{5 \pi\sqrt{\sigma^2 \eta + \nu^2 \alpha} (1 - \Ex{L})^{5/2} }   n^{-3} 
		\exp\left( - \frac{((3\mu/\alpha) (\ell - \alpha n))^2} {2(\sigma^2  \eta +  \nu^2 \alpha)n} \right).
	\end{multline*}
 	Furthermore, 
	\begin{align*}
		\Pr{ |\#_{\cD} \mT  - \alpha n| \ge \sqrt{n} \log n \mid \#_\cX \mT = 2n } \le n^{-\Theta(\log n)}, \\
		\Pr{ |\#_{\cY} \mT  - \beta n| \ge \sqrt{n} \log n \mid \#_\cX \mT = 2n } \le n^{-\Theta(\log n)}.
	\end{align*}
\end{theorem}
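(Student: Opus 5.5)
The proof will reduce $\mT$ to a one-type Galton--Watson tree of $\mS$-blocks and then apply a single-big-jump local limit theorem. Cut $\mT$ at every non-root vertex of type $\cD$. Each piece is an independent copy of $\mS$, and the pieces form a Galton--Watson tree whose offspring variable is $L$. Hence $\#_\cD\mT$ is the number of blocks, and $\#_\cX\mT$ and $\#_\cY\mT$ are sums of the block quantities $X_i$ and $Y_i$. By the cycle lemma (Kemperman's formula), with $(X_i,Y_i,L_i)_{i\ge1}$ i.i.d. copies of $(X,Y,L)$,
\[
\Pr{(\#_\cX\mT,\#_\cD\mT)=(2n,\ell)} = \frac{1}{\ell}\Prb{\textstyle\sum_{i\le \ell} L_i=\ell-1,\ \sum_{i\le\ell}X_i=2n}.
\]
By the decomposition in the proof of Proposition~\ref{pro:expect}, each block is a light-tailed part $(X_0,Y_0,L_0)$ plus $Z$ independent heavy parts $(\bar X,\bar Y,\bar L)$. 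From $\phi_\cZ$ and Lemma~\ref{le:keynumeric} one reads off the law of these heavy parts:
\begin{itemize}
\item $\bar X=2N$, where $\Pr{N=k}\propto q_k(27/256)^k \sim c\,k^{-5/2}$ by~\eqref{eq:enumsimple};
\item conditionally on $N$, $\bar L\sim\mathrm{Bin}(3N-1,\mu)$;
\item $\bar Y$ is determined by~\eqref{eq:bary}.
\end{itemize}
So after merging, the $\ell$ blocks contribute $\sum_i Z_i$ heavy-tailed steps of index $3/2$, on top of i.i.d. light-tailed steps with finite exponential moments (Lemma~\ref{le:le1}).

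Next I would identify the dominant scenario. Since $\Ex{L}<1$ (Lemma~\ref{le:le3}), the constraint $\sum L_i=\ell-1$ with $\ell$ of order $n$ cannot be met by typical fluctuations. The cheapest way is one heavy step of size $\bar X=2m$ whose binomial $\bar L\approx 3\mu m$ supplies the missing $\ell(1-\Ex{L})$ units. Solving $\ell(1-\Ex{L})=3\mu m$ together with $\ell\Ex{X}+2m=2n$ gives $\ell=\alpha n$ and $m=n(1-\tfrac{\alpha}{2}\Ex{X})$.

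I would then make this rigorous in three steps.
\begin{itemize}
\item Apply the local large deviation theorem of Denisov, Dieker and Shneer~\cite{MR2440928}, in its version with a lattice heavy step, to the walk in $X$. The probability of exactly one big step at a given size is $\ell\,\Ex{Z}\,c\,m^{-5/2}(1+o(1))$, uniformly. The other possibilities — two or more big steps, or a moderately large light-tailed deviation — are negligible. For these I would use Proposition~\ref{pro:deviation} for the light part and crude bounds for the rest.
\item Given the big step, the $L$-constraint is a local event for a sum of two independent terms: the binomial $\bar L$, with variance $\sigma^2\cdot 3m$, and the centred sum of the $\tfrac32\mu X_0+\mu Z-L_0$, with variance $\nu^2\ell$. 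The combination $\tfrac32\mu X_0+\mu Z-L_0$ arises after eliminating $m$ using the $X$-constraint, and it is here that $\eta=3-\alpha(\tfrac32\Ex{X_0}+\Ex{Z})$ appears. A local CLT, with Proposition~\ref{pro:binomial} controlling the tails, then yields the Gaussian factor with variance $(\sigma^2\eta+\nu^2\alpha)n$ and an additional factor $n^{-1/2}$.
\item Multiplying by $1/\ell$ and evaluating gives the stated $n^{-3}$ local asymptotics, uniformly for $|\ell-\alpha n|=O(\sqrt n)$. Summing the Gaussian over $\ell$ gives the $n^{-5/2}$ asymptotics for $\Pr{\#_\cX\mT=2n}$, with the stated constant. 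For this summation I also need the tails $|\ell-\alpha n|\ge\sqrt n\log n$ to be negligible, which is the next point.
\end{itemize}

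For the conditional concentration, I would bound $\Pr{(\#_\cX\mT,\#_\cD\mT)=(2n,\ell)}$ for $|\ell-\alpha n|\ge\sqrt n\log n$. In that range either the big step's binomial deviates by about $\sqrt n\log n$, which Proposition~\ref{pro:binomial} bounds by $e^{-c\log^2 n}$, or the light sums deviate by that much, which Proposition~\ref{pro:deviation} with $\lambda\asymp \log n/\sqrt n$ bounds the same way. The case $\ell$ of a wrong order is excluded using Proposition~\ref{pro:linearbound}. Dividing by the $n^{-5/2}$ estimate gives $n^{-\Theta(\log n)}$.

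For $\#_\cY\mT$, I would use~\eqref{eq:bary}: the big step contributes $\bar Y=3m-1+\bar L\approx 3(1+\mu)m$, and the remaining blocks contribute $\ell\Ex{Y}+O(\sqrt n\log n)$ with the same deviation control. The total is $\beta n$. The main obstacle I expect is the first step: applying~\cite{MR2440928} uniformly, in two dimensions, with a random number $\sum Z_i$ of heavy summands mixed with light ones, and controlling the multiple-big-jump and intermediate-jump scenarios sharply enough to reach $o(n^{-3})$ locally. I would first condition on $\sum_i Z_i$ and treat the heavy variables as a separate walk with a random number of steps.
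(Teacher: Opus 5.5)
Your architecture matches the paper's: the cycle lemma, splitting each $\mS$-block into a light part plus $Z$ heavy $\cZ$-parts, a local large deviation estimate for the heavy $X$-walk, and a binomial local limit convolved with a Gaussian for the light sums. There is, however, a genuine gap: apart from the single big jump, you treat everything as light-tailed. There are about $\alpha\Ex{Z}n$ non-maximal heavy steps, each with an index-$3/2$ tail, and they cannot be dropped.

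In the $L$-constraint, after eliminating $m$ through the $X$-constraint, each non-maximal heavy step contributes $\tfrac32\mu\bar X_j-\bar L_j=\mu+\bigl(\mu(3N_j-1)-\bar L_j\bigr)$. The constant $\mu$ is the $\mu Z$ in your combination. The second term is a conditionally centred binomial fluctuation of variance $\sigma^2(3N_j-1)$, and your two-term decomposition omits it. As written, your step therefore yields the variance $\sigma^2(3m-1)+\nu^2\ell\approx\bigl(\sigma^2(3-\tfrac32\alpha\Ex{X})+\nu^2\alpha\bigr)n$. This falls short of $(\sigma^2\eta+\nu^2\alpha)n$ by $\sigma^2\alpha\Ex{Z}(\tfrac32\Ex{\bar X}-1)n=\tfrac{58}{5}\sigma^2\alpha\Ex{Z}n$. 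So the claim that $\eta$ appears at that point does not follow, and the $\ell$-local asymptotics would come out with the wrong Gaussian. Only $\Pr{\#_{\cX}\mT=2n}$, obtained by summing over $\ell$, is insensitive to this.

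The same oversight breaks your treatment of $\#_{\cY}\mT$. The remaining blocks do \emph{not} contribute $\ell\Ex{Y}+O(\sqrt n\log n)$: the sum of $\bar Y_j$ over the non-maximal heavy steps fluctuates on the scale $n^{2/3}$, and so does $m$. Only the total concentrates. It does so because $\bar Y=\tfrac32\bar X-1+\bar L$ lets you eliminate \emph{all} heavy $X$-values via $\sum_i X_i=2n$, leaving $3n+\ell-1+S^V_\ell$ with $V=Y_0-\tfrac32X_0-Z-L_0$ light-tailed.

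The missing idea is to aggregate the binomials, and it also removes what you call the main obstacle. Conditionally on all heavy values $\bar X_j=2N_j$, $j\le k$, the $\bar L_j$ are independent $\mathrm{Bin}(3N_j-1,\mu)$. Their sum is therefore $\mathrm{Bin}(3(n-a)-k,\mu)$, which depends on the heavy steps only through $\sum_j\bar X_j=2(n-a)$. Writing $S^{X_0}_\ell,S^{L_0}_\ell,S^{Z}_\ell$ for sums of $\ell$ i.i.d. copies of the light variables and $S^{\bar X}_k=\sum_{j\le k}\bar X_j$, this gives the exact factorisation
\begin{multline*}
\Prb{\sum_{i=1}^{\ell}(X_i,L_i)=(2n,\ell-1)}=\sum_{a,b,k\ge 0}\Prb{(S^{X_0}_\ell,S^{L_0}_\ell,S^{Z}_\ell)=(2a,b,k)}\\
\cdot\Prb{S^{\bar X}_k=2(n-a)}\,\Prb{\mathrm{Bin}(3(n-a)-k,\mu)=\ell-b-1}.
\end{multline*}
The heavy tail now enters only through the one-dimensional local probability $\Pr{S^{\bar X}_k=2(n-a)}\sim k\,\Pr{\bar X/2=\lfloor (n-a)-k\Ex{\bar X}/2\rfloor}$ from \cite[Cor.~2.1]{MR2440928}. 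You no longer need to isolate the big jump or to control multiple or intermediate jumps in two dimensions. The binomial has $3(n-a)-k\approx\eta n$ trials, which is where $\eta$ genuinely comes from. The remaining steps then go through as you intended: Proposition~\ref{pro:deviation} for the light sums, the binomial local limit theorem, Berry--Esseen for $\tfrac32\mu X_0+\mu Z-L_0$, and the Gaussian convolution.
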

\begin{proof}
	We may view $\mT$ as the result of starting with a single vertex of type $\cD$, and then in each step choose a leaf $v$ of  type $\cD$ of the current tree at random, take a fresh independent copy of $\mS$ and glue it to $v$ by identifying its root vertex with $v$. Each step we occupy a pre-existing ``free'' leaf of type $\cD$, and generate a nonnegative number of new free type $\cD$ leaves. This sequence of numbers is subject to the conditions of the outdegree sequence of a tree in an explorative order (for example via depth-first search). That is, with $d_i$ denoting the number of type $\cD$ leaves of the $i$th copy of $\mS$ we use, we have at step $k$ that $\sum_{i=1}^k(d_i -1) \ge -1$ and if at any step the sum assumes the minimal value $-1$ then there are no more free leaves of type $\cD$ in the current tree and the process stops. The cycle lemma~\cite{MR0138139} states that for any sequence of integers $x_1, \ldots, x_k \ge -1$ with $\sum_{i=1}^k x_i  = -1$ there exists a unique cyclic shift $\upsilon: [k] \to [k]$ such that $\sum_{i=1}^{k'} x_{\upsilon(i)}  \ge 0$ for all $k'<  k$.
	
	Let $1 \le \ell \le 5n$ be given. With $(X_i, Y_i, L_i)_{i \ge 1}$ denoting independent copies of $(X,Y,L)$, it follows that
	\begin{align}
		\label{eq:mot}
		\Pr{ (\#_{\cX} \mT, \#_{\cD} \mT ) = (2n,\ell) }  = \frac{1}{\ell} \Prb{ \sum_{i=1}^\ell (X_i, L_i) = (2n,\ell-1)}. 
	\end{align}
	With $(\bar{X}_i, \bar{Y}_i, \bar{L}_i)_{i \ge 1}$ denoting independent copies of $(\bar{X}, \bar{Y}, \bar{L})$ that are also independent of  $\mU$, we have by construction
	\begin{align}
		(X,L) \eqdist (X_0, L_0) + \sum_{i=1}^Z (\bar{X}_i, \bar{L}_i).
	\end{align}
	With $(X_0(j), L_0(j), Z(j))_{j \ge 1}$ denoting independent copies of $(X_0, L_0, Z)$ and setting $S_\ell^{X_0} = \sum_{j=1}^\ell X_0(j)$, $S_\ell^{L_0} = \sum_{j=1}^\ell L_0(j)$ and $S_\ell^{Z} = \sum_{j=1}^\ell Z(j)$ it follows that
	\begin{align}
		\label{eq:myw}
		\sum_{i=1}^\ell (X_i, L_i) \eqdist (S_\ell^{X_0}, S_\ell^{L_0}) + \sum_{i=1}^{S_\ell^Z} (\bar{X}_i, \bar{L}_i).
	\end{align}
	By~\eqref{eq:enumsimple} and Lemma~\ref{le:keynumeric} it follows that
	\begin{align}
		\label{pro:barxdensity}
		\Prb{\bar{X}=2n} &= \frac{256}{5}  q_n \left(\frac{27}{256}\right)^n \\
		&= \frac{8 \sqrt{6}}{5 \sqrt{\pi}} n^{-5/2} (1 + O(1/n)). \nonumber
	\end{align}
	By  the definition of $\phi_{\cZ}$ we have
	\begin{align}
		\label{eq:sona}
		\Prb{ (\bar{X}, \bar{L}) = (2n, d) } = \Prb{\bar{X}=2n} \Prb{\mathrm{Bin}\left(3n-1, \mu\right) = d}.
	\end{align}
	Note that for $k \ge 1$ and integers $n_1, \ldots, n_k \ge 1$ with $\sum_{i=1}^k n_i = n$ we have conditional on $\bar{X_i} = 2n_i$, $i=1, \ldots, k$ that $\bar{L}_1, \ldots, \bar{L}_k$ are conditionally independent with conditional distribution 
	\[
	(\bar{L}_i \mid (\bar{X}_i)_{1 \le i \le k} = (2n_i)_{1 \le i \le k} ) \eqdist \mathrm{Bin}(3n_i-1, \mu),
	\]
	yielding
	\begin{align}
		\label{eq:thattoo}
		\left(\sum_{i=1}^k \bar{L}_i \,\Bigg \vert\, (\bar{X}_i)_{1 \le i \le k} = (2n_i)_{1 \le i \le k} \right) \eqdist \mathrm{Bin}(3n-k, \mu).
	\end{align}
	Setting $S_k^{\bar{X}} = \sum_{i=1}^k \bar{X}_i$, it follows by~\eqref{eq:myw} and~\eqref{eq:thattoo} that
	\begin{align}
		\label{eq:til1}
		&\Prb{ \sum_{i=1}^\ell (X_i, L_i) = (2n,\ell-1)} \\
		&= \sum_{a,b,k \ge 0} \Prb{ (S_\ell^{X_0}, S_\ell^{L_0}, S_\ell^Z) = (2a,b,k)} \Prb{\sum_{i=1}^k (\bar{X}_i, \bar{L}_i) = (2(n-a), \ell-b-1) } \nonumber\\
		&= \sum_{a,b,k \ge 0} \Prb{ (S_\ell^{X_0}, S_\ell^{L_0}, S_\ell^Z) = (2a,b,k)} \cdot \nonumber\\
		&\qquad \qquad \,\Prb{S_k^{\bar{X}} = 2(n-a)}  \Prb{\mathrm{Bin}(3(n-a)-k, \mu) = \ell-b-1}. \nonumber
	\end{align}
	By Proposition~\ref{pro:binomial} there exist $C,c>0$ such that for all $0 \le k < 3(n-a)$ and all $\ell \ge 0$ we have
	\begin{multline}
		\label{eq:til2}
	\Prb{\mathrm{Bin}(3(n-a)-k,\mu) = \ell-b-1} \\ \le  \frac{C}{\sqrt{3(n-a)-k}} \exp\left( - c \frac{((\ell-b-1)- (3(n-a)-k) \mu)^2}{3(n-a)-k } \right).
	\end{multline}
	By Proposition~\ref{pro:deviation}, it follows that there exists a constant $c_0>0$ such that for all sufficiently small $\epsilon>0$, all $n \ge 1$  and all $1 \le \ell \le 5 n$ we have
	\begin{align}
		\label{eq:conen}
		\Prb{|S_\ell^{X_0} - \Ex{X_0}\ell| \ge  \epsilon n } &\le 2 \exp(-c_0 \epsilon^2 n), \\
		\Prb{ |S_\ell^{L_0} - \Ex{L_0}\ell| \ge \epsilon n } &\le 2 \exp(-c_0 \epsilon^2 n), \nonumber \\
		\Prb{ |S_\ell^{Z} - \Ex{Z}\ell| \ge  \epsilon n } &\le 2 \exp(-c_0 \epsilon^2 n). \nonumber
	\end{align}
	(With the notation $\delta, c, \lambda$ used in  Proposition~\ref{pro:deviation} we set $\lambda= \epsilon/ (10 c)$, such that $c \ell \lambda^2 - \lambda \epsilon n \le c 5n \lambda^2 - \lambda \epsilon n = -n  \epsilon^2  / (20c)$. Since $c$ is fixed, we have $\lambda < \delta$ for all sufficiently small $\epsilon$.)
	
	We emphasise that $O(\cdot)$-terms are allowed to be negative. For $2a = \Ex{X_0}\ell + O(\epsilon n)$, $b =  \Ex{L_0}\ell + O(\epsilon n)$ and $k = \Ex{Z} \ell + O(\epsilon n)$ we have uniformly in $n$ and $\epsilon>0$
	\begin{align*}
		(\ell-b-1)- (3(n-a)-k) \mu &= \ell \left(1 - \Ex{L_0} + \frac{3}{2} \mu \Ex{X_0} + \mu \Ex{Z} \right) - 3 \mu n -1 + O(\epsilon n) \\
		&= 3 \mu (\ell / \alpha -  n) + O(\epsilon n) - 1
	\end{align*}
	Let $C_0>0$ denote a constant so that the $O(\epsilon n)$ term in the preceding equation is bounded in absolute value by $C_0 \epsilon n$. It follows using~Proposition~\ref{pro:linearbound} and by~\eqref{eq:til1} and~\eqref{eq:til2} that for all $n \ge 1$,  $\delta>2/n$ and 
	\[
	0<\epsilon_0 \le \frac{3 \mu}{2 \alpha C_0} \delta
	\] we have by~\eqref{eq:mot}
	\begin{align}
		\label{eq:mw}
		&\Pr{ \#_{\cX} \mT = 2n, |\#_{\cD} \mT - n \alpha| \ge \delta n} \\
		&\qquad= \sum_{\substack{1 \le \ell \le 5n \\ |\ell - n \alpha| \ge \delta n}} \frac{1}{\ell} \Prb{ \sum_{i=1}^\ell (X_i, L_i) = (2n,\ell-1)}  \nonumber \\
		&\qquad\le O(n)  \exp(-c_0 \epsilon_0 ^2 n) \nonumber \\
		&\qquad\quad+ \sum_{\substack{1 \le \ell \le 5n \\ |\ell - n \alpha| \ge \delta n}} \sum_{\substack{a,b,k \ge 0 \\ 
		|2a - \Ex{X_0} \ell| \le \epsilon_0  n \\
		|b - \Ex{L_0} \ell| \le \epsilon_0  n \\
		|k - \Ex{Z} \ell| \le \epsilon_0  n}}\exp\left( - c \frac{((\ell-b-1)- (3(n-a)-k) \mu)^2}{3n } \right) \nonumber \\
		&\qquad\le O(n) \exp(-c_0 \epsilon_0 ^2 n) + O(n^4) \exp\left(-c \frac{(3 \mu \delta n / \alpha - C_0 \epsilon_0 n - 1)^2}{3n} \right). \nonumber  \\
		&\qquad\le O(n) \exp(-c_0 \epsilon_0 ^2 n) + O(n^4) \exp\left(- \frac{3 c \mu^2}{4 \alpha^2}n\delta^2 \right). \nonumber
	\end{align}
	By Proposition~\ref{pro:expect} and Lemma~\ref{le:le3} we have
	\[
		\Ex{L_0} < \Ex{L} < 1,
	\]
	hence
	\begin{align*}
		\alpha \left( \frac{3}{2} \Ex{X_0} + \Ex{Z}\right) = \frac{3 \mu(\Ex{Z} + \frac{3}{2} \Ex{X_0})}{ 1 - \Ex{L_0} + \mu (\Ex{Z} + \frac{3}{2} \Ex{X_0})} < 3.
	\end{align*}
	Hence
	\begin{align}
		3(n-a)-k = n \left(3 - \alpha \left( \frac{3}{2} \Ex{X_0} + \Ex{Z}\right) \right) + O(\delta n) .
 	\end{align}
	concentrates around a positive multiple of $n$, yielding
	\[
		\frac{1}{\sqrt{3(n-a)-k}} = O\left(\frac{1}{\sqrt{n}}\right)
	\]
	uniformly for $\delta$ below a fixed threshold.
	Note that by Proposition~\ref{pro:expect} and Lemma~\ref{le:le3}
	\begin{align*}
		2(n-a) - k \Ex{\bar{X}} &= 2n - \ell (\Ex{X_0} + \Ex{Z}\Ex{\bar{X}})+ O(\epsilon_0 n) \\
		&= 2n - \alpha n\Ex{X} + O(\delta n)  \\
		&= 2n \left(1 - \frac{\frac{3}{2} \mu\Ex{X}}{1 - \Ex{L} + \frac{3}{2}\mu \Ex{X}} \right) + O(\delta n) 
	\end{align*}
	concentrates around a positive multiple of $n$.
	Hence  it follows by the local large deviation result \cite[Cor. 2.1]{MR2440928} that uniformly for $\delta$  below a fixed threshold
	\begin{align}
		\label{eq:mw2}
		&\Prb{S_k^{\bar{X}} = 2(n-a)} \\
		&\sim k \Prb{\frac{\bar{X}}{2} = \lfloor (n-a) -k \Ex{\bar{X}}/2 \rfloor } \nonumber \\
		&\sim (\ell \Ex{Z} + O(\epsilon_0 n) ) \frac{8 \sqrt{6}}{5 \sqrt{\pi}} \left(n \left(\frac{1 - \Ex{L} }{1 - \Ex{L} + \frac{3}{2}\mu \Ex{X}} \right) + O(\delta n) \right)^{-5/2}. \nonumber
	\end{align}
	By~\eqref{eq:mw} it follows that
	\begin{align}
		\label{eq:toget}
		\Pr{ \#_{\cX} \mT = 2n, |\#_{\cD} \mT - n \alpha| \ge \sqrt{n} \log n} = n^{- \Theta(\log n)}.
	\end{align}
	For brevity, let us set
	\begin{align*}
		\Delta_\ell &= \ell - \alpha n, \\
		B &= B(n, a, b, k, \ell) = \Prb{\mathrm{Bin}(3(n-a)-k, \mu) = \ell-b-1}.
	\end{align*}
	Suppose that $\ell \ge 1$ satisfies 
	\[
	|\Delta_\ell| \le  \sqrt{n}\log n.
	\]
	By~\eqref{eq:mot},~\eqref{eq:til1} and \eqref{eq:conen} it follows that 
	\begin{align}
		\label{eq:j0}
		&\Pr{ (\#_{\cX} \mT, \#_{\cD} \mT ) = (2n,\ell) }  \\
		 &= \frac{1}{\ell}  \sum_{a,b,k \ge 0} \Prb{ (S_\ell^{X_0}, S_\ell^{L_0}, S_\ell^Z) = (2a,b,k)} \Prb{S_k^{\bar{X}} = 2(n-a)}  B. \nonumber \\
		&= n^{-\Theta(\log n)} + \frac{1}{\ell}  \quad \sum_{ \mathclap{ \substack{a,b,k \ge 0 \\ 
				|a - (1/2)\Ex{X_0} \ell| \le \sqrt{n} \log n \\
				|b - \Ex{L_0} \ell| \le \sqrt{n} \log n \\
				|k - \Ex{Z} \ell| \le \sqrt{n} \log n}}} \quad \Prb{ (S_\ell^{X_0}, S_\ell^{L_0}, S_\ell^Z) = (2a,b,k)} \Prb{S_k^{\bar{X}} = 2(n-a)}  B. \nonumber
	\end{align}
	By~\eqref{eq:mw2} it follows that uniformly for 
	\begin{align*}
		a &= (1/2)\Ex{X_0} \ell + x_a \\
		b &=  \Ex{L_0} \ell + x_b \\
		k &= \Ex{Z} \ell + x_k 
	\end{align*}
	with $|x_a|, |x_b|, |x_k| \le \sqrt{n} \log n$
	\begin{align}
		\label{eq:jump}
		\frac{1}{\ell} \Prb{S_k^{\bar{X}} = 2(n-a)} \sim  \Ex{Z} \frac{8 \sqrt{6}}{5 \sqrt{\pi}}  \left(\frac{1 - \Ex{L} }{1 - \Ex{L} + \frac{3}{2}\mu \Ex{X}}  \right)^{-5/2} n^{-5/2}.
	\end{align}
	Moreover,
	\begin{align*}
		2\pi(3(n-a)-k)\sigma^2	&= 2\pi \sigma^2(3(n-(1/2)\Ex{X_0} \ell + x_a) - (\Ex{Z}\ell + x_k)) \\
								&= 2\pi \sigma^2n (3 -  \alpha ((3/2)\Ex{X_0} + \Ex{Z}))   + O(\sqrt{n} \log n).
	\end{align*}
	and by Lemma~\ref{le:le3} and Proposition~\ref{pro:expect} 
	\[
		\alpha ((3/2)\Ex{X_0} + \Ex{Z})  < 3.
	\]
	Moreover, 
	\begin{align*}
		&(3(n-a)-k)\mu  - (\ell - b -1 ) \\
		&= 3\mu n - \mu ((3/2)\Ex{X_0} +\Ex{Z})\ell - 3 \mu x_a -  \mu x_k - \ell(1- \Ex{L_0})  + x_b + 1 \\
		&=  - (3 \mu / \alpha)\Delta_\ell - 3 \mu x_a -  \mu x_k + x_b + 1
	\end{align*}
	and hence
	\begin{align*}
		\frac{((3(n-a)-k)\mu  - (\ell - b -1 ))^2}{2(3(n-a)-k)\sigma^2  } &= \frac{((3\mu/\alpha)\Delta_\ell + 3 \mu x_a +  \mu x_k - x_b )^2}{ 2 \sigma^2n (3 -  \alpha ((3/2)\Ex{X_0} + \Ex{Z}))} + o(1).
	\end{align*}
	By the local limit theorem for binomial distributions~\cite[Thm. 1.1]{szewczak2022classicalsurelocallimit} (which is valid for deviations from the mean up to any given sequence $o(n^{1/2 + 1/6})$, and in our case the deviation is $O(\sqrt{n} \log n)$ uniformly for all considered $\ell$, $a$, $b$ and $k$) it follows that
	\begin{align}
		\label{eq:audio}
		B &= \frac{1 + o(1)}{\sqrt{2\pi \sigma^2n \eta }} \exp\left(- \frac{((3\mu/\alpha)\Delta_\ell + 3 \mu x_a +  \mu x_k - x_b )^2}{ 2 \sigma^2n \eta }  \right).
	\end{align}
	Set
	\begin{align*}
		W_\ell &= \ell^{-1/2}\left(\frac{3}{2}\mu S_\ell^{X_0} + \mu S_\ell^{Z} - S_\ell^{L_0} - \ell \left(\frac{3}{2}\mu \Ex{X_0} + \mu \Ex{Z} - \Ex{L_0}\right)\right), \\
		f_{n,\ell}(x) &=  \exp\left(- \frac{((3\mu/\alpha)\Delta_\ell + x \sqrt{\ell} )^2}{ 2 \sigma^2n \eta }  \right)  .
	\end{align*}
	This way, $\Va{W_1} = \nu^2$. Since $f_{n,\ell}$ is bounded, it follows by~\eqref{eq:conen} and~\eqref{eq:audio} that
	\begin{multline*}
		\qquad\,\,\,\,\sum_{ \mathclap{ \substack{a,b,k \ge 0 \\ 
					|a - (1/2)\Ex{X_0} \ell| \le \sqrt{n} \log n \\
					|b - \Ex{L_0} \ell| \le \sqrt{n} \log n \\
					|k - \Ex{Z} \ell| \le \sqrt{n} \log n}}} \quad \Prb{ (S_\ell^{X_0}, S_\ell^{L_0}, S_\ell^Z) = (2a,b,k)}   B = n^{-\Theta(\log n)} + \frac{(1+o(1))}{\sqrt{2\pi \sigma^2n \eta }}\Exb{f_{n,\ell}(W_\ell )  }.
	\end{multline*}
	By~\eqref{eq:j0} and~\eqref{eq:jump} we arrive at
	\begin{multline}
		\label{eq:ep1}
		\Pr{ (\#_{\cX} \mT, \#_{\cD} \mT ) = (2n,\ell) }  \\ = n^{-\Theta(\log n)} + \Ex{Z} \frac{8 \sqrt{6}}{5 \sqrt{\pi}}  \left(\frac{1 - \Ex{L} }{1 - \Ex{L} + \frac{3}{2}\mu \Ex{X}}  \right)^{-5/2} n^{-5/2} 
		 \frac{(1+o(1))}{\sqrt{2\pi \sigma^2n \eta }}\Exb{f_{n,\ell}(W_\ell )  }
	\end{multline}
	uniformly for $|\Delta_\ell| \le \sqrt{n} \log n$. By Proposition~\ref{pro:deviation}, 
	\[
		\Prb{|W_\ell| \sqrt{\ell} \ge (3\mu/\alpha)|\Delta_\ell| /2} \le 2\exp(-\Theta(|\Delta_\ell^2|/n)).
	\]
	In the event that $|W_\ell|\sqrt{\ell} \le (3\mu/\alpha)|\Delta_\ell| /2$ we have
	\[
	f_{n,\ell}(W_\ell) =  \exp\left(- \frac{((3\mu/\alpha)\Delta_\ell + W_\ell \sqrt{\ell} )^2}{ 2 \sigma^2n \eta }  \right)  \le \exp(-\Theta(|\Delta_\ell|^2/n)).
	\]
	Since $f_{n,\ell}$ is bounded, it follows that
	\begin{align*}
	\Exb{f_{n,\ell}(W_\ell )  } \le  O(1) \exp(-\Theta(|\Delta_\ell|^2/n)).
	\end{align*}
	It follows that
	\begin{align}
		\Pr{ (\#_{\cX} \mT, \#_{\cD} \mT ) = (2n,\ell) } &\le O(n^{-3}) \exp(-\Theta(|\Delta_\ell|^2/n))
	\end{align}
	uniformly for $|\Delta_\ell| \le \sqrt{n} \log n$.  
	Together with~\eqref{eq:toget} it follows by Riemann summation that there exists a function $g: [0, \infty[ \to [0, \infty[$ with $\lim_{x \to \infty} g(x)=0$ such that for all $n \ge 1$ and $K \ge 1$
	\begin{align}
		\label{eq:emi2}
		\Pr{ \#_{\cX} \mT = 2n, |\#_{\cD} \mT - \alpha n| \ge K \sqrt{n} } &\le n^{-5/2} g(K).
	\end{align}
	We now consider the case
	\[
		|\Delta_\ell| \le K \sqrt{n}
	\]
	for $K>0$ constant.
	By an elementary substitution 
	\begin{align*}
		\int_{\ndR} |f_{n,\ell}'(x)|\,\mathrm{d}x &= 2.
	\end{align*}
	Hence, with $N$ denoting a centred normal distribution with variance $ \nu^2$, it follows by the Berry--Esseen inequality
	\begin{align}
		\label{eq:ep2}
			|\Exb{f_{n,\ell}(W_\ell )  } - \Exb{f_{n,\ell}(N)}| &\le  \int_{\ndR}| (\Pr{ W_\ell \ge x} - \Pr{N \ge x}) f_{n,\ell}'(x)| \,\mathrm{d}x \\
			&\le 2 \sup_{x \in \ndR} | \Pr{ W_\ell \ge x} - \Pr{N \ge x}| \nonumber\\
			&= O(1/\sqrt{\ell})\nonumber \\
			&= O(1/\sqrt{n}). \nonumber
	\end{align}
	For any $\tau>0$ let $\phi_{\tau^2}(x) = (2 \pi \tau^2)^{-1/2} \exp(-x^2 / (2 \tau^2))$ denote the density of a centred normal distribution with variance $\tau^2$. Since $\sqrt{\ell}N \eqdist \cN(0,   \nu^2\ell)$, it follows that
	\begin{align}
		\label{eq:ep3}
		\frac{1}{\sqrt{2\pi \sigma^2n \eta }}\Exb{f_{n,\ell}(N)  } &= \int_\ndR \phi_{\sigma^2 n \eta}( (3\mu/\alpha) \Delta_\ell - x) \phi_{ \nu^2 \ell}(x)\, \mathrm{d}x \\
		&= \phi_{\sigma^2 n \eta +  \nu^2 \ell}((3\mu/\alpha) \Delta_\ell) \nonumber \\
		&\sim \phi_{(\sigma^2  \eta +  \nu^2 \alpha )n}((3\mu/\alpha) \Delta_\ell). \nonumber
	\end{align}
	Since $\Delta_\ell = O(\sqrt{n})$ we have that $\sqrt{n} \phi_{(\sigma^2  \eta +  \nu^2 \alpha )n}((3\mu/\alpha) \Delta_\ell)$ is bounded away from zero. By~\eqref{eq:ep1},~\eqref{eq:ep2} and~\eqref{eq:ep3} it follows that
	\begin{align}
		\label{eq:od}
			&\Pr{ (\#_{\cX} \mT, \#_{\cD} \mT ) = (2n,\ell) } \\
			&\sim \Ex{Z} \frac{8 \sqrt{6}}{5 \sqrt{\pi}}  \left(\frac{1 - \Ex{L} }{1 - \Ex{L} + \frac{3}{2}\mu \Ex{X}}  \right)^{-5/2} n^{-5/2} 
			  \phi_{(\sigma^2  \eta +  \nu^2 \alpha )n}((3\mu/\alpha) \Delta_\ell) \nonumber
	\end{align}
	By Riemann summation
	\begin{align*}
		\sum_{\ell : |\Delta_\ell| \le K \sqrt{n}} \phi_{(\sigma^2  \eta +  \nu^2 \alpha )n}((3\mu/\alpha) \Delta_\ell) &\sim \int_{-K}^K \phi_{\sigma^2  \eta +  \nu^2 \alpha }((3\mu/\alpha) x) \,\,\mathrm{d}x \\
		&=  \frac{\alpha}{3 \mu} \int_{- (3\mu/\alpha) K}^{(3\mu/\alpha)  K} \phi_{\sigma^2  \eta +  \nu^2 \alpha }(x) \,\,\mathrm{d}x.
	\end{align*}
	This tends to $\alpha / (3\mu)$ as $K \to \infty$. Together with~\eqref{eq:toget},~\eqref{eq:emi2} and~\eqref{eq:od} it follows that
	\begin{align}
		\label{eq:combine}
		\Pr{ \#_{\cX} \mT  = 2n } 
		&\sim \Ex{Z} \frac{8 \sqrt{6}}{5 \sqrt{\pi}}  \left(\frac{1 - \Ex{L} }{1 - \Ex{L} + \frac{3}{2}\mu \Ex{X}}  \right)^{-5/2} \frac{\alpha}{3\mu} n^{-5/2}  \\
		&= \Ex{Z} \frac{8 \sqrt{6}}{5 \sqrt{\pi}}  \frac{(1 - \Ex{L} + \frac{3}{2}\mu \Ex{X})^{3/2}}{ (1 - \Ex{L})^{5/2} }  n^{-5/2}. \nonumber
	\end{align}
	Equation~\eqref{eq:od} may be further simplified to
	\begin{align*}
		&\Pr{ (\#_{\cX} \mT, \#_{\cD} \mT ) = (2n,\ell) } \\
		&\sim \Ex{Z} \frac{8 \sqrt{3} (1 - \Ex{L} + \frac{3}{2}\mu \Ex{X})^{5/2}}{5 \pi\sqrt{\sigma^2 \eta + \nu^2 \alpha} (1 - \Ex{L})^{5/2} }   n^{-3} 
		\exp\left( - \frac{((3\mu/\alpha) \Delta_\ell)^2} {2(\sigma^2  \eta +  \nu^2 \alpha)n} \right). 
	\end{align*}
	Combining~\eqref{eq:combine} with~\eqref{eq:toget} yields
	\begin{align}
		\label{eq:quarter}
		\Pr{ |\#_{\cD} \mT  - \alpha n| \ge \sqrt{n} \log n \mid \#_\cX \mT = 2n } \le n^{-\Theta(\log n)}
	\end{align}
	It remains to verify concentration of $\#_\cY \mT$ around $\beta n$ when conditioning on the event $\#_\cX\mT = 2n$.
	Analogous to~\eqref{eq:mot} we have
	\begin{align}
		\label{eq:prece0}
		\Pr{ (\#_{\cX} \mT, \#_{\cY} \mT, \#_{\cD} \mT ) = (2n, m, \ell) }  = \frac{1}{\ell} \Prb{ \sum_{i=1}^\ell (X_i, Y_i, L_i) = (2n,m, \ell-1)}. 
	\end{align}
	Moreover, analogous to~\eqref{eq:myw} we have
	\begin{align}
		\label{eq:prece}
		\sum_{i=1}^\ell (X_i, Y_i, L_i) \eqdist (S_\ell^{X_0},S_\ell^{Y_0}, S_\ell^{L_0}) + \sum_{j=1}^{S_\ell^Z} (\bar{X}_j, \bar{Y}_j, \bar{L}_j).
	\end{align}
	By~\eqref{eq:bary} we have $\bar{Y}=\frac{3}{2}\bar{X} -1 + \bar{L}$. Hence, on the event that the first coordinate in~\eqref{eq:prece} is equal to $2n$, and the third is equal to $\ell-1$, we have
	\begin{align*}
		S_\ell^{Y_0} + \sum_{j=1}^{S_\ell^Z} \bar{Y}_j &= S_\ell^{Y_0} + \sum_{j=1}^{S_\ell^Z}\left( \frac{3}{2}\bar{X}_j - 1 + \bar{L}_j \right) \\
		&= S_\ell^{Y_0} + \frac{3}{2}(2n - S_\ell^{X_0}) - S_\ell^Z + (\ell -1 - S_\ell^{L_0}) \\
		&= 3n + (\ell -1) + S_\ell^{Y_0} - \frac{3}{2} S_\ell^{X_0} - S_\ell^Z - S_\ell^{L_0} \\
		&=  3n + (\ell -1) + S_\ell^{V}
	\end{align*}
	for $V(i) = Y_0(i) - \frac{3}{2} X_0(i) - Z(i) - L_0(i)$  and $S_\ell^V = \sum_{i=1}^\ell V(i)$. Using~\eqref{eq:bary} and Proposition~\ref{pro:expect} we obtain
	\begin{align*}
		&\beta - (3 + \alpha(1 + \Ex{V(1)})) \\
		&= \alpha \Exb{Y} + 3 (1+\mu) \left(1 - \frac{\alpha}{2} \Exb{X} \right) - (3 + \alpha(1 + \Ex{V(1)})) \\
		&= \alpha \left(\Ex{Y_0} + \Ex{Z}\left(\frac{3}{2}\Ex{\bar{X}} - 1 + \Ex{\bar{L}}\right)\right) + 3 (1+\mu) \left(1 - \frac{\alpha}{2} \Exb{X} \right) \\
		&\quad\, - \left(3 + \alpha\left(1 + \Ex{Y_0} - \frac{3}{2} \Ex{X_0} - \Ex{Z} - \Ex{L_0}\right)\right) \\
		&= \alpha \left( \frac{3}{2} \Ex{X} -1  + \Ex{L} \right) + 3\left( -\frac{\alpha}{2} \Ex{X} - \frac{\alpha}{2} \mu \Ex{X} + \mu \right) \\
		&= 3 \mu - \alpha \left( \frac{3}{2} \mu \Ex{X} +1 - \Ex{L} \right) \\
		&= 0.
	\end{align*}
	It follows that
	\begin{align*}
		S_\ell^{Y_0} + \sum_{j=1}^{S_\ell^Z} \bar{Y}_j - n \beta  &= 3n + (\alpha n + \Delta_\ell -1) + S_\ell^V - n(3 + \alpha + \alpha\Ex{V(1)}) \\
		&= \Delta_\ell - 1 + S_\ell^V - n \alpha \Ex{V(1)}
	\end{align*}
	Set $E=1+|\Ex{V(1)}|$. Analogous to~\eqref{eq:quarter} (using only $E>0$) it follows that
	\[
			\Prb{ |\#_{\cD} \mT  - \alpha n| \ge \frac{1}{4 E}\sqrt{n} \log n \mid \#_\cX \mT = 2n } \le n^{-\Theta(\log n)}.
	\]
	Using~\eqref{eq:combine},~\eqref{eq:prece0} and~\eqref{eq:prece} and Proposition~\ref{pro:deviation}  it follows that
	\begin{align*}
		&\Pr{ |\#_{\cY} \mT  - \beta n| \ge \sqrt{n} \log n \mid \#_\cX \mT = 2n } \\
		&\le n^{-\Theta(\log n)} + O(n^{5/2}) \sum_{\ell : |\Delta_\ell| \le \frac{1}{4E} \sqrt{n} \log n} \frac{1}{\ell} \Prb{|\Delta_\ell - 1 + S_\ell^V - n \alpha \Ex{V(1)}| \ge \sqrt{n} \log n} \\
		&\le  n^{-\Theta(\log n)} + O(n^{3/2}) \sum_{\ell : |\Delta_\ell| \le \frac{1}{4E} \sqrt{n} \log n} \Prb{|S_\ell^V - \ell  \Ex{V(1)}| \ge \frac{1}{2}\sqrt{n} \log n} \\
		&\le n^{-\Theta(\log n)}.
	\end{align*}
	This completes the proof.
\end{proof}

By construction,
\[
\Exb{x^{\#_\cX \mT} y^{\#_\cY \mT}} = \frac{1}{\cD(\rho)} W_\cD^+(\rho x, (\rho x) ^2, \ldots; y, 1, 1, \ldots; 1, 1, \ldots).
\]
Moreover, $(1 + \#_\cY \mT \mid \#_\cX \mT= 2n)$ is distributed like the random number $A_{2n}^\cD$ of cylindrical $1$-edge-cycles in a uniformly at random selected element from $\Sym_{\cD}^{+}[2n]$. The constant summand $1$ accounts for the root edge.
Hence the following is a consequence of Theorem~\ref{te:le4}:

\begin{corollary}
	\label{co:piz}
As $n \to \infty$, 
\begin{align}
	\label{eq:bista}
[x^{2n}] \frac{\cD(\rho x)}{\cD(\rho)} \sim \frac{8 \sqrt{6}}{5 \sqrt{\pi}} \Exb{Z} 
\frac{(1 - \Exb{L} + \frac{3}{2} \mu \Exb{X})^{3/2}}{(1-\Exb{L})^{5/2}} n^{-5/2}
\end{align}
and for any constant $c>0$
\begin{align}
	\label{eq:cono}
	\Prb{ |A_{2n}^\cD - \beta n| \ge c \sqrt{n} \log n} \le n^{-\Theta(\log n)}.
\end{align}
\end{corollary}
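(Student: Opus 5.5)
The plan is to read both claims off Theorem~\ref{te:le4}, using the generating-function identity stated just before the corollary. Specialising that identity at $y=1$ gives
\[
\Exb{x^{\#_\cX \mT}} = \frac{1}{\cD(\rho)} W_\cD^+(\rho x, (\rho x)^2, \ldots; 1,1,\ldots; 1,1,\ldots) = \frac{\cD(\rho x)}{\cD(\rho)},
\]
where the last equality is the definition of the ordinary generating series $\cD(x) = \cD(x,1)$. Note that the factor $y$ in the definition of $\cN(x,y)$ for the root edge disappears at $y=1$. Hence $[x^{2n}]\cD(\rho x)/\cD(\rho) = \Pr{\#_\cX \mT = 2n}$, and \eqref{eq:bista} is exactly the first asymptotic statement of Theorem~\ref{te:le4}.

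For \eqref{eq:cono}, I first justify the distributional identity $A_{2n}^\cD \eqdist (1 + \#_\cY \mT \mid \#_\cX \mT = 2n)$. By construction of the symmetry-enriched tree with uniform random local decorations, the symmetry encoded by $\mT$, conditioned on $\#_\cX\mT = 2n$, is uniform on $\Sym^+_\cD[2n]$. The type-$\cY$ vertices are in bijection with the cylindrical $1$-edge-cycles of the symmetry, except that the root edge is not counted. The root edge is fixed with both poles fixed, so it is a cylindrical $1$-cycle, and this accounts for the extra $+1$.

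Next, apply the last bound of Theorem~\ref{te:le4}. The theorem is stated with threshold $\sqrt{n}\log n$, but the final step of its proof works verbatim with threshold $c'\sqrt{n}\log n$ for any fixed $c'>0$: replace $\frac14$ by a smaller constant in the concentration of $\#_\cD\mT$, and use Proposition~\ref{pro:deviation} with deviation $\frac{c'}{2}\sqrt n\log n$. Each such bound is still $n^{-\Theta(\log n)}$, with the implied constant depending on $c'$. Take $c' = c/2$. The shift by $1$ is then harmless: for large $n$, $|A_{2n}^\cD - \beta n| \ge c\sqrt n\log n$ implies $|\#_\cY\mT - \beta n| \ge c\sqrt n \log n - 1 \ge \frac{c}{2}\sqrt n\log n$.

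The only step that needs care is this extension to an arbitrary constant $c$, since Theorem~\ref{te:le4} is stated only with constant $1$. For $c \ge 1$ the statement follows directly from monotonicity. For $c<1$ I would point to the proof of Theorem~\ref{te:le4}, where the constants enter only through the $n^{-\Theta(\log n)}$ exponents.
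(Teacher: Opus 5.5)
Your proposal is correct and follows the paper's route: \eqref{eq:bista} is the first statement of Theorem~\ref{te:le4} via $\Exb{x^{\#_\cX \mT}} = \cD(\rho x)/\cD(\rho)$, and \eqref{eq:cono} comes from identifying $A_{2n}^\cD$ with $(1+\#_\cY\mT \mid \#_\cX\mT = 2n)$, with the $+1$ accounting for the root edge, and rerunning the final step of that theorem's proof with rescaled thresholds, which the paper simply calls immediate. Your aside that $c \ge 1$ follows by monotonicity is slightly loose at $c=1$ because of the shift by $1$, but your general rescaling argument covers every $c>0$ anyway.
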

The fact that we may multiply $\sqrt{n} \log n$ by any constant $c>0$ in the statement is immediate from the proof of Theorem~\ref{te:le4}.

\subsection{Extremal vertex degrees}

Let 
\[
J_1 \ge J_2 \ge \cdots
\]
denote the ranked points of a Poisson point process $\Xi$ on $]0,\infty[$ with intensity
\[
\frac{32\sqrt{3}}{5\sqrt{\pi}}\alpha\Ex{Z} x^{-5/2}\,\mathrm{d}x.
\]
Let $\mathfrak h$ denote the density of the  $3/2$-stable
law $S$ with Laplace transform
\[
\Exb{\exp(-\lambda S)}=\exp(\lambda^{3/2}), \qquad \lambda \ge 0.
\]
Set
\begin{align}
\omega = \left(\frac{32\sqrt{6}}{15}\right)^{2/3} (\alpha\Ex{Z})^{2/3}.
\end{align}
For any type $\cR$ we let $\Delta_{\cR, \cX}(\mT)$ denote the maximal number of type $\cX$ children of a type $\cR$ vertex in $\mT$.  We also let
\[
\Delta_{\cR, \cX}(\mT) = {}_1\Delta_{\cR, \cX}(\mT) \ge {}_2\Delta_{\cR, \cX}(\mT) \ge {}_3\Delta_{\cR, \cX}(\mT) \ldots
\]
denote the numbers of type $\cX$ children of type $\cR$ vertices in non-increasing order.

The next lemma will aid us in bounding the number of cubic planar graphs that admit an automorphism without cylindrical $1$-cycles of edges.

\begin{lemma}
	\label{le:maxdeg}
	We have
	\begin{align*}
		\Pr{ \Delta_{\cZ, \cX}(\mT) \le  n / \log^2 n \mid \#_\cX \mT = 2n } \le n^{-\Theta(\log n)}.
	\end{align*}
	Let $\mathfrak{E}$ denote the event that there exist (at least) two vertices in $\mT$ of type $\cZ$ with at least $n/ \log^2(n)$ children such that both receive $3$-connected cores as local decorations that are isomorphic as unrooted graphs. Then
	\[
		\Prb{\mathfrak{E} \mid \#_\cX \mT = 2n } \le \exp(- \Theta(n/\log^2 n)).
	\]
	For any  type $\cR \ne \cZ$ we have
	\begin{align*}
		\Pr{ \Delta_{\cR, \cX}(\mT) \ge \log^2 n \mid \#_\cX \mT = 2n } \le n^{-\Theta(\log n)}.
	\end{align*}
	We have uniformly for all even integers $h = \chi n + xn^{2/3}$  that
	\begin{align*}
		\omega n^{2/3} \Pr{ \Delta_{\cZ, \cX}(\mT) = h \mid \#_\cX(\mT) = 2n} = 
		\mathfrak h\left(-\frac{x}{2\omega}\right) + o(1).
	\end{align*}
	Moreover,
	\begin{align*}
		( n^{-2/3} {}_{j}\Delta_{\cZ, \cX}(\mT) \mid \#_{\cX} \mT = 2n)_{j \ge 2} \convd (J_i)_{i \ge 1}.
	\end{align*}
\end{lemma}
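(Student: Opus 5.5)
We reuse the representation from the proof of Theorem~\ref{te:le4}. Conditionally on $\#_\cX\mT=2n$ and $\#_\cD\mT=\ell$, the vertex counts are governed by \eqref{eq:mot} and \eqref{eq:myw}. The tree consists of $\ell$ independent copies of $\mU$, which contribute $(S_\ell^{X_0},S_\ell^{L_0},S_\ell^{Z})$. These are light-tailed with finite exponential moments by Lemma~\ref{le:le1}. In addition there are $S_\ell^Z$ independent $\cZ$-blocks $(\bar X_j,\bar L_j)$, and the number of $\cX$-children of a $\cZ$ vertex is exactly $\bar X_j$. The cycle lemma factor $1/\ell$ is exchangeable, so conditioning on the tree is the same as conditioning the multiset of blocks. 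Thus all statements about $\Delta_{\cZ,\cX}$ reduce to the order statistics of $\bar X_1,\dots,\bar X_k$ with $k=S_\ell^Z\approx \alpha\Ex{Z}n$, conditioned on $\sum_j \bar X_j = 2(n-a)$, where $2a\approx\Ex{X_0}\alpha n$. By Theorem~\ref{te:le4} we may restrict to $|\ell-\alpha n|\le\sqrt n\log n$ at cost $n^{-\Theta(\log n)}$, and by \eqref{eq:conen} likewise $a,b,k$ fluctuate only by $O(\sqrt n\log n)$.

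\textbf{Light types.} For $\cR\ne\cZ$, the type-$\cX$ children of an $\cR$ vertex are at most the total size of a copy of $\mU$, or a local decoration of type $\cR_2$, or an $R_1$, $\cH^-$ core. Proposition~\ref{pro:H} (the $O(c^n)$ factor) together with Lemma~\ref{le:le1} gives exponential tails for these. A union bound over the at most $5n\cdot O(n)$ vertices, divided by $\Pr{\#_\cX\mT=2n}=\Theta(n^{-5/2})$, yields $n^{-\Theta(\log n)}$ at threshold $\log^2 n$.

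\textbf{Heavy type $\cZ$.} $\bar X$ has the tail \eqref{pro:barxdensity}, $\Pr{\bar X=2m}\sim c_0 m^{-5/2}$, which lies in the domain of attraction of a $3/2$-stable law. The target sum $2(n-a)$ exceeds $k\Ex{\bar X}$ by $\chi n\cdot(\text{const})$, a linear excess; indeed $2(n-a)-k\Ex{\bar X}\approx \chi n$ by the computation in the proof of Theorem~\ref{te:le4}. The conditioned walk is then in the one-big-jump regime (Denisov--Dieker--Shneer, and Armendáriz--Loulakis for the structure of the remaining steps). The maximal jump equals the excess plus the fluctuation of the remaining $k-1$ steps. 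Those fluctuations are of order $k^{2/3}\asymp n^{2/3}$ and, centred, converge to the stable law with density $\mathfrak h$. A local stable limit theorem for the remaining sum, combined with the Gaussian $n^{1/2}$ fluctuations of $a,k$ (negligible against $n^{2/3}$), gives the local statement for $h=\chi n+xn^{2/3}$.

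The scale $\omega$ comes from matching the Lévy measure. For the sum of the $\bar X_j/2$ we use tail $c_0\frac{2}{3}m^{-3/2}$ times $k\approx\alpha\Ex{Z}n$, and the normalisation of $\exp(\lambda^{3/2})$ fixes $\omega$; the factor $2$ in $\mathfrak h(-x/(2\omega))$ comes from $\bar X=2m$. The same reduction gives the lower bound $\Delta_{\cZ,\cX}\ge n/\log^2 n$: if all $k$ steps are below $n/\log^2 n$, a truncated exponential Chernoff bound (Fuk--Nagaev) shows that reaching a linear excess has probability $n^{-\Theta(\log n)}$. For the remaining points, Armendáriz--Loulakis shows that the non-maximal steps are asymptotically i.i.d. unconditioned. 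Hence $n^{-2/3}$ times their order statistics converge to a Poisson process with intensity $k\cdot\Pr{\bar X\in n^{2/3}dx}\to \alpha\Ex{Z}\cdot 2c_0\cdot 2^{3/2}x^{-5/2}dx$, which should match $\frac{32\sqrt3}{5\sqrt\pi}\alpha\Ex Z x^{-5/2}$ with $c_0=\frac{8\sqrt6}{5\sqrt\pi}$.

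\textbf{Event $\mathfrak E$.} Given the sizes, cores are uniform labelled $3$-connected cubic rooted graphs with the given vertex counts. By Proposition~\ref{pro:unl3con} and \eqref{eq:enumsimple}, any fixed unlabelled graph on $2m$ vertices is hit with probability $O(m/q_m)$, which is exponentially small in $m$. The number of large $\cZ$ vertices is at most $2n\log^2 n/n$, and two independent cores of the same size coincide with probability $\exp(-\Theta(m))$ with $m\ge n/\log^2 n$. A union bound over pairs, after dividing by $\Theta(n^{-5/2})$, gives the claim.

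\textbf{Main obstacle.} The main difficulty is making the conditioned local stable limit uniform while simultaneously averaging over the random $k,a,b$ and the binomial constraint on $\bar L$ from \eqref{eq:thattoo}. I would first condition on $(a,b,k)$ in the good window, apply the local limit theorem there, and then integrate exactly as in \eqref{eq:ep1}--\eqref{eq:od}.
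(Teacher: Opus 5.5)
Your proposal follows essentially the same route as the paper. You reduce via the cycle lemma to $\bar X_1,\dots,\bar X_k$ conditioned on their sum in the window $|\ell-\alpha n|\le\sqrt n\log n$. There the binomial factor depends only on $(n,a,b,k,\ell)$ and not on the individual $\bar X_j$, so your ``main obstacle'' disappears. You then use a Fuk--Nagaev-type bound for $\Delta_{\cZ,\cX}\le n/\log^2 n$, where the paper uses Denisov--Dieker--Shneer's Lemma 2.1. The local statement comes from one big jump together with the local $3/2$-stable limit theorem for the remaining $k-1$ steps, and the non-maximal sizes from Armendariz--Loulakis plus a Poisson convergence criterion. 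Union bounds handle $\mathfrak E$ and the types $\cR\ne\cZ$.

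The only error is arithmetic. Since $\bar X$ lives on the even integers,
\[
k\,\Pr{\bar X\in n^{2/3}\mathrm{d}x}\to c_0\, 2^{3/2}\alpha\Ex{Z}\,x^{-5/2}\,\mathrm{d}x,
\]
without your extra factor $2$. With $c_0 = \frac{8\sqrt6}{5\sqrt\pi}$ this gives $c_0\, 2^{3/2}=\frac{32\sqrt3}{5\sqrt\pi}$, which is exactly the paper's intensity; your expression $2c_0\cdot 2^{3/2}$ would give twice that.
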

\begin{proof}
	Throughout this proof we will always implicitly assume that $h$ is even.
	We use the same notation as in the proof of Theorem~\ref{te:le4}. For
	\[
		|\Delta_\ell| \le \sqrt{n} \log n
	\]
	it follows analogously to~\eqref{eq:j0} that
	\begin{align*}
		&\Pr{ (\#_{\cX} \mT, \#_{\cD} \mT ) = (2n,\ell), \Delta_{\cZ, \cX}(\mT) \le  n / \log^2 n}  \\
	&\qquad\qquad= n^{-\Theta(\log n)} + \frac{1}{\ell}  \quad \sum_{ \mathclap{ \substack{a,b,k \ge 0 \\ 
				|a - (1/2)\Ex{X_0} \ell| \le \sqrt{n} \log n \\
				|b - \Ex{L_0} \ell| \le \sqrt{n} \log n \\
				|k - \Ex{Z} \ell| \le \sqrt{n} \log n}}} \quad \Prb{ (S_\ell^{X_0}, S_\ell^{L_0}, S_\ell^Z) = (2a,b,k)} \\ &\qquad\qquad\quad\,\,\,\Prb{S_k^{\bar{X}} = 2(n-a), \max_{1 \le i \le k} \bar{X}_i \le n/\log^2 n}  B. 
	\end{align*}
	In the sum, we have
	\begin{align*}
		2(n-a) - k \Ex{\bar{X}} &= 2n - \Ex{X_0}\ell - \Ex{Z}\Ex{\bar{X}}\ell + O(\sqrt{n} \log n) \\
		&= 2n - \Ex{X}\alpha n + O(\sqrt{n} \log n) \\
		&= \chi n + O(\sqrt{n} \log n) .
	\end{align*}
	Hence we may apply~\cite[Lem. 2.1]{MR2440928}, yielding that there exists $C>0$ with
	\begin{align*}
		\Prb{S_k^{\bar{X}} = 2(n-a), \max_{1 \le i \le k} \bar{X}_i \le n/\log^2 n} &\le C \exp\left(- \frac{2(n-a)}{n} \log^2 n \right) \\
		&\le n^{- \Theta(\log n)}.
	\end{align*}
	It follows that
	\[
		\Pr{ (\#_{\cX} \mT, \#_{\cD} \mT ) = (2n,\ell), \Delta_{\cZ, \cX}(\mT) \le  n / \log^2 n}  \le n^{- \Theta(\log n)}.
	\]
	By~\eqref{eq:toget} and~\eqref{eq:combine} this yields
	\begin{align}
		\label{eq:10am}
		\Pr{ \Delta_{\cZ, \cX}(\mT) \le  n / \log^2 n \mid \#_\cX \mT = 2n } \le n^{-\Theta(\log n)}.
	\end{align}
	
	Next, let $n / \log^2 n \le h \le 2n$. It follows analogously to~\eqref{eq:j0} that
	\begin{align}
		\label{eq:logo}
		&\Pr{ (\#_{\cX} \mT, \#_{\cD} \mT ) = (2n,\ell), \Delta_{\cZ, \cX}(\mT) =  h}  \\
		&\qquad\qquad= n^{-\Theta(\log n)} + \frac{1}{\ell}  \quad \sum_{ \mathclap{ \substack{a,b,k \ge 0 \\ 
					|a - (1/2)\Ex{X_0} \ell| \le \sqrt{n} \log n \\
					|b - \Ex{L_0} \ell| \le \sqrt{n} \log n \\
					|k - \Ex{Z} \ell| \le \sqrt{n} \log n}}} \quad \Prb{ (S_\ell^{X_0}, S_\ell^{L_0}, S_\ell^Z) = (2a,b,k)} \nonumber \\ &\qquad\qquad\quad\,\,\,\Prb{S_k^{\bar{X}} = 2(n-a), \max_{1 \le i \le k} \bar{X}_i = h}  B. \nonumber 
	\end{align}
	We claim that in the sum we have 
	\begin{align}
		\label{eq:toshow}
		 \Prb{S_k^{\bar{X}} = 2(n-a), \max_{1 \le i \le k} \bar{X}_i = h} = \Prb{S_k^{\bar{X}} = 2(n-a)} \frac{1}{\omega n^{2/3}} \left(\mathfrak{h}(-x/(2 \omega)) + o(1) \right)
	\end{align}
	with a uniform $o(1)$ term. 
	By~\eqref{pro:barxdensity} and  the local limit theorem (noting that $\bar{X}$ takes values in the even integers and in no proper sublattice of the even integers), 
	\begin{align}
		\label{eq:hllt}
		\sup_{t \in 2 \ndZ} \left| a_m \Prb{S_m^{\bar{X}} = t} - \mathfrak{h}((t - m\Ex{\bar{X}})/(2 a_m)) \right| = o(1)
	\end{align}
	for
	\[
		a_m = \left(\frac{32\sqrt{6}}{15}\right)^{2/3}  m^{2/3}.
	\]
	It is elementary that
	\begin{multline*}
		\Prb{S_k^{\bar{X}} = 2(n-a), \max_{1 \le i \le k} \bar{X}_i = h} \\= \sum_{1 \le j \le \min(2(n-a) / h,k)} \binom{k}{j} \Pr{\bar{X}= h}^j \Prb{S_{k-j}^{\bar{X}} = 2(n-a)-jh, \max_{1 \le i \le k-j} \bar{X}_i < h}.
	\end{multline*}
	In the sum we have $j = O(\log^2 n)$ and hence $k-j \sim k$.
	Since $\mathfrak{h}$ is bounded, it follows by~\eqref{eq:hllt} that
	\begin{align*}
		&\sum_{2 \le j \le \min(2(n-a) / h,k)} \binom{k}{j} \Pr{\bar{X}= h}^j \Prb{S_{k-j}^{\bar{X}} = 2(n-a)-jh, \max_{1 \le i \le k-j} \bar{X}_i < h} \\
		&\le \sum_{2 \le j \le \min(2(n-a) / h,k)} k^j h^{-5j/2}k^{-2/3} \\
		&= O(n^{-10/3}).
	\end{align*}
	Hence,
	\begin{multline}
		\label{eq:sunday}
		\Prb{S_k^{\bar{X}} = 2(n-a), \max_{1 \le i \le k} \bar{X}_i = h} \\
		=  O(n^{-10/3}) + k\Pr{\bar{X}=h} \Prb{S_{k-1}^{\bar{X}} = 2(n-a)-h, \max_{1 \le i \le k-1} \bar{X}_i < h}.
	\end{multline}
	Let $0< \delta < \chi$. If $n / \log^2 n \le h \le \delta n$, then
	\[
		2(n-a)- (k-1)\Ex{\bar{X}} -h = \Theta(n),
	\]
	yielding by~\cite[Cor. 2.1]{MR2440928}
	\begin{align*}
		\Prb{S_{k-1}^{\bar{X}} = 2(n-a)-h} &\sim (k-1) \Pr{\bar{X}= 2(n-a)-h - (k-1)\Ex{\bar{X}} } \\
		&= \Theta(n^{-3/2}).
	\end{align*}
	Hence by~\eqref{eq:sunday}
	\begin{align*}
		\Prb{S_k^{\bar{X}} = 2(n-a), \max_{1 \le i \le k} \bar{X}_i = h} 
		&\le O(n^{-10/3}) + O(n) (n/\log^2(n))^{-5/2} n^{-3/2}, \\
		&\le O( n^{-3} (\log n)^5).
	\end{align*}
	Note that by~\cite[Cor. 2.1]{MR2440928}
	\[
		\Prb{S_k^{\bar{X}} = 2(n-a)} \frac{1}{\omega n^{2/3}} = n \Theta(n^{-5/2}) n^{-2/3} = \Theta(n^{-13/6}).
	\]
	It follows that~\eqref{eq:toshow} holds uniformly for $n / \log^2 n \le h \le \delta n$. 
	
	Suppose now that $\delta n \le h \le 2n$. 
	\begin{align*}
		&\Prb{S_{k-1}^{\bar{X}} = 2(n-a)-h, \max_{1 \le i \le k-1} \bar{X}_i \ge h} \\
		&\quad\le (k-1) \sum_{ i \ge h } \Pr{\bar{X}= i} \Prb{S_{k-2}^{\bar{X}} = 2(n-a)-h - i} \\
		&\quad\le O(k) \sup_{i \ge h} \Pr{\bar{X}= i} \\
		&\quad\le O(k)  \Pr{\bar{X}= h} \\
		&\quad= O(n^{-3/2}).
	\end{align*}
	Consequently,
	\[
		 k\Pr{\bar{X}=h} \Prb{S_{k-1}^{\bar{X}} = 2(n-a)-h, \max_{1 \le i \le k-1} \bar{X}_i \ge h} = O(n^{-3}).
	\]
	By~\eqref{eq:sunday} it follows that
	\begin{multline}
		\Prb{S_k^{\bar{X}} = 2(n-a), \max_{1 \le i \le k} \bar{X}_i = h} \\
		=  O(n^{-3}) + k\Pr{\bar{X}=h} \Prb{S_{k-1}^{\bar{X}} = 2(n-a)-h}.
	\end{multline}
	From the local limit theorem~\eqref{eq:hllt} it follows that 
	\begin{align*}
		\Prb{S_{k-1}^{\bar{X}} = 2(n-a) - h} &= \frac{1}{a_{k-1}} \left(\mathfrak{h}\left(\frac{2(n-a)-h- (k-1)\Ex{\bar{X}}}{2a_{k-1}} \right) + o(1) \right) \\
		&= \frac{1}{ \omega n^{2/3}}\left(\mathfrak{h}\left(\frac{- x}{2 \omega  } + o(1) \right) + o(1) \right)  \\
		&= \frac{1}{ \omega n^{2/3}}\left(\mathfrak{h}\left(\frac{- x}{2 \omega  }  \right) + o(1) \right).
	\end{align*}
	By~\cite[Cor. 2.1]{MR2440928}
	\begin{align*}
			\Prb{S_k^{\bar{X}} = 2(n-a)} \sim k \Prb{\bar{X}= 2(n-a)-k \Ex{\bar{X}}} \sim k \Pr{\bar{X} = 2 \lfloor \chi n / 2 \rfloor}.
	\end{align*}	
	Thus, we arrive at
	\begin{align*}
		\frac{\Prb{S_k^{\bar{X}} = 2(n-a), \max_{1 \le i \le k} \bar{X}_i = h}}{\frac{1}{\omega n^{2/3}} \Prb{S_k^{\bar{X}} = 2(n-a)} } &= o(1) + \frac{\Prb{ \bar{X} = h} }{ \Prb{\bar{X} = 2 \lfloor \chi n / 2 \rfloor} } \left(\mathfrak{h}\left(\frac{- x}{2 \omega  }  \right) + o(1)\right).
	\end{align*}
	For $|x| \ge \log n $ the right-hand side simplifies to $o(1)$ since $\delta n \le h \le 2n$, and for $|x| \le \log n$ we have \[
	\frac{\Prb{ \bar{X} = h} }{ \Prb{\bar{X} = 2 \lfloor \chi n / 2 \rfloor} } \sim 1.
	\]
	In both cases,
	\[
			 \Prb{S_k^{\bar{X}} = 2(n-a), \max_{1 \le i \le k} \bar{X}_i = h} = \Prb{S_k^{\bar{X}} = 2(n-a)} \frac{1}{\omega n^{2/3}} \left(\mathfrak{h}(-x/(2 \omega)) + o(1) \right).
	\]
	This completes the verification of~\eqref{eq:toshow}. 
	
	Using~\eqref{eq:toshow},~\eqref{eq:logo} and~\eqref{eq:j0} it follows that
	\begin{multline*}
		\Pr{ (\#_{\cX} \mT, \#_{\cD} \mT ) = (2n,\ell), \Delta_{\cZ, \cX}(\mT) = h } \\
		= \frac{1}{\omega n^{2/3}}\left(\mathfrak{h}\left(\frac{- x}{2 \omega  }  \right) + o(1)\right)\Pr{ (\#_{\cX} \mT, \#_{\cD} \mT ) = (2n,\ell)}
	\end{multline*}
	uniformly for $n / \log^2 n \le h \le 2n$. 
	Summing over $\ell$ and using~\eqref{eq:toget} and~\eqref{eq:10am} it follows that
	\[
		 \Pr{ \Delta_{\cZ, \cX}(\mT) = h \mid \#_\cX(\mT) = 2n} = 
		\frac{1}{\omega n^{2/3}} \left( \mathfrak{h} \left(-\frac{x}{2\omega}\right) + o(1)\right)
	\]
	uniformly for all even integers $1 \le h \le 2n$ (or equivalently, all even integers $h \in \ndZ$).
	
	Next, we study the non-maximal numbers of type $\cX$ children of type $\cZ$ vertices.  For $x>0$ we let $\delta_x$ denote the Dirac measure with mass $1$ at the point $x$, and for $x \le 0$ we let $\delta_x = 0$ the measure with mass zero. We set \[
	\Xi_n = \sum_{j \ge 2} \delta_{n^{-2/3} {}_j\Delta_{\cZ, \cX}(\mT)},
	\] with ${}_j\Delta_{\cZ, \cX}(\mT) := 0$ whenever $j$ exceeds the number of type $\cZ$ vertices in $\mT$. Let $g: \ndR \to \ndR$ denote a continuous function with compact support in $]0,\infty[$. In particular, $g(0)=0$. Let $f$ denote the function on the space of locally finite point measures on $[0,\infty[$ with
	\[
		f(\Pi) = \exp\left(- \int g \,\mathrm{d}\Pi \right).
	\]
	Analogously to~\eqref{eq:j0} we have uniformly for
	\[
		|\Delta_\ell| \le \sqrt{n} \log n
	\]
	that
	\begin{align}
		\label{eq:logo5}
		&\Ex{ f(\Xi_n),  (\#_{\cX} \mT, \#_{\cD} \mT ) = (2n,\ell)}  \\
		&\qquad\qquad= n^{-\Theta(\log n)} + \frac{1}{\ell}  \quad \sum_{ \mathclap{ \substack{a,b,k \ge 0 \\ 
					|a - (1/2)\Ex{X_0} \ell| \le \sqrt{n} \log n \\
					|b - \Ex{L_0} \ell| \le \sqrt{n} \log n \\
					|k - \Ex{Z} \ell| \le \sqrt{n} \log n}}} \quad \Prb{ (S_\ell^{X_0}, S_\ell^{L_0}, S_\ell^Z) = (2a,b,k)} \nonumber \\ &\qquad\qquad\quad\,\,\,\Prb{S_k^{\bar{X}} = 2(n-a)} B \Ex{f(\hat{\Xi}_{n,k}) \mid S_k^{\bar{X}} = 2(n-a)} \nonumber 
	\end{align}
	with
	\[
		\hat{\Xi}_{n,k} = \sum_j \delta_{n^{-2/3} \bar{X}_j }.
	\]
	with $j$ ranging over all $1 \le j \le k$ except the index of the first occurrence of the maximum of $\bar{X}_1, \ldots, \bar{X}_k$.
	Let $\tau$ denote an operator that takes as input a finite sequence of nonnegative numbers and deletes the first occurrence of the maximum of that sequence, yielding a sequence that is one element shorter.
	Since $2(n-a) - k \Ex{\bar{X}} \sim \chi n$ we may apply~\cite[Thm. 1]{MR2775110}, yielding that 
	\begin{align}
		\label{eq:tvap}
		d_{\mathrm{TV}}( ( \tau(\bar{X}_1, \ldots, \bar{X}_k) \mid S_k^{\bar{X}}=2(n-a)), (\bar{X}_1, \ldots, \bar{X}_{k-1}) ) \to 0
	\end{align}
	uniformly as $n \to \infty$ with $a$ and $k$ in the ranges as in the sum in~\eqref{eq:logo5}. Set
	\[
		\tilde{\Xi}_{n,k} =  \sum_{j = 1}^{k-1} \delta_{n^{-2/3} \bar{X}_j }.
	\]
	By~\eqref{eq:tvap} it follows that
	\begin{align*}
		\Ex{f(\hat{\Xi}_{n,k}) \mid S_k^{\bar{X}} = 2(n-a)} &= \Ex{f(\tilde{\Xi}_{n,k}) } + o(1)
	\end{align*}
	with a uniform $o(1)$ term.  By~\eqref{pro:barxdensity} we have for each $t>0$
	\[
		(k-1) \Prb{ n^{-2/3} \bar{X} \ge t} \to \int_t^\infty  \frac{32\sqrt{3}}{5\sqrt{\pi}}\alpha \Ex{Z} x^{-5/2}\,\mathrm{d}x.
	\]
	Hence by  Resnick's point process convergence criterion~\cite[Prop. 3.21]{zbMATH04030574},
	\[
		\Ex{f(\tilde{\Xi}_{n,k}) } \to \Ex{f(\Xi)}
	\]
	uniformly in $k$ in the range appearing in~\eqref{eq:logo5} as $n \to \infty$. (To see that this is uniform in $k$, suppose that it is not. Then there exists a specific sequence $k(n)$ for which the assumptions of the point process convergence criterion are satisfied, but not its conclusion. A clear contradiction.) Thus we arrive at
	\[
		\Ex{f(\hat{\Xi}_{n,k}) \mid S_k^{\bar{X}} = 2(n-a)} = \Ex{f(\Xi)} + o(1)
	\]
	with a uniform $o(1)$ term. Substituting the preceding uniform estimate into~\eqref{eq:logo5} and comparing with~\eqref{eq:j0}, we obtain uniformly for
	\[
		|\Delta_\ell| \le \sqrt n\log n
	\]
	that
	\begin{align*}
		\Ex{ f(\Xi_n) ,    (\#_{\cX} \mT, \#_{\cD} \mT ) = (2n,\ell)}  \sim 
		\Ex{f(\Xi)} \Prb{(\#_{\cX} \mT, \#_{\cD} \mT ) = (2n,\ell)}.
	\end{align*}
	Summing over such $\ell$, using~\eqref{eq:toget}, and dividing by
	$\Pr{\#_{\cX}\mT = 2n}$ we obtain
	\[
		\Ex{ f(\Xi_n) \mid   \#_{\cX} \mT = 2n} \to  \Ex{f(\Xi)}.
	\]
	Consequently,
	\[
		(\Xi_n \mid  \#_{\cX} \mT = 2n) \convv \Xi.
	\]
	This readily yields
	\begin{align*}
		( n^{-2/3} {}_j\Delta_{\cZ, \cX}(\mT) \mid \#_{\cX} \mT = 2n)_{j \ge 2} \convd (J_i)_{i \ge 1}.
	\end{align*}
	
	Next, we consider the event $\mathfrak{E}$. Conditional on the event $\#_\cX \mT= 2n$  there can be at most $2 \log^2 n$ vertices of type $\cZ$ with at least $n / \log^2 n$ type $\cX$ children. Hence there are at most $4 \log^4 n$ pairs of such vertices. Suppose that one of those  pairs receives the same number $2n \ge s \ge n / \log^2 n$ of type $\cX$ children. 	Conditional on the tree and on the sizes of the large type-$\cZ$ vertices, the local decorations at distinct type-$\cZ$ vertices are independent, and a type-$\cZ$ vertex with $s$ type-$\cX$ children
	receives a uniformly selected labelled $3$-connected cubic planar graph
	with an oriented root edge on the vertex set $[s]$. There are $q_{s/2} s! / 2$ such graphs. For any unlabelled unrooted cubic planar graph with $s$ vertices there are at most $(3s)s!$ labelled versions with an oriented root edge. By~\eqref{eq:enumsimple} it follows that the probability for a specific pair of  decorations with the same number $s$ of type-$\cX$ children to receive decorations that are isomorphic as unrooted graphs is bounded by  $\exp(- \Theta(s)) = \exp(- \Theta(n / \log^2n))$. Hence we arrive at
	\[
			\Prb{\mathfrak{E} \mid \#_\cX \mT = 2n } \le   4 \log^4 n \exp(- \Theta(n/ \log^2 n)) = \exp(- \Theta(n/ \log^2 n)).
	\]

	It remains to consider the event $\Delta_{\cR, \cX}(\mT) \ge \log^2 n$ for  a vertex type $\cR \ne \cZ$. The number of type $\cX$ children of a vertex of type $\cR$ has finite exponential moments. Thus the probability for such a vertex to have at least $\log^2(n)$ type $\cX$ children is bounded by $n^{-\Theta(\log n)}$. Moreover, in the event $\#_{\cX} \mT = 2n$ there can be at most $2n$ vertices that have at least one type $\cX$ child. Since the event $\#_\cX \mT= 2n$ has probability $\Theta(n^{-5/2})$, it follows that
	\begin{align*}
		\Pr{ \Delta_{\cR, \cX}(\mT) \ge \log^2 n \mid \#_\cX \mT = 2n } \le O(n^{5/2})  (2n) n^{-\Theta(\log n)} = n^{-\Theta(\log n)}.
	\end{align*}
	This completes the proof.
\end{proof}

\begin{corollary}
	\label{co:deno}
	The probability that a uniformly at random selected unlabelled $\cD$ network with $2n$ vertices admits at least one automorphism $\sigma$ of the underlying unrooted cubic graph with $\cyl_1(\sigma)=0$ is bounded by $n^{-\Theta(\log n)}$.
\end{corollary}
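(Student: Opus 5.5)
The plan is to transfer the question to the random decorated tree $\mT$ conditioned on $\#_\cX \mT = 2n$, and then use the giant $3$-connected core from Lemma~\ref{le:maxdeg}.

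\emph{Reduction to $\mT$.} Each unlabelled $\cD$-network with $2n$ vertices corresponds to exactly $(2n)!$ elements of $\Sym_\cD^+[2n]$. Hence the network underlying a uniform element of $\Sym^+_\cD[2n]$ is a uniform unlabelled $\cD$-network with $2n$ vertices. By construction of the local decorations, this symmetry is the one encoded by $\mT$ with its uniform decorations, conditioned on $\#_\cX\mT = 2n$. So it suffices to bound, under this conditioning, the probability that the underlying unrooted cubic multigraph $G$ has an automorphism $\sigma$ with $\cyl_1(\sigma)=0$.

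\emph{Good event.} By Lemma~\ref{le:maxdeg} I will work on the intersection of two events, which fails with conditional probability at most $n^{-\Theta(\log n)}+\exp(-\Theta(n/\log^2 n))$.
\begin{enumerate}
\item Some type-$\cZ$ vertex $v$ has $s\ge n/\log^2 n$ type-$\cX$ children.
\item The event $\mathfrak E$ fails.
\end{enumerate}
Let $M$ be the $3$-connected core stored at $v$. Conditionally on $\mT$, $M$ is a uniform labelled $3$-connected cubic planar graph with an oriented root edge on $s$ vertices.

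I then add a third requirement: as an unrooted graph, $M$ has no non-trivial automorphism. Each unlabelled unrooted graph has at most $3s\cdot s!$ rooted labelled versions, and there are $q_{s/2}s!/2$ rooted labelled graphs in total. So Proposition~\ref{pro:unl3con} (together with Proposition~\ref{pro:expmap}) bounds the failure probability by $O(c^{s/2}) = \exp(-\Theta(n/\log^2 n))$. All three error terms are $n^{-\Theta(\log n)}$.

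\emph{Deterministic step on the good event.} Let $\sigma$ be any automorphism of $G$.
\begin{itemize}
\item The $3$-connected cores of the network decomposition are canonical pieces of $G$, namely the $3$-connected components obtained after suppressing the inserted networks and series reductions. Hence $\sigma$ maps $M$ onto some core $M'$ that is isomorphic to $M$ as an unrooted graph and has the same size.
\item If $M'\neq M$, the event $\mathfrak E$ occurs, which is excluded. So $\sigma$ maps $M$ to itself.
\item The induced map is an automorphism of $M$ as an unrooted graph, so it is the identity. Thus $\sigma$ fixes every vertex of $M$.
\end{itemize}
Take a vertex $u$ of $M$ and a core edge $uw$ of $M$.
\begin{itemize}
\item If no network is inserted at $uw$, then $uw$ is an edge of $G$ whose endpoints are both fixed. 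It is therefore a cylindrical $1$-cycle of $\sigma^{(2)}$.
\item If a network is inserted at $uw$ with new edges $ut$ and $wt'$, then the piece of $G$ hanging between $u$ and $w$ is the unique such piece. It must be mapped to itself, because $\sigma$ fixes $u$, $w$ and all other core vertices. The vertex $t$ is the unique neighbour of $u$ inside this piece, so $\sigma(t)=t$, and $ut$ is again a cylindrical $1$-cycle.
\end{itemize}
In either case $\cyl_1(\sigma)\ge 1$. So on the good event no such $\sigma$ exists, which proves the claim.

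\emph{Main obstacle.} The step I expect to be delicate is the claim that automorphisms of the unrooted graph $G$ permute the $3$-connected cores and restrict to genuine graph automorphisms of them. Two features complicate this: the network structure is defined relative to the root edge, and $G$ may contain loops and double edges coming from the root edge. I would handle it through the uniqueness of the decomposition into $3$-connected components of a $2$-connected cubic graph (Tutte/SPQR), which is invariant under all graph automorphisms. Bridges and loop parts would be handled separately; they do not contain $M$ and are permuted among themselves. The core $M$ is identified inside $G$ as the unique $3$-connected component of size at least $n/\log^2 n$ of its isomorphism type.
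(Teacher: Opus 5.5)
Your overall route matches the paper's: you pass to $\mT$ conditioned on $\#_\cX\mT=2n$, use a large type-$\cZ$ core with trivial unrooted automorphism group together with $\mathfrak{E}^{\mathrm c}$, and then get a cylindrical $1$-cycle from the fixed core vertices. However, the step ``If $M'\neq M$, the event $\mathfrak E$ occurs'' does not follow from your good event.

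The event $\mathfrak E$ only compares cores stored at two \emph{type-$\cZ$} vertices. A $3$-connected component $M'$ of $G$ isomorphic to $M$ could come from elsewhere:
\begin{enumerate}
\item It could be the decoration of a vertex of type $R_1$ or $\cH^-$. Your third requirement rules this out, because those cores carry a non-trivial automorphism, so $M\cong M'$ would have one too. You did not say this, but it works.
\item It could lie inside the decoration of a vertex of type $\cD_2$, $\cP_2$, $\cH_2$ or $\cL_2$. There the network is a cycle composition of two identical copies, so its cores come in isomorphic pairs. Nothing in your good event excludes this. With only $s\ge n/\log^2 n$, there is room for several cores of size $s$ among $2n$ vertices.
\end{enumerate}
So the uniqueness claim in your last paragraph, that $M$ is the unique $3$-connected component of size at least $n/\log^2 n$ of its isomorphism type, is exactly what is not yet justified.

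The fix is the one the paper uses in its event $\mathfrak{E}_0$. Also intersect with the third statement of Lemma~\ref{le:maxdeg}: $\Delta_{\cR,\cX}(\mT)\le \log^2 n$ for every type $\cR\ne\cZ$, which costs only $n^{-\Theta(\log n)}$. On this event every $3$-connected component with more than $\log^2 n$ vertices comes from a type-$\cZ$ decoration. Hence $M'$ is a type-$\cZ$ core of size $s\ge n/\log^2 n$, and $\mathfrak{E}^{\mathrm c}$ forces $M'=M$. With this added, the rest of your argument goes through as in the paper. That includes choosing $v$ as a function of $\mT$ alone, so that its decoration is conditionally uniform.
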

\begin{proof}
	With $\mathfrak{E}$ as in Lemma~\ref{le:maxdeg}, let $\mathfrak{E}_0$ denote the event that the complementary event $\mathfrak{E}^{\mathrm{c}}$ holds jointly with $\#_\cX \mT= 2n$ and 
	\[
		\Delta_{\cZ, \cX}(\mT) \ge  n / \log^2 n 
	\]
	and jointly for any type $\cR \ne \cZ$
	\begin{align*}
		\Delta_{\cR, \cX}(\mT) \le \log^2 n.
	\end{align*}
	By Lemma~\ref{le:maxdeg} we have
	\[
		\Prb{ \mathfrak{E}_0 \mid \#_\cX \mT = 2n} \ge 1 - n^{-\Theta( \log n)}.
	\]
	
	In the event $\mathfrak{E}_0$ all $3$-connected components with more than $\log^2 n$ vertices arise from decorations of vertices of type $\cZ$. (There may be $3$-connected components arising from decorations of vertices of different type, but their size is bounded by $\log^2 n$.)
	
	If we take the conditioned tree  $(\mT \mid \#_\cX \mT= 2n)$, endow it with its independent vertex decorations and distribute labels uniformly at random then the result corresponds to a symmetry from $\Sym_\cD^+([2n])$ with the uniform distribution. The corresponding unlabelled $\cD$-network $\mD_{2n}$ is uniformly distributed among all unlabelled $\cD$-networks with $2n$ vertices.
	
	Any type $\cZ$ vertex with $s \ge n / \log^2n$ type-$\cX$ children receives as decoration a uniformly selected labelled $3$-connected cubic planar graph with an oriented root edge on the vertex set $[s]$. This happens independently from the decorations of other vertices and everything else. There are precisely $q_{s/2} s! / 2$ such graphs.  It follows by Proposition~\ref{pro:unl3con} that there are constants $A,a>0$ (that do not depend on $s$) such that at most $q_{s/2} s! A\exp(-a s)$ of these graphs admit a non-trivial automorphism of the underlying unrooted graph.
	
	Let $\mathfrak{E}_1$ denote the event that  $\mathfrak{E}_0$ holds and jointly all of the at most $2\log^2n$ large $3$-connected components (i.e. those with at least $n / \log^2n$ vertices) in the decorations admit no non-trivial automorphism of the underlying unrooted graph. It follows that
	\[
		\Prb{ \mathfrak{E}_1 \mid \#_\cX \mT = 2n} \ge 1 - n^{-\Theta( \log n)}.
	\]

	In the event $\mathfrak{E}_1$ any automorphism $\sigma$ of the unrooted graph underlying $\mD_{2n}$  restricts to the identity map on a $3$-connected component $C$. If an edge of $C$ did not get substituted by a network, then it is a cylindrical $1$-cycle of $\sigma$. If an edge of $C$ was substituted by a network, then the substituted network is mapped to itself and its two poles are fixed separately, because the two incident vertices in $C$ are fixed. Hence an attachment edge incident to one of the fixed vertices of $C$ is fixed with both endpoints fixed. We arrive at $\mathrm{cyl}_1(\sigma) > 0$. This completes the proof.
\end{proof}

\begin{corollary}
	\label{co:dcomponents}
	For each $i \ge 1$ let $\mathfrak{M}_i^\cD(2n)$ denote the number of vertices of the $i$th largest $3$-connected component in a uniform random $2n$-vertex unlabelled $\cD$-network. 
	We have uniformly for all even integers $h=\chi n + x n^{2/3}$ that
	\[
	\omega n^{2/3} \Pr{\mathfrak{M}_1^\cD(2n) = h} = \mathfrak{h} \left(-\frac{x}{2\omega}\right) + o(1).
	\]
	Moreover,
	\[
	\left(n^{-2/3}\mathfrak{M}_j^\cD(2n)\right)_{j \ge 2} \convd \left(J_i\right)_{i \ge 1}.
	\]
\end{corollary}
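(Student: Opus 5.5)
Our plan is to transfer the statements about $\Delta_{\cZ,\cX}(\mT)$ and ${}_j\Delta_{\cZ,\cX}(\mT)$ from Lemma~\ref{le:maxdeg} to the uniform unlabelled $\cD$-network $\mD_{2n}$. The first step is to compare the uniform unlabelled network with the uniform symmetry. Conditional on $\#_\cX\mT=2n$, the decorated tree with uniform labels gives a uniform element of $\Sym^+_\cD[2n]$. Each unlabelled network $D$ with $2n$ vertices corresponds to exactly $(2n)!$ symmetries, since the orbit of $D$ has size $(2n)!/|\Autp(D)|$ and each labelled version carries $|\Autp(D)|$ automorphisms. Hence the unlabelled network underlying the uniform symmetry is exactly uniform, and has the law of $\mD_{2n}$, as already used in the proof of Corollary~\ref{co:deno}. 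So it suffices to study the $3$-connected components of the network encoded by $(\mT\mid\#_\cX\mT=2n)$.

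The second step is to identify the $3$-connected components with tree data. A $3$-connected component of the network is a core decoration at a vertex of type $\cZ$, $R_1$ or $\cH^-$, or it sits inside a local decoration of type $\cD_2,\cP_2,\cH_2,\cL_2$. A $\cZ$-vertex whose core has $s$ vertices has exactly $s$ children of type $\cX$. For other types, the core vertices are among the type-$\cX$ children, so the core size is at most $\Delta_{\cR,\cX}(\mT)$.

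The main obstacle, and the third step, is to control the size of $3$-connected components arising from decorations of types $\cD_2,\cP_2,\cH_2,\cL_2$. There, one component of the symmetry is duplicated by cycle composition, and a decoration with $m$ type-$\cX$ children contains $3$-connected components with at most $m$ vertices. Lemma~\ref{le:maxdeg} gives $\Delta_{\cR,\cX}(\mT)\le\log^2 n$ for all $\cR\ne\cZ$ with probability $1-n^{-\Theta(\log n)}$. On that event, every component not coming from a $\cZ$-vertex has at most $\log^2 n$ vertices.

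Fourth, we use that event together with $\Delta_{\cZ,\cX}(\mT)\ge n/\log^2 n$. On this joint event, the largest component equals $\Delta_{\cZ,\cX}(\mT)$. For $j\ge2$, the $j$th largest component coincides with ${}_j\Delta_{\cZ,\cX}(\mT)$ whenever the latter exceeds $\log^2 n$. Otherwise both quantities are $O(\log^2 n)=o(n^{2/3})$ after rescaling by $n^{-2/3}$. This matters because the limit $(J_i)$ has no atoms: the vague convergence of the point processes on compacts of $]0,\infty[$ transfers, and each fixed coordinate converges because $J_i>0$ almost surely.

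For the local limit statement, we compare the two probabilities directly:
\[
\bigl|\Pr{\mathfrak{M}_1^\cD(2n)=h}-\Pr{\Delta_{\cZ,\cX}(\mT)=h\mid\#_\cX\mT=2n}\bigr|\le n^{-\Theta(\log n)}
\]
uniformly in $h$. Multiplying by $\omega n^{2/3}$ keeps this error $o(1)$. The claims then follow from Lemma~\ref{le:maxdeg}.
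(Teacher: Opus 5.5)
Your proposal is correct and follows essentially the same route as the paper. Both arguments pass to the decorated tree $(\mT \mid \#_\cX \mT = 2n)$, which encodes a uniform unlabelled $\cD$-network, and invoke Lemma~\ref{le:maxdeg} to restrict to an event of probability $1-n^{-\Theta(\log n)}$ on which $\Delta_{\cZ,\cX}(\mT) \ge n/\log^2 n$ and $\Delta_{\cR,\cX}(\mT) \le \log^2 n$ for all $\cR \ne \cZ$. On that event every $3$-connected component with more than $\log^2 n$ vertices is a $\cZ$-decoration, whose size equals its number of type-$\cX$ children, so both the local limit and the point-process limit transfer with an $n^{-\Theta(\log n)}$ error. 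Your explicit handling of the indices $j \ge 2$, where components of size $O(\log^2 n) = o(n^{2/3})$ vanish after rescaling, simply spells out what the paper condenses into ``likewise''.
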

\begin{proof}
	The random tree $\mT$, conditioned on $\#_\cX \mT = 2n$, together with its decorations corresponds to a uniformly at random selected $2n$-vertex unlabelled $\cD$-network. Let $\mathfrak{F}$ denote the event that 
	$
	\Delta_{\cZ, \cX}(\mT) \ge  n / \log^2 n 
	$
	and  for any type $\cR \ne \cZ$ it holds that
	$
		\Delta_{\cR, \cX}(\mT) \le \log^2 n.
	$
	By Lemma~\ref{le:maxdeg} we have
	\[
	\Prb{ \mathfrak{F} \mid \#_\cX \mT = 2n} \ge 1 - n^{-\Theta( \log n)}.
	\]
	On the event $\mathfrak{F}$ all $3$-connected components with more than $\log^2 n$ vertices arise from decorations of vertices of type $\cZ$. The number of type $\cX$ children of such a vertex corresponds precisely to the number of vertices in the associated $3$-connected component.
	
	Thus, uniformly for all even integers $h=\chi n + x n^{2/3}$
	\begin{align*}
		&\omega n^{2/3} \Pr{\mathfrak{M}_1^\cD(2n) = h} \\
		&\quad = O(n^{2/3 - \Theta(\log n)}) + \omega n^{2/3} \Pr{ \Delta_{\cZ, \cX}(\mT) = h \mid \#_\cX(\mT) = 2n} \\
		&\quad = \mathfrak{h} \left(-\frac{x}{2\omega}\right) + o(1). 
	\end{align*}
	
	Likewise,  the point process limit for the remaining rescaled extremal degrees under the conditioning $\#_\cX \mT = 2n$ translates to the stated limit
	\[
		\left(n^{-2/3}\mathfrak{M}_j^\cD(2n)\right)_{j \ge 2} \convd \left(J_i\right)_{i \ge 1}.
	\]
	This completes the proof.
\end{proof}

\section{Simple networks and cubic planar graphs}

\label{sec:sicup}

The preceding section concluded with the asymptotic enumeration of $\cD$-networks. From this we will derive the asymptotic number of simple networks, connected unlabelled cubic planar graphs, and unconstrained unlabelled  cubic planar graphs.

\subsection{Subexponential sequences}

We collect several useful results on subexponential sequences following mostly~\cite{MR3097424}.  See also \cite{MR0348393,zbMATH01593195, MR714482,MR772907,zbMATH05346192} for the development of this theory.

\begin{definition}
	Let $d \ge 1$ be an integer. A power series $f(x) = \sum_{n =0}^\infty f_n x^n$ with nonnegative coefficients and radius of convergence $r >0$ belongs to the class $\mathscr{S}_d$, if $f_n=0$ whenever $n$ is not divisible by $d$, and
	\begin{align}
		\label{eq:condition}
		\frac{f_n}{f_{n+d}} \sim r^d, \qquad \frac{1}{f_n}\sum_{i+j=n}f_if_j \sim 2 f(r) < \infty
	\end{align}
	as $n \equiv 0 \mod d$ becomes large.
\end{definition}

The broad scope of this setting is illustrated by the following  observation, which has been noted in various places, see for example \cite{MR772907}.
\begin{proposition}
	\label{pro:easy}
	If  for some constants $r>0$, $b > 1$ and a slowly varying function $h$  we have $f_n = h(n) n^{-b} r^{-n}$ for large enough $n$, then the series $\sum_{n \in d\ndN_0} f_n x^n$ belongs to the class $\mathscr{S}_d$.
\end{proposition}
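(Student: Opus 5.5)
The plan is to verify the three requirements of the class $\mathscr{S}_d$ directly: the radius of convergence is $r$, the ratio condition holds, and the convolution condition holds. Throughout, $n, i, j$ range over $d\ndN_0$, and the formula $f_n = h(n)n^{-b}r^{-n}$ is only assumed for $n \ge N$. The finitely many smaller indices are harmless.

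\emph{Radius and ratio condition.} The two basic facts about slowly varying functions I will use are $h(n)^{1/n} \to 1$ and the uniform convergence theorem: $h(\lambda n)/h(n) \to 1$ uniformly for $\lambda$ in compact subsets of $]0,\infty[$. From $h(n)^{1/n}\to 1$ and $n^{-b/n}\to 1$ we get $f_n^{1/n} \to r^{-1}$, so the radius of convergence equals $r$. Potter's bound $h(n) = O(n^{\varepsilon})$ with $\varepsilon < b-1$ gives $f_n r^n = O(n^{\varepsilon - b})$, which is summable, so $f(r)<\infty$. For the ratio condition,
\[
\frac{f_n}{f_{n+d}} = \frac{h(n)}{h(n+d)}\Big(1+\frac dn\Big)^{b} r^{d} \to r^d,
\]
since $(n+d)/n \to 1$ and the uniform convergence theorem applies.

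\emph{Convolution condition.} I split $\sum_{i+j=n} f_if_j/f_n$ into three parts: $i<n/2$, the symmetric part $j<n/2$, and the possible middle term $i=j=n/2$.

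The middle term is $f_{n/2}^2/f_n = 2^{2b}\,\frac{h(n/2)^2}{h(n)}\,n^{-b}$. By the uniform convergence theorem $h(n/2)/h(n)\to 1$, so this equals $O(h(n)\, n^{-b})$, which tends to $0$ because $h(n)n^{-b}\to 0$ for $b>0$.

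For $i<n/2$ with $n-i\ge N$, each term is
\[
\frac{f_i f_{n-i}}{f_n} = f_i r^i \cdot \frac{h(n-i)}{h(n)}\Big(\frac{n}{n-i}\Big)^b .
\]
For fixed $i$ this tends to $f_i r^i$. It is also dominated uniformly in $n$ (for $n$ large) by $C f_i r^i$. The reason is that $(n/(n-i))^b \le 2^b$, and $h(n-i)/h(n) = h(\lambda n)/h(n)$ with $\lambda \in [1/2,1]$ stays bounded by the uniform convergence theorem. Since $\sum_i f_i r^i = f(r) < \infty$, dominated convergence shows this half-sum tends to $f(r)$. The other half is identical by symmetry, so the total tends to $2f(r)$.

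The only step needing care is the uniform domination of $h(n-i)/h(n)$ over the whole range $i<n/2$, and that is exactly what the uniform convergence theorem on $[1/2,1]$ supplies.
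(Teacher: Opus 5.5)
Your proof is correct and complete. The uniform convergence theorem on $[1/2,1]$, together with Potter's bound $h(n)=O(n^{\varepsilon})$ for some $\varepsilon<b-1$, handles every step:
\begin{itemize}
\item the radius of convergence and the finiteness of $f(r)$;
\item the ratio condition;
\item the vanishing of the middle term $f_{n/2}^2/f_n$;
\item the uniform domination $f_if_{n-i}/f_n\le C f_i r^i$ for $i<n/2$, which is what makes dominated convergence work on each half of the convolution.
\end{itemize}

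The paper gives no proof of this proposition; it quotes it as a known observation and points to the literature (e.g.\ \cite{MR772907}). There is therefore no competing route to compare. Your write-up is the standard direct verification, and it makes the statement self-contained.

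The only implicit hypothesis worth stating is that $h$ is positive and measurable. This is part of the usual definition of slow variation, and it is exactly what the uniform convergence theorem requires.
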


We make a basic observation:
\begin{proposition}
	\label{pro:negle}
	Let $f(x) = \sum_{n \ge 0} f_n x^n$ belong to $\mathscr{S}_d$ with radius of convergence $r$. Let $g(x)$ denote a power series with nonnegative coefficients such that there exists $s>r$ with $g(s)< \infty$. Then for any $\frac{r}{s}<t <1$ there exists a constant $C_t>0$ such that for all integers $n \ge 0$ with $[x^n]f(x) \ne 0$
	\[
		 [x^n] g(x) \le C_t t^n [x^n] f(x).
	\]
\end{proposition}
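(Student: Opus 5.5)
The plan is to use the subexponential ratio condition in~\eqref{eq:condition} to get a lower bound on the coefficients of $f$. From that lower bound the claim follows by comparison with the exponential upper bound on the coefficients of $g$ coming from $g(s)<\infty$.

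\emph{Upper bound for $g$.} Since $g$ has nonnegative coefficients and $g(s)<\infty$, we have $[x^n]g(x)\,s^n \le g(s)$ for every $n$. Hence
\[
[x^n]g(x) \le g(s)\, s^{-n}.
\]

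\emph{Lower bound for $f$.} Fix $t$ with $r/s<t<1$ and choose $t'$ with $r/s<t'<t$. Write $f_n=[x^n]f(x)$.
\begin{itemize}
\item The first condition in~\eqref{eq:condition} gives $f_{n+d}/f_n \to r^{-d}$ along $n\equiv 0 \bmod d$. In particular $f_n>0$ for all large multiples $n$ of $d$.
\item Since $r/t'<s$, we have $r^{-d} > (s t')^{-d}$. So there is $N$ with $f_{n+d}\ge (st')^{-d} f_n$ for all $n\ge N$ with $n\equiv 0\bmod d$.
\item Iterating this inequality from a fixed $N_0\ge N$ with $f_{N_0}>0$ gives $f_n \ge c\,(st')^{-n}$ for all large $n\equiv 0 \bmod d$, with $c>0$.
\end{itemize}

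\emph{Comparison for large $n$.} Combining the two bounds, for all large $n\equiv 0\bmod d$,
\[
[x^n]g(x) \le g(s) s^{-n} \le \frac{g(s)}{c}\, t'^{\,n} f_n \le \frac{g(s)}{c}\, t^n f_n.
\]
The slack between $t'$ and $t$ is not actually needed here; choosing $t'=t$ directly also works.

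\emph{Finitely many remaining $n$.} These are the $n$ with $f_n\ne 0$ that lie below the threshold. Each such $n$ has $f_n>0$, so the ratio $[x^n]g(x)/(t^n f_n)$ is finite. Enlarging $C_t$ to the maximum of these finitely many ratios and $g(s)/c$ finishes the proof.

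The only mildly delicate point is that the ratio condition yields positivity and geometric growth of $f_n$ only eventually. This is handled by the finite adjustment of the constant in the last step.
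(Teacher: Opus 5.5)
Your proof is correct and follows essentially the same route as the paper. Both combine an exponential upper bound $[x^n]g(x)=O(s^{-n})$ from $g(s)<\infty$, a geometric lower bound on $f_n$ obtained by iterating the ratio condition in~\eqref{eq:condition} from a fixed threshold, and an adjustment of the constant for the finitely many remaining $n$ (these are multiples of $d$, since $f\in\mathscr{S}_d$); the only cosmetic difference is that the paper parametrises the slack as $(1-\epsilon)$ and lets $\epsilon\to 0$, while you pick $t'$ (or $t$ itself) directly.
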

\begin{proof}
	Throughout this proof we restrict $n$ to multiples of $d$. By~\eqref{eq:condition} for any $\epsilon>0$ there exists $N \ge 1$ divisible by $d$ such that for  $n \ge N$
	\[
		\frac{f_{n+d}}{f_n} \ge r^{-d} (1-\epsilon)^d.
	\]
	Consequently, for  $n \ge N$ and $k \ge 1$
	\[
		\frac{f_{n+kd}}{f_n} \ge r^{-kd} (1- \epsilon)^{kd}.
	\]
	This entails for  $n \ge N$ 
	\[
		f_n \ge r^{-(n-N)} (1- \epsilon)^{(n-N)}  f_N .
	\]
	On the other hand, for any $s>r$ with $g(s)< \infty$
	we have 
	\[
		[x^n]g(x) = o(s^{-n}).
	\]
	Thus
	\[
		\frac{1}{f_n} [x^n]g(x) \le o(1) \left( \frac{r}{(1-\epsilon)s} \right)^n.
	\]
	In particular, there exists a constant $C>0$ such that for $n \ge N$
	\[
		\frac{1}{f_n} [x^n]g(x) \le C \left( \frac{r}{(1-\epsilon)s} \right)^n.
	\]
	By replacing $C$ with a possibly larger constant this also holds for integers $0 \le n \le N$ with $f_n \ne 0$. As $\epsilon>0$ was arbitrary, this completes the proof.
\end{proof}

The following observation is a variant of \cite[Thm. 4.9]{MR3097424}.

\begin{lemma}
	\label{le:help}
	Let $f(x)$ belong to $\mathscr{S}_d$ with radius of convergence $r$, and $h_1(x), h_2(x)$ be power-series with nonnegative coefficients such that $[x^n]h_i(x)=0$ for $i=1,2$ whenever $n$ is not divisible by $d\ge1$. With $n$ restricted to  multiples of $d$, set
	\begin{align*}
		\overline{o}_1 &= \limsup_{n \to \infty} \frac{[x^n]h_1(x)}{[x^n]f(x)},  &\overline{o}_2 &= \limsup_{n \to \infty}\frac{[x^n]h_2(x)}{[x^n]f(x)}, \\
		\underline{o}_1 &= \liminf_{n \to \infty} \frac{[x^n]h_1(x)}{[x^n]f(x)},   &\underline{o}_2 &= \liminf_{n \to \infty} \frac{[x^n]h_2(x)}{[x^n]f(x)}.
	\end{align*}
	Suppose that $\overline{o}_1, \overline{o}_2 < \infty$.  Then, with $n$ restricted to multiples of $d$
	\begin{align*}
		\limsup_{n \to \infty} \frac{[x^n]h_1(x)h_2(x)}{[x^n]f(x)} &\le  \overline{o}_2h_1(r) + \overline{o}_1h_2(r), \\
		\liminf_{n \to \infty} \frac{[x^n]h_1(x)h_2(x)}{[x^n]f(x)} &\ge  \underline{o}_2h_1(r) + \underline{o}_1h_2(r).
	\end{align*}
\end{lemma}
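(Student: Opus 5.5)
The plan is the standard subexponential convolution argument: split the convolution into a "small $i$", "small $j$" and "middle" range. Throughout, $n$, $i$ and $j$ are multiples of $d$. Write $f_n=[x^n]f(x)$ and $h_{k,n}=[x^n]h_k(x)$, so that
\[
[x^n]h_1(x)h_2(x)=\sum_{i+j=n} h_{1,i}h_{2,j}.
\]
Fix a large constant $K$ and split the sum into three parts: $i\le K$, $j\le K$, and $K<i,j<n-K$. Both inequalities will follow from controlling each part separately and then letting $K\to\infty$.

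\emph{Boundary ranges.} For fixed $i\le K$, the first condition in~\eqref{eq:condition} gives $f_{n-i}/f_n\to r^{i}$. Hence
\[
\limsup_{n\to\infty}\frac{1}{f_n}\sum_{i\le K}h_{1,i}h_{2,n-i}\le \overline{o}_2\sum_{i\le K}h_{1,i}r^i\le \overline{o}_2 h_1(r),
\]
and likewise the $\liminf$ is at least $\underline{o}_2\sum_{i\le K}h_{1,i}r^i$. This lower bound tends to $\underline{o}_2h_1(r)$ as $K\to\infty$ by monotone convergence. The range $j\le K$ is symmetric and yields $\overline{o}_1h_2(r)$ and $\underline{o}_1h_2(r)$. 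Note that $h_i(r)<\infty$ is automatic whenever $\overline{o}_i<\infty$, because $h_{i,n}\le C f_n$ for all $n$ and $f(r)<\infty$. For the lower bound we simply drop the middle range, since all terms are nonnegative. That completes the $\liminf$ statement. We also need $K<n-K$ so that the two boundary ranges are disjoint, which holds for $n$ large.

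\emph{Middle range (main obstacle).} For the upper bound we must show that the middle range is negligible as $K\to\infty$. Since $\overline{o}_1,\overline{o}_2<\infty$ and the coefficients are finite, there is a constant $C$ with $h_{k,m}\le C f_m$ for all $m$ and $k=1,2$. So the middle range is at most
\[
C^2\sum_{K<i<n-K} f_if_{n-i}.
\]
The key fact, standard for the class $\mathscr{S}_d$ (as in~\cite{MR3097424}), is that
\[
\lim_{K\to\infty}\limsup_{n\to\infty}\frac{1}{f_n}\sum_{K<i<n-K}f_if_{n-i}=0.
\]
To prove it, apply the boundary computation above with $h_1=h_2=f$. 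Using the ratio condition it gives
\[
\frac{1}{f_n}\sum_{i\le K \text{ or } j\le K} f_if_j\to 2\sum_{i\le K}f_ir^i,
\]
where the two ranges are disjoint for large $n$. The second condition in~\eqref{eq:condition} says that the full convolution divided by $f_n$ tends to $2f(r)$. Subtracting, the middle part tends to $2\bigl(f(r)-\sum_{i\le K}f_ir^i\bigr)$, and this goes to $0$ as $K\to\infty$. This step is where the subexponential hypothesis is genuinely used.

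Combining the three ranges and letting $K\to\infty$ gives the $\limsup$ bound $\overline{o}_2h_1(r)+\overline{o}_1h_2(r)$, which completes the argument.
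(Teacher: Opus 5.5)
Your proof is correct and follows the paper's argument. It uses the same three-range split, the ratio condition for the two boundary ranges, and the convolution condition in~\eqref{eq:condition} to show that the middle part of $f*f$, divided by $f_n$, tends to $2\bigl(f(r)-\sum_{i\le K}f_ir^i\bigr)$. The only differences are that you bound the middle range by a crude constant $C^2$ rather than $\overline{o}_1\overline{o}_2+\epsilon$, and that you spell out the $\liminf$ case. Both are harmless, though the bound $h_{k,m}\le Cf_m$ should only be claimed for $m>K$, since $f_m$ may vanish for small $m$ while $h_{k,m}$ does not; that restricted version is all your middle-range estimate uses.
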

\begin{proof}
	Write $a_n = [x^n] h_1(x)$ and $b_n = [x^n] h_2(x)$ and $f_n = [x^n]f(x)$. For any constant $K \ge 1$, by~\eqref{eq:condition}
	 \begin{align*}
		\frac{1}{f_n}\sum_{i=0}^{K} a_i b_{n-i} &\le \frac{\overline{o}_2+o(1)}{f_n}\sum_{i=0}^{K} a_i f_{n-i}  \sim \overline{o}_2\sum_{i=0}^{K} a_i r^i
	 \end{align*}
	 and likewise
 	 \begin{align*}
 	 	\frac{1}{f_n}\sum_{i=n-K}^{n} a_i b_{n-i} &\le \frac{\overline{o}_1+ o(1)}{f_n}\sum_{i=n-K}^{n} f_i b_{n-i}  \sim \overline{o}_1 \sum_{i=0}^{K} b_i r^i.
 	 \end{align*}
 	 (Note that for $n$ divisible by $d$ we have $f_n > 0$ for large enough $n$. Furthermore,  $a_if_{n-i}$ and $f_i b_{n-i}$ are  equal to zero when $i$ is not divisible by $d$. Hence we only need to apply~\eqref{eq:condition} to the summands where  $i$ is a multiple of $d$.) Furthermore, for any $\epsilon>0$ we may choose $K$ large enough so that by~\eqref{eq:condition} 
 	 \begin{align*}
 	 	\frac{1}{f_n}\sum_{i=K+1}^{n-K-1} a_i b_{n-i} &\le  (\overline{o}_1 \overline{o}_2 + \epsilon) \frac{1}{f_n}\sum_{i=K+1}^{n-K-1} f_i f_{n-i} \\
 	 	&\sim 2 (\overline{o}_1 \overline{o}_2 + \epsilon) \left(f(r) - \sum_{i=0}^K f_i r^i \right).
 	 \end{align*}
 	 Since we may take $K$ arbitrarily large, it follows that
 	 \[
 	 		\limsup_{n \to \infty} \frac{[x^n]h_1(x)h_2(x)}{[x^n]f(x)} \le  \overline{o}_2h_1(r) + \overline{o}_1h_2(r).
 	 \]
 	 The lower bound for the limit inferior follows analogously.
\end{proof}

The following result describes the behaviour of a one-dimensional random walk with i.i.d. steps that is stopped at a random time that is independent from its steps.

\begin{proposition}[{\cite[Thm. 4.8, 4.30]{MR3097424}}] \label{pro:cosub}
	If $f(x)$ belongs to $\mathscr{S}_d$ with radius of convergence $r>0$, and $g(x)$ is a non-constant power series with nonnegative coefficients that is analytic at $f(r)$, then $g(f(x))$ belongs to $\mathscr{S}_d$ and \[ [x^n] g(f(x)) \sim g'(f(r)) [x^n]f(x), \qquad n \to \infty, \qquad n \equiv 0 \mod d. \]
\end{proposition}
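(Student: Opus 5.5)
This is the classical result that the subexponential class $\mathscr{S}_d$ is closed under analytic composition, the random-sum (single big jump) principle. I would prove it by reducing $g(f(x))$ to a weighted sum of convolution powers of $f$ and controlling each power uniformly.

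\textbf{Step 1: asymptotics of fixed powers.} Write $g(y)=\sum_{k\ge0}g_k y^k$ and $f_n=[x^n]f(x)$, with $n$ restricted to multiples of $d$ throughout. I would show by induction on $k$ that
\[
[x^n]f(x)^k\sim k f(r)^{k-1} f_n .
\]
The case $k=1$ is trivial. For the induction step, apply Lemma~\ref{le:help} with $h_1=f^{k-1}$ and $h_2=f$. The limsup and liminf coincide, which gives $(k-1)f(r)^{k-2}\cdot f(r)+1\cdot f(r)^{k-1}$.

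\textbf{Step 2: a uniform Kesten-type bound.} The key estimate is that for every $\epsilon>0$ there is $C_\epsilon$ with
\[
[x^n]f(x)^k\le C_\epsilon (f(r)+\epsilon)^k f_n \qquad \text{for all } n,k .
\]
I would prove this by the standard argument. Set $u_k=\sup_n [x^n]f^k/f_n$. Split the convolution $[x^n]f^{k}=\sum_i [x^i]f^{k-1}\, f_{n-i}$ into $n-i\le K$ and $n-i>K$. For $n-i\le K$, the ratio condition $f_{n-j}/f_n\to r^j$ controls the terms. For $n-i>K$, use the second condition in~\eqref{eq:condition} with $K$ chosen large so that the tail contributes at most $f(r)+\epsilon/2$ relative to the previous bound. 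This yields a recursion
\[
u_k\le A_K+u_{k-1}(f(r)+\epsilon),
\]
which iterates to the claim.

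\textbf{Step 3: dominated convergence.} Since $g$ is analytic at $f(r)$, its radius of convergence exceeds $f(r)$. Choose $\epsilon$ so that $\sum_k |g_k| (f(r)+\epsilon)^k<\infty$; the coefficients are nonnegative, so this is $\sum_k g_k(f(r)+\epsilon)^k$. Then
\[
\frac{[x^n]g(f(x))}{f_n}=\sum_k g_k\frac{[x^n]f^k}{f_n}.
\]
By Step 2 the terms are dominated by a summable sequence, and by Step 1 they converge termwise. Hence the sum tends to $\sum_k k g_k f(r)^{k-1}=g'(f(r))$.

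\textbf{Step 4: membership in $\mathscr{S}_d$.} The coefficients of $g(f(x))$ vanish off multiples of $d$. They are asymptotically $g'(f(r))f_n$, and $g'(f(r))>0$ because $g$ is nonconstant with nonnegative coefficients and $f(r)>0$. The ratio condition therefore transfers directly. For the convolution condition, apply Lemma~\ref{le:help} to $h_1=h_2=g\circ f$. The relevant ratios both converge to $g'(f(r))$, so
\[
[x^n](g\circ f)^2\sim 2g(f(r))g'(f(r))f_n\sim 2g(f(r))\,[x^n]g(f(x)).
\]

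The main obstacle is Step 2, the uniform exponential-in-$k$ bound. This is the only place where the full subexponentiality condition, not just regular variation, is used delicately, and the choice of the cutoff $K$ depending on $\epsilon$ must be made carefully.
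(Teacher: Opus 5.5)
The paper gives no proof of its own here: it cites Foss--Korshunov--Zachary. Your outline (fixed powers, a Kesten-type bound, dominated convergence, then closure) is the standard route behind those theorems, and Steps 1, 3 and 4 are fine.

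\textbf{Step 2 does not deliver the recursion you state}, because the cut is placed on the wrong index. Write $[x^n]f(x)^k=\sum_i [x^i]f(x)^{k-1}f_{n-i}$.

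In the region $n-i\le K$, the factor $[x^i]f(x)^{k-1}$ sits at an index $i\ge n-K$ close to $n$. The ratio condition says nothing about it, and the only bound uniform in $k$ is $u_{k-1}f_i$. So this region contributes roughly $u_{k-1}\sum_{j\le K}f_jr^j\approx f(r)\,u_{k-1}$, not an additive constant $A_K$.

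The region $n-i>K$, bounded the same way, contributes $u_{k-1}\frac{1}{f_n}\sum_{j>K}f_jf_{n-j}\to u_{k-1}\bigl(2f(r)-\sum_{j\le K}f_jr^j\bigr)$. This is again about $f(r)\,u_{k-1}$.

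Together you only get $u_k\lesssim(2f(r)+\epsilon)u_{k-1}$. Step 3 would then need $g$ to converge beyond $2f(r)$, which is not assumed.

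\textbf{The fix.} The point of Kesten's lemma is that the terms where the $(k-1)$-fold power has a \emph{small} index must be estimated without $u_{k-1}$. So cut on $i$ instead.

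For $i\le K$, use the trivial bound $[x^i]f(x)^{k-1}\le f(r)^{k-1}r^{-i}$ together with $f_{n-i}\le 2r^if_n$ for $n\ge N(K)$. This region then contributes at most $2f(r)^{k-1}f_n$ in total.

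For $i>K$, use $[x^i]f(x)^{k-1}\le u_{k-1}f_i$ and $\frac{1}{f_n}\sum_{K<i\le n}f_if_{n-i}\to 2f(r)-\sum_{i\le K}f_ir^i$. The limit is below $f(r)+\epsilon$ once $\sum_{i>K}f_ir^i<\epsilon$.

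Treating $n<N(K)$ by the same trivial bound yields $u_k\le C_Kf(r)^{k-1}+(f(r)+\epsilon)u_{k-1}$. Iterating gives $u_k\le C_\epsilon(f(r)+2\epsilon)^k$, which is what Step 3 needs.

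\textbf{Two smaller corrections.}
First, the additive term is $C_Kf(r)^{k-1}$ rather than a constant, unless you first normalise $p_n=f_nr^n/f(r)$ to a probability distribution.
Second, the supremum defining $u_k$ must be taken over $n\ge N_0$, where $f_n>0$ for all $n\ge N_0$, and you need $K\ge N_0$. The reason is that $[x^n]f(x)^k$ can be positive at small $n$ where $f_n=0$. So the bound ``for all $n,k$'' is false as stated, though only large $n$ matter for Step 3.
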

We extend this result to more general compositions:
\begin{lemma}
	\label{le:hela}
	Let $f(x)$ belong to $\mathscr{S}_d$ with radius of convergence $r$. Let $F(x, u)$ denote a power series with nonnegative coefficients such that for some $\epsilon>0$
	\[
		F(r + \epsilon, f(r) + \epsilon) < \infty.
	\]
	Furthermore, suppose that $[x^n]F(x,f(x))=0$ whenever $n$ is not divisible by~$d$.  Then
	\[
		[x^n] F(x, f(x)) \sim \frac{\partial F}{\partial u} (r, f(r)) [x^n] f(x).
	\]
\end{lemma}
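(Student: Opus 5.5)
Write $F(x,u)=\sum_{k\ge 0} F_k(x)u^k$ with $F_k(x)=[u^k]F(x,u)$, a power series with nonnegative coefficients. Then $F(x,f(x)) = F_0(x) + F_1(x)f(x) + \sum_{k\ge 2} F_k(x) f(x)^k$. I will treat the three pieces separately and compare each with $f_n := [x^n]f(x)$, restricting $n$ to multiples of $d$.

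\emph{The term $F_0$.} Since $F_0(r+\epsilon)\le F(r+\epsilon,f(r)+\epsilon)<\infty$, Proposition~\ref{pro:negle} gives $[x^n]F_0(x) = O(t^n f_n)$ for some $t<1$. This term is negligible.

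\emph{The term $F_1 f$.} Here I use the variant of Lemma~\ref{le:help} with $h_1=F_1$ and $h_2=f$. Again by Proposition~\ref{pro:negle}, $[x^n]F_1(x)=o(f_n)$, so $\overline{o}_1=\underline{o}_1=0$, and $\overline{o}_2=\underline{o}_2=1$. One point needs care: $F_1$ may have coefficients at indices not divisible by $d$. To handle this, decompose $F_1$ by residue class mod $d$. The parts with nonzero residue pair with $f$ only at indices not divisible by $d$, so they contribute nothing at multiples of $d$. Alternatively, I would simply rerun the three-range splitting of the proof of Lemma~\ref{le:help}, using $f_{n-i}\sim r^i f_n$ for $i\equiv 0 \bmod d$ and $f_{n-i}=0$ otherwise. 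Either way $[x^n]F_1 f \sim F_1(r) f_n$.

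\emph{The tail $\sum_{k\ge 2}F_k f^k$.} By Proposition~\ref{pro:cosub} and induction (or directly), $f^k\in\mathscr{S}_d$ with $[x^n]f^k\sim k f(r)^{k-1} f_n$. Combining this with the same argument as for $F_1$ gives $[x^n]F_k f^k \sim F_k(r)\,k f(r)^{k-1} f_n$ for each fixed $k$. Summing over $k$ yields $\partial_u F(r,f(r))\, f_n$, provided the interchange of limit and sum is justified.

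The main obstacle is this uniform control of the tail in $k$. I would use the standard Kesten-type bound for subexponential sequences (the ingredient behind \cite[Thm.~4.8]{MR3097424}): for every $\delta>0$ there is $C_\delta$ with $[x^n]f^k\le C_\delta (f(r)+\delta)^{k} f_n$ for all $n$ and $k$. A similar argument extends this to $[x^n]F_k f^k \le C'_\delta\, G_k\, (f(r)+\delta)^k f_n$, where $G_k$ is obtained by evaluating $F_k$ slightly beyond $r$. One way to get this is to bound $F_k(x)\le_{\mathrm{coeff}}$ a multiple of $F_k(r+\epsilon)$ times a geometric series in $x/(r+\epsilon)$, and to absorb the resulting convolution using Proposition~\ref{pro:negle}. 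Choosing $\delta<\epsilon$, the bound $\sum_k F_k(r+\epsilon)(f(r)+\epsilon)^k<\infty$ makes the tail summable. Dominated convergence then finishes the proof, and it also gives $\partial_u F(r,f(r))<\infty$.
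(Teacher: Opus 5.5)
Your proof is correct, but it controls the tail $\sum_{k\ge K}F_k f^k$ by a different mechanism than the paper. You import a Kesten-type bound $[x^n]f^k\le C_\delta(f(r)+\delta)^k f_n$ that is uniform in $k$. You combine it with $F_k(x)\le_{\mathrm{coeff}}F_k(r+\epsilon)\,(1-x/(r+\epsilon))^{-1}$ and then apply dominated convergence over $k$.

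The paper needs no estimate uniform in $k$. It sets $s=f(r)+\epsilon/2$ and $\Phi_s(y)=y/(1-y/s)^2=\sum_{k\ge1}k s^{-(k-1)}y^k$, and uses the coefficientwise domination $f^k\le_{\mathrm{coeff}}\frac{s^{k-1}}{k}\Phi_s(f)$. This gives $\sum_{k\ge K}F_kf^k\le_{\mathrm{coeff}}\Phi_s(f)\sum_{k\ge K}F_k(x)s^{k-1}/k$. One application of Proposition~\ref{pro:cosub} to $\Phi_s\circ f$ and one of Lemma~\ref{le:help} then bound the tail asymptotically by $f_n\Phi_s'(f(r))\sum_{k\ge K}F_k(r)s^{k-1}/k$, which tends to $0$ as $K\to\infty$.

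The paper's route is self-contained given the tools it already states; the Kesten bound is hidden inside the cited Proposition~\ref{pro:cosub}. Your route is the more standard dominated-convergence argument and makes the uniform domination explicit. However, it requires citing the local lattice form of Kesten's bound, which the paper does not contain.

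That bound also needs some care. It can only hold for large $n$: for small multiples $n$ of $d$ one may have $f_n=0<[x^n]f^k$. So when you convolve with the geometric series, the finitely many small indices $j$ must be handled separately. For instance, use $[x^j]f^k\le f(r)^k r^{-j}$ together with $(r+\epsilon)^{-n}=o(f_n)$, which follows from Proposition~\ref{pro:negle}.

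Finally, your separate treatment of $F_0$ and $F_1$, and the residue-class decomposition, are unnecessary. Nonnegativity and the hypothesis $[x^n]F(x,f(x))=0$ for $d\nmid n$ already force every $F_k$ to be supported on multiples of $d$. The paper uses this to treat all fixed $k$ at once via Lemma~\ref{le:help}.
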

\begin{proof}
	We may write
	\[
		F(x,u) = \sum_{k \ge 0} F_k(x) u^k
	\]
	with $F_k(x)$ a power series with nonnegative coefficients. Since we assumed $[x^n]F(x,f(x))=0$ whenever $n$ is not divisible by~$d$, it follows that $[x^n]F_k(x)=0$ whenever $n$ is not divisible by $d$.

	Let us now restrict $n$ to multiples of $d$. We write $f_n = [x^n] f(x)$. By assumption,
	\[
		\sum_{k \ge 0} F_k(r + \epsilon) (f(r) + \epsilon)^k < \infty.
	\]
	This entails $F_k(r + \epsilon)< \infty$  and hence by Proposition~\ref{pro:negle}
	\[
	[x^n]F_k(x) = o(f_n).
	\]	
	Proposition~\ref{pro:cosub} implies 
	\[
		[x^n]f(x)^k \sim k f(r)^{k-1} f_n
	\]
	for any constant $k \ge 0$. By Lemma~\ref{le:help} it follows that
	\[
		[x^n] F_k(x) f(x)^k \sim k f(r)^{k-1}  F_k(r) f_n .
	\]
	Hence, for any $K \ge 0$
	\begin{align}
		\label{eq:gof}
		[x^n] \sum_{k=0}^K F_k(x) f(x)^k \sim f_n \sum_{k=0}^K F_k(r) kf(r)^{k-1} .
	\end{align}
	Set $s=f(r) + \epsilon/2$ and define
	\[
		\Phi_s(x) = \frac{x}{(1-x/s)^2} = \sum_{k \ge 1} \frac{k}{s^{k-1}} x^k.
	\]
	This way, $\Phi_s$ has radius of convergence $s>f(r)$. Hence by 	Proposition~\ref{pro:cosub}
	\[
		[x^n] \Phi_s(f(x)) \sim \Phi_s'(f(r)) f_n.
	\]
	Moreover, coefficient-wise
	\[
		f(x)^k \le_{\mathrm{coeff}} \frac{s^{k-1}}{k} \Phi_s(f(x)).
	\]
	Hence, coefficient-wise
	\[
		\sum_{k \ge K} F_k(x) f(x)^k \le_{\mathrm{coeff}} \Phi_s(f(x))  \sum_{k \ge K} F_k(x) \frac{s^{k-1}}{k}.
	\]
	The series $\sum_{k \ge K} F_k(x) \frac{s^{k-1}}{k}$  is convergent at $x=r + \epsilon$, hence by Proposition~\ref{pro:negle}
	\[
		[x^n]\sum_{k \ge K} F_k(x) \frac{s^{k-1}}{k} = o(f_n).
	\]
	Hence by Lemma~\ref{le:help}
	\[
		[x^n] \Phi_s(f(x)) \sum_{k \ge K} F_k(x) \frac{s^{k-1}}{k} \sim f_n \Phi_s'(f(r))  \sum_{k \ge K} F_k(r) \frac{s^{k-1}}{k}.
	\]
	It follows that
	\[
		\lim_{K \to \infty} \limsup_{n \to \infty} \frac{1}{f_n} [x^n] \sum_{k \ge K} F_k(x) f(x)^k = 0.
	\]
	By~\eqref{eq:gof}, it follows that
	\[
		\frac{1}{f_n} [x^n] \sum_{k=0}^\infty F_k(x) f(x)^k \to  \sum_{k=0}^\infty  F_k(r) k f(r)^{k-1}.
	\]
	This completes the proof.
\end{proof}

\begin{lemma}
	\label{le:help2}
		Let $f(x)$ belong to $\mathscr{S}_d$ with radius of convergence $r$. Let $h_1(x), \ldots, h_m(x)$ denote power-series with nonnegative coefficients such that $[x^n]h_i(x)=0$ whenever $n$ is not divisible by $d\ge1$. With $n$ restricted to multiples of $d$, set
	\begin{align*}
		\overline{o}_i &= \limsup_{n \to \infty} \frac{[x^n]h_i(x)}{[x^n]f(x)}, \\
		\underline{o}_i &= \liminf_{n \to \infty} \frac{[x^n]h_i(x)}{[x^n]f(x)},
	\end{align*}
	for all $1 \le i \le m$.
	Suppose that $\overline{o}_i < \infty$ for all $i$.
	Let $F(x, u_1, \ldots, u_m)$ denote a power-series with nonnegative coefficients such that for some $\epsilon>0$ we have
	\[
		F(r + \epsilon, h_1(r) + \epsilon, \ldots,  h_m(r) + \epsilon) < \infty.
	\]
	Furthermore, suppose that $[x^n]F(x, h_1(x), \ldots, h_m(x))=0$ whenever $n$ is not divisible by~$d$.
	Then, with $n$ restricted to multiples of $d$
	\begin{align*}
		\limsup_{n \to \infty} \frac{[x^n] F(x, h_1(x), \ldots, h_m(x))}{[x^n] f(x)}   \le \sum_{i=1}^m \overline{o}_i \frac{\partial F}{\partial u_i} (r, h_1(r), \ldots, h_m(r))
	\end{align*}
	and
	\begin{align*}
	\liminf_{n \to \infty} \frac{[x^n] F(x, h_1(x), \ldots, h_m(x))}{[x^n] f(x)}   \ge \sum_{i=1}^m \underline{o}_i \frac{\partial F}{\partial u_i} (r, h_1(r), \ldots, h_m(r)).
	\end{align*}
\end{lemma}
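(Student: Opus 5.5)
We will prove Lemma~\ref{le:help2} by mimicking the proof of Lemma~\ref{le:hela}: expand $F$ in the variables $u_1,\dots,u_m$, treat every finite-degree monomial with Lemma~\ref{le:help}, and control the tail with a domination argument based on Proposition~\ref{pro:cosub}.

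Write $F(x,u_1,\ldots,u_m)=\sum_{\mathbf k}F_{\mathbf k}(x)u_1^{k_1}\cdots u_m^{k_m}$ with $F_{\mathbf k}$ having nonnegative coefficients. Since $F(r+\epsilon,\ldots)<\infty$, each $F_{\mathbf k}(r+\epsilon)<\infty$. Proposition~\ref{pro:negle} then gives $[x^n]F_{\mathbf k}(x)=o(f_n)$, so $F_{\mathbf k}$ contributes $0$ to both $\overline o$ and $\underline o$.

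The first step is to show that for any finite product $P=h_1^{k_1}\cdots h_m^{k_m}$,
\begin{align*}
\limsup_{n\to\infty}\frac{[x^n]P}{f_n}&\le \sum_i k_i\,\overline o_i\, h_i(r)^{k_i-1}\prod_{j\ne i}h_j(r)^{k_j},\\
\liminf_{n\to\infty}\frac{[x^n]P}{f_n}&\ge \sum_i k_i\,\underline o_i\, h_i(r)^{k_i-1}\prod_{j\ne i}h_j(r)^{k_j}.
\end{align*}
This follows by induction on $\sum k_i$, applying Lemma~\ref{le:help} with $h_1$ replaced by the partial product and $h_2$ by the next factor. The induction needs the limsup of the partial product to be finite, which the induction hypothesis provides. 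Applying Lemma~\ref{le:help} once more with the factor $F_{\mathbf k}$ (for which $\overline o=\underline o=0$) gives the bound $F_{\mathbf k}(r)\sum_i(\cdots)$. These terms sum to $\partial F/\partial u_i$ evaluated at $(r,h(r))$, weighted by $\overline o_i$ or $\underline o_i$. Summing over $|\mathbf k|\le K$ gives the claimed inequalities for the truncation.

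For the liminf bound, nonnegativity lets us drop the tail $|\mathbf k|>K$, and then we let $K\to\infty$. For the limsup bound we must show the tail is negligible. Pick a single dominating series $g:=f+h_1+\cdots+h_m$. Its coefficients satisfy $[x^n]g\le (1+\sum_i\overline o_i+o(1))f_n$, with $g(r)=f(r)+\sum_i h_i(r)$.

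This tail estimate is the main obstacle. Let $s_i=h_i(r)+\epsilon/2$. We bound $h_1^{k_1}\cdots h_m^{k_m}$ coefficientwise by a single-variable expression built from the $\Phi_s$ trick of Lemma~\ref{le:hela}. Set $H(x)=\sum_i h_i(x)/s_i$. Then $\prod_i (h_i/s_i)^{k_i}\le_{\mathrm{coeff}} H^{|\mathbf k|}$, and $H(r)<m$, so this alone is too lossy unless we are careful. Instead we keep the variables separate and bound the tail by
\[
\sum_{|\mathbf k|>K}F_{\mathbf k}(x)\prod_i s_i^{k_i}\cdot\Big(\sum_i \Phi_{1}(h_i/s_i)\Big),
\]
using the inequality $\prod_i y_i^{k_i}\le_{\mathrm{coeff}} (\max_i\text{-type})$ together with $y^k\le_{\mathrm{coeff}}\frac1k\Phi_1(y)$ with $\Phi_1(y)=y/(1-y)^2$. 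Here one exploits $y_j^{k_j}$ for $j\ne i$ being analytic at $r$ with value $<(1-\epsilon')^{k_j}$, while the single factor $y_i^{k_i}$ carries the large coefficient. Concretely, a product with at least one $k_i>0$ is dominated by $\sum_i \frac{s_i^{k_i}}{k_i}\Phi_{s_i}(h_i)\prod_{j\ne i}h_j^{k_j}$. Lemma~\ref{le:help}, together with Proposition~\ref{pro:cosub} applied to $\Phi_{s_i}(h_i)$ and with $h_i$ replaced by $g$ where needed, then gives a limsup bounded by a constant times $\sum_{|\mathbf k|>K}F_{\mathbf k}(r)\prod_i s_i^{k_i}$. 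This is the tail of a series convergent by hypothesis, so it tends to $0$ as $K\to\infty$. Combining the truncation estimate with this tail bound yields the limsup inequality.
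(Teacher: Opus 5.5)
Your finite-degree part is sound. The induction with Lemma~\ref{le:help} gives the Leibniz-type bound for each monomial $F_{\mathbf k}h_1^{k_1}\cdots h_m^{k_m}$, these bounds sum to $\sum_i \overline{o}_i\,\partial F/\partial u_i$ (respectively with $\underline{o}_i$), and the liminf inequality follows by dropping the nonnegative tail. The gap is the tail estimate for the limsup, which is the whole difficulty of the lemma.

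First, the displayed bound with the common factor $\sum_i\Phi_1(h_i/s_i)$ is false coefficient-wise, because mixed monomials are not dominated by sums of pure powers. Take $m=2$, $h_1=ax^d$, $h_2=bx^{2d}$ with $a\to0$: the $x^{3d}$-coefficient of $h_1h_2/(s_1s_2)$ is of order $a$, while that of $\Phi_1(h_1/s_1)+\Phi_1(h_2/s_2)$ is of order $a^3$. The valid version, $h^{\mathbf k}\le_{\mathrm{coeff}}\frac{s_i^{k_i-1}}{k_i}\Phi_{s_i}(h_i)\prod_{j\ne i}h_j^{k_j}$, leaves a $\mathbf k$-dependent factor whose coefficients are of order $f_n$, not $o(f_n)$. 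After summing over $\mathbf k$ you face $\Phi_{s_i}(h_i)\cdot G_i(x)$ with $G_i=\sum_{\mathbf k}F_{\mathbf k}\frac{s_i^{k_i-1}}{k_i}\prod_{j\ne i}h_j^{k_j}$. Applying Lemma~\ref{le:help} then requires $\limsup_n [x^n]G_i/f_n<\infty$, and small as $K\to\infty$, which is exactly the kind of statement you are proving. Applying Lemma~\ref{le:help} termwise and adding infinitely many limsups is not justified without uniformity. In Lemma~\ref{le:hela} this step works only because the factor $\Phi_s(f)$ is the same for every $k$ and can be pulled out.

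Second, and more fundamentally, even $\limsup_n [x^n]\Phi_{s_i}(h_i)/f_n<\infty$ is not available. Proposition~\ref{pro:cosub} needs the inner series to lie in $\mathscr{S}_d$, but $h_i$ is only assumed to satisfy $\overline{o}_i<\infty$. Replacing $h_i$ by $g=f+h_1+\cdots+h_m$ does not help. The series $g$ need not lie in $\mathscr{S}_d$ (its coefficient ratio to $f_n$ may oscillate), and $g(r)=f(r)+\sum_j h_j(r)$ typically exceeds $s_i=h_i(r)+\epsilon/2$, so $\Phi_{s_i}(g)$ diverges at $r$. The underlying constraint is that $F$ is only known to converge up to $h_i(r)+\epsilon$ in its $i$th slot. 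So any series dominating $h_i$ must both lie in $\mathscr{S}_d$ \emph{and} have value at $r$ within $\epsilon$ of $h_i(r)$; neither $g$ nor anything of the form $p+Cf$ has this property.

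The missing idea, which is the one the paper uses, is to split at a large $N$. Let $p_i$ be the truncation of $h_i$ at degree $N$, let $q=\sum_{n>N}f_nx^n$, and let $C_i=\overline{o}_i+\epsilon$. Then:
\begin{itemize}
\item $h_i\le_{\mathrm{coeff}}p_i+C_iq$;
\item $q$ lies in $\mathscr{S}_d$ and is common to all $i$;
\item $p_i(r)+C_iq(r)\to h_i(r)$ as $N\to\infty$.
\end{itemize}
Hence $F(x,h_1,\ldots,h_m)\le_{\mathrm{coeff}}G(x,q)$ with $G(x,u)=F(x,p_1+C_1u,\ldots,p_m+C_mu)$. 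Lemma~\ref{le:hela} applies directly to $G(x,q)$, with no monomial expansion needed. Letting $N\to\infty$ and then $\epsilon\to0$ gives the limsup bound.

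If you want to keep your truncation scheme, the same domination turns your tail into $G^{(K)}(x,q(x))$, where $G^{(K)}$ is the part of $G$ coming from $|\mathbf k|>K$. Its ratio to $f_n$ tends to $\partial_uG^{(K)}(r,q(r))$, which tends to $0$ as $K\to\infty$.
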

\begin{proof}
Without loss of generality we may assume that $h_i(x)$ is not a constant for all $1 \le i \le m$.
Let $f_n = [x^n]f(x)$ and $h_{i,n} = [x^n]h_i(x)$. We may write
\[
F(x, u_1, \ldots, u_m) = \sum_{k_1,\ldots,k_m \ge 0} F_{k_1,\ldots,k_m}(x) u_1^{k_1} \cdots u_m^{k_m}.
\]
For each $k_1, \ldots, k_m \ge 0$ the coefficients of $F_{k_1,\ldots,k_m}(x)$ are nonnegative. Since for all $1 \le i \le m$ we have that $h_i(x)$ is a power series in $x^d$ with nonnegative coefficients, the assumption that
\[
[x^n]F(x,h_1(x),\ldots,h_m(x)) = 0
\]
whenever $n$ is not divisible by $d$ implies that  \[
[x^n]F_{k_1,\ldots,k_m}(x) = 0
\]
whenever $n$ is not a multiple of $d$. Let us now restrict $n$ to multiples of $d$. Let $\epsilon>0$ and set
\[
C_i = \overline{o}_i + \epsilon
\]
for $1 \le i \le m$. There exists $N \in d\ndN$ large enough so that
\[
h_{i,n} \le C_i f_n
\]
for all $1 \le i \le m$ and all $n \ge N$. We set
\begin{align*}
p_i(x) = \sum_{n \le N} h_{i,n} x^n
\end{align*}
and
\[
	q(x) = \sum_{n > N} f_n x^n.
\]
Then $q(x)$ belongs to $\mathscr{S}_d$, since removing finitely many coefficients does not affect~\eqref{eq:condition}. Moreover, coefficient-wise
\[
h_i(x) \le_{\mathrm{coeff}} p_i(x) + C_i q(x)
\]
for all $1 \le i \le m$. Note that 
\begin{align}
	\label{eq:capN}
	\lim_{N \to \infty}(p_i(r) + C_i q(r)) =  h_i(r).
\end{align}
Hence, by increasing $N$ if necessary, we may assume that
\[
p_i(r) + C_i q(r) < h_i(r) + \epsilon/2
\]
for all $1 \le i \le m$. Since each $p_i$ is a polynomial it follows that there exists $\delta>0$ such that
\[
	p_i(r+\delta) + C_i (q(r) + \delta) < h_i(r) + \epsilon, \qquad 1 \le i \le m.
\]
We set
\[
	G(x,u) = F(x,p_1(x) + C_1 u, \ldots, p_m(x) + C_m u).
\]
Then $G(x,u)$ has nonnegative coefficients, satisfies
\[
	G(r+\delta,q(r)+\delta)<\infty,
\]
and $[x^n]G(x,q(x))=0$ whenever $n$ is not divisible by $d$. By Lemma~\ref{le:hela} it follows that
\[
	[x^n]G(x,q(x)) \sim \frac{\partial G}{\partial u} (r,q(r)) [x^n]q(x).
\]
Since $[x^n]q(x) = f_n$ for all $n > N$, and since coefficient-wise
\[
	F(x,h_1(x),\ldots,h_m(x)) \le_{\mathrm{coeff}} G(x,q(x)),
\]
it follows that
\[
	\limsup_{n \to \infty} \frac{[x^n]F(x,h_1(x),\ldots,h_m(x))}{f_n} \le \frac{\partial G}{\partial u}(r,q(r)).
\]
We have
\[
	\frac{\partial G}{\partial u}(r,q(r)) = \sum_{i=1}^m C_i \frac{\partial F}{\partial u_i} (r,p_1(r)+C_1q(r),\ldots,p_m(r)+C_mq(r)).
\]
By~\eqref{eq:capN}, letting $N$ tend to infinity yields
\[
\limsup_{n \to \infty} \frac{[x^n]F(x,h_1(x),\ldots,h_m(x))}{f_n} \le \sum_{i=1}^m C_i \frac{\partial F}{\partial u_i} (r, h_1(r), \ldots, h_m(r)).
\]
Letting $\epsilon \to 0$ we obtain
\begin{align*}
	\limsup_{n \to \infty} \frac{[x^n] F(x, h_1(x), \ldots, h_m(x))}{[x^n] f(x)}   \le \sum_{i=1}^m \overline{o}_i \frac{\partial F}{\partial u_i} (r, h_1(r), \ldots, h_m(r)).
\end{align*}
The asserted bound for the limit inferior follows analogously.
\end{proof}

\begin{remark}
	Lemma~\ref{le:help2} may be applied to situations where  $F$ has coefficients with mixed signs in the following way. We simply need to write $F = F^+ - F^-$ for power series $F^+$ and $F^-$ with positive coefficients and apply Lemma~\ref{le:help2} to each, if they satisfy the required conditions.
\end{remark}

\subsection{Simple networks}

By Equation~\eqref{eq:simpnetwork}, simple networks are enumerated by
\begin{align*}
	\cN_{\mathrm{s}}	= \cH  + \cP_{\mathrm{s}}  + \cI + \cS_{\mathrm{s}}.
\end{align*}
By~\eqref{eq:D},~\eqref{eq:sdplus},~\eqref{eq:pdplus} and \eqref{eq:argc} this may be expressed by
\begin{align}
	\label{eq:gotcha}
	\cN_{\mathrm{s}} 	&= \cD + \cI - \cL -  \cP_{\mathrm{d}}  - \cS_{\mathrm{d}} \\
			&= \cD + \frac{1}{x^2}\cL^2 - \cL -x^2 \cD -\cL^2 \nonumber \\
			&= (1 - x^2)\cD + \left(\frac{1}{x^2} - 1\right) \cL^2 - \cL. \nonumber
\end{align}
Recall by~\eqref{eq:biermoe},~\eqref{eq:lroot} and~\eqref{eq:dtwo} that
\begin{align*}
	2(1 + x^2) \cL &= \cL^2  + x^2(\cD + \cD^-) + \cL_2, \\
	\cL &= 1 + x^2 - \sqrt{1 + 2 x^2 + x^4 - \cL_2 - x^2(\cD + \cD^- )}, \\
	\cD^- &= \cL + (1 + \cD^-) (\cD_2 - \cS_2) + x^2 \cD^- + \frac{x^2}{2}( (\cD^-)^2 + \cD_2) \\
	&\quad\,  + R_2(x,x^2, 1+ \cD, 1 + \cD_2, 1 + \cD^-). 
\end{align*}
Recall from Corollary~\ref{co:sharpy} that $0 \le \Delta \le 0.047$ for
\begin{align*}
	\Delta &= \frac{\rho^2}{2\sqrt{1 + 2\rho^2 + \rho^4 - \cL_2(\rho) - \rho^2(\cD(\rho) + \cD^-(\rho) )}} \\
			&= \frac{\rho^2}{2(1 + \rho^2 - \cL(\rho))}.
\end{align*}
 For notational convenience, we set
\[
d_n = [x^n] \cD(x).
\]
By~\eqref{eq:bista} we have that
\begin{align}
	\label{eq:bista2}
d_n \sim \cD(\rho) \frac{64 \sqrt{3}}{5 \sqrt{\pi}} \Exb{Z} 
\frac{(1 - \Exb{L} + \frac{3}{2} \mu \Exb{X})^{3/2}}{(1-\Exb{L})^{5/2}}  n^{-5/2} \rho^{-n}
\end{align}
as $n \in 2\ndN$ tends to infinity.

\begin{lemma}
	\label{le:blo1}
	Let
	\[
		\vartheta = (\rho,\rho^2, 1 + \cD(\rho), 1 + \cD_2(\rho), 1 + \cD^-(\rho))
	\]
	and set
	\begin{align*}
		c_{\cD^-} 	&= \frac{\Delta + \partial_3 R_2(\vartheta)}{1 - \Delta - \cD_2(\rho) + \cS_2(\rho) - \rho^2(1 + \cD^-(\rho)) - \partial_5 R_2(\vartheta)}, \\
		c_\cL		&= \Delta(1 + c_{\cD^-}), \\
		c_{\cN_{\mathrm{s}}}	&= 1 - \rho^2 - c_\cL + 2(\rho^{-2} - 1)c_\cL \cL(\rho).
	\end{align*}
	The three constants are finite, positive, and as $n \in 2 \ndN$ tends to infinity,
	\begin{align*}
		[x^n] \cD^-(x) &\sim c_{\cD^-} d_n, \\
		[x^n] \cL(x) &\sim c_\cL d_n, \\
		[x^n] \cN_{\mathrm{s}}(x) &\sim c_{\cN_{\mathrm{s}}} d_n.
	\end{align*}
\end{lemma}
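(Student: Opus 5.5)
Since $d_n\sim c\,n^{-5/2}\rho^{-n}$ on even $n$ by \eqref{eq:bista2}, Proposition~\ref{pro:easy} gives $\cD(x)\in\mathscr{S}_2$ with radius $\rho$. The plan is to transfer the asymptotics of $d_n$ to $\cD^-$, then to $\cL$, then to $\cN_{\mathrm{s}}$ via Lemma~\ref{le:help2}. Every other series that appears ($\cD_2,\cS_2,\cL_2$, and the $x^2$-arguments of $R_2$) has radius $\sqrt\rho>\rho$, so by Proposition~\ref{pro:negle} its coefficients are $o(d_n)$ and its contribution enters only through evaluations at $\rho$. All series involved are even in $x$ by parity of cubic graphs, so the divisibility hypotheses with $d=2$ hold.

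\textbf{Step 1: a priori bounds.} First I would show $\overline{o}_{\cD^-},\overline{o}_{\cL}<\infty$ relative to $d_n$. The bound $[x^n]\cD^-\le d_n$ is immediate from coefficient-wise domination, and $\cL\le_{\mathrm{coeff}}\cD$ since $\cL$ is a summand of $\cD$. So both ratios are at most $1$.

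\textbf{Step 2: the linear system for the limits.} Write $\cL=1+x^2-\sqrt{\,\cdot\,}$ and apply Lemma~\ref{le:help2} (with the sign Remark) to \eqref{eq:lroot}, treating $\cD,\cD^-$ as the $h_i$. The function is analytic at $(\rho,\cD(\rho),\cD^-(\rho))$ by Lemma~\ref{le:analyticsum}, and its partial derivatives there are both $\Delta$. This gives $\overline{o}_\cL\le\Delta(1+\overline{o}_{\cD^-})$ and the matching liminf inequality. Next apply Lemma~\ref{le:help2} to the equation for $\cD^-$ in \eqref{eq:dtwo}, substituting for $\cL$ either the sqrt expression or directly $\cL$ as an extra $h$. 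The terms $\cD_2-\cS_2$ have mixed signs but are analytic beyond $\rho$, so the Remark applies and they act as constants. The $u$-derivatives are: with respect to $\cD^-$, the quantity $\Delta+\cD_2(\rho)-\cS_2(\rho)+\rho^2(1+\cD^-(\rho))+\partial_5R_2(\vartheta)=:\gamma$; with respect to $\cD$, the quantity $\Delta+\partial_3R_2(\vartheta)$. This yields
\[
\overline{o}_{\cD^-}\le \Delta+\partial_3R_2(\vartheta)+\gamma\,\overline{o}_{\cD^-},
\qquad
\underline{o}_{\cD^-}\ge \Delta+\partial_3R_2(\vartheta)+\gamma\,\underline{o}_{\cD^-}.
\]
Corollary~\ref{co:sharpy} gives $\gamma\le 0.38<1$, as in the proof of Lemma~\ref{le:le3}. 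Solving the inequalities forces $\overline{o}_{\cD^-}=\underline{o}_{\cD^-}=c_{\cD^-}$, which is positive since $\Delta>0$. Feeding this back into the $\cL$ inequalities gives $c_\cL=\Delta(1+c_{\cD^-})$.

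\textbf{Step 3: simple networks.} Apply Lemma~\ref{le:help2} to \eqref{eq:gotcha}, $\cN_{\mathrm{s}}=(1-x^2)\cD+(x^{-2}-1)\cL^2-\cL$, splitting the positive and negative parts. Note $x^{-2}\cL^2$ is a genuine power series since $\cL=O(x^6)$, so I would write it as $x^4(\cL/x^6)\cdot\cL$ or simply invoke Lemma~\ref{le:help} with $h_1=h_2=\cL/x$. The result is $c_{\cN_{\mathrm{s}}}=1-\rho^2-c_\cL+2(\rho^{-2}-1)\cL(\rho)c_\cL$.

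\textbf{Main obstacle.} The main difficulty is positivity of $c_{\cN_{\mathrm{s}}}$. I would bound it below using Corollary~\ref{co:sharpy}: $\rho^2\le\rho_{\mathrm{lab}}^2\approx0.102$, and $c_\cL\le\Delta(1+c_{\cD^-})$ where $c_{\cD^-}\le(0.047+29/128)/(1-0.38)\approx0.44$. This gives $c_\cL\le0.068$, so $c_{\cN_{\mathrm{s}}}\ge1-0.102-0.068>0$, the last term being nonnegative. A secondary technical point is verifying the $\epsilon$-enlarged finiteness hypothesis of Lemma~\ref{le:help2} for the $R_2$ term, which follows as in Lemma~\ref{le:analyticsum}.
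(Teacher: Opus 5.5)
Your plan follows the paper's proof closely. You control the ratios $[x^n]\cD^-/d_n$ and $[x^n]\cL/d_n$ through $\limsup/\liminf$, apply Lemma~\ref{le:help2} to the $\cL$-relation and the $\cD^-$-equation, use the contraction bound $0.38<1$ from Corollary~\ref{co:sharpy}, and finish with \eqref{eq:gotcha}. The paper uses the quadratic relation \eqref{eq:biermoe} instead of the square root.

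\textbf{The gap in Step~2.} The real difficulty is not the positivity of $c_{\cN_{\mathrm{s}}}$, which is immediate as you show, but the signs. For a mixed-sign $F=F^+-F^-$ the Remark only gives
\[
\limsup_{n\to\infty}\frac{[x^n]F(x,\cD,\cD^-)}{d_n}\le\sum_i\bigl(\overline{o}_i\,\partial_iF^+-\underline{o}_i\,\partial_iF^-\bigr).
\]
For $F=1+x^2-\sqrt{(1+x^2)^2-\cL_2(x)-x^2(u+v)}$ the negative part genuinely depends on $v$: the coefficient of $u+v$ in $F$ is $x^2/(2\sqrt{(1+x^2)^2-\cL_2(x)})=\tfrac12x^2-\tfrac12x^4+\cdots$. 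So you only get $\overline{o}_\cL\le\Delta(1+\overline{o}_{\cD^-})+\partial_vF^-\cdot(\overline{o}_{\cD^-}-\underline{o}_{\cD^-})$, and symmetrically for the $\liminf$. Hence ``solving the inequalities'' does not force $\overline{o}_{\cD^-}=\underline{o}_{\cD^-}$. By contrast, $\cD_2-\cS_2=(\cD-\cS)(x^2)$ is coefficientwise nonnegative, since $\cS$ is a subspecies of $\cD$, so no Remark is needed there.

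\textbf{How to close it.} Show first that the limits exist, by bounding oscillations.
\begin{enumerate}
\item Write the $\cD^-$-equation with $\cL$ itself, so all coefficients are nonnegative. Subtracting the $\liminf$ bound from the $\limsup$ bound gives $(1-\gamma_0)(\overline{o}_{\cD^-}-\underline{o}_{\cD^-})\le\overline{o}_\cL-\underline{o}_\cL$, where $\gamma_0=\cD_2(\rho)-\cS_2(\rho)+\rho^2(1+\cD^-(\rho))+\partial_5R_2(\vartheta)\le 0.34$.
\item The $\cL$-relation gives $\overline{o}_\cL-\underline{o}_\cL\le\kappa\,(\overline{o}_{\cD^-}-\underline{o}_{\cD^-})$ with $\kappa$ small. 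In your version $\kappa=\partial_v(F^++F^-)$ evaluated at $(\rho,\cD(\rho),\cD^-(\rho))$. This is at most the $v$-derivative of the majorant obtained from $|(1+x^2)^{-2}|\le_{\mathrm{coeff}}(1-x^2)^{-2}$, about $0.07$. That majorant also gives the $\epsilon$-enlarged finiteness of $F^\pm$ that the Remark requires; analyticity at the point alone does not provide it.
\item Alternatively, writing $2\cL=x^2(\cD+\cD^-)+\cL^2+\cL_2-2x^2\cL$ gives $\kappa=\rho^2/(2(1-\rho^2-\cL(\rho)))<0.06$.
\end{enumerate}
Since $\kappa<1-\gamma_0$, both oscillations vanish. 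Your inequalities then become equalities and yield $c_{\cD^-}$ and $c_\cL$ as stated.

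Switching to the paper's quadratic form does not avoid this by itself. The paper's identity $\limsup_n[x^n](1+x^2)\cL/d_n=(1+\rho^2)\overline{\lambda}$ holds in general only with ``$\le$''; it fails, for example, if $[x^n]\cL/d_n$ alternates along $n\equiv 0,2 \bmod 4$. The argument uses ``$\ge$'', so it needs the same repair.

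A minor point in Step~3: $\cL/x$ has odd exponents and violates the parity hypothesis of Lemma~\ref{le:help}. Use $x^{-2}\cL^2=(x^{-2}\cL)\cdot\cL$ instead, with $x^{-2}\cL$ even.
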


\begin{proof}
	Throughout this proof, $n$ is restricted to positive even integers. Since $\cD^-(x)$ and $\cL(x)$ are dominated coefficient-wise by $\cD(x)$, the quantities
	\begin{align*}
		\overline{\delta} &= \limsup_{n \to \infty} \frac{[x^n]\cD^-(x)}{d_n}, &
		\underline{\delta} &= \liminf_{n \to \infty} \frac{[x^n]\cD^-(x)}{d_n}, \\
		\overline{\lambda} &= \limsup_{n \to \infty} \frac{[x^n]\cL(x)}{d_n}, &
		\underline{\lambda} &= \liminf_{n \to \infty} \frac{[x^n]\cL(x)}{d_n}
	\end{align*}
	are finite. By Proposition~\ref{pro:easy}, $\cD(x)$ belongs to $\mathscr{S}_2$. Hence
	\[
		\frac{d_{n-2}}{d_n} \to \rho^2.
	\]
	It follows that
	\begin{align*}
		\limsup_{n \to \infty} \frac{[x^n](1+x^2)\cL(x)}{d_n} &= (1 + \rho^2) \overline{\lambda}, \\
		\liminf_{n \to \infty} \frac{[x^n](1+x^2)\cL(x)}{d_n} &= (1 + \rho^2) \underline{\lambda}.
	\end{align*}
	Moreover, $\rho<1$ implies that $\cL_2(x)$, $\cD_2(x)$ and $\cS_2(x)$ are analytic at $\rho$. By Proposition~\ref{pro:negle} it follows that their coefficients grow at speed $o(d_n)$.
	By Equation~\eqref{eq:biermoe},
	\[
	2(1 + x^2)\cL = x^2(\cD + \cD^-) + \cL^2+ \cL_2.
	\]
	By Lemma~\ref{le:help} it follows that
	\begin{align*}
		2(1+\rho^2)\overline{\lambda}
		&\le \rho^2(1 + \overline{\delta}) + 2 \overline{\lambda}\cL(\rho), \\
		2(1+\rho^2)\underline{\lambda}
		&\ge \rho^2(1 + \underline{\delta}) + 2 \underline{\lambda}\cL(\rho).
	\end{align*}
	Hence
	\begin{align}
		\overline{\lambda} &\le \Delta(1 + \overline{\delta}), \label{eq:blo1d} \\
		\underline{\lambda} &\ge \Delta(1 + \underline{\delta}). \label{eq:blo1e}
	\end{align}
	
	Next, let
	\[
	\Psi(x,u,v) = R_2(x,x^2, 1 + u, 1 + \cD_2(x), 1 + v).
	\]
	Since $R_2$ has nonnegative coefficients, so does~$\Psi$. By Lemma~\ref{le:analyticsum}, $\Psi$ is analytic at the point $(\rho,\cD(\rho),\cD^-(\rho))$. It follows by Lemma~\ref{le:help2} 
	\begin{align*}
		\limsup_{n \to \infty} \frac{[x^n]\Psi(x,\cD(x),\cD^-(x))}{d_n}
		&\le \partial_3 R_2(\vartheta) + \overline{\delta}\partial_5 R_2(\vartheta),\\
		\liminf_{n \to \infty} \frac{[x^n]\Psi(x,\cD(x),\cD^-(x))}{d_n}
		&\ge \partial_3 R_2(\vartheta) + \underline{\delta}\partial_5 R_2(\vartheta).
	\end{align*}
	Equation~\eqref{eq:dtwo} may be written as
	\[
		\cD^- = \cL + (1 + \cD^-) (\cD_2 - \cS_2) + x^2 \cD^- + \frac{x^2}{2}( (\cD^-)^2 + \cD_2)  + \Psi(x,\cD(x),\cD^-(x)). 
	\]
	By Lemma~\ref{le:help} it follows that
	\begin{align*}
		\overline{\delta}
		&\le \overline{\lambda}
		+ (\cD_2(\rho)-\cS_2(\rho))\overline{\delta}
		+ \rho^2\overline{\delta}
		+ \rho^2\cD^-(\rho)\overline{\delta}
		+ \partial_3 R_2(\vartheta)
		+ \overline{\delta}\,\partial_5 R_2(\vartheta), \\
		\underline{\delta}
		&\ge \underline{\lambda}
		+ (\cD_2(\rho)-\cS_2(\rho))\underline{\delta}
		+ \rho^2\underline{\delta}
		+ \rho^2\cD^-(\rho)\underline{\delta}
		+ \partial_3 R_2(\vartheta)
		+ \underline{\delta}\,\partial_5 R_2(\vartheta).
	\end{align*}
	Using~\eqref{eq:blo1d} and~\eqref{eq:blo1e}, this yields
	\begin{align*}
		\overline{\delta}
		&\le \Delta + \partial_3 R_2(\vartheta) + \left(\Delta + \cD_2(\rho)-\cS_2(\rho) + \rho^2(1+\cD^-(\rho)) + \partial_5 R_2(\vartheta)\right)\overline{\delta}, \\
		\underline{\delta}
		&\ge \Delta + \partial_3 R_2(\vartheta)  + \left(\Delta + \cD_2(\rho)-\cS_2(\rho) + \rho^2(1+\cD^-(\rho)) + \partial_5 R_2(\vartheta)\right)\underline{\delta}.
	\end{align*}
	By Corollary~\ref{co:sharpy} and $\cD^-(\rho)\le \cD(\rho)$, we have
	\begin{align*}
		&\Delta + \cD_2(\rho)-\cS_2(\rho) + \rho^2(1+\cD^-(\rho)) + \partial_5 R_2(\vartheta) \\
		&\le 0.047 + 0.0004 + \rho_{\mathrm{lab}}^2(1 + 0.038) + \frac{29}{128} \\
		&\le 0.38.
	\end{align*}
	Since this is less than $1$, it follows that
	\begin{align*}
		\overline{\delta} &\le \frac{\Delta + \partial_3 R_2(\vartheta)}{1 - \left(\Delta + \cD_2(\rho)-\cS_2(\rho) + \rho^2(1+\cD^-(\rho)) + \partial_5 R_2(\vartheta)\right) } = c_{\cD^-}, \\
		\underline{\delta} &\ge c_{\cD^-}.
	\end{align*}
	In other words,
	\[
		\overline{\delta} = \underline{\delta} =  c_{\cD^-}.
	\]
	Hence
	\[
		[x^n]\cD^-(x) \sim c_{\cD^-} d_n.
	\]
	Using Corollary~\ref{co:sharpy}, we obtain
	\[
		0 <	c_{\cD^-} < \frac{0.047 + 29/128}{1 - 0.38} \le 0.442.
	\]	
	From~\eqref{eq:blo1d} and~\eqref{eq:blo1e}, it follows that
	\[
		\overline{\lambda}  = \underline{\lambda} =  \Delta (1 + \overline{\delta}) = c_{\cL}.
	\]
	Hence
	\[
		[x^n]\cL(x) \sim c_\cL d_n.
	\]
	By Corollary~\ref{co:sharpy},
	\[
		0 < c_\cL < 0.047 (1 + 0.442) \le 0.0678.
	\]
	By Equation~\eqref{eq:gotcha}
	\begin{align*}
		\cN_{\mathrm{s}} 
		= (1 - x^2)\cD + \left(\frac{1}{x^2} - 1\right) \cL^2 - \cL. \nonumber
	\end{align*}
	Hence, by Lemma~\ref{le:help},
	\[
		[x^n]\cN_{\mathrm{s}}(x) \sim \left(1 - \rho^2 - c_\cL + 2(\rho^{-2}-1)c_\cL \cL(\rho)\right) d_n = c_{\cN_{\mathrm{s}}} d_n.
	\]
	By Corollary~\ref{co:sharpy},
	\begin{align*}
		\rho^2 + c_\cL + 2(\rho^{-2} -1)c_{\cL} \cL(\rho) &\le \rho_{\mathrm{lab}}^2 + 0.0678 + 2( 0.307^{-2} -1) \cdot 0.0678 \cdot 0.012 \\
		&\le 0.186.
	\end{align*}
	Hence
	\[
		0.814 < c_{\cN_{\mathrm{s}}} < \infty. 
	\]
	This completes the proof.
\end{proof}

For any species of networks $\cJ$ let $\mA_n^{\cJ}$ denote the sum of $1$ plus the number of  cylindrical $1$-edge-cycles in a uniformly at random selected element from $\Sym_{\cJ}^{+}[n]$ .

\begin{lemma}
	\label{le:concentrate}
	We have
	\begin{align*}
		\Prb{ |\mA_n^{\cN_{\mathrm{s}}} - \beta n/2| \ge \sqrt{n} \log n} \le n^{-\Theta(\log n)}.
	\end{align*}
\end{lemma}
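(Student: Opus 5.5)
We transfer the concentration statement~\eqref{eq:cono} for $\cD$-networks to simple networks using the decomposition~\eqref{eq:gotcha}, $\cN_{\mathrm{s}} = \cD + \cI - \cL - \cP_{\mathrm{d}} - \cS_{\mathrm{d}}$, at the level of symmetries. Note first that in~\eqref{eq:cono} the variable $n$ counts half the number of vertices, so for a $\cD$-symmetry with $n$ vertices the number $A^\cD_n$ concentrates at $\beta n/2$; this explains the constant in the statement. All probabilities below are for $n$ even, over the uniform element of the relevant $\Sym^+[n]$, and the count $|\Sym^+_{\cJ}[n]|$ is $n!\,[x^n]\cJ(x)$.

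\emph{Step 1 (the $\cD$-part).} A simple network is either a $\cD$-network (namely in $\cH$, $\cP_{\mathrm{s}}$ or $\cS_{\mathrm{s}}$) or an isthmus network. For the $\cD$-part, the bad event $|\mA - \beta n/2| \ge \sqrt n\log n$ has, by~\eqref{eq:cono} (applied with any constant $c$), at most $n^{-\Theta(\log n)} n! d_n$ symmetries. Since $[x^n]\cN_{\mathrm{s}}(x) \sim c_{\cN_{\mathrm{s}}} d_n$ with $c_{\cN_{\mathrm{s}}}>0$ by Lemma~\ref{le:blo1}, this is still $n^{-\Theta(\log n)}$ of $|\Sym^+_{\cN_{\mathrm{s}}}[n]|$.

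\emph{Step 2 (isthmus networks).} By~\eqref{eq:argc}, a positive $\cI$-symmetry on $n$ vertices is an ordered pair of positive $\cL$-symmetries with sizes $n_1+n_2 = n+2$, glued along the isthmus. Its cylindrical $1$-cycle count is essentially the sum of the two counts (up to an additive $O(1)$ from the removed loop edges and the isthmus). By subexponentiality ($\cL \in \mathscr{S}_2$, since $[x^n]\cL \sim c_\cL d_n$, together with Lemma~\ref{le:help}), the pairs with $\min(n_1,n_2) \ge K$ contribute a proportion tending to $0$ as $K$ grows. That is too weak, however; a superpolynomially small error is needed. I would therefore split by $m=\min(n_1,n_2)$:
\begin{itemize}
\item If $m \le \sqrt n$, the small component shifts the count by $O(m) \le O(\sqrt n)$. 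Taking a slightly smaller window for the large component (e.g.\ $\tfrac12\sqrt n\log n$), it suffices to have concentration for $\cL$-symmetries.
\item If $m > \sqrt n$, both components are large. Each concentrates at $\beta n_i/2$, so the sum concentrates at $\beta(n+2)/2$, with deviations adding up to at most $\sqrt n\log n$ when each component is within its own window.
\end{itemize}
So everything reduces to concentration for $\cL$-symmetries.

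\emph{Step 3 ($\cL$-symmetries).} The loop networks are exactly the subtrees of $\mT$ rooted at the type $\cL$ child, so they fall under the same branching-process analysis. Concretely, I would use~\eqref{eq:ldec}: a positive $\cL$-symmetry is either built from an $(\cN - \cL)$-symmetry of size $n-2$ plus $O(1)$ edges, or from a negative symmetry, or from an $\cL_2$-decoration. The negative and $\cL_2$ parts have exponentially smaller counts (Proposition~\ref{pro:negle}), or are dominated via~\eqref{eq:lroot}; $\cN - \cL = \cD - \cL + \cI$ in turn feeds back into $\cD$ and $\cI$. Since every loop network of size $n$ contains a non-loop network of size $n-2$, I would set up an induction on $n$ with error $n^{-c\log n}$ and a deviation window allowed to grow by an additive $O(1)$ per recursion step. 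The recursion depth is bounded along the chain $\cL \to \cI \to \cL$ only through halving when both parts are large, so the accumulated window stays $O(\sqrt n \log n)$.

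\emph{Main obstacle.} The obstacle is making this induction rigorous with uniform constants, in particular controlling the negative-symmetry terms $\cD^-$ inside $\cL$. If it breaks down, the fallback is to rerun the proof of Theorem~\ref{te:le4} directly with a root of type $\cL$: the cycle-lemma identity~\eqref{eq:mot} and the single-big-jump analysis carry over verbatim, since only the first copy of $\mS$ changes, and an independent first summand with exponential moments does not alter the local large deviation estimates. This yields $\Pr{|\#_\cY - \beta n| \ge \sqrt n\log n \mid \#_\cX = 2n} \le n^{-\Theta(\log n)}$ for the $\cL$-rooted tree, and also for $\cD^-$-rooted trees, which would cover all cases uniformly.
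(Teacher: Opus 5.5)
Your Steps 1 and 2 are sound and essentially match the paper's argument. The paper also writes $\mA^\cI_n \eqdist \mA^{\cL,1}_{n_1}+\mA^{\cL,2}_{n_2}-3$ and needs concentration of $\mA^\cL_m$ uniformly in $m\le n$, with small $m$ handled by the trivial bound $|\mA^\cL_m-\beta m/2|\le Cm$.

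\textbf{The gap is Step 3, which you leave open.} Both routes you sketch for it are flawed as stated.

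\emph{The induction via \eqref{eq:ldec}.} The claim that the negative part has exponentially smaller counts is false. Proposition~\ref{pro:negle} needs convergence strictly beyond $\rho$, but Lemma~\ref{le:blo1} gives $[x^n]\cD^-(x)\sim c_{\cD^-}d_n$ with $c_{\cD^-}>0$. So the term $a_1a_2W^-_{\cN-\cL}$ carries a positive fraction of $\Sym^+_\cL[n]$. You would then need concentration of cylindrical $1$-cycles for pole-swapping symmetries, which nothing available to you provides. In addition, the chain $\cL\to\cI\to\cL\to\cdots$ (with one small factor each time) can have depth linear in $n$. An additive loss per step is therefore not controlled, and your halving argument only covers the case where both factors are large.

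\emph{The fallback.} This does not carry over verbatim either. Identity \eqref{eq:mot} comes from the cycle lemma for i.i.d.\ increments; with a distinguished root block you need the forest (hitting-time) version instead. Also, the root block $\mS^\cL$ is not light-tailed, since an $\cL$-root can have an $\cH$-child with a $\cZ$-grandchild.

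\textbf{The missing observation makes all of this unnecessary.} $\cL$ is one of the summands in $\cD=\cL+\cS+\cP+\cH$. Hence $\Sym^+_\cL[n]\subset\Sym^+_\cD[n]$, with the same cylindrical $1$-cycle counts. By Lemma~\ref{le:blo1}, $[x^n]\cL(x)\sim c_\cL d_n$ with $c_\cL>0$, which is a fact you already invoke in Step 2. So by \eqref{eq:cono} the bad $\cL$-symmetries number at most
\[
n^{-\Theta(\log n)}\,n!\,d_n \;=\; n^{-\Theta(\log n)}\,|\Sym^+_\cL[n]| .
\]
This is literally your Step 1 ratio argument applied to $\cL$, and it is exactly how the paper proceeds. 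With this in place, your Step 2 finishes the proof, and your proposal becomes essentially the paper's.
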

\begin{proof}
	Recall that
	\[
			\cN_{\mathrm{s}}	= \cH  + \cP_{\mathrm{s}}   + \cS_{\mathrm{s}} + \cI.
	\]
	The species of networks $\tilde{\cN} := \cH  + \cP_{\mathrm{s}} + \cS_{\mathrm{s}}$ is a subspecies of $\cD$. From~\eqref{eq:rhopsplus} and Lemma~\ref{le:hela}  it follows that
	\begin{align*}
		\limsup_{n \to \infty} \frac{d_n}{[z^n] \tilde{\cN}(z)} < \infty.
	\end{align*}
	It is elementary (from simply dividing the number of ``bad'' networks by the total number of networks) that
	\begin{align*}
		\Prb{ |\mA_n^{\tilde{\cN}} - \beta n/2| \ge \sqrt{n} \log n} &\le \Prb{ |\mA_n^{\cD} - \beta n/2| \ge \sqrt{n} \log n} \frac{d_n}{[z^n] \tilde{\cN}(z)} \\
		&\le \Prb{ |\mA_n^{\cD} - \beta n/2| \ge \sqrt{n} \log n} \sup_{\substack{m \ge n \\ m \in 2\ndN}} \frac{d_m}{[z^m] \tilde{\cN}(z)}.
	\end{align*}
	By Corollary~\ref{co:piz} it follows that for any $c>0$
	\begin{align}
		\label{eq:par1}
		\Prb{ |\mA_n^{\tilde{\cN}} - \beta n/2| \ge c\sqrt{n} \log n} \le n^{-\Theta(\log n)}.
	\end{align}
	Recall that by Lemma~\ref{le:blo1} 
	\[
		[z^n] \cL(z) \sim c_{\cL} d_n.
	\]
	Analogously as for~\eqref{eq:par1} it follows that
	\begin{align}
		\label{eq:par2}
		\Prb{ |\mA_n^{\cL} - \beta n/2| \ge c\sqrt{n} \log n} \le n^{-\Theta(\log n)}.
	\end{align}
	Since an $n$-vertex $\cL$-network has at most $3n/2$ edges it holds with $C = (3 + \beta)/2$ trivially that
	\[
		|\mA_n^{\cL} - \beta n/2| \le C n
	\]
	for all $n$. Consequently, with $m$ restricted to even integers for which at least one $\cL$-network with $m$ vertices exists,
	\begin{align}
		\label{eq:dm}
		\sup_{m \le n} \Prb{|\mA_m^{\cL} - \beta m/2| \ge c \sqrt{n} \log n} \le n^{-\Theta(\log n)}.
	\end{align}
	Indeed, for $m \le C^{-1} c \sqrt{n} \log n $ the upper bound holds trivially, and for $n \ge m > C^{-1} c \sqrt{n} \log n$ we have $\log m = \Theta(\log n)$ and hence by~\eqref{eq:par2}
	\[
		\Prb{ |\mA_m^{\cL} - \beta m/2| \ge c\sqrt{m} \log m} \le m^{-\Theta(\log m)} \le n^{-\Theta(\log n)}.
	\]
	
	As established in~\eqref{eq:argc}, we have that $\cI(x) = \cL(x)^2 / x^2$ since any $\cI$-network may be constructed from two $\cL$-networks as illustrated in Figure~\ref{fi:isthmus}. 
	
	Hence a uniformly selected $\cI$-symmetry with vertex set $[n]$ is obtained by numbers of vertices $n_1,n_2$ satisfying $n_1+n_2=n+2$ with probability
	proportional to
	\[
		[z^{n_1}]\cL(z)\,[z^{n_2}]\cL(z),
	\]
	and then composing two independent uniformly at random selected symmetries with $n_1$ and $n_2$ vertices (involving a final uniform relabelling step that the vertex set becomes $[n]$).

	This allows us to write
	\[
		\mA_n^{\cI} \eqdist \mA_{n_1}^{\cL,1}+\mA_{n_2}^{\cL,2}-3
	\]
	with $(\mA_{m}^{\cL,1})_m$ and $(\mA_{m}^{\cL,2})_m$ independent from each other and from $(n_1, n_2)$, and $\mA_{m}^{\cL,i}\eqdist \mA_{m}^\cL$ for all $m$ and $i=1,2$. 

	Applying~\eqref{eq:dm} for $c/4$ instead of $c$ it follows that with probability at least $1 - n^{-\Theta(\log n)}$ we have jointly for $i=1,2$
	\[
		|\mA_{n_i}^{\cL,i} - \beta n_i / 2| \le (c/4) \sqrt{n} \log n 
	\]
	and hence
	\[
		|\mA_{n_1}^{\cL,1}+\mA_{n_2}^{\cL,2} - \beta (n+2) / 2| \le (c/2) \sqrt{n} \log n.
	\]
	This yields
	\begin{align}
		\label{eq:parr2}
		\Prb{ |\mA_n^{\cI} - \beta n/2| \ge c\sqrt{n} \log n} \le n^{-\Theta(\log n)}.
	\end{align}
	Combining~\eqref{eq:par1} and~\eqref{eq:parr2} it follows by $\cN_{\mathrm{s}} = \tilde{\cN} + \cI$ that
	\begin{align*}
		\Prb{ |\mA_n^{\cN_{\mathrm{s}}} - \beta n/2| \ge \sqrt{n} \log n} \le n^{-\Theta(\log n)}.
	\end{align*}
\end{proof}

\begin{lemma}
	\label{le:nosym2}
	The probability that a uniformly at random selected unlabelled simple network with $n$ vertices admits at least one automorphism $\sigma$ of the underlying unrooted cubic graph with $\cyl_1(\sigma)=0$ is bounded by $n^{-\Theta(\log n)}$.
\end{lemma}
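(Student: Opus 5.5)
We transfer Corollary~\ref{co:deno} from $\cD$-networks to simple networks, following the decomposition $\cN_{\mathrm{s}} = \tilde{\cN} + \cI$ with $\tilde{\cN} = \cH + \cP_{\mathrm{s}} + \cS_{\mathrm{s}}$ used in the proof of Lemma~\ref{le:concentrate}.

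\emph{The $\tilde{\cN}$ part.} $\tilde{\cN}$ is a subspecies of $\cD$. The property ``the underlying unrooted cubic graph admits an automorphism $\sigma$ with $\cyl_1(\sigma)=0$'' is a property of the unlabelled network, so we compare counts directly. The number of bad $n$-vertex unlabelled $\tilde{\cN}$-networks is at most the number of bad $n$-vertex unlabelled $\cD$-networks. By Corollary~\ref{co:deno}, the latter is at most $n^{-\Theta(\log n)}$ times the number of unlabelled $n$-vertex $\cD$-networks.

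Here we work with unlabelled counts $[z^n]\cD(z)$, not symmetry counts. This is exactly what Corollary~\ref{co:deno} controls, since $\mD_{2n}$ is uniform among unlabelled networks. As in the proof of Lemma~\ref{le:concentrate}, Lemma~\ref{le:hela} applied to~\eqref{eq:rhopsplus} gives $\limsup d_n/[z^n]\tilde{\cN}(z)<\infty$. Dividing, the fraction of bad networks among unlabelled $\tilde{\cN}$-networks is $n^{-\Theta(\log n)}$.

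\emph{The $\cI$ part.} An isthmus network arises from an ordered pair of loop networks $N_1,N_2$ with $n_1+n_2=n+2$. Suppose the whole graph $G$ has an automorphism $\sigma$ with $\cyl_1(\sigma)=0$. The root edge is a bridge, and $\sigma$ either fixes or swaps the two sides.

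If $\sigma$ fixes the sides, it restricts to automorphisms of the unrooted graphs underlying each $N_i$ (the loop side with its pendant structure). These restrictions have no cylindrical $1$-cycles, except possibly the bridge itself, which would then be cylindrical and contradict $\cyl_1(\sigma)=0$. So $\sigma$ swaps the bridge's ends, and hence swaps the two sides. This forces $N_1\cong N_2$, with $n_1=n_2=(n+2)/2$.

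The number of such unlabelled pairs is at most $[z^{(n+2)/2}]\cL(z)$. This is exponentially smaller than $[z^n]\cI(z)$, which is of order $d_n\cdot\Theta(1)$ by $\cI=\cL^2/x^2$, Lemma~\ref{le:blo1} and Lemma~\ref{le:help}. Indeed $[z^{n/2}]\cL = O(\rho^{-n/2})$, while $d_n$ grows like $\rho^{-n}n^{-5/2}$ and $\rho<1$. So the $\cI$ contribution is $O(\rho^{n/2}n^{5/2})$, certainly $n^{-\Theta(\log n)}$.

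\emph{Main obstacle.} The step needing the most care is the $\tilde{\cN}$ transfer. We must check that badness refers to the same unrooted cubic graph in both species. A simple network is an unlabelled connected cubic planar graph with an oriented root edge, and $\hat N$ together with the root edge is that graph, so badness is intrinsic to the network. We must also ensure the comparison is between unlabelled counts at the same $n$, since $\tilde{\cN}$-networks are $\cD$-networks with the same vertex count.

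Combining the two parts, the unlabelled simple networks satisfy $[z^n]\cN_{\mathrm{s}}\ge[z^n]\tilde{\cN}$. The bad ones number at most $n^{-\Theta(\log n)}d_n + O(\rho^{-n/2})$. Dividing by $[z^n]\cN_{\mathrm{s}}\sim c_{\cN_{\mathrm{s}}}d_n$ from Lemma~\ref{le:blo1} yields the claim.
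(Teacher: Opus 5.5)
Your treatment of the $\tilde{\cN}$ part is fine, but the $\cI$ part has a genuine gap. You write that, since the root edge $e$ is a bridge, $\sigma$ either fixes or swaps the two sides of $e$. This presupposes $\sigma(e)=e$. However, $\sigma$ is an automorphism of the \emph{unrooted} graph: it only maps bridges to bridges, and it may send $e$ to a different bridge.

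Here is a concrete counterexample.
\begin{itemize}
\item Let $B$ be $K_4$ with one edge subdivided, and let $C$ be $K_4$ with two disjoint edges subdivided.
\item Join the degree-$2$ vertex of $B$ to one degree-$2$ vertex of $C$ by an edge $e$; call the result $A$.
\item Take a disjoint copy $A'$ of $A$, and join the two remaining degree-$2$ vertices (one in $C$, one in its copy) by an edge $f$.
\end{itemize}
The result is a $22$-vertex simple connected cubic planar graph $G$. The involution $\sigma$ exchanging $A$ and $A'$ has no fixed vertex, so $\cyl_1(\sigma)=0$. Rooting $G$ at $e$ therefore gives a bad isthmus network. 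Its two sides have $5$ and $17$ vertices, so $N_1\not\cong N_2$, and $\sigma$ sends $e$ to its copy in $A'$ rather than fixing or reversing it. Consequently your bound of the bad $\cI$-networks by $[z^{(n+2)/2}]\cL(z)$ is unjustified, and so is the $O(\rho^{-n/2})$ estimate built on it.

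The missing idea is to count rootings of bad unrooted graphs instead of analysing the isthmus decomposition.
\begin{itemize}
\item A bad unlabelled graph $G$ with $n$ vertices has at most $3n$ oriented-edge rootings.
\item A connected cubic graph contains a cycle, so at least one of these rootings is at a non-bridge edge, and that rooting is a bad $\cD$-network.
\item Hence the number of bad simple networks, isthmus or not, is at most $3n$ times the number of bad $\cD$-networks. By Corollary~\ref{co:deno} this is $n^{-\Theta(\log n)} d_n$.
\item Dividing by $[x^n]\cN_{\mathrm{s}}(x)\sim c_{\cN_{\mathrm{s}}} d_n$ (Lemma~\ref{le:blo1}) gives the claim.
\end{itemize}
This is the paper's argument, and it makes the split $\cN_{\mathrm{s}}=\tilde{\cN}+\cI$ unnecessary. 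Note also that the step you flagged as the main obstacle, the $\tilde{\cN}$ transfer, is the easy one. The delicate point is precisely that the root edge need not be invariant under $\sigma$.
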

\begin{proof}
	Let $\mathfrak{B}_n$ denote the set of $n$-vertex unlabelled (unrooted) connected cubic planar graphs that admit an automorphism $\sigma$ with $\mathrm{cyl}_1(\sigma)=0$. 
	For $G \in \mathfrak{B}_n$, let $r_{\cN_{\mathrm{s}}}(G)$ denote the number of distinct oriented-edge rootings of $G$, and let $r_\cD(G)$ denote the number of such rootings for which the root edge is not an isthmus.
	
	Since $G$ is cubic, it has $3n/2$ edges and hence at most $3n$ oriented edges. Hence
	\[
		r_{\cN_{\mathrm{s}}}(G) \le 3n.
	\]
	On the other hand, $G$ is not a tree and hence contains a cycle, yielding
	\[
		r_\cD(G) \ge 1.
	\]
	Thus
	\begin{align}
		\label{eq:trivi}
		r_{\cN_{\mathrm{s}}}(G) \le 3n r_\cD(G).
	\end{align}
	
	Let $b_n^\cD$ and $b_n^{\cN_{\mathrm{s}}}$ denote the number of unlabelled $\cD$-networks and $\cN_{\mathrm{s}}$-networks with $n$ vertices whose underlying unrooted graph belongs to $\mathfrak{B}_n$. By Corollary~\ref{co:deno} we have
	\[
		b_n^{\cD} \le d_n n^{-\Theta( \log n )}.
	\]
	By~\eqref{eq:trivi} and Lemma~\ref{le:blo1} it follows that
	\begin{align*}
		b_n^{\cN_{\mathrm{s}}} 
		&= \sum_{G \in \mathfrak{B}_n} r_{\cN_{\mathrm{s}}}(G) \\
		&\le 3n \sum_{G \in \mathfrak{B}_n} r_{\cD}(G) \\
		&\le d_n n^{-\Theta( \log n )} \\
		&\le n^{-\Theta(\log n)} [x^n] \cN_{\mathrm{s}}(x).
	\end{align*}
	This completes the proof.
\end{proof}

\subsection{Connected cubic planar graphs}

Throughout this section $n$ is restricted to even integers. Let $\cC$ denote the species of connected  cubic planar graphs and let $c_n = [x^n]\cC(x)$ denote the number of unlabelled graphs in this class with $n$ vertices.

\begin{theorem}
	\label{te:connected}
	Set
	\[
		\gamma = \frac{c_{\cN_{\mathrm{s}}}}{\beta} \cD(\rho) \frac{64 \sqrt{3}}{5 \sqrt{\pi}} \Exb{Z} \frac{(1 - \Exb{L} + \frac{3}{2} \mu \Exb{X})^{3/2}}{(1-\Exb{L})^{5/2}}.
	\]
	We have
	\[
		c_n \sim \gamma n^{-7/2} \rho^{-n}
	\]
	as $n \in 2\ndN$ tends to infinity.
\end{theorem}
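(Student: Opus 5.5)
The plan is to de-root by double counting symmetries. We will relate connected graphs carrying an automorphism to simple networks carrying a pole-fixing automorphism, and then use the concentration of the number of cylindrical $1$-edge-cycles from Lemma~\ref{le:concentrate}. This avoids any dissymmetry-type decomposition.

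\emph{Step 1: an exact identity.} Simple networks are precisely connected cubic planar graphs with an oriented root edge (Section~\ref{sec:simplenetworks}). Consider pairs $(G,\sigma)\in\Sym_\cC[n]$ together with an oriented edge $e=ab$ of $G$ such that $\sigma(a)=a$ and $\sigma(b)=b$. These triples are in bijection with $\Sym^+_{\cN_{\mathrm{s}}}[n]$. For fixed $(G,\sigma)$, the admissible unoriented edges are exactly the cylindrical $1$-cycles of $\sigma^{(2)}$, so there are $2\cyl_1(\sigma)$ admissible oriented edges. In the network the root edge is not counted by the edge index, so $\cyl_1(\sigma)$ on $G$ equals $\mA_n^{\cN_{\mathrm{s}}}$ evaluated at the corresponding network symmetry. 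Since $|\Sym_\cC[n]| = n!\,c_n$ and $|\Sym^+_{\cN_{\mathrm{s}}}[n]| = n!\,[x^n]\cN_{\mathrm{s}}(x)$, splitting $\Sym_\cC[n]$ according to whether $\cyl_1(\sigma)>0$ or not gives
\[
	c_n = [x^n]\cN_{\mathrm{s}}(x)\, \Exb{\frac{1}{2\mA_n^{\cN_{\mathrm{s}}}}} + e_n,
\]
where $e_n$ is $1/n!$ times the number of symmetries $(G,\sigma)\in\Sym_\cC[n]$ with $\cyl_1(\sigma)=0$.

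\emph{Step 2: the main term.} By Lemma~\ref{le:concentrate}, $|\mA_n^{\cN_{\mathrm{s}}}-\beta n/2|\le\sqrt n\log n$ outside an event of probability $n^{-\Theta(\log n)}$. On that exceptional event we use the trivial bound $\mA_n^{\cN_{\mathrm{s}}}\ge 1$, so its contribution to the expectation is $O(n^{-\Theta(\log n)})=o(1/n)$. Hence
\[
	\Exb{1/(2\mA_n^{\cN_{\mathrm{s}}})}\sim 1/(\beta n).
\]
Combining this with Lemma~\ref{le:blo1} and~\eqref{eq:bista2}, the main term is asymptotic to $\frac{c_{\cN_{\mathrm{s}}}}{\beta n}d_n = \gamma n^{-7/2}\rho^{-n}$.

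\emph{Step 3: the error term.} Group the symmetries counted by $e_n$ by unlabelled graph. An unlabelled $G$ has $n!/|\Aut(G)|$ labellings, and each labelling contributes at most $|\Aut(G)|$ automorphisms with $\cyl_1=0$. So each unlabelled $G$ contributes at most $1$ to $e_n$, and only if $G\in\mathfrak B_n$ in the notation of the proof of Lemma~\ref{le:nosym2}. Every $G\in\mathfrak B_n$ has at least one oriented-edge rooting, so
\[
	e_n\le|\mathfrak B_n|\le b_n^{\cN_{\mathrm{s}}}\le n^{-\Theta(\log n)}[x^n]\cN_{\mathrm{s}}(x) = o(d_n/n).
\]
This completes the argument.

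The main obstacle is already handled by earlier results. It is the concentration of $\mA_n^{\cN_{\mathrm{s}}}$ at the correct constant $\beta/2$, which comes from the single-big-jump analysis, together with the control of symmetries having no fixed edge. The only care needed here is to check the factor-$2$ bookkeeping between oriented root edges, the uncounted root edge in $W^+$, and the definition of $\mA_n$.
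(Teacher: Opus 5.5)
Your proof is correct and is essentially the paper's argument. The paper also double counts each $(C,\sigma)\in\widetilde{\Sym}_{\cC}[n]$ through its $2\cyl_1(\sigma)$ rooted versions in $\Sym^+_{\cN_{\mathrm{s}}}[n]$, localises $\cyl_1(\sigma)$ near $\beta n/2$ via Lemma~\ref{le:concentrate}, and bounds the symmetries without cylindrical $1$-edge-cycles by $n!$ times the number of graphs in $\mathfrak{B}_n$ using Lemma~\ref{le:nosym2}. Your exact identity $c_n=[x^n]\cN_{\mathrm{s}}(x)\,\mathbb{E}[1/(2\mA_n^{\cN_{\mathrm{s}}})]+e_n$ just repackages the paper's split into the concentration event and its complement.
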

\begin{proof}
We may express  $\Sym_{\cC}[n]$ as a disjoint union
\[
\Sym_{\cC}[n] = \Sym_{\cC}^0[n] \cup  \widetilde{\Sym}_{\cC}[n]
\]
with $\Sym_{\cC}^0[n]$ the subset of symmetries with no cylindrical $1$-edge cycles. The Walsh index series of $\cC$ is related to the Walsh index series of the species of simple networks $\cN_{\mathrm{s}}$ via
\begin{align*}
	W_{\cN_{\mathrm{s}}}^+ = 2 \frac{\partial W_{\cC} }{\partial a_1}.
\end{align*}
The factor $2$ corresponds to the two possible orientations of a specified root edge. In particular, any symmetry $(C, \sigma) \in \widetilde{\Sym}_\cC[n]$  corresponds to precisely $2 \cyl_1(\sigma)$  symmetries from $\Sym_{\cN_{\mathrm{s}}}^+[n]$ that are obtained by declaring one of the $\cyl_1(\sigma)$ edges whose endpoints get fixed by $\sigma$ as the root edge and orienting it in one of the two possible ways. Let $\Pi(C, \sigma)$ denote the collection of these rooted versions of $(C, \sigma)$, so that we have a disjoint union
\[
	\Sym_{\cN_{\mathrm{s}}}^+[n] = \bigcup_{ (C, \sigma) \in \widetilde{\Sym}_\cC[n]} \Pi(C, \sigma).
\]
Let us set $s_n = [x^n]\cN_{\mathrm{s}}(x)$. By~Lemma~\ref{le:concentrate} it follows that
\begin{align}
	\label{eq:ph1}
	&\sum_{\substack{(C, \sigma) \in \widetilde{\Sym}_{\cC}[n] \\ |\cyl_1(\sigma) - \beta n /2| \ge \sqrt{n} \log n }} |\Pi(C, \sigma)| \\
	&= |\{(N, \sigma) \in \Sym_{\cN_{\mathrm{s}}}^+[n] \,\mid\, |1 +\cyl_1(\sigma) - \beta n /2| \ge \sqrt{n} \log n \}| \nonumber \\
	&\le  n^{-\Theta(\log n)} s_n n! . \nonumber
\end{align}
We have $|\Pi(C, \sigma)| \ge 1$ for each $(C, \sigma) \in \widetilde{\Sym}_{\cC}[n]$. It follows that
\begin{align}
	\label{eq:ph2}
	|\{(C, \sigma) \in \widetilde{\Sym}_{\cC}[n] \,\mid\, |\cyl_1(\sigma) - \beta n /2| \ge \sqrt{n} \log n \}| \le n^{-\Theta(\log n)} s_n n! .
\end{align}
Furthermore, since $|\Pi(C, \sigma)| = 2\cyl_1(\sigma)$
\begin{align*}
	&\sum_{\substack{(C, \sigma) \in \widetilde{\Sym}_{\cC}[n] \\ |\cyl_1(\sigma) - \beta n /2| < \sqrt{n} \log n }} |\Pi(C, \sigma)| \\
	&= ( \beta n + O(\sqrt{n} \log n) ) |\{(C, \sigma) \in \widetilde{\Sym}_{\cC}[n] \,\mid\, |\cyl_1(\sigma) - \beta n /2| < \sqrt{n} \log n \}|.
\end{align*}
By~\eqref{eq:ph1} it follows that
\begin{align*}
		|\{(C, \sigma) \in \widetilde{\Sym}_{\cC}[n] \,\mid\, |\cyl_1(\sigma) - \beta n /2| < \sqrt{n} \log n \}| &= \frac{s_n n!(1 - n^{-\Theta(\log n)})}{\beta n + O(\sqrt{n} \log n)}.
\end{align*}
Hence, by~\eqref{eq:ph2} 
\begin{align}
	\label{eq:posd1}
	|\widetilde{\Sym}_{\cC}[n]| = \frac{s_n n!(1 - n^{-\Theta(\log n)})}{\beta n + O(\sqrt{n} \log n)}.
\end{align}

We have that $|\Sym_{\cC}^0[n]|$ is bounded by $n!$ times the number of unlabelled connected cubic planar graphs with $n$ vertices that admit at least one automorphism $\sigma$ with $\cyl_1(\sigma)=0$. Any such graph corresponds to at least one simple network. By Lemma~\ref{le:nosym2} it follows that
\begin{align}
	\label{eq:ume}
	|\Sym_{\cC}^0[n]| &\le n! s_n n^{-\Theta(\log n)}.
\end{align}
Together with~\eqref{eq:posd1} and Lemma~\ref{le:blo1}, it follows that
\begin{align*}
	c_n = \frac{s_n}{n \beta}(1 + O(\log(n) / \sqrt{n})).
\end{align*}
Using~\eqref{eq:bista2}  it follows that
\[
	c_n \sim \gamma n^{-7/2} \rho^{-n}
\]
for
\begin{align*}
	\gamma &= \frac{c_{\cN_{\mathrm{s}}}}{\beta} \cD(\rho) \frac{64 \sqrt{3}}{5 \sqrt{\pi}} \Exb{Z} 
	\frac{(1 - \Exb{L} + \frac{3}{2} \mu \Exb{X})^{3/2}}{(1-\Exb{L})^{5/2}}.
\end{align*}
\end{proof}

\begin{corollary}
	\label{co:tv}
	Let $P_n$ denote the uniform law on $\Sym_{\cC}[n]$. Let $Q_n$ denote the law on $\Sym_{\cC}[n]$ obtained by taking a uniform element of $\Sym_{\cN_{\mathrm{s}}}^{+}[n]$ and forgetting the root edge. Then
	\[
		d_{\mathrm{TV}}(P_n, Q_n) \to 0
	\]
	as $n \in 2\ndN$ tends to infinity. Furthermore, uniformly for all events $B \subset \Sym_{\cC}[n]$ 
	\[
		P_n(B) = Q_n(B)(1+ o(1)) + O(n^{-\Theta(\log n)}).
	\]
\end{corollary}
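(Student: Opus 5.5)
We compare the two laws pointwise on $\Sym_{\cC}[n]$. Under $P_n$ each symmetry has mass $1/|\Sym_{\cC}[n]|$. Under $Q_n$, a symmetry $(C,\sigma)\in\widetilde{\Sym}_{\cC}[n]$ is hit by exactly $|\Pi(C,\sigma)| = 2\cyl_1(\sigma)$ rooted symmetries. Using the disjoint union $\Sym_{\cN_{\mathrm{s}}}^+[n] = \bigcup \Pi(C,\sigma)$ from the proof of Theorem~\ref{te:connected}, we get
\[
Q_n(\{(C,\sigma)\}) = \frac{2\cyl_1(\sigma)}{s_n n!},
\]
and $Q_n$ puts no mass on $\Sym_{\cC}^0[n]$.

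We split $\Sym_{\cC}[n]$ into a good set $G_n$ and its complement. Let $G_n$ consist of those $(C,\sigma)\in\widetilde{\Sym}_{\cC}[n]$ with $|\cyl_1(\sigma)-\beta n/2|<\sqrt n\log n$. Then:
\begin{itemize}
\item On $G_n$ we have $Q_n(\{(C,\sigma)\}) = (\beta n + O(\sqrt n\log n))/(s_n n!)$.
\item By \eqref{eq:posd1} and \eqref{eq:ume}, $|\Sym_{\cC}[n]| = s_n n!(1+n^{-\Theta(\log n)})/(\beta n+O(\sqrt n \log n))$.
\end{itemize}
Hence the ratio $Q_n(\{(C,\sigma)\})/P_n(\{(C,\sigma)\})$ equals $1+O(\log n/\sqrt n)$ uniformly on $G_n$.

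It remains to control the complement. By \eqref{eq:ph1}, $Q_n(G_n^{\mathrm c}) = n^{-\Theta(\log n)}$, since $Q_n$ gives $\Sym_{\cC}^0[n]$ zero mass and the bad part of $\widetilde{\Sym}_{\cC}[n]$ carries rooted mass at most $n^{-\Theta(\log n)}s_n n!$. On the $P_n$ side, \eqref{eq:ph2} and \eqref{eq:ume} bound the number of elements of $G_n^{\mathrm c}$ by $n^{-\Theta(\log n)}s_n n!$. Dividing by $|\Sym_{\cC}[n]| = \Theta(s_n n!/n)$ gives $P_n(G_n^{\mathrm c}) \le n\cdot n^{-\Theta(\log n)} = n^{-\Theta(\log n)}$.

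For an arbitrary event $B$ we then write
\[
P_n(B)=P_n(B\cap G_n)+P_n(B\cap G_n^{\mathrm c}) = Q_n(B\cap G_n)(1+o(1)) + O(n^{-\Theta(\log n)}),
\]
and $Q_n(B\cap G_n)=Q_n(B)-O(n^{-\Theta(\log n)})$. This yields the second claim. It also gives
\[
d_{\mathrm{TV}}(P_n,Q_n)\le \sup_B|P_n(B)-Q_n(B)| \le o(1)+O(n^{-\Theta(\log n)})\to0.
\]
For this last step, the $o(1)$ comes from the uniform ratio estimate summed over $G_n$, which gives at most $O(\log n/\sqrt n)\,Q_n(G_n)$.

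The main point needing care is that the ratio estimate on $G_n$ is uniform. This holds because both the numerator $2\cyl_1(\sigma)$ and the normalising constant $|\Sym_{\cC}[n]|$ are pinned to $\beta n$ up to $O(\sqrt n\log n)$. Everything else is bookkeeping with the superpolynomially small exceptional sets already established.
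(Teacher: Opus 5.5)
Your proposal is correct and follows essentially the same route as the paper: the pointwise ratio $Q_n(\{(C,\sigma)\})/P_n(\{(C,\sigma)\}) = 2\cyl_1(\sigma)c_n/s_n$, the good set $G_n$ on which $\cyl_1(\sigma)$ is close to $\beta n/2$ (Lemma~\ref{le:concentrate}), and superpolynomially small exceptional sets. The differences are cosmetic: you bound $P_n(G_n^{\mathrm c})$ by direct counting via \eqref{eq:ph2} and \eqref{eq:ume}, and you record the explicit rate $O(\log n/\sqrt n)$ on $G_n$. The paper instead bounds $P_n(G_n^{\mathrm c}\cap E_n)$ through the ratio inequality $Q_n/P_n \ge 2c_n/s_n$ on symmetries with $\cyl_1(\sigma)>0$.
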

\begin{proof}
	Each symmetry $(C,\sigma)\in \Sym_{\cC}[n]$ with $\cyl_1(\sigma)>0$ gives rise to exactly $2\cyl_1(\sigma)$ elements of $\Sym^+_{\cN_{\mathrm{s}}}[n]$ by choosing one of the $\cyl_1(\sigma)$ cylindrical $1$-cycles and orienting it in one of the two possible ways. Hence, with $c_n = [x^n]\cC(x)$ and $s_n = [x^n] \cN_{\mathrm{s}}(x)$ as before,
	\begin{align}
		\label{eq:c0}
	\frac{Q_n(\{(C, \sigma)\})}{P_n(\{(C,\sigma)\})} = \frac{2\mathrm{cyl}_1(\sigma) c_n}{s_n}
	\end{align}
	for $\mathrm{cyl}_1(\sigma)>0$, and $Q_n(\{(C,\sigma)\})=0$ if $\mathrm{cyl}_1(\sigma)=0$. 
	
	By Theorem~\ref{te:connected},
	\begin{align}
		\label{eq:c1}
		c_n\sim \frac{s_n}{\beta n}.
	\end{align}
	Moreover, Lemma~\ref{le:concentrate} gives, for the event
	\[
		G_n := \{ (C, \sigma) \in \Sym_{\cC}[n] \,\,\mid\,\, |\mathrm{cyl}_1(\sigma)- \beta n / 2| \le \sqrt n\log n\}
	\]
	that
	\begin{align}
		\label{eq:c2}
		Q_n(G_n^{\mathrm{c}})\le n^{-\Theta(\log n)}.
	\end{align}
	On $G_n$ we have by~\eqref{eq:c0} and~\eqref{eq:c1}
	\[
		\frac{Q_n(\{(C, \sigma)\})}{P_n(\{(C,\sigma)\})} = \frac{2\mathrm{cyl}_1(\sigma)}{\beta n}(1+o(1)) = 1 + o(1).
	\]
	Furthermore, for the event
	\[
		E_n = \{ (C, \sigma) \in \Sym_{\cC}[n] \mid \cyl_1(\sigma) > 0\}
	\]
	we have by~\eqref{eq:c0}, ~\eqref{eq:c1} and~\eqref{eq:c2}
	\[
		P_n(G_n^{\mathrm{c}} \cap E_n) \le \frac{s_n}{2c_n}Q_n(G_n^{\mathrm c}) = n^{-\Theta(\log n)}.
	\]
	By~\eqref{eq:ume} and~\eqref{eq:c1}
	\[
		P_n(E_n^{\mathrm{c}}) \le \frac{s_n}{c_n}  n^{-\Theta(\log n)} = n^{-\Theta(\log n)}.
	\]
	Hence we arrive at
	\[
		P_n(G_n^{\mathrm{c}}) \le n^{-\Theta(\log n)}. 
	\]
	Consequently, uniformly for all $B \subset \Sym_{\cC}[n]$
	\begin{align*}
		P_n(B) 	&= P_n(B \cap G_n) + O(n^{-\Theta(\log n)})\\
				&= Q_n(B \cap G_n)(1+o(1)) + O(n^{-\Theta(\log n)}) \\
				&= Q_n(B)(1+o(1)) + O(n^{-\Theta(\log n)}).
	\end{align*}
	In particular,
	\[
		d_{\mathrm{TV}}(P_n,Q_n) \to 0.
	\]
\end{proof}

For each $i \ge 1$ let $\mathfrak{M}_i(n)$ denote the number of vertices of the $i$th largest $3$-connected component in a uniform random $n$-vertex unlabelled connected cubic planar graph.

\begin{theorem}
	\label{te:components}
	For each $i \ge 1$ let $\mathfrak{M}_i(n)$ denote the number of vertices
	of the $i$th largest $3$-connected component in a uniform random
	$n$-vertex unlabelled connected cubic planar graph. Set
	\[
	\chi_{\cC}:=\frac{\chi}{2},
	\qquad
	\omega_{\cC}:=2^{-2/3}\omega .
	\]
	We have uniformly for all even integers
	$h=\chi_{\cC} n + x n^{2/3}$ that
	\[
	\omega_{\cC} n^{2/3}
	\Pr{\mathfrak{M}_1(n)=h}
	=
	\mathfrak h\left(-\frac{x}{2\omega_{\cC}}\right)+o(1).
	\]
	Moreover,
	\[
	\left(n^{-2/3}\mathfrak{M}_j(n)\right)_{j \ge 2}
	\convd
	\left(2^{-2/3}J_i\right)_{i \ge 1}.
	\]
\end{theorem}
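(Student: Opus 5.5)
The plan is to transfer Corollary~\ref{co:dcomponents} from $\cD$-networks to connected graphs in three steps: first from $\cD$-networks to simple networks, then from simple networks to unrooted graphs via Corollary~\ref{co:tv}, and finally rescale. Throughout, write $n=2m$, so that a uniform $n$-vertex unlabelled $\cD$-network is the object of Corollary~\ref{co:dcomponents} with parameter $m=n/2$. Substituting $m=n/2$ there gives
\[
\chi m=\chi_{\cC} n, \qquad \omega m^{2/3}=\omega_{\cC} n^{2/3}, \qquad m^{-2/3}=2^{2/3}n^{-2/3}.
\]
This is exactly why $\chi_{\cC}=\chi/2$ and $\omega_{\cC}=2^{-2/3}\omega$, and why the limit of $(n^{-2/3}\mathfrak{M}_j(n))_{j\ge 2}$ becomes $(2^{-2/3}J_i)_{i\ge1}$. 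The statement for uniform $n$-vertex $\cD$-networks, expressed in $n$, is therefore exactly the claimed statement.

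Step one passes from $\cD$-networks to simple networks. Write $\cN_{\mathrm{s}}=\tilde{\cN}+\cI$ with $\tilde{\cN}=\cH+\cP_{\mathrm{s}}+\cS_{\mathrm{s}}\subset\cD$, as in Lemma~\ref{le:concentrate}.
\begin{itemize}
\item On $\cD\setminus\tilde{\cN}=\cL+\cP_{\mathrm{d}}+\cS_{\mathrm{d}}$, the defining property of uniform $\cD$-networks is at least asymptotically inherited. Here I would use the symmetry-enriched tree: the event ``$\mathfrak{M}_1=h$'' is the event that the big $\cZ$-vertex has $h$ children. A better route is to argue directly that, for a uniform element of $\Sym^+_{\tilde{\cN}}[n]$, the conditional law of the maximal $\cZ$-degree is a local limit statement restricted to a subspecies of positive asymptotic density ($d_n/[z^n]\tilde{\cN}=O(1)$). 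Restricting does not change the conditional law, because in the proof of Lemma~\ref{le:maxdeg} the big jump is asymptotically independent of the bounded-size top of the tree, which determines the species (root type $\cL$, $\cS$, $\cP$, $\cH$ and so on).
\item Concretely, I would redo the argument of Lemma~\ref{le:maxdeg} with the root of $\mT$ forced into a given subset of types. This only changes the law of the first copy of $\mS$ in \eqref{eq:myw}, which does not affect the asymptotics of $\Pr{S_k^{\bar X}=2(n-a),\max\bar X_i=h}$.
\item For $\cI=\cL^2/x^2$, one of the two $\cL$ pieces is small with high probability by subexponentiality (the one-big-jump property, as in Lemma~\ref{le:help}). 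The big piece is a uniform $\cL$-network of size $n-O_p(1)$, to which the $\cL$-restricted statement applies. A shift of $O_p(1)$ is negligible on the scale $n^{2/3}$, and the extra small components only add finitely many $O(1)$-sized points, which vanish after scaling.
\end{itemize}
Step two passes from simple networks to unrooted graphs. Since automorphisms do not change the underlying graph, the component sizes are functions of the symmetry. Corollary~\ref{co:tv} gives $P_n(B)=Q_n(B)(1+o(1))+O(n^{-\Theta(\log n)})$ uniformly in $B$, and a uniform unlabelled graph corresponds to the uniform symmetry $P_n$. Taking $B=\{\mathfrak{M}_1=h\}$, the probabilities are of order $n^{-2/3}$, so the multiplicative $1+o(1)$ and the additive superpolynomially small error preserve the local limit uniformly in $h$. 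The point-process convergence transfers through the total variation bound.

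The main obstacle is step one, namely the uniform local limit for the $\tilde{\cN}$- and $\cI$-restricted ensembles. I expect it to be handled by reading the proof of Lemma~\ref{le:maxdeg} with a modified root offspring law and checking that every estimate there is insensitive to that change.
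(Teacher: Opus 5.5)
Your proposal is correct and follows the paper's proof: reduce to uniform simple networks via Corollary~\ref{co:tv}, split $\cN_{\mathrm{s}}=\cH+\cP_{\mathrm{s}}+\cS_{\mathrm{s}}+\cI$, and rerun Lemma~\ref{le:maxdeg} and Corollary~\ref{co:dcomponents} with a modified root offspring law (your rescaling $n=2m$ is the bookkeeping the paper leaves implicit behind $\chi_{\cC}$, $\omega_{\cC}$ and the factor $2^{-2/3}$). The one variation is for $\cI$, where you use the one-big-jump property of $\cL^2$ to reduce to $\cL$-networks while the paper uses root offspring $\phi_{\cI}=x_{\cL}^2$ conditioned on $n+2$ type-$\cX$ vertices, and both work; one small correction is that excluding $\cS_{\mathrm{d}}$ is not a condition on the first copy of $\mS$ alone, since it depends on the child of the $\cD$-leaf of the root's $\cS$-vertex, which is why the paper's $\phi_{\cS_{\mathrm{s}}}$ unfolds that $\cD$-vertex into its $\{\cL,\cS,\cP,\cH\}$-child and subtracts the term $\cL(\rho)^2x_{\cL}^2x_{\cY}$.
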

\begin{proof}
	By Corollary~\ref{co:tv} it suffices to prove the corresponding statements for the extremal sizes of $3$-connected components in uniform random $n$-vertex unlabelled simple networks.
	
	Recall that
	\[
	\cN_{\mathrm{s}} = \cH + \cP_{\mathrm{s}} + \cS_{\mathrm{s}} + \cI.
	\]
	Since a uniform random unlabelled simple network is a mixture of uniform random unlabelled networks from each of these four classes, it suffices to prove the corresponding statements for the extremal component sizes of these four classes.
	
	The proof of Lemma~\ref{le:maxdeg} extends in a straightforward manner to four modified branching processes, where the root has type $\cH$ instead of $\cD$, or has one of the three modified offspring distributions:
	\begin{align*}
		\phi_{\cP_{\mathrm{s}}} &= \frac{1}{\cP_{\mathrm{s}}(\rho)}\frac{\rho^2 x_{\cX}^2}{2} \left( x_{\cY}^4(\cD(\rho) x_{\cD})^2 + \cD_{2}(\rho) x_{\cD_2} \right) ,\\
		\phi_{\cS_{\mathrm{s}}} &= 	\frac{1}{\cS_{\mathrm{s}}(\rho)} \Big( \left(\cL(\rho) x_{\cL}  + \cP(\rho)  x_{\cP} + \cH(\rho) x_{\cH}\right) x_{\cY} \\
			&\quad\, \cdot \left( \cL(\rho) x_{\cL}  + \cS(\rho) x_{\cS}  + \cP(\rho) x_{\cP} + \cH(\rho) x_{\cH} \right) - \cL(\rho)^2 x_{\cL}^2 x_{\cY}\Big),  \\
		\phi_{\cI} &=  x_{\cL}^2.
	\end{align*}

	The first three of these four modified decorated branching processes conditioned on having $n$ vertices of type $\cX$  correspond to uniform random symmetries from the three types of networks $\cH$, $\cP_{\mathrm{s}}$, and  $\cS_{\mathrm{s}}$. 
	
	Recall that \[
	W_\cI^+ = \frac{1}{s_1^2a_1^2} (W_\cL^+)^2.
	\]
	Hence in the case where the root offspring is determined by $\phi_{\cI}$ we need to condition on $n+2$ vertices instead and be aware that in this case the number of type $\cY$ vertices in the tree corresponds to the number of cylindrical edge $1$-cycles plus $2$.

	In all four cases it follows by identical arguments as in Corollary~\ref{co:dcomponents} that  the corresponding statements for the extremal sizes of the $3$-connected components hold. This completes the proof.
\end{proof}

\subsection{Unconstrained cubic planar graphs}

An unconstrained unlabelled cubic planar graph is a multiset of connected unlabelled cubic planar graphs. Hence the ordinary generating series $\cG(x)$ for the unlabelled graphs in the species $\cG$ of unconstrained  cubic planar graphs satisfies
\[
\cG(x) = \exp\left(\sum_{i \ge 1} \cC(x^i)/i \right).
\]
By Theorem~\ref{te:connected}, $\cC(z)$ belongs to $\mathscr{S}_2$. Hence we may apply a counting result~\cite[Lem. 3.2]{zbMATH07359155} to obtain
\begin{align*}
	g_n &\sim \exp\left(\sum_{i \ge 1} \cC(\rho^i)/i \right) c_n \sim \kappa n^{-7/2} \rho^{-n}
\end{align*}
for
\[
\kappa = \exp\left(\sum_{i \ge 1} \cC(\rho^i)/i \right) \gamma.
\]
This proves Theorem~\ref{te:main}.

By applying~\cite[Thm. 3.1]{zbMATH07359155} we also obtain that a uniformly at random selected unlabelled cubic planar graph with $n$ vertices has a giant connected component of size $n - O_{\mathrm{p}}(1)$ and the fragment obtained by deleting it approaches in distribution a finite limiting graph as $n$ tends to infinity. This implies that the central limit theorem for the largest $3$-connected components (implied by the local limit theorem) and the joint distributional limits for the remaining $3$-connected components in Theorem~\ref{te:components} also hold for uniform unlabelled unconstrained cubic planar graphs with $n$ vertices. By a similar deviation bound as those used in the proof of Theorem~\ref{te:components} the local limit theorem also holds in the unconstrained case. 

\section{Numerical approximations}

\label{sec:numerics}

\subsection{Bounds on symmetries}

In order to improve our numeric estimates  we are going to make the exponential bounds from Proposition~\ref{pro:H} explicit by using the structural analysis  by Brown~\cite{MR168485}.

\begin{proposition}
	\label{pro:brown}
	We have for $n \ge 2$
	\begin{align*}
		[x^{2n}y^{3n-1}] R_1(x,x^2,y,y^2,y) &\le \epsilon_n/2, \\
		[x^{2n}y^{3n-1}] R_2(x,x^2,y,y^2,y) &\le \epsilon_{n-1} + \epsilon_n + \theta_n,
	\end{align*}
	for
	\begin{align*}
		\epsilon_n &= \frac{(3n)!}{n!(2n+1)!}\\
		\theta_n &= \frac{4 (2n-3)!}{(\lfloor n/2\rfloor - 1)! ( \lfloor 3n/2 \rfloor -1)!}.
	\end{align*}
\end{proposition}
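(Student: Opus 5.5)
The plan is to turn each coefficient into a count of unlabelled rooted objects carrying a specific kind of involution. Then I pass to the dual simple triangulations and bound the counts by Brown's enumeration~\cite{MR168485} of rooted triangulations with prescribed symmetries.

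\emph{Reduction for $R_1$.} By~\eqref{eq:WWWp} and the definition of $R_1$ in the proof of Proposition~\ref{pro:H}, the coefficient $[x^{2n}y^{3n-1}]R_1(x,x^2,y,y^2,y)$ equals $\frac{1}{(2n)!}$ times the number of pairs $(M,\sigma)$. Here $M$ is a labelled $3$-connected cubic planar graph on $[2n]$ with an oriented root edge, and $\sigma\neq\id$ fixes both ends of the root edge. By Proposition~\ref{pro:wlabelled}, only rooted achiral unlabelled graphs contribute. Each contributes $(2n)!/2$ labelled versions, each carrying exactly one such $\sigma$. The coefficient is therefore $q_n'/2$, with $q_n'$ the number of rooted achiral $3$-connected cubic maps with $2n$ vertices, as in the proof of Proposition~\ref{pro:unl3con}. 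It remains to show $q_n'\le\epsilon_n$. Dualising (Proposition~\ref{pro:aut1} says the symmetry is an orientation-reversing involution fixing the oriented root), $q_n'$ counts rooted simple triangulations with $n+2$ vertices that admit a reflection fixing the root edge and its orientation. Cutting along the mirror circle, which passes through the root, such a triangulation is determined by one half, a rooted triangulated disk whose boundary lies on the axis. I would invoke Brown's count of these reflexive rooted triangulations, or an injection into his dissection count, to get at most $\frac{(3n)!}{n!(2n+1)!}=\epsilon_n$.

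\emph{Reduction for $R_2$.} By~\eqref{eq:WWWm}, the coefficient is $\frac{1}{(2n)!}\sum_U |\mathrm{orbit}(U)|\cdot|\Autm(U)|$ over unlabelled rooted cores $U$. The orbit has size $(2n)!/|\Autp(U)|$, and $|\Autm(U)|=|\Autp(U)|$ whenever $\Autm(U)\neq\emptyset$. Each $U$ with a pole-swapping automorphism therefore contributes exactly $1$, and the coefficient is at most the number of rooted cubic maps admitting a combinatorial involution that reverses the root edge. Proposition~\ref{pro:aut2} says such a $\sigma$ is an involution, and I split by its type.

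\begin{enumerate}
\item \emph{Orientation-reversing, reflection through the root edge.} Here the mirror contains the edge and swaps its ends. Dually, the mirror crosses the dual root edge. The same halving argument as for $R_1$ gives a half-object of essentially the same size, bounded by $\epsilon_n$.
\item \emph{Orientation-reversing, reflection perpendicular to the edge.} Here the mirror passes through the edge midpoint. Dually, the mirror contains the dual root edge. Collapsing along the axis produces Brown's object with one fewer triangle pair, bounded by $\epsilon_{n-1}$.
\item \emph{Orientation-preserving half-turn about the edge midpoint.} Its quotient is a sphere map with two branch points. Brown's enumeration of rooted triangulations with a rotation of order $2$ gives the bound $\theta_n=\frac{4(2n-3)!}{(\lfloor n/2\rfloor-1)!(\lfloor 3n/2\rfloor-1)!}$. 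The floors reflect the parity of the vertex/face counts on the axis of rotation.
\end{enumerate}

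Summing the three cases gives $\epsilon_{n-1}+\epsilon_n+\theta_n$.

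The main obstacle is matching the index conventions and counts in Brown's paper exactly with our setting. This means tracking the shift from cubic maps with $2n$ vertices to triangulations with $n+2$ vertices, where the dual root sits relative to the axis of symmetry in each case, and whether Brown's counts include the degenerate cases (no interior vertices on the axis, etc.). I would handle this by setting up explicit injections from each symmetric rooted object into Brown's objects, rather than claiming equality, since only upper bounds are needed. I would also check small $n$ against \texttt{plantri} output to confirm the indices.
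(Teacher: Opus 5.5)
There is a genuine gap in your treatment of $R_2$, and a related misidentification in $R_1$.

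\textbf{The case split for $R_2$ is wrong.} An involution of the cubic map that swaps the ends of $e=uv$ comes in only two kinds.
\begin{enumerate}
\item If it preserves orientation, it sends the face left of $u\to v$ to the face left of $v\to u$. So it swaps the two faces at $e$; this is the half-turn.
\item If it reverses orientation, it fixes each face at $e$. This is a reflection whose mirror crosses $e$ at its midpoint.
\end{enumerate}
Your case (i), a mirror that contains $e$ yet swaps its ends, is impossible, since such a reflection fixes $e$ pointwise. Its dual description (mirror crossing the dual root edge, ends $u,v$ fixed) is exactly the $R_1$ configuration. Hence the whole orientation-reversing contribution lies in your case (ii).

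For case (ii) you offer only ``collapsing along the axis gives $\epsilon_{n-1}$''. That is not an argument. Contracting the dual root edge creates a double edge whenever $u,v$ have a common neighbour besides the two apexes (a separating triangle through the root, which this paper's simple triangulations allow). You also never show the result lands injectively in a class Brown counts. Your total $\epsilon_{n-1}+\epsilon_n+\theta_n$ is reached only by charging $\epsilon_n$ to an empty case.

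\textbf{What the paper does instead.} It dualises and deletes the dual root edge $uv$. This gives a triangulated quadrangle $u,a,v,b$ with $n-2$ inner vertices, rooted at $u\to a$, from which the original is recoverable.
\begin{enumerate}
\item In the half-turn case the symmetry rotates the quadrangle by two steps. This is Brown's ${}_2E_{n-2,1}=\theta_n$ (his (8.10)), matching your case (iii) once the reduction to a disk is made.
\item In the reflection case the symmetry fixes $u$ and reverses the boundary, giving Brown's $L_{n-2,1}$. Brown states no bound for it. One must extract it from his relation $K(x,0)=1+x(L_{.1}(x)-J(x,0)^2)$, i.e.\ $L_{.1}=(J_0-1)/x+J_0^2$, together with $J_0\le_{\mathrm{coeff}}J_0^2\le_{\mathrm{coeff}}\sum_{m\ge 0}\epsilon_{m+1}x^m$. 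This yields $L_{n-2,1}\le\epsilon_n+\epsilon_{n-1}$, so both $\epsilon$-terms belong to the reflection case.
\end{enumerate}

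\textbf{$R_1$.} Your reduction to $q_n'/2$ is correct, but the dual symmetry is misidentified. Since $\sigma$ fixes $u,v$ and reverses orientation, it swaps the two faces at $e$. Dually it swaps the ends of the dual root edge and fixes both adjacent triangles; it does not fix the dual root edge and its orientation. Note that your $R_1$ dual description and your case (ii) describe the same dual object, yet you assign them different bounds ($\epsilon_n$ versus $\epsilon_{n-1}$). The correct object is Brown's $K_{n-1,0}$, which his (14.4) bounds by $\epsilon_n$. Searching Brown for reflexive triangulations whose mirror contains the root would send you to the $L$-type quantity instead.
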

\begin{proof}
	In Proposition~\ref{pro:unl3con} we let $q_n'$ denote the number of rooted achiral $3$-connected unlabelled planar graphs with $2n$ vertices.  By Proposition~\ref{pro:unl3con} these are precisely those with exactly one non-trivial root-preserving graph automorphism and $(2n)!/2$ labelled versions, whereas the remaining rooted chiral graphs each have no non-trivial root-preserving graph automorphism. Hence out of all symmetries in $\Sym_\cH^+[2n]$ there are precisely $q_n' (2n)!/2$ that carry a non-trivial automorphism, and the non-trivial automorphism corresponds to an orientation-reversing homeomorphism of the sphere. Hence
	\begin{align}
		\label{eq:b1}
		[x^{2n}y^{3n-1}] R_1(x,x^2,y,y^2,y) = \frac{q_n'}{2}.
	\end{align}
	By the dual map construction, $q_n'$ is also equal to the number of $(n+2)$-vertex simple triangulations that have an automorphism that swaps the poles of the root edge but fixes each of the two adjacent faces. In the notation of Brown~\cite[Sec. 10]{MR168485} the number of such objects is bounded by $K_{n-1,0}$, which by~\cite[Eq. (14.4)]{MR168485} is bounded by $\epsilon_n$. Hence
	\begin{align}
		\label{eq:b2}
		q_n' \le \epsilon_n.
	\end{align}
	Combining~\eqref{eq:b1} and~\eqref{eq:b2}, we obtain
	\[
	[x^{2n}y^{3n-1}] R_1(x,x^2,y,y^2,y) \le \epsilon_n/2.
	\]
	
	As for $R_2$, recall that $[x^{2n}y^{3n-1}] R_2(x,x^2,y,y^2,y)$ counts pole symmetric $\cH$-networks. For such a network we fix a single rooted cubic planar map corresponding to it and form the dual map. In an injective manner this yields a triangulation $T$ with an oriented root edge $e$ such that it admits an automorphism $\varphi$ that swaps the two faces adjacent to the root edge. Let $u$ denote the start vertex of $e$ and $v$ the target vertex of $e$. Note that $\varphi$ either swaps $u$ and $v$ or it fixes both $u$ and $v$. Let $a$ denote the unique third vertex in the face right to $e$ and $b$ the unique third vertex in the face left to $e$. 
	
	We delete $e$, creating a face of degree four that is left invariant by $\varphi$.  The result is a triangulation $T'$ of a $4$-gon. We declare the directed edge $e'$ from $u$ to $a$ as the new root edge. Note that we may recover $(T,e)$ from $(T', e')$.
	
	If $\varphi$ swaps the ends of $u$ and $v$ then $\varphi$  rotates the boundary of the root quadrangle by two turns. In the notation of Brown~\cite[Sec. 6]{MR168485}, the number of objects like $(T', e')$ is bounded by ${}_2E_{n-2,1}$, which by~\cite[Eq. (8.10)]{MR168485} is given by $\theta_n$.
	
	If $\varphi$ fixes both $u$ and $v$ then $\varphi$ reverses the orientation of the boundary of the quadrangle, and leaves the origin of the root edge $e'$ fixed. In the notation of Brown~\cite[Sec. 10]{MR168485}, the number of objects like $(T', e')$ is bounded by $L_{n-2,1}$.
	
	Hence, with this notation
	\begin{align}
		\label{eq:b3}
		[x^{2n}y^{3n-1}] R_2(x,x^2,y,y^2,y) \le \theta_n + L_{n-2,1}.
	\end{align}
	
	A bound for $L_{n-2,1}$ is not explicitly spelled out in~\cite{MR168485}, but may be obtained from the equations of generating series given there. Specifically~\cite[Sec. 10]{MR168485} defines generating series $K(x,y)$, $L(x,y)$ and $J(x,y)$ such that
	\begin{align*}
		K(x,y) &= \sum_{n,m \ge 0} K_{n,m} x^n y^m, \\
		L(x,y) &= \sum_{n,m \ge 0} L_{n,m} x^n y^m,\\
		J(x,y^2) &= \sum_{n,m \ge 0} K_{n,2m} x^n y^{2m} = \sum_{n,m \ge 0} L_{n,2m} x^n y^{2m}.
	\end{align*}
	Brown uses the notation \[
	L_{.1}(x) = \sum_{n \ge 0} L_{n,1} x^n
	\]
	and 
	\[
	J_0(x) = J(x,0) = K(x,0) = L(x,0).
	\]
	By~\cite[Eq. (12.2)]{MR168485} 
	\[
	K(x,0) = 1 + x(L_{.1}(x) - J(x,0)^2).
	\]
	Therefore $J_0(0) = K(0,0) = 1$ and
	\begin{align}
		\label{eq:b4}
		L_{.1}(x) = (J_0(x) -1)/x + J_0(x)^2.
	\end{align}
	By~\cite[Display before (14.4)]{MR168485} we have coefficient-wise domination
	\begin{align}
		\label{eq:b5}
		J_0(x) \le_\mathrm{coeff} J_0(x)^2 \le_\mathrm{coeff} \sum_{n=0}^\infty \epsilon_{n+1}x^n.
	\end{align}
	Combining~\eqref{eq:b4} and~\eqref{eq:b5} we obtain
	\[
	L_{n,1} \le \epsilon_{n+2} + \epsilon_{n+1}.
	\]
	Hence by~\eqref{eq:b3}
	\[
	[x^{2n}y^{3n-1}] R_2(x,x^2,y,y^2,y) \le \epsilon_{n-1} + \epsilon_n + \theta_n.
	\]
	This completes the proof.
\end{proof}

\subsection{Truncations of generating series}

\label{sec:truncations}

The software \texttt{plantri} (version \texttt{5.8}) by Gunnar Brinkmann, Brendan McKay and Heidi Van den Camp  efficiently generates unlabelled $3$-connected cubic planar graphs with a given size.  See~\cite{zbMATH05382101,zbMATH02244630, zbMATH02245027} for papers related to its algorithmic capabilities.  Specifically, 
\[
\text{\texttt{plantri -m1c3 -d -G \{n\}d}  }
\]
generates all unlabelled $3$-connected cubic planar graphs  with $\{n\}$ vertices and internally computes the automorphism group of their dual triangulation. The action of this group on the dual triangulation may be converted to an action on the cubic graph using the dual map formalism. This enables us to compute arbitrarily large truncations of the Walsh index series $W_3(\bm{s}, \bm{a}, \bm{b})$ of $3$-connected cubic planar graphs. Due to \texttt{plantri}'s res/mod functionality this process may  be parallelised well, allowing us to reach high vertex counts.

The corresponding series of $3$-connected cubic networks are given by
\begin{align*}
	W_{3}^+ &= 2 \frac{\partial W_3}{\partial a_1} \\
	W_{3}^- &= 2 \frac{\partial W_3}{\partial b_1},
\end{align*}
with the factor $2$ corresponding to the two possible orientations of a specified root edge in a labelled $3$-connected cubic planar graph. This way, $R_2 = W_3^-$ and $R_1$ may be obtained from $W_3^+$ by discarding the terms corresponding to identity automorphisms, i.e. those without vertex cycles of length $2$. Plugging these truncations into the recursive system of equations for $\cD$-networks yields truncations of the generating series for networks:

\begin{tcolorbox}
	\tiny
\setlength{\jot}{1pt}
\begin{align*}
	\cD(x) &= x^{4} + 4x^{6} + 19x^{8} + 104x^{10} + 620x^{12} + 4026x^{14} + 27894x^{16} + 203162x^{18} + 1537687x^{20} \\
	&\quad + 11995410x^{22} + 95861608x^{24} + 781347958x^{26} + 6474182078x^{28} + 54397619869x^{30} \\
	&\quad + 462575384197x^{32} + 3974833433257x^{34} + 34470094954668x^{36} + 301371748748567x^{38} \\
	&\quad + 2654131825150422x^{40} + 23527938600570547x^{42} + 209804460441511473x^{44} + O(x^{46}), \\[3pt]
	\cD^{-}(x) &= x^{4} + 4x^{6} + 13x^{8} + 44x^{10} + 182x^{12} + 810x^{14} + 4200x^{16} + 24464x^{18} + 159293x^{20} + 1121984x^{22} \\
	&\quad + 8379336x^{24} + 65164438x^{26} + 521969644x^{28} + 4274197843x^{30} + 35610577435x^{32} \\
	&\quad + 300888332649x^{34} + 2572446598612x^{36} + 22216231046627x^{38} + 193560027328890x^{40} \\
	&\quad + 1699553475117967x^{42} + 15026655868163753x^{44} + O(x^{46}), \\[3pt]
	\cL(x) &= x^{6} + 3x^{8} + 13x^{10} + 62x^{12} + 342x^{14} + 2095x^{16} + 14053x^{18} + 100379x^{20} + 752038x^{22} \\
	&\quad + 5833396x^{24} + 46478053x^{26} + 378202552x^{28} + 3130855522x^{30} + 26292033253x^{32} \\
	&\quad + 223504815431x^{34} + 1920153657373x^{36} + 16649581158677x^{38} + 145554662204890x^{40} \\
	&\quad + 1281802023943294x^{42} + 11362274299117646x^{44} + O(x^{46}), \\[3pt]
	\cS(x) &= x^{8} + 8x^{10} + 53x^{12} + 348x^{14} + 2329x^{16} + 16148x^{18} + 116192x^{20} + 864850x^{22} + 6628261x^{24} \\
	&\quad + 52072566x^{26} + 417712440x^{28} + 3410514978x^{30} + 28269678992x^{32} + 237402111144x^{34} \\
	&\quad + 2016447481073x^{36} + 17299604330959x^{38} + 149741737920037x^{40} + 1306463523050961x^{42} \\
	&\quad + 11480284741511714x^{44} + O(x^{46}), \\[3pt]
	\cS^{-}(x) &= x^{8} + 5x^{12} + 4x^{14} + 35x^{16} + 60x^{18} + 348x^{20} + 1058x^{22} + 5825x^{24} + 28880x^{26} + 184862x^{28} \\
	&\quad + 1240912x^{30} + 9146194x^{32} + 70210146x^{34} + 559121049x^{36} + 4558123213x^{38} + 37869567837x^{40} \\
	&\quad + 319284517553x^{42} + 2725214180998x^{44} + O(x^{46}), \\[3pt]
	\cP(x) &= x^{6} + 4x^{8} + 20x^{10} + 108x^{12} + 649x^{14} + 4206x^{16} + 29120x^{18} + 211644x^{20} + 1598925x^{22} \\
	&\quad + 12451122x^{24} + 99353143x^{26} + 808751290x^{28} + 6693770858x^{30} + 56188508181x^{32} \\
	&\quad + 477404199016x^{34} + 4099237882762x^{36} + 35525787924956x^{38} + 310421127532837x^{40} \\
	&\quad + 2732400454975305x^{42} + 24210327527652583x^{44} + O(x^{46}), \\[3pt]
	\cP^{-}(x) &= x^{6} + 4x^{8} + 14x^{10} + 48x^{12} + 205x^{14} + 906x^{16} + 4652x^{18} + 26574x^{20} + 170119x^{22} + 1181786x^{24} \\
	&\quad + 8743597x^{26} + 67573846x^{28} + 539078604x^{30} + 4402328757x^{32} + 36609144608x^{34} \\
	&\quad + 308902214764x^{36} + 2638209030084x^{38} + 22765349391939x^{40} + 198210140466377x^{42} \\
	&\quad + 1739396012622501x^{44} + O(x^{46}), \\[3pt]
	\cH(x) &= x^{4} + 2x^{6} + 11x^{8} + 63x^{10} + 397x^{12} + 2687x^{14} + 19264x^{16} + 143841x^{18} + 1109472x^{20} \\
	&\quad + 8779597x^{22} + 70948829x^{24} + 583444196x^{26} + 4869515796x^{28} + 41162478511x^{30} \\
	&\quad + 351825163771x^{32} + 3036522307666x^{34} + 26434255933460x^{36} + 231896775333975x^{38} \\
	&\quad + 2048414297492658x^{40} + 18207272598600987x^{42} + 162751573873229530x^{44} + O(x^{46}), \\[3pt]
	\cH^{-}(x) &= x^{4} + 2x^{6} + 5x^{8} + 17x^{10} + 67x^{12} + 259x^{14} + 1164x^{16} + 5699x^{18} + 31992x^{20} + 198769x^{22} \\
	&\quad + 1358329x^{24} + 9913908x^{26} + 76008384x^{28} + 603022805x^{30} + 4907069231x^{32} + 40704162464x^{34} \\
	&\quad + 342831605426x^{36} + 2923882734653x^{38} + 25202146164224x^{40} + 219222026190743x^{42} \\
	&\quad + 1922260342242608x^{44} + O(x^{46}).
\end{align*}
\end{tcolorbox}

We omit the obtained expressions for $R_1$ and $R_2$ since they would fill several pages. As a byproduct, we obtain the following truncation for the ordinary generating series $\cW(x)$ of $3$-connected unlabelled planar graphs:

\begin{tcolorbox}
	\tiny
\begin{align*}
		\cW(x)
		&= x^4 + x^6 + 2x^8 + 5x^{10} + 14x^{12} + 50x^{14}
		+ 233x^{16} + 1249x^{18} 
		+ 7595x^{20} + 49566x^{22} + 339722x^{24} \\
		&\quad+ 2406841x^{26} 
		+ 17490241x^{28} + 129664753x^{30}
		+ 977526957x^{32} 
		+ 7475907149x^{34}
		+ 57896349553x^{36}\\
		&+ 453382272049x^{38} 
		+ 3585853662949x^{40}
		+ 28615703421545x^{42} 
		+ 230214772514389x^{44}
		+ O(x^{46}).
\end{align*}
\end{tcolorbox}

\subsection{Numeric bounds for networks and connected cubic planar graphs}

For each $k \ge 4$, let   \[
\cD^{\langle k \rangle} = \sum_{i=4}^k d_i x^i .
\]  
We will use similar notation for truncations of other power series.
The series $\cH(x)$ dominates coefficient-wise the series $M(x, 1 + \cD^{\langle k \rangle}(x))$. Hence with $\rho_k$ denoting the radius of convergence of the latter, we have
\[
\rho \le \rho_k.
\]
Since $\cQ(z)$ has radius of convergence $27/256$, the radius of convergence $\rho_k$  is the unique positive solution to the equation
\[
x^2(1+\cD^{\langle k \rangle}(x))^3 = 27/256.
\]
Since $\rho^2(1 + \cD(\rho))^3= 27/256$ by Lemma~\ref{le:keynumeric}, it follows that
\[
\cD(\rho)  = \rho^{-2/3} (27/256)^{1/3}-1 \ge \rho_k^{-2/3} (27/256)^{1/3}-1 = \cD^{\langle k \rangle}(\rho_k).
\]
Clearly \[
\cD^{\langle k \rangle}(\rho_k) \ge \cD^{\langle k \rangle}(\rho),
\]
hence it follows that 
\[
\lim_{k \to \infty} \cD^{\langle k \rangle}(\rho_k) = \cD(\rho)
\]
and consequently
\[
\lim_{k \to \infty} \rho_k = \rho.
\]
This yields the following table:
\begin{table}[H]
	\centering
	\begin{tabular}{r|l}
		$k$ & $\rho_k$  \\
		\hline 
		20 & 0.315951739471170 \\
		24 & 0.315867873991947 \\
		28 & 0.315813199766632 \\
		32 & 0.315775205290344 \\
		36 & 0.315747538826359 \\ 
		40 & 0.315726657018910 \\
		44 & 0.315710440451692
	\end{tabular}
\end{table}

Using the rate of decay of these rigorous upper bounds we may employ differential approximants to non-rigorously \emph{guess} that $\rho$ should equal approximately $0.315606$. But the following result gives actual lower and upper bounds.

\begin{theorem}
	\label{te:numeric}
	We have
	\begin{alignat*}{2}
		0.315602932219633 &\le \rho &\le 0.315608352382852, \\
		0.019238001429548 &\le \cD(\rho) &\le 0.019249670988456, \\
		0.015983988309684 &\le \cD^-(\rho) &\le 0.015997675822097, \\
		0.000102531061343 &\le \cD_2(\rho) &\le 0.000102545445050, \\
		0.001593442610593 &\le \cL(\rho) &\le 0.001601495258377, \\
		0.000001006922857 &\le \cL_2(\rho) &\le 0.000001007132574, \\
		0.000363115090375 &\le \cS(\rho) &\le 0.000363551584770, \\
		0.000000010511540 &\le \cS_2(\rho) &\le 0.000000010514491, \\
		0.001939743481319 &\le \cP(\rho) &\le 0.001940995582173, \\
		0.001609918801599 &\le \cP^-(\rho) &\le 0.001611360012121, \\
		0.015341700247260 &\le \cH(\rho) &\le 0.015343628563137, \\
		0.012676467663415 &\le \cH^-(\rho) &\le 0.012680645297464.
	\end{alignat*}
\end{theorem}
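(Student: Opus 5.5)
We will obtain the bounds by rigorous monotone iteration on the system of equations from Section~\ref{sec:nedewa}, evaluated at $x=\rho$. We split each series into a truncation and a tail: the known truncation $\cD^{\langle 44\rangle}$ (and likewise for $\cL,\cS,\cP,\cH$ and the minus-versions, $R_1$, $R_2$), plus a tail we bound explicitly. Lower bounds on the values at $\rho$ come from the truncations. Upper bounds come from bounding the tails.

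\textbf{Bracketing $\rho$.} For $\rho$ we already have the upper bound $\rho\le\rho_{44}$. Given any upper bound $\bar D$ on $\cD(\rho)$, Lemma~\ref{le:keynumeric} gives the lower bound $\rho\ge\sqrt{(27/256)/(1+\bar D)^3}$. Conversely, $\cD(\rho)=\rho^{-2/3}(27/256)^{1/3}-1$ converts bounds on $\rho$ into bounds on $\cD(\rho)$. Every other quantity is then squeezed through the equations. The doubled series $\cD_2(\rho),\cL_2(\rho),\cS_2(\rho)$ need only crude tail bounds, since $\rho^2\approx 0.1$, so their truncations at order $44$ are already accurate to far below the stated precision.

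\textbf{Upper bounds.} We start from the state of Corollary~\ref{co:sharpy} and run the iteration of Lemma~\ref{le:prenum}, but with the exact equations~\eqref{eq:lroot}, \eqref{eq:sthat}, the equations for $\cP,\cP^-,\cS^-$, and the $\cH$, $\cH^-$ equations. In each step:
\begin{itemize}
\item $M(\rho,1+\cD(\rho))$ is computed exactly via Corollary~\ref{co:mcomp}.
\item $R_1$ and $R_2$ are split into the \texttt{plantri} truncation (cores with at most $44$ vertices) plus a tail. The tail is bounded coefficientwise by Proposition~\ref{pro:brown}, i.e. by $\sum_{n>22}(\epsilon_n/2)\,\rho^{2n}(1+\bar D)^{3n-1}$ and the analogous sum with $\epsilon_{n-1}+\epsilon_n+\theta_n$. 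This is legitimate because, as in the proof of Lemma~\ref{le:analyticsum}, the arguments $1+\cD_2$ and $1+\cD^-$ are dominated by $1+\cD$ and $(1+\cD)^2$.
\item The tails converge geometrically, since $\epsilon_n\approx(27/4)^n$ is far smaller than $q_n\approx(256/27)^n$ (both up to polynomial factors).
\end{itemize}
All functions involved are monotone in their arguments, so each step maps upper bounds to upper bounds. The map is a contraction by the Jacobian estimates from the proofs of Lemma~\ref{le:keynumeric} and Lemma~\ref{le:le3} (determinant bounded away from $0$). The iteration therefore converges, in interval arithmetic, to a self-consistent upper state.

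\textbf{Lower bounds.} Evaluating every truncation at the current lower bound for $\rho$ gives valid lower bounds, because all coefficients are nonnegative. For quantities defined by subtraction, such as $\cS^-$ through $\cD_2-\cS_2$, we use the upper bound on the subtracted term. Feeding these lower values back through the same monotone equations with zero tails gives a monotone increasing iteration that improves them further.

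\textbf{Main obstacle.} The hard step is making the bounds tight at the $10^{-5}$ level for $\rho$. The dominant singular term $M$ is known exactly, but the upper iteration uses $\rho_{\mathrm{l}}$ and $\rho_{\mathrm{u}}$ at different places, which inflates the gap. We will therefore iterate jointly on the pair ($\rho$-interval, $\cD(\rho)$-interval), using $\rho^2(1+\cD(\rho))^3=27/256$ to couple them at every step. We expect the remaining width to be governed mainly by the Brown tail bounds for $R_2$, which dominate the gap between the stated $\cD^-(\rho)$ and $\cH^-(\rho)$ bounds. The final numbers are produced by a certified computer evaluation with outward rounding.
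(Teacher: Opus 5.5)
Your proposal is essentially the paper's proof. Both run a coupled interval iteration started from the values of Corollary~\ref{co:sharpy} and $\rho_{44}$, using the order-$44$ truncations, the tail bound $\rho^{45}\cD(\rho)$ for the doubled series, Proposition~\ref{pro:brown} for the tails of $R_1$ and $R_2$, the exact value $M(\rho,1+\cD(\rho))=\frac{5}{512(1+\cD(\rho))}$, and $\rho^2(1+\cD(\rho))^3=27/256$ to update $\rho$. Two small corrections: no contraction argument is needed, since every finite iterate already gives valid bounds (the paper simply runs $200$ steps). Also, the upper and lower iterations cannot be run separately: $M$ and $\rho=\sqrt{(27/256)/(1+\cD(\rho))^3}$ are decreasing in $\cD(\rho)$, so the upper bounds for $\cH(\rho)$ and $\rho$ must use the current lower bound for $\cD(\rho)$, and vice versa.
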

\begin{proof}
Recall that
\[
	\vartheta = (\rho,\rho^2, 1 + \cD(\rho), 1 + \cD_2(\rho), 1 + \cD^-(\rho)).
\]
Suppose that at discrete time $i\ge 0$ we are given nonnegative bounds
\begin{alignat*}{2}
	\rho_{\mathrm{l}}[i] &\le \rho &\le \rho_{\mathrm{u}}[i], \\
	D_{\mathrm{l}}[i] &\le \cD(\rho) &\le D_{\mathrm{u}}[i], \\
	D_{\mathrm{l}}^-[i] &\le \cD^-(\rho) &\le D_{\mathrm{u}}^-[i], \\
	D_{\mathrm{l}}^2[i] &\le \cD_2(\rho) &\le D_{\mathrm{u}}^2[i], \\
	L_{\mathrm{l}}[i] &\le \cL(\rho) &\le L_{\mathrm{u}}[i], \\
	L_{\mathrm{l}}^2[i] &\le \cL_2(\rho) &\le L_{\mathrm{u}}^2[i], \\
	S_{\mathrm{l}}[i] &\le \cS(\rho) &\le S_{\mathrm{u}}[i], \\
	S_{\mathrm{l}}^2[i] &\le \cS_2(\rho) &\le S_{\mathrm{u}}^2[i], \\
	P_{\mathrm{l}}[i] &\le \cP(\rho) &\le P_{\mathrm{u}}[i], \\
	P_{\mathrm{l}}^-[i] &\le \cP^-(\rho) &\le P_{\mathrm{u}}^-[i], \\
	H_{\mathrm{l}}[i] &\le \cH(\rho) &\le H_{\mathrm{u}}[i], \\
	H_{\mathrm{l}}^-[i] &\le \cH^-(\rho) &\le H_{\mathrm{u}}^-[i],
\end{alignat*}
as well as
\begin{alignat*}{2}
	R_{\mathrm{l}}^1[i] &\le R_1(\vartheta) &\le R_{\mathrm{u}}^1[i].
\end{alignat*}
We are going to construct the next iteration of bounds. From~\eqref{eq:sthat} we have
\[
	\cS(\rho) = \frac{\cD^2(\rho)}{1 + \cD(\rho)}.
\]
The function $x \mapsto x^2/(1+x)$ is increasing for $x>0$. Hence we may set
\begin{align*}
	S_{\mathrm{l}}[i+1] &=  (D_{\mathrm{l}}[i])^2 / (1 + D_{\mathrm{l}}[i]), \\
	S_{\mathrm{u}}[i+1] &=  (D_{\mathrm{u}}[i])^2 / (1 + D_{\mathrm{u}}[i]).
\end{align*}

We set $k=44$, since we have obtained truncations of generating series up to this order in Section~\ref{sec:truncations}. We have
\[
\cD_2(\rho) = \cD(\rho^2) = \sum_{i=1}^{k} d_i \rho^{2i} + E = \cD^{\langle k \rangle}(\rho^2) + E
\]
with
\[
0 \le E := \rho^{k+1} \sum_{i \ge k+1} d_i \rho^{2i - (k+1) } \le \rho^{k+1} \cD(\rho).
\]
Hence we may set
\begin{align*}
	D_{\mathrm{l}}^2[i+1] &= \cD^{\langle k \rangle}((\rho_{\mathrm{l}}[i])^2), \\
	D_{\mathrm{u}}^2[i+1] &= \cD^{\langle k \rangle}((\rho_{\mathrm{u}}[i])^2) + (\rho_{\mathrm{u}}[i])^{k+1} D_{\mathrm{u}}[i].
\end{align*}
By analogous arguments we may bound $\cS_2(\rho)$ and $\cL_2(\rho)$, allowing us to set
\begin{align*}
	S_{\mathrm{l}}^2[i+1] &= \cS^{\langle k \rangle}((\rho_{\mathrm{l}}[i])^2), \\
	S_{\mathrm{u}}^2[i+1] &= \cS^{\langle k \rangle}((\rho_{\mathrm{u}}[i])^2) + (\rho_{\mathrm{u}}[i])^{k+1} S_{\mathrm{u}}[i+1], \\
	L_{\mathrm{l}}^2[i+1] &= \cL^{\langle k \rangle}((\rho_{\mathrm{l}}[i])^2), \\
	L_{\mathrm{u}}^2[i+1] &= \cL^{\langle k \rangle}((\rho_{\mathrm{u}}[i])^2) + (\rho_{\mathrm{u}}[i])^{k+1} L_{\mathrm{u}}[i].
\end{align*}
By~\eqref{eq:lroot} we have
\[
	\cL(\rho) = 1 + \rho^2 - \sqrt{1 + 2 \rho^2 + \rho^4 - \cL_2 - \rho^2(\cD(\rho) + \cD^-(\rho) )}.
\]
Hence we may set
\begin{align*}
	L_{\mathrm{l}}[i+1] &= 1 + (\rho_{\mathrm{l}}[i])^2 \\
	&\quad- \sqrt{1 + 2 (\rho_{\mathrm{u}}[i])^2 + (\rho_{\mathrm{u}}[i])^4 - L_{\mathrm{l}}^2[i+1] - (\rho_{\mathrm{l}}[i])^2(D_{\mathrm{l}}[i] + D_{\mathrm{l}}^-[i] )},\\
	L_{\mathrm{u}}[i+1] &= 1 + (\rho_{\mathrm{u}}[i])^2 \\
	&\quad- \sqrt{1 + 2 (\rho_{\mathrm{l}}[i])^2 + (\rho_{\mathrm{l}}[i])^4 - L_{\mathrm{u}}^2[i+1] - (\rho_{\mathrm{u}}[i])^2(D_{\mathrm{u}}[i] + D_{\mathrm{u}}^-[i] )}.
\end{align*}
Of course, this is only valid if the argument of the square root stays positive and the resulting bounds stay positive. If any of these required  conditions fails, we abort the iteration.

From
\[
	\cP= x^2\cD + \frac{x^2}{2}(\cD^2 + \cD_2)
\]
it follows that we may set
\begin{align*}
	P_{\mathrm{l}}[i+1] &= (\rho_{\mathrm{l}}[i])^2 D_{\mathrm{l}}[i] + \frac{(\rho_{\mathrm{l}}[i])^2}{2}(  (D_{\mathrm{l}}[i])^2 +  D_{\mathrm{l}}^2[i+1]), \\
	P_{\mathrm{u}}[i+1] &= (\rho_{\mathrm{u}}[i])^2 D_{\mathrm{u}}[i] + \frac{(\rho_{\mathrm{u}}[i])^2}{2}(  (D_{\mathrm{u}}[i])^2 +  D_{\mathrm{u}}^2[i+1]).
\end{align*}
By Proposition~\ref{pro:brown}, Lemma~\ref{le:keynumeric} and the fact that $\cD_2(x) \le_{\mathrm{coeff}} \cD(x)^2$ and $\cD^-(x) \le_{\mathrm{coeff}} \cD(x)$ we may write
\begin{align}
	\label{eq:aous}
	R_1(\vartheta)  =  R_1^{\langle k \rangle}(\vartheta) + E_1
\end{align}
with $E_1 \ge 0$ satisfying
\begin{align*}
	E_1 &\le \frac{1}{1 + \cD(\rho)} \sum_{j \ge \lceil(k+1)/2\rceil} \left(\frac{27}{256} \right)^j [x^{2j}y^{3j-1}] R_1(x,x^2,y,y^2,y) \\
		&\le \frac{1}{1 + \cD(\rho)} \sum_{j \ge \lceil(k+1)/2\rceil}\left(\frac{27}{256} \right)^j \epsilon_j/2.
\end{align*}
Hence we may set
\begin{align*}
	R_{\mathrm{l}}^1[i+1] &= R_1^{\langle k \rangle}(\rho_{\mathrm{l}}[i], (\rho_{\mathrm{l}}[i])^2, 1 + D_{\mathrm{l}}[i], 1 + D_{\mathrm{l}}^2[i+1], 1 + D_{\mathrm{l}}^-[i]), \\
	R_{\mathrm{u}}^1[i+1] &= R_1^{\langle k \rangle}(\rho_{\mathrm{u}}[i], (\rho_{\mathrm{u}}[i])^2, 1 + D_{\mathrm{u}}[i], 1 + D_{\mathrm{u}}^2[i+1], 1 + D_{\mathrm{u}}^-[i])  \\
	&\quad+  \frac{1}{1 + D_{\mathrm{l}}[i]} \sum_{j \ge \lceil(k+1)/2\rceil}\left(\frac{27}{256} \right)^j \epsilon_j/2.
\end{align*}
Recall by Corollary~\ref{co:mcomp}
\begin{align*}
	M(\rho, 1+ \cD(\rho)) =  \frac{1}{1 +\cD(\rho)} \frac{5}{512}.
\end{align*}
This allows us to set
\begin{align*}
	H_{\mathrm{l}}[i+1] &= R_{\mathrm{l}}^1[i+1] +  \frac{1}{1 + D_{\mathrm{u}}[i]} \frac{5}{512}, \\
	H_{\mathrm{u}}[i+1] &= R_{\mathrm{u}}^1[i+1] +  \frac{1}{1 + D_{\mathrm{l}}[i]} \frac{5}{512}.
\end{align*}
We may now set
\begin{align*}
	D_{\mathrm{l}}[i+1] &= L_{\mathrm{l}}[i+1] + S_{\mathrm{l}}[i+1] + P_{\mathrm{l}}[i+1] + H_{\mathrm{l}}[i+1], \\
	D_{\mathrm{u}}[i+1] &= L_{\mathrm{u}}[i+1] + S_{\mathrm{u}}[i+1] + P_{\mathrm{u}}[i+1] + H_{\mathrm{u}}[i+1].
\end{align*}
From
\[
	\rho^2(1 +\cD(\rho))^3 = 27/256
\]
we obtain
\begin{align*}
	\rho_{\mathrm{l}}[i+1] &= \sqrt{\frac{1}{(1 + D_{\mathrm{u}}[i+1])^3} 27/256}, \\
	\rho_{\mathrm{u}}[i+1] &= \sqrt{\frac{1}{(1 + D_{\mathrm{l}}[i+1])^3} 27/256}.
\end{align*}
With
\[
\cP^-(\rho) = \rho^2 \cD^-(\rho) + \frac{\rho^2}{2}( (\cD^-(\rho))^2 + \cD_2(\rho)), 
\]
we set
\begin{align*}
	P_{\mathrm{l}}^-[i+1] &= (\rho_{\mathrm{l}}[i+1])^2 D_{\mathrm{l}}^-[i] + \frac{(\rho_{\mathrm{l}}[i+1])^2}{2}( (D_{\mathrm{l}}^-[i])^2 + D_{\mathrm{l}}^2[i+1]), \\
	P_{\mathrm{u}}^-[i+1] &= (\rho_{\mathrm{u}}[i+1])^2 D_{\mathrm{u}}^-[i] + \frac{(\rho_{\mathrm{u}}[i+1])^2}{2}( (D_{\mathrm{u}}^-[i])^2 + D_{\mathrm{u}}^2[i+1]).
\end{align*}
From
\[
	\cS^-(\rho) = (1 + \cD^-(\rho)) (\cD_2(\rho) - \cS_2(\rho))
\]
we obtain
\begin{align*}
	S_{\mathrm{l}}^-[i+1] &= (1 + D_{\mathrm{l}}^-[i])(D_{\mathrm{l}}^2[i+1] - S_{\mathrm{u}}^2[i+1]), \\
	S_{\mathrm{u}}^-[i+1] &= (1 + D_{\mathrm{u}}^-[i])(D_{\mathrm{u}}^2[i+1] - S_{\mathrm{l}}^2[i+1]).
\end{align*}
If the proposed lower bound becomes negative, we abort the iteration.

Applying Proposition~\ref{pro:brown} and Lemma~\ref{le:keynumeric} in a fully analogous manner as for $H_{\mathrm{l}}[i+1]$ and  $H_{\mathrm{u}}[i+1]$ allows us to set
\begin{align*}
	H_{\mathrm{l}}^-[i+1] &= R_2^{\langle k \rangle}(\rho_{\mathrm{l}}[i+1], (\rho_{\mathrm{l}}[i+1])^2, 1 + D_{\mathrm{l}}[i+1], 1 + D_{\mathrm{l}}^2[i+1], 1 + D_{\mathrm{l}}^-[i]), \\
	H_{\mathrm{u}}^-[i+1] &= R_2^{\langle k \rangle}(\rho_{\mathrm{u}}[i+1], (\rho_{\mathrm{u}}[i+1])^2, 1 + D_{\mathrm{u}}[i+1], 1 + D_{\mathrm{u}}^2[i+1], 1 + D_{\mathrm{u}}^-[i])  \\
	&\quad+  \frac{1}{1 + D_{\mathrm{l}}[i+1]} \sum_{j \ge \lceil(k+1)/2\rceil}\left(\frac{27}{256} \right)^j (\epsilon_{j-1} + \epsilon_j + \theta_j).
\end{align*}
Finally,
\begin{align*}
	D_{\mathrm{l}}^-[i+1] &= L_{\mathrm{l}}[i+1] + S_{\mathrm{l}}^-[i+1] + P_{\mathrm{l}}^-[i+1] + H_{\mathrm{l}}^-[i+1], \\
	D_{\mathrm{u}}^-[i+1] &= L_{\mathrm{u}}[i+1] + S_{\mathrm{u}}^-[i+1] + P_{\mathrm{u}}^-[i+1] + H_{\mathrm{u}}^-[i+1].
\end{align*}
This completes the description of the recursion. 

By Corollary~\ref{co:sharpy} and the fact $\cD^-(\rho) \le \cD(\rho)$, we may initialise this procedure with the start values
\begin{align*}
	\rho_{\mathrm{l}}[0] &=0.307, \\
	\rho_{\mathrm{u}}[0] &=\rho_k, \\
	D_{\mathrm{l}}[0] &= \cD^{\langle k \rangle}(\rho_k), \\
	D_{\mathrm{u}}[0] &= 0.038, \\
	D_{\mathrm{u}}^-[0]&=0.038, \\
	L_{\mathrm{u}}[0]&=0.012.
\end{align*}
The remaining start values for the lower bounds are set to zero. The remaining start values for the upper bounds do not get used by the procedure, and may remain in an arbitrary starting state for $i=0$. (At time $i+1$ these bounds get computed from other time $i$ values then they are used directly before the step $i+1$ of the procedure terminates.) Running the procedure from $i=0$ until $i=200$ yields the stated bounds. 
\end{proof}

\begin{corollary}
	\label{co:padi}
We have
	\begin{alignat*}{2}
		0.037018252397683 &\le \partial_1 R_1(\vartheta) &\le 0.037227709665557, \\
		0.067168778620222 &\le \partial_2 R_1(\vartheta) &\le 0.067500626434639, \\
		0.000079495165998 &\le \partial_3 R_1(\vartheta) &\le 0.000174249520716, \\
		0.012689059335935 &\le \partial_4 R_1(\vartheta) &\le 0.012736757392454, \\
		0.006273159247075 &\le \partial_5 R_1(\vartheta) &\le 0.006368412933331, \\
		0.040518269261126 &\le \partial_1 R_2(\vartheta) &\le 0.040999840264280, \\
		0.223778904704424 &\le \partial_2 R_2(\vartheta) &\le 0.224551628314946, \\
		0.006273159247075 &\le \partial_3 R_2(\vartheta) &\le 0.006492964428157, \\
		0.029525024297009 &\le \partial_4 R_2(\vartheta) &\le 0.029635534579018, \\
		0.007799088274125 &\le \partial_5 R_2(\vartheta) &\le 0.008019097678712.
	\end{alignat*}
\end{corollary}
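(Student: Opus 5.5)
The plan is to bound each partial derivative $\partial_j R_i(\vartheta)$ from both sides by the same truncation-plus-tail scheme used in the proof of Theorem~\ref{te:numeric}, evaluated on the final rigorous interval enclosures for $\rho$, $\cD(\rho)$, $\cD^-(\rho)$ and $\cD_2(\rho)$ that the theorem provides.

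\textbf{Lower bounds.} Each $R_i$ has nonnegative coefficients, so every partial derivative $\partial_j R_i$ is again a power series with nonnegative coefficients. Hence it is coordinatewise nondecreasing on the nonnegative orthant. It is also bounded below coefficient-wise by $\partial_j R_i^{\langle k\rangle}$, where $k=44$ and $R_i^{\langle k\rangle}$ is the truncation computed from \texttt{plantri}. For the lower bound I would therefore evaluate $\partial_j R_i^{\langle k\rangle}$ at the vector
\[
\vartheta_{\mathrm{l}}=(\rho_{\mathrm{l}},\rho_{\mathrm{l}}^2,1+D_{\mathrm{l}},1+D^2_{\mathrm{l}},1+D^-_{\mathrm{l}}),
\]
whose entries are the lower bounds from Theorem~\ref{te:numeric}. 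Since $\vartheta_{\mathrm{l}}\le\vartheta$ coordinatewise, this gives a valid lower bound for $\partial_j R_i(\vartheta)$.

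\textbf{Upper bounds.} For the upper bound I would evaluate the same truncation at $\vartheta_{\mathrm{u}}$, the analogous vector built from the upper bounds, and then add a tail term. The tail consists of the contributions of cores with $2n>k$ vertices. I would control it through a monomial-wise domination.

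1. A monomial $s_1^{\alpha}s_2^{\beta}a_1^{c}a_2^{d}b_1^{e}$ in $R_i$ with $[x^{2n}y^{3n-1}]$-degree satisfies $\alpha+2\beta=2n$ and $c+2d+e=3n-1$.
2. Using $1+\cD_2\le (1+\cD)^2$ and $\cD^-\le\cD$, each such monomial at $\vartheta$ is at most $\rho^{2n}(1+\cD(\rho))^{3n-1}$.
3. Differentiating in the $j$th variable multiplies the monomial by its exponent and divides by the corresponding coordinate. The exponent is at most $3n-1$ for $j=3,4,5$, and at most $2n$ for $j=1,2$. The coordinate is at least $\rho$, $\rho^2$, or $1$.
4. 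By Lemma~\ref{le:keynumeric}, $\rho^{2n}(1+\cD(\rho))^{3n}=(27/256)^n$.
5. The total coefficient sum for each $n$ is bounded by Proposition~\ref{pro:brown}: $\epsilon_n/2$ for $R_1$ and $\epsilon_{n-1}+\epsilon_n+\theta_n$ for $R_2$.

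Together these give a tail bound of the form
\[
\frac{C_j}{(1+\cD(\rho))\,\rho^{a_j}}\sum_{n\ge \lceil (k+1)/2\rceil} (3n)\left(\tfrac{27}{256}\right)^n\cdot(\text{Brown bound}),
\]
where $a_j\in\{0,1,2\}$ depends on which coordinate is differentiated. I would then replace $\rho$ and $\cD(\rho)$ in this expression by their worst-case interval endpoints.

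\textbf{Main obstacle.} The hard step is to show that this tail is numerically negligible at the claimed precision. Stirling's formula gives $\epsilon_n\sim c\,n^{-5/2}(27/4)^n$, so $(27/256)^n\epsilon_n$ decays like $(729/1024)^n n^{-5/2}$. The extra factor $n$ from differentiation makes it decay like $n^{-3/2}(0.712)^n$, and $\theta_n$ behaves similarly. At $n\ge 23$ the sum is therefore of order $10^{-4}$ to $10^{-5}$. That is consistent with the widths of the stated intervals, which are largest for $\partial_3 R_1$, $\partial_3R_2$ and $\partial_5 R_i$.

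I would evaluate this sum rigorously by computing finitely many terms exactly and bounding the remainder by a geometric series. This uses that the ratio of consecutive terms $\epsilon_{n+1}/\epsilon_n$ increases monotonically to $27/4$.

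The remaining work is to combine truncation and tail in Mathematica using interval arithmetic with exact rationals. This also covers the slightly different normalisations in the $s_2$-variable, which enters with argument $\rho^2$, so its derivative brings in a factor $\rho^{-2}$.
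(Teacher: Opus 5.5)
\textbf{Gap: the tail majorant in step 3 is too weak to reach the stated constants.}

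Your scheme is the paper's. The lower bounds come from the $k=44$ truncations $\partial_j R_i^{\langle k\rangle}$ evaluated at the lower endpoints of Theorem~\ref{te:numeric}. The upper bounds come from the same truncations at the upper endpoints, plus a tail over $n\ge 23$ controlled by Proposition~\ref{pro:brown} and Lemma~\ref{le:keynumeric}. Your lower bounds are fine.

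The problem is that your tail majorant is strictly weaker than the paper's $E_1(j),E_2(j)$, and this statement has no slack. The stated interval widths essentially equal the paper's tail bounds; the spread of the truncated part between the two endpoint vectors is tiny. Any pointwise larger tail therefore yields upper bounds above the claimed ones. Concretely:
\begin{itemize}
\item \textbf{$j=2$.} You bound the $s_2$-exponent by $2n$, but $\alpha+2\beta=2n$ forces $\beta\le n$. Your tail is twice the paper's $E_1(2)\approx 3.3\cdot 10^{-4}$, which is the entire width of the stated interval for $\partial_2R_1$. The same happens for $\partial_2R_2$.
\item \textbf{$j=4$.} From $c+2d+e=3n-1$ you get $d\le(3n-1)/2$. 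Moreover, the dominations $1+\cD_2\le(1+\cD)^2$ and $\cD^-\le\cD$ should be applied to the \emph{differentiated} monomial, i.e.\ to $a_2^{d-1}$. This gives the prefactor $(1+\cD(\rho))^{-3}$ rather than your $(1+\cD(\rho))^{-1}$. Again you lose roughly a factor $2$.
\item \textbf{$j=3,5$.} The same remark gives $(1+\cD(\rho))^{-2}$ instead of your $(1+\cD(\rho))^{-1}$, since you bound the coordinate below by $1$. The exponent factor should also be $3n-1$ rather than $3n$. This costs only about $3\%$. But the tail for $\partial_3R_1$ is about $9.5\cdot 10^{-5}$, so your upper bound exceeds the claimed $0.000174249520716$ by roughly $2$--$3\cdot 10^{-6}$. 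The same applies to $\partial_5R_1$, $\partial_3R_2$ and $\partial_5R_2$.
\end{itemize}
Only your $j=1$ tail coincides with the paper's. As written, your computation certifies valid but weaker intervals for eight of the ten upper bounds, not the stated ones. With the three sharpenings above your tail becomes exactly the paper's $E_i(j)$, and the plan goes through.

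\textbf{Minor slips.} Neither affects validity, since you bound the remainder geometrically anyway.
\begin{itemize}
\item $\epsilon_n=\frac{1}{2n+1}\binom{3n}{n}\sim\frac{\sqrt3}{4\sqrt\pi}\,n^{-3/2}(27/4)^n$, not $n^{-5/2}$.
\item $(27/256)^n\theta_n$ decays roughly like $(27/256)^{n/2}$ up to polynomial factors. It is therefore negligible, not comparable to the $\epsilon$-terms.
\end{itemize}
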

\begin{proof}
	Set $k=44$. Analogous to~\eqref{eq:aous} we have for $1 \le i \le 5$
	\begin{align*}
		\partial_i R_1(\vartheta)  =  \partial_i R_1^{\langle k \rangle}(\vartheta) + E_1(i)
	\end{align*}
	with $E_1(i) \ge 0$ satisfying by Proposition~\ref{pro:brown} 
	\begin{align*}
			E_1(1) &\le \frac{1}{\rho(1 + \cD(\rho))}  \sum_{j \ge \lceil(k+1)/2\rceil} \left(\frac{27}{256} \right)^j 2j [x^{2j}y^{3j-1}] R_1(x,x^2,y,y^2,y) \\
			&\le \sqrt{(1 + \cD(\rho))27/256} \sum_{j \ge \lceil(k+1)/2\rceil}\left(\frac{27}{256} \right)^{j-1} j \epsilon_j, \\
			E_1(2) &\le (1 + \cD(\rho))^2  \sum_{j \ge \lceil(k+1)/2\rceil} \left(\frac{27}{256} \right)^{j-1} j \epsilon_j/2,
	\end{align*}
	and likewise
	\begin{align*}
		E_1(3) &\le \frac{1}{(1 + \cD(\rho))^2} \sum_{j \ge \lceil(k+1)/2\rceil}\left(\frac{27}{256} \right)^{j} (3j-1)  \epsilon_j / 2,\\
		E_1(4) &\le \frac{1}{(1 + \cD(\rho))^3} \sum_{j \ge \lceil(k+1)/2\rceil}\left(\frac{27}{256} \right)^{j} (3j-1)  \epsilon_j / 4,\\
		E_1(5) &\le \frac{1}{(1 + \cD(\rho))^2} \sum_{j \ge \lceil(k+1)/2\rceil}\left(\frac{27}{256} \right)^{j} (3j-1)  \epsilon_j / 2.
	\end{align*}
	Analogously, for $1  \le i \le 5$
	\begin{align*}
		\partial_i R_2(\vartheta)  =  \partial_i R_2^{\langle k \rangle}(\vartheta) + E_2(i)
	\end{align*}
	with $E_2(i) \ge 0$ satisfying 
	\begin{align*}
		E_2(1) &\le \sqrt{(1 + \cD(\rho))27/256} \sum_{j \ge \lceil(k+1)/2\rceil}\left(\frac{27}{256} \right)^{j-1} 2 j(\epsilon_{j-1} + \epsilon_j + \theta_j), \\
		E_2(2) &\le (1 + \cD(\rho))^2 \sum_{j \ge \lceil(k+1)/2\rceil}\left(\frac{27}{256} \right)^{j-1} j(\epsilon_{j-1} + \epsilon_j + \theta_j), \\
		E_2(3) &\le \frac{1}{(1 + \cD(\rho))^2} \sum_{j \ge \lceil(k+1)/2\rceil}\left(\frac{27}{256} \right)^{j} (3j-1)(\epsilon_{j-1} + \epsilon_j + \theta_j), \\
		E_2(4) &\le \frac{1}{(1 + \cD(\rho))^3} \sum_{j \ge \lceil(k+1)/2\rceil}\left(\frac{27}{256} \right)^{j} (3j-1)(\epsilon_{j-1} + \epsilon_j + \theta_j)/2, \\
		E_2(5) &\le \frac{1}{(1 + \cD(\rho))^2} \sum_{j \ge \lceil(k+1)/2\rceil}\left(\frac{27}{256} \right)^{j} (3j-1)(\epsilon_{j-1} + \epsilon_j + \theta_j).
	\end{align*}
	Plugging in the bounds from Theorem~\ref{te:numeric} yields the stated bounds for the partial derivatives of $R_1$ and $R_2$.
\end{proof}

\begin{corollary}
	\label{co:ssi}
 	We have
	\begin{alignat*}{2}
		0.002653575206648 &\le \cD_2'(\rho) &\le 0.002653905861533, \\
		0.000038688165463 &\le \cL_2'(\rho) &\le 0.000038695587969, \\
		0.002653031142560 &\le \cD_2'(\rho) - \cS_2'(\rho) &\le 0.002653361653329.
	\end{alignat*}
\end{corollary}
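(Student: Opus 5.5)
We bound $\cD_2'(\rho)$, $\cL_2'(\rho)$ and $\cD_2'(\rho)-\cS_2'(\rho)$ the same way as $\cD_2(\rho)$, $\cL_2(\rho)$, $\cS_2(\rho)$ in the proof of Theorem~\ref{te:numeric}. We combine the explicit truncations of Section~\ref{sec:truncations} at order $k=44$ with a rigorous tail estimate. The tail is controlled by $\cD(\rho)$, which Theorem~\ref{te:numeric} already bounds.

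Since $\cD_2(x)=\cD(x^2)$, we have
\[
\cD_2'(\rho)=2\rho\,\cD'(\rho^2)=\sum_{i} 2i\,d_i\,\rho^{2i-1}.
\]
We split this sum at $i\le k$ and $i>k$. The truncated part is monotone increasing in $\rho$, so we evaluate it at the lower and upper bounds for $\rho$ from Theorem~\ref{te:numeric}. This gives a lower bound and the main part of the upper bound.

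For the tail we use
\[
2i\rho^{2i-1}=2i\rho^{i-1}\cdot\rho^{i}\le \Big(\sup_{i>k}2i\rho^{i-1}\Big)\rho^i .
\]
For $\rho\approx0.3156$, the map $i\mapsto 2i\rho^{i-1}$ is decreasing for $i\ge 1$, so the supremum is attained at $i=k+1$. Hence
\[
0\le \sum_{i>k}2i\,d_i\rho^{2i-1}\le 2(k+1)\rho_{\mathrm{u}}^{k}\,\cD(\rho)\le 2(k+1)\rho_{\mathrm{u}}^{k}D_{\mathrm{u}}.
\]
This term is of order $10^{-22}$, which is negligible.

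The same argument applied to $\cL$ and to $\cS$ bounds $\cL_2'(\rho)$ and $\cS_2'(\rho)$. For their tails we use $\cL(\rho)\le L_{\mathrm{u}}$ and $\cS(\rho)\le S_{\mathrm{u}}$, or simply $\cL(\rho),\cS(\rho)\le\cD(\rho)$ via coefficient-wise domination.

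For the difference $\cD_2'-\cS_2'$ we do not subtract the two separate bounds. Instead we use that $\cD-\cS=\cL+\cP+\cH$ has nonnegative coefficients, given explicitly by the truncations. So
\[
\cD_2'(\rho)-\cS_2'(\rho)=\sum_i 2i\,(d_i-[x^i]\cS)\,\rho^{2i-1}.
\]
The truncated part is again monotone in $\rho$, and the tail is bounded by $2(k+1)\rho_{\mathrm{u}}^k\,\cD(\rho)$ as before.

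The only delicate point is checking the monotonicity of $i\mapsto 2i\rho^{i-1}$ over the whole range $i>k$; it holds because $\rho<1/2$. After that, the remaining work is a direct interval-arithmetic evaluation of the degree-$44$ polynomials at the endpoints $0.315602932219633$ and $0.315608352382852$. This evaluation produces the stated digits.
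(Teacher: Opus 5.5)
Your proposal is correct and matches the paper's proof: truncate at $k=44$, evaluate the truncated derivatives (which have nonnegative coefficients and are therefore monotone in $\rho$) at $\rho_{\mathrm{l}}$ and $\rho_{\mathrm{u}}$, bound the tail using that $m\rho^{m-1}$ is decreasing, and treat the difference via the nonnegativity of the coefficients of $\cD-\cS$. The only cosmetic difference is in the tail: the paper starts it at $K=k+2=46$, since odd coefficients vanish, and bounds it by $2K\rho_{\mathrm{u}}^{K-1}\bigl(D_{\mathrm{u}}-\cD^{\langle k\rangle}(\rho_{\mathrm{l}})\bigr)$, whereas you use the cruder $2(k+1)\rho_{\mathrm{u}}^{k}D_{\mathrm{u}}$; at the stated precision this makes no difference.
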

\begin{proof}
	Note that $[x^n] \cS(x) \le [x^n] \cD(x)$ for all $n$.
	The lower bounds are obtained by evaluating the derivatives of the truncated generating series by the lower bound of $\rho$ obtained in Theorem~\ref{te:numeric}. 
	
	For the upper bounds, set $k=44$ and $K=k+2$. Let
	\[
		E = \sum_{m \ge K} 2m d_{m} \rho^{2m-1}.
	\]
	This way,
	\[
		\cD_2'(\rho) = \sum_{m=4}^k 2m d_m \rho^{2m-1} + E.
	\]
	By the upper bound for $\rho$ in Theorem~\ref{te:numeric}, we have for $m \ge K$
	\[
		\frac{(m+1)\rho^{m+1}}{m\rho^m} = \left(1+\frac1m\right)\rho \le \left(1+\frac1K\right)\rho < 1.
	\]
	Hence the sequence $m\rho^m$ is decreasing for $m\ge K$. Therefore
	\begin{align*}
		E &= \sum_{m\ge K} 2m d_{m} \rho^{2m-1} \\
		&\le 2 K \rho^{K-1} \sum_{m\ge K} d_{m}\rho^m \\
		&= 2 K \rho^{K-1} \left(\cD(\rho) - \cD^{\langle k \rangle}(\rho) \right).
	\end{align*}
	Hence with $\rho_{\mathrm{l}} \le \rho \le \rho_{\mathrm{u}}$ and $\cD(\rho) \le D_{\mathrm{u}}$ the bounds from Theorem~\ref{te:numeric} we obtain
	\begin{align*}
		\cD_2'(\rho) &\le \sum_{m=4}^k 2m d_m \rho^{2m-1}  + 2 K \rho^{K-1} \left(\cD(\rho) - \cD^{\langle k \rangle}(\rho) \right) \\
		&\le \sum_{m=4}^k 2m d_m \rho_{\mathrm{u}}^{2m-1}  + 2 K \rho_{\mathrm{u}}^{K-1} \left(D_{\mathrm{u}} - \cD^{\langle k \rangle}(\rho_{\mathrm{l}}) \right).
	\end{align*}
	The remaining upper bounds for $\cL_2'(\rho)$ and $\cD_2'(\rho) - \cS_2'(\rho)$ may be obtained analogously.
\end{proof}

\begin{corollary}
	\label{co:constants}
	We have
	\begin{alignat*}{2}
		0.045356936670661 &\le \Delta &\le 0.045358968590721, \\
		0.018874670237461 &\le \mu &\le 0.018886335636483, \\
		0.245963056274748 &\le \Ex{L} &\le 0.246237577296421, \\
		0.533332604696564 &\le \Ex{Z} &\le 0.533960972759706, \\
		6.475011097668019 &\le \Ex{X} &\le 6.491809765761764, \\
		6.854400539306733 &\le \Ex{Y} &\le 6.869460478394766, \\
		0.060370180161665 &\le \alpha &\le 0.060463169887492, \\
		2.870539413038735 &\le \beta &\le 2.874588048549916, \\
		0.803865622664150 &\le \chi/2 &\le 0.804428352762890, \\
		0.061061383492303 &\le c_{\cD^-} &\le 0.061340254776674, \\
		0.048126493974744 &\le c_{\cL} &\le 0.048141299280483, \\
		0.853636457272574 &\le c_{\cN_{\mathrm{s}}} &\le 0.853662171949282, \\
		0.070022335988452 &\le \gamma &\le 0.070409625852348.
	\end{alignat*}
\end{corollary}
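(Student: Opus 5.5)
The plan is to treat Corollary~\ref{co:constants} as interval arithmetic on the explicit closed-form expressions derived earlier. Every constant listed is an explicit rational or algebraic function of $\rho$, $\cD(\rho)$, $\cD^-(\rho)$, $\cD_2(\rho)$, $\cL(\rho)$, $\cL_2(\rho)$, $\cS_2(\rho)$, $\cD_2'(\rho)$, $\cS_2'(\rho)$, $\cL_2'(\rho)$, and of the partial derivatives $\partial_i R_1(\vartheta)$, $\partial_i R_2(\vartheta)$. Rigorous two-sided enclosures for all of these inputs are already available from Theorem~\ref{te:numeric}, Corollary~\ref{co:padi} and Corollary~\ref{co:ssi}.

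I will evaluate the formulas in the following order.

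\begin{enumerate}
\item $\Delta=\rho^2/(2(1+\rho^2-\cL(\rho)))$ and $\mu=\cD(\rho)/(1+\cD(\rho))$.
\item The matrix entries $a_{ij}$ and the vectors $b$, $\tilde b$, $\hat b$, $\check b$ from Lemma~\ref{le:le3}, which give $\Ex{L}$, $\Ex{Z}$, $\Ex{X}$ and $\Ex{Y}$.
\item $\alpha$, $\beta$ and $\chi$ from their defining formulas.
\item $c_{\cD^-}$, $c_\cL$ and $c_{\cN_{\mathrm{s}}}$ from Lemma~\ref{le:blo1}.
\item $\gamma$ from Theorem~\ref{te:connected}.
\end{enumerate}

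In each step I bound a quantity from above by plugging upper bounds into arguments on which the expression is increasing and lower bounds into arguments on which it is decreasing, and symmetrically for the lower bound.

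For most expressions, monotonicity in each argument on the relevant small box is clear:
\begin{itemize}
\item $x\mapsto x/(1+x)$ and $x\mapsto x/(1+x)^2$ are increasing for $0<x<1$;
\item $\Delta$ is increasing in $\rho$ and in $\cL(\rho)$;
\item the numerators of the $b$-vectors are sums of nonnegative products.
\end{itemize}
Where an argument enters with mixed sign, I will bound each occurrence separately. Examples are $\cD(\rho)$ appearing in both numerator and denominator of $\check b_1$, and $\partial_5R_1(\vartheta)$ inside $a_{12}$, which is negative. Separate bounding is crude but valid, and the input intervals are narrow (relative width about $10^{-3}$ or less). For the quotients $\Ex{L}=(a_{22}b_1-a_{12}b_2)/(a_{11}a_{22}-a_{21}a_{12})$ and its analogues, I will bound the numerator and the determinant separately. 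The determinant is bounded below by roughly $0.56$ via \eqref{eq:deti}, so the quotient bounds are straightforward.

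The main obstacle is the dependency problem in interval arithmetic. Quantities like $\alpha$ and $\chi$ depend on $\Ex{L}$ and $\Ex{X}$, which were themselves computed from shared inputs, so naive propagation could widen the intervals beyond the stated precision. To limit this, I will not propagate $\Ex{L}$ and $\Ex{X}$ as independent intervals. Instead I will substitute the explicit Lemma~\ref{le:le3} expressions into $\alpha$, $\beta$ and $\chi$, and where necessary split the input box into a modest number of sub-boxes, bounding the composite function on each. The same splitting handles $\gamma$, which is a product of $c_{\cN_{\mathrm{s}}}$, $\cD(\rho)$, $\Ex{Z}$, $\beta^{-1}$ and a power of $(1-\Ex{L}+\tfrac32\mu\Ex{X})/(1-\Ex{L})$. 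All evaluations are carried out in Mathematica with outward-rounded arithmetic.
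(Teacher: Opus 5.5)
Your proposal is correct and follows essentially the same route as the paper: bound $\Delta$ and $\mu$ directly, solve the linear system of Lemma~\ref{le:le3} using the enclosures from Theorem~\ref{te:numeric}, Corollary~\ref{co:padi} and Corollary~\ref{co:ssi}, then compute $\alpha$, $\beta$, $\chi/2$, the constants of Lemma~\ref{le:blo1}, and finally $\gamma$, all by monotone interval arithmetic. The one difference is that the paper skips your dependency-reduction step: it propagates the intervals for $\Ex{L}$, $\mu$, $\Ex{X}$, $\Ex{Y}$ as if they were independent (for instance, using that $\chi/2$ is decreasing in each of $\Ex{L}$, $\mu$, $\Ex{X}$), and this already gives the stated precision, so your substitution and box-splitting are harmless but unnecessary.
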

\begin{proof}
	The expressions
	\[
		\Delta = \frac{\rho^2}{2(1+\rho^2-\cL(\rho))}
	\]
	and
	\[
		\mu = \frac{\cD(\rho)}{1 + \cD(\rho)}
	\]
	may be bounded directly using the bounds of Theorem~\ref{te:numeric}. Next, we use the linear system from Lemma~\ref{le:le3}. With
	\[
		A=	\begin{pmatrix} a_{11} & a_{12}\\ a_{21} & a_{22} \end{pmatrix}, \qquad J=a_{11} a_{22} - a_{21} a_{12},
	\]
	Lemma~\ref{le:le3} gives
	\begin{align*}
		\Ex{L} &= J^{-1}(a_{22} b_1 - a_{12} b_2) \\
		\Ex{Z} &= J^{-1}a_{22}\tilde{b}_1, \\
		\Ex{X} &= J^{-1}(a_{22}\hat{b}_1 - a_{12} \hat{b}_2), \\
		\Ex{Y} &= J^{-1}(a_{22}\check{b}_1 - a_{12} \check{b}_2).
	\end{align*}
	The coefficients of $A$ and the expressions for $b_1, b_2, \tilde{b}_1, \hat{b}_1, \hat{b}_2, \check{b}_1, \check{b}_2$ may be computed using the bounds from Theorem~\ref{te:numeric}, Corollary~\ref{co:padi} and Corollary~\ref{co:ssi}. This yields the stated intervals for $\Ex{L}$, $\Ex{Z}$, $\Ex{X}$ and $\Ex{Y}$.
	
	This allows us to compute
	\begin{align*}
		\alpha &= \frac{3\mu}{1 - \Ex{L} + \frac{3}{2} \mu \Ex{X}}, \\
		\beta &= \alpha \Ex{Y} + 3(1+\mu)\left(1 - \frac{\alpha}{2} \Ex{X} \right), \\
		\frac{\chi}{2} &= \frac{1-\Ex{L}} {1 - \Ex{L} + \frac{3}{2} \mu \Ex{X}}.
	\end{align*}
	For $\chi/2$, we use monotonicity of the last expression in the ``variables'' $\Ex{L},\mu,$ and $\Ex{X}$.
	
	It is decreasing in each of these variables on the relevant interval, so the lower bound is obtained from the upper endpoints of $\Ex{L},\mu,\Ex{X}$, and the upper bound from their lower endpoints.
		
	Next, we compute the simple-network constant from Lemma~\ref{le:blo1}. Namely,
	\begin{align*}
		c_{\cD^-} &=
		\frac{\Delta + \partial_3R_2(\vartheta)}{1 - \Delta - \cD_2(\rho) + \cS_2(\rho) - \rho^2(1 + \cD^-(\rho)) - \partial_5 R_2(\vartheta)}, \\
		c_{\cL} &= \Delta(1+c_{\cD^-}), \\
		c_{\cN_{\mathrm{s}}} &= 1 - \rho^2 - c_{\cL} + 2(\rho^{-2} - 1) c_{\cL} \cL(\rho).
	\end{align*}
	Substituting the resulting intervals into the expression
	\[
		\gamma = \frac{c_{\cN_{\mathrm{s}}}}{\beta} \cD(\rho) \frac{64 \sqrt{3}}{5 \sqrt{\pi}} \Exb{Z} \frac{(1 - \Exb{L} + \frac{3}{2} \mu \Exb{X})^{3/2}}{(1-\Exb{L})^{5/2}}
	\]
	from Theorem~\ref{te:connected}	yields the stated bounds for $\gamma$ and finalises the proof.
\end{proof}

\subsection{Numeric bounds for unconstrained cubic planar graphs}

In order to obtain numeric estimates for $\kappa$ we need bounds on $\cC(\rho^i)$ for $i \ge 1$. Hence we require truncations of $\cC(x)$.

In order to compute its coefficients, our first approach is to use the mentioned software \texttt{plantri} to iterate over all cubic planar maps and then de-duplicate using \texttt{shortg} from the \texttt{nauty} software suite~\cite{zbMATH06264238} by Brendan McKay and Adolfo Piperno. Specifically, 
\[
	\text{\texttt{plantri -c1m3 -d -s \{n\}d | shortg -u }  }
\]
outputs the number $c_n$ of unlabelled connected cubic planar graphs with $n$ vertices. This process may be parallelised using \texttt{plantri}'s res/mod functionality by storing the resulting chunks of graphs in a first stage and deduplicating graphs from different chunks in a second stage.

Regardless of whether we parallelise or not, the bottleneck is the de-duplication stage. At the very least we need to store and search all the non-isomorphic graphs found during program execution. This poses a major challenge already when $n>30$, where the number of non-isomorphic graphs exceeds $10^9$.

In order to tackle this challenge we plug the truncation of the Walsh index series for $3$-connected networks into our system of equations, yielding a truncation for the Walsh index series $W_{\cN_{\mathrm{s}}}^+$ and $W_{\cN_{\mathrm{s}}}^-$ of simple networks. These series relate to the Walsh index series $W_{\cC}$ of connected cubic planar graphs through the equations
\[
	W_{\cN_{\mathrm{s}}}^+ = 2 \frac{\partial W_\cC}{\partial a_1} \qquad \text{and} \qquad 	W_{\cN_{\mathrm{s}}}^- = 2 \frac{\partial W_\cC}{\partial b_1}.
\]
Integrating $W_{\cN_{\mathrm{s}}}^+$ by $a_1$ and dividing by $2$ yields a part of $W_{\cC}$, specifically the contribution of symmetries with at least one cylindrical edge $1$-cycle. If we integrate $W_{\cN_{\mathrm{s}}}^-$ by $b_1$, divide by $2$ and evaluate at $a_1=0$, we obtain a part of $W_{\cC}$ given by the contribution of symmetries with at least one m\"obius edge $1$-cycle and no cylindrical edge $1$-cycle. These two obtained parts do not overlap.

In order to calculate the missing contributions we  patch \texttt{plantri} to suppress all maps that do not admit an automorphism without edge $1$-cycles. Maps admitting automorphisms with no  edge $1$-cycles (m\"obius or cylindrical) are very rare, hence we need to de-duplicate a much smaller collection of graphs.

In this way, we overcome the bottleneck in the de-duplication stage and obtain a truncation of $W_{\cC}$ up to graphs with $38$ vertices. Using the multiset construction yields the corresponding truncation of $W_{\cG}$. We obtain:

\begin{tcolorbox}
	\tiny
	\begin{align*}
		\cC(x) &= x^{4} + x^{6} + 3x^{8} + 9x^{10} + 32x^{12} + 133x^{14} + 681x^{16} + 3893x^{18} + 24809x^{20} + 169206x^{22} + 1214462x^{24} \\
		&\quad+ 9034509x^{26} + 69093299x^{28}  + 539991437x^{30} + 4296547755x^{32} + 34707716982x^{34} \\
		&\quad + 284066443439x^{36} + 2351827694094x^{38} + O(x^{40}), \\[3pt]
		\cG(x) &= 1 + x^4+x^6+4x^8+10x^{10}+37x^{12}+146x^{14} +733x^{16}+4099x^{18}+25819x^{20}+174688x^{22} \\
		&\quad +1248012x^{24} + 9256401x^{26}+70652394x^{28}
		+551417906x^{30}+4383002696x^{32}+35378003617x^{34} \\
		&\quad +289366461497x^{36} + 2394425156287x^{38} + O(x^{40}).
	\end{align*}
\end{tcolorbox}

\begin{corollary}
	\label{co:kappabound}
	We have
	\[
		0.07082557663 \le \kappa \le 0.07124166270.
	\]
\end{corollary}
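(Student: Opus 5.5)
The plan is to write $\kappa = \gamma \exp\bigl(\sum_{i\ge1}\cC(\rho^i)/i\bigr)$ and bound the two factors separately. For $\gamma$ we take the interval from Corollary~\ref{co:constants}, namely $0.070022335988452 \le \gamma \le 0.070409625852348$. The exponential factor is increasing in $\rho$ and in the coefficients of $\cC$. We bound it from below by truncation and from above by truncation plus an explicit tail estimate, using the interval $0.315602932219633\le\rho\le0.315608352382852$ from Theorem~\ref{te:numeric}.

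For the lower bound we keep only nonnegative terms: $\sum_{i\ge1}\cC(\rho^i)/i \ge \sum_{i=1}^{I} \cC^{\langle 38\rangle}(\rho_{\mathrm l}^i)/i$ for some moderate $I$, where $\cC^{\langle 38\rangle}$ is the truncation displayed above. For the upper bound we need $\cC(\rho)$ itself, and the tail of $\cC(\rho)$ is the main obstacle. We will bound $c_m$ for $m\ge 40$ by comparing with simple networks. Every connected cubic planar graph with $m$ vertices has at least one rooting as a simple network, since it has $3m/2$ edges. Hence $c_m \le s_m = [x^m]\cN_{\mathrm s}(x)$, and so
\[
\cC(\rho) - \cC^{\langle 38\rangle}(\rho) \le \cN_{\mathrm s}(\rho) - \cN_{\mathrm s}^{\langle 38\rangle}(\rho).
\]
By~\eqref{eq:gotcha} we have $\cN_{\mathrm s}(\rho) = (1-\rho^2)\cD(\rho) + (\rho^{-2}-1)\cL(\rho)^2 - \cL(\rho)$. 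This is bounded above using the upper bounds of Theorem~\ref{te:numeric} for $\cD(\rho)$ and $\cL(\rho)$ together with the lower bound for $\rho$ where monotonicity requires it. The truncation $\cN_{\mathrm s}^{\langle 38\rangle}(\rho)$ is obtained from the network truncations and evaluated at $\rho_{\mathrm l}$. The comparison $c_m \le s_m$ loses a factor of roughly $\beta m$. If the resulting upper bound is not tight enough, we will instead use the sharper bound $c_m \le \tfrac{1}{m}\cdot(\text{const})\, s_m$ for $m\ge 40$. To justify it, we sum over symmetries: the Walsh relation $W_{\cN_{\mathrm s}}^+=2\partial W_\cC/\partial a_1$ gives $m!\,c_m \le |\Sym_\cC^0[m]| + m!\,s_m/2$. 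The contribution of $\Sym_\cC^0$ is rare but hard to control explicitly. If that route is too hard to make rigorous, we accept the cruder bound $c_m\le s_m$ and check whether the stated upper endpoint $0.07124166270$ still follows. The width of the stated interval, about $0.6\%$, suggests that a crude tail of size about $10^{-3}$ relative to $\cC(\rho)$ is enough.

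For $i\ge2$ the terms are tiny. Since $\cC$ starts at $x^4$ and has nonnegative coefficients, we have $\cC(\rho^i) \le \rho^{4(i-1)}\cC(\rho)$ for $\rho<1$, so $\sum_{i\ge I+1}\cC(\rho^i)/i \le \cC(\rho)\rho^{4I}/(1-\rho^4)$. For $2\le i\le I$ we evaluate $\cC^{\langle 38\rangle}(\rho_{\mathrm u}^i)$ and add the tail bound $\rho_{\mathrm u}^{40(i-1)}\bigl(\cC(\rho)-\cC^{\langle38\rangle}(\rho)\bigr)$, which is negligible. Finally we exponentiate and multiply the resulting interval by the $\gamma$ interval. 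This is a finite interval-arithmetic computation, done in Mathematica as in the earlier numerical corollaries, and it should reproduce the stated bounds.
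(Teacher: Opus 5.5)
Your argument is correct and follows the paper's scheme: truncate $\cC$, then bound the tail by comparison with a network series whose value at $\rho$ is controlled by Theorem~\ref{te:numeric}. The only real difference is the comparison series.

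\textbf{Comparison with the paper.} The paper roots each graph at a non-isthmus edge to get $c_m\le d_m$. It then takes the tail bound $D_{\mathrm u}-\cD^{\langle 38\rangle}(\rho_{\mathrm l})$ directly from Theorem~\ref{te:numeric}, and sums over all $i$ in closed form via $\sum_j c_j\log\frac{1}{1-\rho^j}$. You use $c_m\le s_m$, where any edge works, and evaluate $\cN_{\mathrm s}(\rho)$ through \eqref{eq:gotcha}. This costs an extra interval step with mixed signs: the term $-\cL(\rho)$ needs the lower bound $L_{\mathrm l}$, not the upper bound. In return you get a slightly smaller tail, since $s_m\approx 0.85\, d_m$.

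\textbf{Your slack heuristic is wrong.} The $0.6\%$ width of the stated interval comes entirely from the $\gamma$ interval. The stated endpoints are $\gamma_{\mathrm l}\cdot 1.011471206\ldots$ and $\gamma_{\mathrm u}\cdot 1.011817089\ldots$, the paper's own bounds on the exponential factor, rounded only in the eleventh digit. So there is essentially no slack. Your tail bound must be no worse than the paper's, which is about $3.4\cdot 10^{-4}$. That is roughly $3\%$ of $\cC(\rho)$, not $10^{-3}$ relative.

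\textbf{Why your bound still works.} Your tail comes to about $2.8\cdot 10^{-4}$, plus roughly $2\cdot 10^{-5}$ from the widths of the $\cD(\rho)$ and $\cL(\rho)$ intervals. That is about $3.0\cdot10^{-4}$ in total, below the paper's figure. So the final check does pass, but only because of this comparison, not for the reason you gave.

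\textbf{Truncation in $i$.} The same lack of slack applies to your lower bound. Cutting the $i$-sum at $I$ must lose less than about $4\cdot 10^{-11}$ in the logarithm. Taking $I\ge 5$ is safe, or you can use the closed form as the paper does.
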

\begin{proof}
	Set $k=38$. We use the notation
	\[
	\cC^{\langle k \rangle}(x) = \sum_{j=4}^k c_j x^j.
	\]
	Let $\rho_{\mathrm{l}} \le \rho \le \rho_{\mathrm{u}}$ denote the bounds from Theorem~\ref{te:numeric}. 
	For each $i \ge 1$ we have
	\[
		\cC(\rho^i) \ge \cC^{\langle k \rangle}(\rho_{\mathrm{l}}^i).
	\]
	On the other hand,
	\begin{align*}
		\cC(\rho^i) \le \cC^{\langle k \rangle}(\rho^i_{\mathrm{u}}) + \sum_{j \ge k+2} c_j \rho^{ij}.
	\end{align*}
	Note that for all even $n \ge 4$
	\[
		c_n \le d_n,
	\]
	because any connected unlabelled cubic planar graph has at least one non-isthmus edge. Rooting and orienting that edge yields a $\cD$-network. Hence
	\begin{align*}
		\sum_{j \ge k+2} c_j \rho^{ij} &\le \rho^{(k+2)(i-1)} \sum_{j \ge k+2} c_j \rho^{j} \\
		&\le \rho^{(k+2)(i-1)} \sum_{j \ge k+2} d_j \rho^{j} \\
		&\le \rho^{(k+2)(i-1)} (\cD(\rho) - \cD^{\langle k \rangle}(\rho)) \\
		&\le \rho^{(k+2)(i-1)}_{\mathrm{u}} (D_{\mathrm{u}} - \cD^{\langle k \rangle}(\rho_{\mathrm{l}}))
	\end{align*}
	with $D_{\mathrm{u}} \ge \cD(\rho)$ denoting the upper bound from Theorem~\ref{te:numeric}.
	Summing the lower and upper bounds over $i \ge 1$ we arrive at
	\begin{align*}
		\sum_{j=4}^k c_j \log\frac{1}{1 - \rho_{\mathrm{l}}^j} &\le  
		\sum_{i \ge 1} \cC(\rho^i) / i \\
		&\le   \frac{D_\mathrm{u} - \cD^{\langle k \rangle}(\rho_{\mathrm{l}})}{\rho_{\mathrm{u}}^{k+2}} \log\frac{1}{1 - \rho_{\mathrm{u}}^{k+2}} + \sum_{j=4}^k c_j \log\frac{1}{1 - \rho_{\mathrm{u}}^j}.
	\end{align*}
	Through this, we arrive at
	\[
		    1.011471206056769 \le \exp\left( \sum_{i \ge 1} \cC(\rho^i) / i  \right) \le 1.011817089358919.
	\]
	Since
	\[
		\kappa = \gamma \exp\left( \sum_{i \ge 1} \cC(\rho^i) / i  \right),
	\]
	we may use the bounds on $\gamma$ from Theorem~\ref{te:numeric} to arrive at the stated numerical bounds for $\kappa$.
\end{proof}

\section*{Acknowledgement}
The computational results have been achieved using the
Austrian Scientific Computing (ASC) infrastructure.
This research was funded in part by the Austrian Science Fund (FWF) 10.55776/F1002 and 10.55776/PAT6732623. For open access purposes, the authors have applied a CC BY public copyright license to any author-accepted manuscript version arising from this submission.

\bibliographystyle{abbrv}
\bibliography{ucubic}

\end{document}